\documentclass[pre,aps,superscriptaddress,nofootinbib,notitlepage,10pt]{revtex4-2}

\usepackage[margin=1in]{geometry}

\usepackage[
  colorlinks,
  linkcolor = blue,
  citecolor = blue,
  urlcolor = blue]{hyperref}

\usepackage{amsmath}
\usepackage{amsfonts}
\usepackage{amsthm}
\usepackage{amssymb}
\usepackage{cleveref}
\usepackage[normalem]{ulem}
\usepackage[usenames,dvipsnames]{xcolor}
\usepackage{stackrel}
\usepackage{centernot}
\usepackage{graphicx}
\usepackage{enumitem}
\usepackage{comment}



\newlist{thmlist}{enumerate}{1}
\setlist[thmlist]{label=(\roman{thmlisti}), ref=(\roman{thmlisti}),noitemsep}
\makeatletter
\renewcommand{\p@thmlisti}{\perh@ps{\thetheorem}}
\protected\def\perh@ps#1#2{\textup{#1#2}}
\newcommand{\itemrefperh@ps}[2]{\textup{#2}}
\newcommand{\itemref}[1]{\begingroup\let\perh@ps\itemrefperh@ps\ref{#1}\endgroup}
\makeatother

\synctex=1

\newtheorem{theorem}{Theorem}
\newtheorem{lemma}[theorem]{Lemma}
\newtheorem{corollary}[theorem]{Corollary}

\crefname{conjecture}{conjecture}{conjectures}
\Crefname{conjecture}{Conjecture}{Conjectures}
\newtheorem{definition}[theorem]{Definition}

\newtheorem{example}[theorem]{Example}

\Crefname{observation}{Observation}{Observations}

\crefname{game}{game}{games}
\Crefname{game}{Game}{Games}

\crefname{secinapp}{Appendix}{Appendices}
\Crefname{secinapp}{Appendix}{Appendices}

\newcommand{\braopket}[3]{\ensuremath{\left\langle{#1}\middle|{#2}\middle|{#3}\right\rangle}}
\newcommand{\bra}[1]{\ensuremath{\langle{#1}|}}
\newcommand{\ket}[1]{\ensuremath{|{#1}\rangle}}
\newcommand{\braket}[2]{\ensuremath{\langle{#1}|{#2}\rangle}}

\newcommand{\abs}[1]{\ensuremath{{\lvert#1\rvert}}}
\newcommand{\ip}[1]{\langle{#1}\rangle}
\newcommand{\hilb}[1]{\ensuremath{\mathcal{#1}}}

\newcommand{\hilbdual}[1]{{{#1}^\dag}}
\newcommand{\rank}[0]{\ensuremath{\textnormal{rank}}}

\newcommand{\linspan}[0]{\ensuremath{\textnormal{span}}}
\newcommand{\diag}[0]{\ensuremath{\textnormal{diag}}}
\newcommand{\Tr}[0]{\ensuremath{\textnormal{Tr}}}
\newcommand{\linop}[1]{\ensuremath{\mathcal{L}(#1)}}
\newcommand{\psd}[1]{\ensuremath{\textnormal{Pos}(#1)}}
\newcommand{\herm}[1]{\ensuremath{\textnormal{Herm}(#1)}}
\newcommand{\proj}[1]{{\ket{#1}\bra{#1}}}

\newcommand{\conj}[1]{\overline{#1}}

\newcommand{\hA}[0]{\hilb{A}}
\newcommand{\hAi}[0]{{\hilb{A}_i}}
\newcommand{\hAj}[0]{{\hilb{A}_j}}
\newcommand{\hB}[0]{\hilb{B}}
\newcommand{\hBi}[0]{{\hilb{B}_i}}
\newcommand{\hBj}[0]{{\hilb{B}_j}}

\newcommand{\hY}[0]{\hilb{Y}}
\newcommand{\hYi}[0]{{\hilb{Y}_i}}
\newcommand{\hYj}[0]{{\hilb{Y}_j}}
\newcommand{\hZ}[0]{\hilb{Z}}
\newcommand{\hZi}[0]{{\hilb{Z}_i}}
\newcommand{\hZj}[0]{{\hilb{Z}_j}}
\newcommand{\TrA}[0]{\Tr_\hA}
\newcommand{\TrB}[0]{\Tr_\hB}

\newcommand{\TrAi}[0]{\Tr_\hAi}
\newcommand{\TrBi}[0]{\Tr_\hBi}
\newcommand{\TrYi}[0]{\Tr_\hYi}

\newcommand{\ot}[0]{\otimes}
\newcommand{\Gc}[0]{{\overline{G}}}

\newcommand{\Sc}[0]{{S^c}}
\newcommand{\SG}[0]{{S_G}}
\newcommand{\SGc}[0]{{S_\Gc}}
\newcommand{\lovasz}[0]{Lov{\'a}sz}

\newcommand{\opnorm}[1]{\ensuremath{\left\lVert#1\right\rVert}}
\newcommand{\trnorm}[1]{\ensuremath{\left\lVert#1\right\rVert_{\Tr}}}

\newcommand{\simG}[0]{\sim_G}
\newcommand{\simeqG}[0]{\simeq_G}

\newcommand{\dsw}[0]{\tilde{\vartheta}}
\newcommand{\lov}[0]{\vartheta}

\newcommand{\dswbody}[0]{\textnormal{TH}}
\newcommand{\dswbodyS}[0]{\textnormal{TH}^\sharp}
\newcommand{\thbody}[0]{\textnormal{TH}}
\newcommand{\diags}[0]{\mathcal{D}}
\newcommand{\vv}[0]{V}
\newcommand{\vw}[0]{W}
\newcommand{\vx}[0]{X}
\newcommand{\rw}[0]{\sqrt{W}}

\newcommand{\her}[0]{\ensuremath{\textnormal{her}}}
\newcommand{\conv}[0]{\ensuremath{\textnormal{conv}}}
\newcommand{\convcl}[0]{\ensuremath{\overline{\textnormal{conv}}}}
\newcommand{\corn}[1]{\mathcal{#1}}
\newcommand{\cornA}[0]{\corn{A}}
\newcommand{\cornB}[0]{\corn{B}}
\newcommand{\cornC}[0]{\corn{C}}

\newcommand{\spaceD}[0]{\mathcal{D}}
\newcommand{\vecmod}[2]{#1 / #2}
\newcommand{\twirlU}[0]{\Omega}
\newcommand{\blockscale}[0]{\Psi}
\newcommand{\commproj}[0]{\Delta}
\newcommand{\diagscale}[0]{D}
\newcommand{\dimAi}[0]{\dim(\hAi)}
\newcommand{\dimYi}[0]{\dim(\hYi)}

\newcommand{\relint}[0]{\ensuremath{\textnormal{relint}}}
\newcommand{\fp}[0]{\ensuremath{\textnormal{fp}}}
\newcommand{\vp}[0]{\ensuremath{\textnormal{vp}}}
\newcommand{\fg}[0]{\ensuremath{\textnormal{fg}}}
\newcommand{\abpsi}[0]{\diamond}

\begin{document}

\author{Dan Stahlke}
\title{Weighted theta functions for non-commutative graphs}

\date{\today}

\begin{abstract}
    Gr\"{o}tschel, Lov{\'a}sz, and Schrijver generalized the
    Lov{\'a}sz $\lov$ function by allowing a weight for each vertex.
    We provide a similar generalization of Duan, Severini, and Winter's $\tilde{\vartheta}$ on
    non-commutative graphs.
    While the classical theory involves a weight vector assigning a non-negative weight to each
    vertex, the non-commutative theory uses a positive semidefinite weight matrix.
    The classical theory is recovered in the case of diagonal weight matrices.

    Most of Gr\"{o}tschel, Lov{\'a}sz, and Schrijver's results generalize to non-commutative graphs.
    In particular, we generalize the inequality
    $\vartheta(G, w) \vartheta(\Gc, x) \ge \langle w, x \rangle$ with some modification needed
    due to non-commutative graphs having a richer notion of complementation.
    Similar to the classical case, facets of the theta body correspond to cliques and
    if the theta body anti-blocker is finitely generated then it is equal to the non-commutative
    generalization of the clique polytope.

    We propose two definitions for non-commutative perfect graphs, equivalent for classical graphs
    but inequivalent for non-commutative graphs.
\end{abstract}

\maketitle

\section{Introduction}

\lovasz{}~\cite{lovasz79} introduced the $\lov$ function of a graph as an upper bound on the Shannon capacity
-- the independence number regularized under the strong graph product.
The $\lov$ quantity is an upper bound on independence number, a lower bound on fractional
chromatic number, and is multiplicative under the strong and the disjunctive graph products.
It is a semidefinite program, hence efficiently computable both in theory and in practice.
It is monotone under graph homomorphisms~\cite{de2013optimization}; in fact its bound on
independence and chromatic number follow from this.

Further insight into $\lov$ is gained by allowing vertices to be
weighted~\cite{Grtschel1986,knuth94}.
Weights are basically equivalent to duplicating vertices~\cite{knuth94}
except that weights don't have to be whole numbers.
Aside from only being defined for non-negative weights, the weighted $\lov$ of a graph resembles a
norm on the weight vector: it scales linearly and is convex.
In that language, $\lov$ of the complement graph is the dual norm.
The set of weights $w$ for which $\lov(\Gc, w) \le 1$ is investigated
in~\cite{Grtschel1986}, where facets of this convex body are shown to correspond to clique constraints.
This set is polyhedral if and only if the graph is perfect.

\lovasz{}'s bound can be adapted to quantum channels via a suitable generalization of
graphs where an operator subspace takes the place of the adjacency matrix~\cite{dsw2013}.
These so called \emph{non-commutative graphs} have since drawn interest in connection with
quantum channels but also independently of any application.
Several classical graph definitions and results carry over to non-commutative graphs,
including
homomorphisms~\cite{Stahlke2016QuantumZS,weaver_quantum_relations,ortiz2016quantum,brannan2020quantumtoclassical},
chromatic numbers~\cite{Stahlke2016QuantumZS,KIM2019291,levene_paulsen_todorov_complexity},
Ramsey and Tur{\'a}n theorems~\cite{weaver_ramsey,weaver_qm_turan},
asymptotic spectrum~\cite{li2019quantum},
a Haemers bound~\cite{gribling2020haemers},
and connectivity~\cite{chavezdominguez2019connectivity}.
It can happen that there are multiple ways to generalize a particular
concept: \cite{btw2019}~presents two generalizations of $\lov$ distinct from the one
in~\cite{dsw2013} (though possibly the same as each other).

The present work investigates a weighted version of the $\dsw$ of~\cite{dsw2013}, generalizing most
of the results from~\cite{Grtschel1986}.
We note that~\cite{btw2019} defined a weighted version of their $\lov$ generalization,
and investigated the corresponding theta body.
It is not known whether it supports the sort of duality relations we find in this paper.

We will cover basic notation in \cref{sec:notation},
introduce our weighted $\dsw$ in \cref{sec:weighted_dsw},
prove a duality relation in \cref{sec:holder},
extend this to $S_0$-graphs in \cref{sec:block_graphs}
(with the core proof deferred to \cref{sec:main_proof}),
and in \cref{sec:facets} explore the geometry of the theta body for non-commutative graphs and its
relation to perfect graphs.
The presentation is self-contained, not requiring any background in quantum mechanics
or quantum channels.

\section{Notation and basic definitions}
\label{sec:notation}

Hilbert spaces will be denoted by the symbols $\hA, \hB, \hY, \hZ$.
These will always be finite dimensional.
Dual spaces are denoted $\hilbdual{\hA}$, etc.
Linear operators on $\hA$ are denoted $\linop{\hA}$; linear maps from
$\hA$ to $\hB$ by $\linop{\hA \to \hB}$.
The set of positive semidefinite operators on $\hA$ is denoted $\psd{\hA}$,
the set of Hermitian operators by $\herm{\hA}$.
For operators $X,Y \in \linop{\hA}$, $X \le Y$ means $Y - X \in \psd{\hA}$.
For spaces $\hA$ and $\hB \subseteq \hA$ we write the quotient space as
$\vecmod{\hA}{\hB} = \hA \cap \hB^\perp$.

We will use Dirac's bra-ket notation where $\ket{x}$ is a vector,
$\bra{x}$ is its dual, $\braket{x}{y}$ is an inner product, and
$\braopket{x}{M}{y}$ is an inner product between $\bra{x}$ and
$M \ket{y}$ with $M$ an operator.
The adjoint of an operator is written $M^\dag$.
Basis vectors are denoted $\ket{i}$ or $\ket{j}$ with $i, j \in \{1, \dots, n \}$.

For Hilbert spaces $\hA$ and $\hB$ of equal dimension,
we choose an isomorphism $\hilbdual{\hA} \to \hB$, denoted $\ket{\Phi}$.
There is a canonical choice once an orthonormal basis has been chosen:
\begin{align}
    \ket{\Phi} &= \sum_i \ket{i}_\hA \ot \ket{i}_\hB.
\end{align}
For an operator $M \in \linop{\hA}$ we define its vectorization
$\ket{M} = (M \ot I) \ket{\Phi} \in \hA \ot \hB$.
We also use this isomorphism to define the transpose, taking it to move
an operator from $\linop{\hA}$ to $\linop{\hB}$ so that
$(M \ot I) \ket{\Phi} = (I \ot M^T) \ket{\Phi}$.
Complex conjugate is defined similarly: $\conj{M} = (M^\dag)^T$ and
$(M^\dag \ot I) \ket{\Phi} = (I \ot \conj{M}) \ket{\Phi}$.

We take the definition of a non-commutative graph from~\cite{dsw2013}.

\begin{definition}
    A \emph{non-commutative graph} is an operator subspace $S \subseteq \linop{\hA}$ satisfying
    $S = S^\dag$ and $I \in S$.
    \label{def:noncomm_graph}
\end{definition}

In the above definition and throughout we use the shorthand notation
$S^\dag = \linspan\{ x^\dag : x \in S \}$,
$S + S' = \linspan\{ x+y : x \in S, y \in S' \}$,
$\mathbb{C} I = \linspan\{ x I : x \in \mathbb{C} \}$, etc.

Non-commutative graphs are analogous to adjacency matrices,
but taken as subspaces rather than 0-1 matrices.
Indeed, given any classical graph $G$ we can define a corresponding
non-commutative graph.

\begin{definition}
    For a graph $G$ define $\SG = \linspan\{ \ket{i}\bra{j} : i \simeqG j \}$.
    \label{def:classical_graph}
\end{definition}

Note that~\cite{Stahlke2016QuantumZS} breaks from the above definition,
taking non-commutative graphs to be trace free, $I \perp S$.
That works better for graph homomorphisms, which require vertices to not be self adjacent.
But for the present work the convention of \cref{def:noncomm_graph,def:classical_graph}
is more appropriate.

We take the following definition from~\cite{btw2019}.
This is an extension to operators of the convex corners defined in~\cite{Fulkerson1971,Csiszr1990},
which in turn are recovered by restricting to diagonal operators.
\begin{definition}
    Let $\hA$ be a Hilbert space.
    A convex corner in $\linop{\hA}$ is a non-empty closed convex subset
    $\cornC \subseteq \psd{\hA}$ such that
    \begin{align}
        A \in \cornC \textrm{ and } 0 \le B \le A \implies B \in \cornC.
        \label{eq:her_cond}
    \end{align}
    \label{def:convex_corner}
\end{definition}
Condition~\eqref{eq:her_cond} is called \emph{herditarity}.
For a general subset $\cornC \subseteq \psd{\hA}$ we denote by $\her(\cornC)$ the
\emph{hereditary closure}, the smallest hereditary set containing $\cornC$:
\begin{align}
    \her(\cornC) = \{ A \in \psd{\hA} : \exists B \in \cornC \textrm{ s.t. } A \le B \}.
    \label{eq:her_clos}
\end{align}
The smallest convex corner containing a given subset $\cornC \subseteq \psd{\hA}$ of positive
semidefinite operators is obtained by taking the hereditary closure of the closure of the convex
hull,
\begin{align}
    \her( \convcl( \cornC )).
\end{align}
We then say this convex corner is \emph{generated} by $\cornC$.  We say a convex corner is
\emph{finitely generated} if it can be generated by a finite set.

Also from~\cite{btw2019}, and again an extension of a concept from~\cite{Fulkerson1971,Csiszr1990}, we take the
definition of anti-blockers.
\begin{definition}
    Let $\hA$ be a Hilbert space.
    The anti-blocker of $\cornC \subseteq \psd{\hA}$ is
    \begin{align}
        \cornC^\sharp = \{ B \in \psd{\hA} : \Tr(AB) \le 1 \textrm{ for all } A \in \cornC \}.
    \end{align}
    \label{def:antiblocker}
\end{definition}
The following theorem is from \cite[lemma 2.2.10 and theorem 2.3.12]{borelandthesis}.
\begin{lemma}
    Let $\hA$ be a Hilbert space.
    Convex corners satisfy the following basic facts.
    \begin{thmlist}
        \item If $\cornC \subseteq \psd{\hA}$ is non-empty then $\cornC^\sharp$ is a convex corner.
            \label[theorem]{thm:antiblocker_is_corner}
        \item If $\cornB, \cornC \subseteq \psd{\hA}$ and $\cornB \subseteq \cornC$ then
            $\cornC^\sharp \subseteq \cornB^\sharp$
            \label[theorem]{thm:antiblocker_inclusion}
        \item A non-empty set $\cornC \subseteq \psd{\hA}$ satisfies $\cornC =
            \cornC^{\sharp\sharp}$ if and only if $\cornC$ is a convex corner
            (the second anti-blocker theorem).
            \label[theorem]{thm:second_antiblocker}
    \end{thmlist}
\end{lemma}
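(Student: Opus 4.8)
I would treat the three statements in order of increasing difficulty, the first two following by inspection. For part~(i): $\cornC^\sharp$ lies in $\psd{\hA}$ by definition, contains $0$ (since $\Tr(A\cdot 0)=0\le 1$) hence is non-empty, and is the intersection of $\psd{\hA}$ with the closed half-spaces $\{B:\Tr(AB)\le 1\}$ over $A\in\cornC$, hence is closed and convex; for hereditarity, if $B\in\cornC^\sharp$ and $0\le B'\le B$ then for every $A\in\cornC$ we get $\Tr(AB')=\Tr(AB)-\Tr(A(B-B'))\le\Tr(AB)\le 1$, using that $A\ge 0$ and $B-B'\ge 0$ make $\Tr(A(B-B'))\ge 0$, so $B'\in\cornC^\sharp$. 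For part~(ii): if $\cornB\subseteq\cornC$, then any $B$ satisfying $\Tr(AB)\le 1$ for all $A\in\cornC$ in particular satisfies it for all $A\in\cornB$, so $\cornC^\sharp\subseteq\cornB^\sharp$.

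For part~(iii) I would first record two easy facts. For any non-empty $\cornC\subseteq\psd{\hA}$ one has $\cornC\subseteq\cornC^{\sharp\sharp}$, since each $A\in\cornC$ is positive semidefinite and, by the definition of $\cornC^\sharp$, satisfies $\Tr(BA)\le 1$ for all $B\in\cornC^\sharp$; and since $\cornC^\sharp$ is non-empty (it contains $0$), applying part~(i) to $\cornC^\sharp$ shows $\cornC^{\sharp\sharp}=(\cornC^\sharp)^\sharp$ is a convex corner. The latter already gives the implication that $\cornC=\cornC^{\sharp\sharp}$ forces $\cornC$ to be a convex corner. The real content is the converse: assuming $\cornC$ is a convex corner, and using the first fact above, it suffices to show $\cornC^{\sharp\sharp}\subseteq\cornC$, i.e.\ that every $A_0\in\psd{\hA}\setminus\cornC$ (the case $A_0\notin\psd{\hA}$ being trivial since $\cornC^{\sharp\sharp}\subseteq\psd{\hA}$) admits a $B\in\cornC^\sharp$ with $\Tr(BA_0)>1$.

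This I would prove by a separating-hyperplane argument in which hereditarity is exactly what forces the separating functional to be positive semidefinite. Consider the convex set $\cornC-\psd{\hA}=\{C-P:C\in\cornC,\ P\in\psd{\hA}\}\subseteq\herm{\hA}$. Hereditarity gives $(\cornC-\psd{\hA})\cap\psd{\hA}=\cornC$: the inclusion $\supseteq$ is clear, and if $X=C-P\ge 0$ with $C\in\cornC$ and $P\ge 0$ then $0\le X\le C$, so $X\in\cornC$ by hereditarity. Hence $A_0\notin\cornC-\psd{\hA}$; since that set is closed, $A_0$ can be strictly separated from it, yielding $H\in\herm{\hA}$ with $s:=\sup\{\Tr(HZ):Z\in\cornC-\psd{\hA}\}$ finite and $\Tr(HA_0)>s$. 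Inserting $Z=C-tP$ with $C\in\cornC$, $P\in\psd{\hA}$, $t\ge 0$ gives $\Tr(HC)-t\,\Tr(HP)\le s$, so letting $t\to\infty$ forces $\Tr(HP)\ge 0$; as $P\ge 0$ was arbitrary, $H\ge 0$. Taking $t=0$ gives $\Tr(HC)\le s$ for all $C\in\cornC$, and $s\ge\Tr(H\cdot 0)=0$. If $s>0$, then $B:=H/s$ lies in $\cornC^\sharp$ and $\Tr(BA_0)=\Tr(HA_0)/s>1$. If $s=0$, then $\Tr(HC)=0$ for every $C\in\cornC$ (it is $\ge 0$ because $H,C\ge 0$ and $\le 0$ because $s=0$), so $\lambda H\in\cornC^\sharp$ for every $\lambda\ge 0$, and a sufficiently large $\lambda$ gives $\Tr(\lambda HA_0)>1$. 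Either way $A_0\notin\cornC^{\sharp\sharp}$, as required.

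The one step that is not routine is the assertion that $\cornC-\psd{\hA}$ is closed, which is what licenses the strict separation above. This is immediate when $\cornC$ is bounded (a compact set plus a closed cone is closed), which is the case in the applications of interest; for a general, possibly unbounded, convex corner one needs a separate argument -- for instance by analyzing the recession cone of $\cornC$ and using hereditarity to truncate the unbounded directions, or by reducing to the bounded case. I would expect this to be where essentially all the genuine work lies, the remainder being bookkeeping.
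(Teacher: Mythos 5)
The paper does not prove this lemma itself: it is imported wholesale from Boreland's thesis (\cite[Lemma 2.2.10 and Theorem 2.3.12]{borelandthesis}), so there is no ``paper proof'' to compare against, only the standard argument in the literature. Your reconstruction is correct and is essentially that standard argument: parts~(i) and~(ii) by direct verification, and part~(iii) by separating a point of $\psd{\hA}\setminus\cornC$ from $\cornC-\psd{\hA}$ and using hereditarity (via the identity $(\cornC-\psd{\hA})\cap\psd{\hA}=\cornC$ and the $t\to\infty$ limit) to force the separating functional to be positive semidefinite. The one caveat you flag --- closedness of $\cornC-\psd{\hA}$ --- is the right thing to flag, and your observation that it is automatic when $\cornC$ is compact settles it for the intended setting: the source (\cite{borelandthesis} and~\cite{btw2019}) includes boundedness in the definition of a convex corner, which this paper's Definition~\ref{def:convex_corner} seems to have silently dropped, and every convex corner actually used in the paper (e.g.\ $\dswbodyS(S)$, $\dswbody(S)$, $\fp(S)$) is bounded. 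So for the lemma as intended your proof is complete; only if one insisted on the unbounded generality literally permitted by Definition~\ref{def:convex_corner} would the closedness of $\cornC-\psd{\hA}$ require the extra recession-cone analysis you allude to.
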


Following~\cite{btw2019}, we will occasionally refer to \emph{diagonal convex corners}, analogous
to \cref{def:convex_corner} but restricted to diagonal matrices.
Note this is equivalent to the classical definition from~\cite{Fulkerson1971,Csiszr1990}, just with
the elements of the corner being positive semidefinite diagonal matrices rather than entrywise
non-negative vectors.
If $\cornC$ is a convex corner then $\cornC \cap \diags$, with $\diags$ being the subspace
of diagonal matrices, is a diagonal convex corner.
Again following~\cite{btw2019} we define the diagonal anti-blocker
\begin{align}
    \cornC^\flat &= \{ B \in \diags : B \ge 0, \Tr(AB) \le 1 \textrm{ for all } A \in \cornC \}
    \\ &= \cornC^\sharp \cap \diags.
\end{align}
This is equivalent to the classical notion of anti-blocker from~\cite{Fulkerson1971,Csiszr1990}.
We will only be referencing diagonal convex corners and diagonal anti-blockers when comparing
our theory to the previously established theory of classical (commutative) graphs.


\section{Weighted \texorpdfstring{$\dsw$}{theta}}
\label{sec:weighted_dsw}

The following definition of weighted $\lov$ for classical graphs is derived in
a straightforward way from~\cite[section 6]{knuth94}.  It is presented here for background
and to motivate a similar definition for non-commutative graphs.  We will not be using it
directly.

\begin{definition}
    Let $G$ be a graph with $n$ vertices and $\ket{w} \in \mathbb{R}^n$ be an entrywise positive vector.
    Let $\ket{r}$ be the entrywise square root of $\ket{w}$.  Define
    \begin{align}
        \lov(G, w) = \min\{ \lambda : Y \ge \ket{r} \bra{r},
            Y_{ii} = \lambda, Y_{ij} = 0 \textrm{ for } i \not\simeq j \}.
    \end{align}
    The unweighted $\lov$ is recovered by taking $\ket{w}$ to be the all ones vector.
    \label{def:classical_weighted}
\end{definition}

The non-weighted $\dsw$ has been defined for non-commutative graphs
by~\cite{dsw2013}, which presents the following equivalent definitions.

\begin{definition}
    Let $S$ be a non-commutative graph.
\begin{align}
    \dsw(S) &= \max\{\opnorm{T + I \ot I} : T \in S^\perp \ot \linop{\hB}, T + I \ot I \ge 0\}
    \\ &= \max\{ \braopket{\Phi}{T + I \ot \rho}{\Phi} :
        T \in S^\perp \ot \linop{\hB}, T + I \ot \rho \ge 0, \rho \ge 0, \Tr \rho = 1 \}
    \\ &= \min\{ \opnorm{ \TrA Y } : Y \in S \ot \linop{\hB}, Y \ge \ket{\Phi}\bra{\Phi} \}
    \\ &= \min\{ \lambda : Y \in S \ot \linop{\hB}, \TrA Y \le \lambda I, Y \ge \ket{\Phi}\bra{\Phi} \}
    \label{eq:unweighted_dsw_min_Y}
\end{align}
    \label{def:unweighted_dsw}
\end{definition}

If $G$ is a graph and $\SG = \linspan\{ \ket{i}\bra{j} : i \simeqG j \}$ then
$\dsw(\SG) = \lov(G)$.

It is worth mentioning that for classical graphs there are many alternate
forms for $\lov$~\cite{Grtschel1986,knuth94}.
While these are all equivalent for classical graphs, they are in general different
when generalized to non-commutative graphs.
Two quantities different from~\cref{def:unweighted_dsw} are studied in~\cite{btw2019}.

We now construct a weighted version of $\dsw(S)$.
Observing the similarity between \eqref{eq:unweighted_dsw_min_Y} and
\cref{def:classical_weighted}, it seems reasonable that the weights should be absorbed
into $\ket{\Phi}$.
The weights will be a positive semidefinite operator rather than an entrywise positive vector,
which is common for quantum generalizations of classical concepts  (cf.\ density operators vs.\
probability distributions).

\begin{definition}
    Let $S$ be a non-commutative graph and $W \in \psd{A}$.
    Let
    $\ket{\vw} = (      W  \ot I) \ket{\Phi} \in \hA \ot \hB$ and
    $\ket{\rw} = (\sqrt{W} \ot I) \ket{\Phi}$.
    Note that $\TrB\left( \proj{\rw} \right) = W$ and
    $\TrA\left( \proj{\rw} \right) = W^T$.
    Define
    \begin{align}
        \dsw(S, W) &= \min\left\{ \lambda :
            Y \in S \ot \linop{\hB},
            \TrA Y \le \lambda I,
            Y \ge \proj{\rw} \right\}.
        \label{eq:weighted_dsw_min_Y}
    \end{align}
    \label{def:weighted_dsw_min_Y}
\end{definition}

Note that for $W = I$,~\eqref{eq:weighted_dsw_min_Y} reduces
to~\eqref{eq:unweighted_dsw_min_Y}.
And for a classical graph with diagonal weight matrix it reduces to
\cref{def:classical_weighted}:

\begin{theorem}
    For any classical graph $G$ and weight vector $\ket{w}$,
    \begin{align}
        \dsw(\SG, \diag(w)) = \lov(G, w)
    \end{align}
    where $\SG = \linspan\{ \ket{i}\bra{j} : i \simeqG j \}$
    and $\diag(w)$ is the diagonal matrix with $\diag(w)_{ii} = w_i$.
    \label{thm:match_classical}
\end{theorem}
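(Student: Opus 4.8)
The plan is to reduce $\dsw(\SG,\diag(w))$ to $\lov(G,w)$ by showing that the semidefinite program in \cref{def:weighted_dsw_min_Y} ``block-diagonalizes'' once we specialize to $S = \SG$ and $W = \diag(w)$, so that the only relevant part of $Y$ is an $n\times n$ matrix, recovering \cref{def:classical_weighted}. First I would write out $\ket{\rw}$ explicitly: with $W = \diag(w)$ we have $\sqrt{W} = \diag(r)$ where $r_i = \sqrt{w_i}$, so $\ket{\rw} = \sum_i r_i \ket{i}_\hA \ot \ket{i}_\hB$. The key observation is that $\ket{\rw}$ lives entirely in the span of the ``diagonal'' basis vectors $\{\ket{i}_\hA \ot \ket{i}_\hB\}$, which is naturally identified with $\mathbb{C}^n$; let $P$ be the projector onto this subspace and identify $P(\hA\ot\hB)P \cong \linop{\mathbb{C}^n}$ via $\ket{i}_\hA\ot\ket{i}_\hB \mapsto \ket{i}$. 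Under this identification $\proj{\rw}$ becomes $\ket{r}\bra{r}$.

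Next I would show that in the minimization \eqref{eq:weighted_dsw_min_Y} we may without loss of generality replace $Y$ by $PYP$. For the ``$\le$'' direction (that $\dsw \le \lov$): given a feasible $Y'$ for \cref{def:classical_weighted} with value $\lambda$, set $Y = \sum_{i,j : i\simeqG j} Y'_{ij}\, \ket{i}\bra{j}_\hA \ot \ket{i}\bra{j}_\hB$. Because $Y'_{ij} = 0$ whenever $i\not\simeqG j$ and $i\neq j$, while the diagonal terms $\ket{i}\bra{i}\ot\ket{i}\bra{i}$ are always allowed since $i\simeqG i$, this $Y$ lies in $\SG\ot\linop{\hB}$. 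One checks $Y \ge \proj{\rw}$ (since $PY'P$-type positivity lifts), and $\TrA Y = \sum_i Y'_{ii}\,\ket{i}\bra{i} = \lambda I$, so $Y$ is feasible for \eqref{eq:weighted_dsw_min_Y} with the same value. For the ``$\ge$'' direction (that $\dsw \ge \lov$): given feasible $Y$ for \eqref{eq:weighted_dsw_min_Y} with value $\lambda$, I would apply the ``diagonal twirl'' — conjugate $Y$ by the unitary $U_\phi \ot \conj{U_\phi}$ where $U_\phi = \diag(e^{i\phi_1},\dots,e^{i\phi_n})$ and average over $\phi$. This pinching map preserves $\SG\ot\linop{\hB}$ (each $\ket{i}\bra{j}_\hA\ot\ket{k}\bra{l}_\hB$ picks up phase $e^{i(\phi_i - \phi_j - \phi_k + \phi_l)}$, which averages to zero unless $\{i,j\}=\{k,l\}$, and the surviving terms stay in $\SG\ot\linop{\hB}$), preserves the constraint $\TrA Y \le \lambda I$, and fixes $\proj{\rw}$ (since $U_\phi\ot\conj{U_\phi}$ fixes $\ket{\rw}$). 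So after twirling we may assume $Y = \sum_{i\simeqG j} c_{ij}\,\ket{i}\bra{j}_\hA\ot\ket{i}\bra{j}_\hB + (\text{terms } \ket{i}\bra{i}\ot\ket{j}\bra{j})$; projecting further onto $P$ and taking $Y' = (\langle\!\langle \text{diagonal block}\rangle\!\rangle)$ with $Y'_{ij} := \braopket{ii}{Y}{jj}$ gives a feasible point for \cref{def:classical_weighted} with the same $\lambda$, using $Y \ge \proj{\rw}$ to get $Y' \ge \ket{r}\bra{r}$ and the diagonal constraint to get $Y'_{ii} = \lambda$.

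The main obstacle I anticipate is the ``$\ge$'' direction: one must be careful that the twirl plus compression argument genuinely produces a matrix $Y'$ satisfying \emph{all} the constraints of \cref{def:classical_weighted} simultaneously — in particular that $\TrA Y \le \lambda I$ survives as $Y'_{ii} = \lambda$ exactly (not just $\le\lambda$), which requires noting that $\lov(G,w)$ as defined is unchanged if one relaxes $Y_{ii} = \lambda$ to $Y_{ii} \le \lambda$ (scale up the feasible solution or observe the optimum is attained on the boundary), and that the off-diagonal zero pattern of $Y'$ is exactly the non-edge pattern of $G$. Everything else is routine bookkeeping with the $\ket{i}\bra{j}\ot\ket{i}\bra{j}$ basis and the definition of $\SG$.
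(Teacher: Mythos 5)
Your argument is correct in overall structure and is essentially the approach the paper's one-line proof points to (the paper just cites \cite[corollary 12]{dsw2013}; your diagonal-phase twirl is the natural reconstruction, and the same twirl reappears in the paper's \cref{lem:twirl_Y} in the special case $S_0 = \diags$). The ``$\le$'' direction is complete. In the ``$\ge$'' direction, however, there is one step you appeal to without actually establishing it: you say the constraint $\TrA Y \le \lambda I$ gives $Y'_{ii} \le \lambda$, but these are not the same quantity. After twirling, $(\TrA Y)_{ii} = \sum_j \braopket{j,i}{Y}{j,i}$, whereas $Y'_{ii} = \braopket{i,i}{Y}{i,i}$ is only one term of that sum.

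The missing observation is that $Y \ge \proj{\rw} \ge 0$, so each $\braopket{j,i}{Y}{j,i} \ge 0$ and therefore $Y'_{ii} \le (\TrA Y)_{ii} \le \lambda$. Equivalently: the twirl block-diagonalizes $Y$ with respect to the projector $P$ onto $\linspan\{\ket{i}_\hA \ot \ket{i}_\hB\}$ and its complement; the $(I-P)$ block is PSD, so $\TrA(PYP) \le \TrA Y \le \lambda I$, and the diagonal of $\TrA(PYP)$ is exactly $(Y'_{ii})_i$. This is a one-line fix, but without it the compression step does not visibly inherit the partial-trace constraint. Your handling of the remaining slack (pad the diagonal of $Y'$ to force $Y'_{ii} = \lambda$) is then fine, as is your characterization of which monomials $\ket{i}\bra{j}_\hA\ot\ket{k}\bra{l}_\hB$ survive the twirl (the surviving ones are exactly $i=k,\,j=l$ and $i=j,\,k=l$, slightly narrower than ``$\{i,j\}=\{k,l\}$'' as stated, but this doesn't affect the argument).
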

\begin{proof}
    Straightforward generalization of the proof of~\cite[corollary 12]{dsw2013}.
\end{proof}

\Cref{thm:match_classical} invites a question: for classical graphs, does our $\dsw$,
by allowing non-diagonal weight matrices, provide additional information about the graph?
In \cref{sec:block_graphs} we will find the answer is no.
For classical graphs, $\dsw$ with non-diagonal weights is a function of $\dsw$ with
diagonal weights (\cref{thm:dswSW}).

Being a semidefinite program,~\eqref{eq:weighted_dsw_min_Y} is efficiently computable.
It is more computationally expensive than the classical $\lov$ because $Y$
is of size $n^2 \times n^2$ rather than $n \times n$ where $n = \dim(\hA) = \abs{G}$.
Experiments with the SCS solver~\cite{scs2016,convexjl} show $n=9$ takes one minute (i7-6820HQ CPU,
circa 2015), with runtime scaling at about $O(n^6)$.
Much insight can be gained through numerical experiments even with $n=3,4$.
And, crucially, semidefinite programs have dual formulations.

\begin{theorem}
    The dual of the semidefinite program~\eqref{eq:weighted_dsw_min_Y} is
    \begin{align}
        \dsw(S, W) &= \max\Big\{
        \braopket{\rw}{T + I \ot \rho}{\rw} :
        T \in S^\perp \ot \linop{\hB},
    \nonumber \\ &\qquad\qquad
        T + I \ot \rho \ge 0,
        \rho \ge 0, \Tr \rho = 1 \Big\}
        \label{eq:weighted_dsw_max_T}
    \end{align}
\end{theorem}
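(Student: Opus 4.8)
The statement is that the SDP dual of the minimization problem \eqref{eq:weighted_dsw_min_Y} is the maximization problem \eqref{eq:weighted_dsw_max_T}. The natural approach is to write \eqref{eq:weighted_dsw_min_Y} in a standard conic (SDP) form, form the Lagrangian dual mechanically, simplify, and then invoke strong duality via Slater's condition. I would organize the proof in four steps.

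First, I would recast the primal. The variables are $\lambda \in \mathbb{R}$ and $Y \in \linop{\hA \ot \hB}$, subject to the linear constraint $Y \in S \ot \linop{\hB}$ (equivalently, the projection of $Y$ orthogonal to $S \ot \linop{\hB}$ vanishes — note $(S \ot \linop{\hB})^\perp = S^\perp \ot \linop{\hB}$ since $S = S^\dag$ and the relevant orthogonal complement in $\linop{\hA}$ is $S^\perp$), together with the two semidefinite inequalities $\lambda I - \TrA Y \ge 0$ and $Y - \proj{\rw} \ge 0$. We are minimizing the linear functional $\lambda$.

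Second, I would introduce dual variables: $\rho \in \psd{\hB}$ conjugate to the constraint $\lambda I \ge \TrA Y$ (so that $\langle \rho, \lambda I - \TrA Y\rangle \ge 0$), a variable $M \in \psd{\hA \ot \hB}$ conjugate to $Y \ge \proj{\rw}$, and an unconstrained Hermitian multiplier $T \in S^\perp \ot \linop{\hB}$ enforcing $Y \in S \ot \linop{\hB}$. Writing the Lagrangian $L = \lambda - \langle \rho, \lambda I - \TrA Y\rangle - \langle M, Y - \proj{\rw}\rangle - \langle T, Y\rangle$ and minimizing freely over $\lambda$ forces $\Tr \rho = 1$; minimizing over $Y$ forces the coefficient of $Y$ to vanish, i.e.\ $I \ot \rho + M - T = 0$ after using $\langle \rho, \TrA Y\rangle = \langle I \ot \rho, Y\rangle$ (here I absorb signs into $T$, which is unconstrained in sign within $S^\perp \ot \linop{\hB}$). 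This gives $M = T + I \ot \rho$, and the constraint $M \ge 0$ becomes $T + I \ot \rho \ge 0$. What remains of the Lagrangian is $\langle M, \proj{\rw}\rangle = \braopket{\rw}{T + I \ot \rho}{\rw}$, which is exactly the objective in \eqref{eq:weighted_dsw_max_T}. I would keep careful track of one subtlety: the multiplier $T$ really ranges over all Hermitian operators, but its component in $S \ot \linop{\hB}$ is annihilated against $Y$ because $Y$ is constrained to $S \ot \linop{\hB}$ — wait, it is the reverse: only the $S^\perp \ot \linop{\hB}$ component of $T$ contributes a nontrivial constraint, so WLOG $T \in S^\perp \ot \linop{\hB}$, matching the stated feasible region.

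Third, I would establish strong duality (so that $\min = \max$ and the maximum is attained). The cleanest route is Slater's condition on the primal: I need a strictly feasible point. Taking $Y = c\, I \ot I$ for large $c$ gives $Y \in S \ot \linop{\hB}$ (since $I \in S$), $Y - \proj{\rw} > 0$ for $c > \opnorm{\proj{\rw}}$, and choosing $\lambda$ large makes $\lambda I - \TrA Y = (\lambda - c\dim\hA) I > 0$; all inequality constraints hold strictly. Since the primal is feasible and bounded below (by $0$, as $\TrA Y \ge 0$ forces $\lambda \ge 0$ on the feasible set), strong duality holds and the dual optimum is attained.

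**Main obstacle.** The genuine proof is routine; the only thing requiring care is the bookkeeping of adjoints and partial traces when moving the multiplier $\rho$ past $\TrA$ — specifically the identity $\langle \rho, \TrA Y \rangle = \langle I \ot \rho, Y\rangle$ and the correct placement of the $S$ versus $S^\perp$ factors in the subspace constraint. I would state these two facts explicitly and then the rest follows by the standard conic-duality template.

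Since the computation is entirely standard, I would write:
\begin{proof}
    This is the SDP dual of~\eqref{eq:weighted_dsw_min_Y}; strong duality holds by Slater's condition, since $Y = c\, I \ot I$ for sufficiently large $c$ (together with a correspondingly large $\lambda$) is strictly feasible for the primal, using $I \in S$. Forming the Lagrangian with multiplier $\rho \ge 0$ for $\TrA Y \le \lambda I$, multiplier $M \ge 0$ for $Y \ge \proj{\rw}$, and an unconstrained Hermitian multiplier for $Y \in S \ot \linop{\hB}$, minimization over $\lambda$ yields $\Tr\rho = 1$ and minimization over $Y$ yields $M = T + I \ot \rho$ for some $T \in S^\perp \ot \linop{\hB}$ (using $\langle \rho, \TrA Y\rangle = \langle I \ot \rho, Y\rangle$). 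The constraint $M \ge 0$ becomes $T + I \ot \rho \ge 0$, and the dual objective is $\langle M, \proj{\rw}\rangle = \braopket{\rw}{T + I \ot \rho}{\rw}$, giving~\eqref{eq:weighted_dsw_max_T}.
\end{proof}
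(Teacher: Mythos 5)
Your proof is correct and follows essentially the same route as the paper's: form the Lagrangian dual of~\eqref{eq:weighted_dsw_min_Y} and invoke Slater's condition to close the duality gap. The only minor difference is that you verify strict feasibility on the primal side (which directly yields attainment of the dual maximum), whereas the paper exhibits a strictly feasible point $T=0,\ \rho = I_\hB/\dim(\hB)$ of the dual; both suffice.
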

\begin{proof}
    Set $M = \proj{\rw}$ and rewrite \eqref{eq:weighted_dsw_min_Y},
    \begin{align}
        \dsw(S, W) &= \min\Big\{ \lambda :
            Y \in S \ot \linop{\hB},
            \TrA Y - \lambda I \le 0,
            M - Y \le 0 \Big\}.
    \end{align}
    The Lagrangian is
    \begin{align}
        L(\lambda, Y; R, \rho) &= \lambda
            + \ip{\rho, \TrA Y - \lambda I}
            + \ip{R, M - Y}
        \\ &= \ip{I \ot \rho - R, Y}
            + \lambda (1 - \Tr \rho)
            + \ip{R, M}.
    \end{align}
    The dual program is then
    \begin{align}
        d^* &= \max\Big\{ \ip{R, M} :
            I \ot \rho - R \in S^\perp \ot \linop{\hB},
            \Tr \rho = 1,
            R \ge 0, \rho \ge 0 \Big\}
    \end{align}
    Defining $T = R - I \ot \rho$ gives the right hand side of \eqref{eq:weighted_dsw_max_T}.
    The point $T=0, \rho = I_\hB / \dim(\hB)$ is in the relative interior of the
    feasible region so Slater's condition holds and $d^* = \dsw(S, W)$.
\end{proof}

\begin{theorem}
    Let $S \subseteq \linop{\hA}$ be a non-commutative graph and $W \in \psd{A}$.
    Let $n = \dim(\hA)$.
    We have the following alternate forms for $\dsw(S, W)$,
    where~\eqref{eq:weighted_dsw_min_YWinvT} requires $W$ to be non-singular.
    \begin{align}
        \dsw(S, W)
        &= \min\{ \lambda :
            Y \in S \ot \linop{\hB},
            \TrA Y = \lambda I,
            Y \ge \proj{\rw} \}
            \label{eq:weighted_dsw_min_Y_eq}
        \\ &= \min\{ \lambda :
            Y \in S \ot \linop{\hB},
            \TrA Y = \lambda W^T,
            Y \ge \ket{\vw}\bra{\vw} \}
            \label{eq:weighted_dsw_min_YWT_eq}
        \\ &= \min\{ \lambda :
            Y \in S \ot \linop{\hB},
            \TrA Y \le \lambda W^T,
            Y \ge \ket{\vw}\bra{\vw} \}
            \label{eq:weighted_dsw_min_YWT}
        \\ &= \min\{ \lambda :
            Y \in S \ot \linop{\hB},
            \TrA Y \le \lambda W^{-T},
            Y \ge \ket{\Phi}\bra{\Phi} \}
            \label{eq:weighted_dsw_min_YWinvT}
    \end{align}
    \begin{align}
        \dsw(S, W)
        &= \max\Big\{
            \opnorm{(\sqrt{W} \ot I)(T + I \ot I)(\sqrt{W} \ot I)} :
            T \in S^\perp \ot \linop{\hB},
            T + I \ot I \ge 0,
            \Big\}
            \label{eq:weighted_dsw_max_opnorm}
        \\ &= \max\Big\{
            \opnorm{\sqrt{T + I \ot I}(W \ot I)\sqrt{T + I \ot I}} :
            T \in S^\perp \ot \linop{\hB},
            T + I \ot I \ge 0,
            \Big\}
            \label{eq:weighted_dsw_max_opnorm2}
        \\ &= \max\Big\{
            n \opnorm{(\sqrt{W} \ot I) Y (\sqrt{W} \ot I)} :
            Y \in (S^\perp + \mathbb{C}I) \ot \linop{\hB},
            Y \ge 0,
            \TrA Y = I
            \Big\}
            \label{eq:weighted_dsw_max_Y}
        \\ &= \max\Big\{
            n \braopket{\rw}{Y}{\rw} :
            Y \in (S^\perp + \mathbb{C}I) \ot \linop{\hB},
            Y \ge 0, \Tr Y = 1 \Big\}
            \label{eq:weighted_dsw_max_Y_v2}
    \end{align}
    \label{thm:weighted_dsw_equiv_forms}
\end{theorem}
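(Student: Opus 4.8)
The plan is to establish the equalities in two groups, starting from the two forms already proved — the primal \eqref{eq:weighted_dsw_min_Y} and its dual \eqref{eq:weighted_dsw_max_T} — and working outward by elementary manipulations. For the ``min'' family \eqref{eq:weighted_dsw_min_Y_eq}--\eqref{eq:weighted_dsw_min_YWinvT}, I would first note that \eqref{eq:weighted_dsw_min_Y_eq} follows from \eqref{eq:weighted_dsw_min_Y} by the standard trick: given a feasible $Y$ with $\TrA Y \le \lambda I$, add to $Y$ an element of $S \ot \linop{\hB}$ of the form $I \ot D$ with $D \ge 0$ diagonal (legitimate since $I \in S$) to saturate the trace constraint to $\TrA Y = \lambda I$, without destroying $Y \ge \proj{\rw}$. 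Next, \eqref{eq:weighted_dsw_min_YWT_eq} and \eqref{eq:weighted_dsw_min_YWT} come from the substitution $Y \mapsto (\sqrt{W}^{-1} \ot I) Y (\sqrt{W}^{-1} \ot I)$ when $W$ is nonsingular — one checks $\proj{\rw} \mapsto \proj{\vw}$ and $\TrA$ transforms covariantly — and then a density/limiting argument extends to singular $W$ (approximate $W$ by $W + \epsilon I$ and use continuity of the SDP value, noting all the sets involved are closed). Finally \eqref{eq:weighted_dsw_min_YWinvT} is the analogous substitution applied to \eqref{eq:weighted_dsw_min_YWT_eq}, namely $Y \mapsto (\sqrt{W}^{-1}\ot I)Y(\sqrt{W}^{-1}\ot I)$ again, sending $\proj{\vw} \mapsto \proj{\Phi}$ and $\lambda W^T \mapsto \lambda W^{-T}$ in the trace constraint.

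For the ``max'' family \eqref{eq:weighted_dsw_max_opnorm}--\eqref{eq:weighted_dsw_max_Y_v2}, I would begin from \eqref{eq:weighted_dsw_max_T}. The key observation is that $\braopket{\rw}{T + I \ot \rho}{\rw} = \braopket{\Phi}{(\sqrt{W}\ot I)(T + I\ot\rho)(\sqrt{W}\ot I)}{\Phi}$. Optimizing over $\rho \ge 0$ with $\Tr\rho = 1$ for fixed $T$: since $\braopket{\rw}{I\ot\rho}{\rw}$ is linear in $\rho$ and the constraint $T + I\ot\rho \ge 0$ is equivalent to $\rho \ge -\TrA$-type conditions, one shows the supremum over $\rho$ of $\braopket{\Phi}{(\sqrt W\ot I)(T+I\ot\rho)(\sqrt W\ot I)}{\Phi}$, subject to $T+I\ot\rho\ge 0$, equals $\opnorm{(\sqrt W\ot I)(T+I\ot I)(\sqrt W\ot I)}$ — intuitively, replacing $\rho$ by the identity maximizes the matrix in Loewner order up to the feasibility constraint, and the $\braket{\Phi}{\cdot}$-form picks out the operator norm (this is the same maneuver used in \cite{dsw2013} to pass between their first two lines of \cref{def:unweighted_dsw}). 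That yields \eqref{eq:weighted_dsw_max_opnorm}; then \eqref{eq:weighted_dsw_max_opnorm2} follows from $\opnorm{ABA^\dag} = \opnorm{\sqrt{C}\,(A^\dag A)\,\sqrt{C}}$ type cyclicity — more precisely $\opnorm{(\sqrt W\ot I)P(\sqrt W\ot I)} = \opnorm{\sqrt P(W\ot I)\sqrt P}$ when $P = T + I\ot I \ge 0$, which is the standard identity $\opnorm{X X^\dag} = \opnorm{X^\dag X}$ with $X = \sqrt P(\sqrt W\ot I)$.

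For the last two, \eqref{eq:weighted_dsw_max_Y} and \eqref{eq:weighted_dsw_max_Y_v2}, I would reparametrize: writing $Y = (T + I\ot I)/n$ does not quite land in the stated feasible set, so instead observe that elements of $(S^\perp + \mathbb{C}I)\ot\linop{\hB}$ that are $\ge 0$ with $\TrA Y = I$ are exactly $Y = (T + I\ot\rho)/n$ rescaled — run the correspondence $T + I\ot\rho \leftrightarrow nY$ where $\rho = \TrA(\,\cdot\,)/n$ adjusted appropriately, check $\TrA Y = I$ corresponds to the normalization, and match the objectives via $n\opnorm{(\sqrt W\ot I)Y(\sqrt W\ot I)}$. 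Then \eqref{eq:weighted_dsw_max_Y_v2} is the $\braket{\Phi}{\cdot}$-versus-operator-norm passage one more time, together with $\Tr Y = \braopket{\Phi}{Y}{\Phi}$-type bookkeeping for the normalization $\Tr Y = 1 \leftrightarrow \TrA Y = I$ in the presence of the $S^\perp + \mathbb{C}I$ structure.

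The main obstacle I anticipate is the repeated ``operator norm vs.\ $\braket{\Phi}{\cdot}$'' step — rigorously justifying that maximizing $\braopket{\Phi}{M}{\Phi}$-type quantities over the density operator $\rho$ (subject to a semidefinite feasibility constraint coupling $\rho$ and $T$) produces an operator norm requires care, because one must verify both that the supremum is attained and that the optimal $\rho$ can be taken to align with the top eigenvector of the relevant operator without violating $T + I\ot\rho \ge 0$. I expect this is handled exactly as in \cite[proof of corollary 12 and surrounding lemmas]{dsw2013}, and the right framing is to prove the single auxiliary identity
\begin{align}
    \max\{\braopket{\rw}{P}{\rw} : P \ge 0,\ P - I\ot\rho \in S^\perp\ot\linop{\hB},\ \Tr\rho = 1,\ \rho\ge 0\}
    = \max\{\opnorm{(\sqrt W\ot I)P(\sqrt W\ot I)} : \ldots\}
\end{align}
once and for all, then invoke it wherever the pattern recurs. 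Everything else is congruence transformations, the $I\in S$ slack trick, and a continuity argument for the singular-$W$ boundary case.
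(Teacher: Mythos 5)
Your overall architecture — derive the min-family from \eqref{eq:weighted_dsw_min_Y} by congruence transformations plus the $I\in S$ slack trick, derive the max-family from \eqref{eq:weighted_dsw_max_T} by an operator-norm-vs-$\braket{\Phi}{\cdot}$ passage — matches the paper's proof. The max-family plan is essentially correct (including the identity $\opnorm{(\sqrt W\ot I)P(\sqrt W\ot I)}=\opnorm{\sqrt P(W\ot I)\sqrt P}$ via $\opnorm{XX^\dag}=\opnorm{X^\dag X}$, and the observation that $Y=n^{-1}(T+I\ot I)$ satisfies $\TrA Y=I$ precisely because $\TrA T=0$ when $T\in S^\perp\ot\linop{\hB}$ and $I\in S$, so it \emph{does} land in the stated feasible set). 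You also correctly flag the delicate point: aligning $\rho$ with the top eigenvector while preserving $T+I\ot\rho\ge 0$, and in the reverse direction showing the null space of $T'$ contains that of $I\ot\rho$; the paper handles this exactly as you anticipate.

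However, there is a genuine gap in your min-family steps: you propose the congruence $Y\mapsto(\sqrt{W}^{-1}\ot I)\,Y\,(\sqrt{W}^{-1}\ot I)$, which acts on the $\hA$ tensor factor. This destroys the constraint $Y\in S\ot\linop{\hB}$, since $S$ is not closed under conjugation by $\sqrt{W}^{-1}$. (It also does not send $\proj{\rw}$ to $\proj{\vw}$: one computes $(\sqrt{W}^{-1}\ot I)\ket{\rw}=\ket{\Phi}$, not $\ket{\vw}$.) The correct maneuver, used in the paper, is to conjugate on the $\hB$ side by $I_\hA\ot\sqrt{W^T}$. This preserves $S\ot\linop{\hB}$ trivially, and via the transpose identity $(I\ot M^T)\ket{\Phi}=(M\ot I)\ket{\Phi}$ one checks $(I\ot\sqrt{W^T})\ket{\rw}=\ket{\vw}$ and $\TrA\bigl((I\ot\sqrt{W^T})Y(I\ot\sqrt{W^T})\bigr)=\sqrt{W^T}(\TrA Y)\sqrt{W^T}$. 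The same issue afflicts your step from \eqref{eq:weighted_dsw_min_YWT_eq} to \eqref{eq:weighted_dsw_min_YWinvT}; the paper uses $I\ot W^{-T}$, again on the $\hB$ side. So the idea (congruence transformation) is right, but as written your transformations are on the wrong tensor factor and would break feasibility. A further minor point: for singular $W$ the paper works directly with the pseudoinverse of $\sqrt{W^T}$ rather than a limiting argument, which avoids having to justify convergence of an SDP value as the feasible set varies; your density argument could probably be made to work but is more delicate than it looks, and the pseudoinverse route is cleaner. (Also, in your slack step the operator $D$ should merely be positive semidefinite, not diagonal — the slack $\lambda I-\TrA Y$ is a general PSD operator on $\hB$.)
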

\begin{proof}
    Let $n = \dim(\hA)$.
    Let $Y$ be feasible for~\eqref{eq:weighted_dsw_min_Y} with value $\lambda$.
    Set $Y' = Y + n^{-1} I_\hA \ot (\lambda I_\hB - \Tr_\hA Y)$.
    Then $\Tr_\hA Y' = \lambda I$.
    Since $I_\hA \in S$ we have $Y' \in S \ot \linop{\hB}$.
    Since $\Tr_\hA Y \le \lambda I_\hB$, we have $Y' \ge Y$
    and $Y'$ is feasible for~\eqref{eq:weighted_dsw_min_Y_eq}.
    Therefore $\eqref{eq:weighted_dsw_min_Y_eq} \le \eqref{eq:weighted_dsw_min_Y}$.

    Let $Y$ be feasible for~\eqref{eq:weighted_dsw_min_Y_eq}.
    Define $Y' = (I_A \ot \sqrt{W^T}) Y (I_A \ot \sqrt{W^T})^\dag$.
    Then $\Tr_\hA Y' = \sqrt{W^T} (\Tr_\hA Y) \sqrt{W^T}^\dag = \lambda W^T$.
    And $Y' \ge (I_A \ot \sqrt{W^T}) \proj{\rw} (I_A \ot \sqrt{W^T})^\dag =
    \ket{\vw}\bra{\vw}$.
    So $Y'$ is feasible for~\eqref{eq:weighted_dsw_min_YWT_eq}, giving
    $\eqref{eq:weighted_dsw_min_YWT_eq} \le \eqref{eq:weighted_dsw_min_Y_eq}$.

    Since~\eqref{eq:weighted_dsw_min_YWT} has a larger feasible region,
    $\eqref{eq:weighted_dsw_min_YWT} \le \eqref{eq:weighted_dsw_min_YWT_eq}$.

    Let $Y$ be feasible for~\eqref{eq:weighted_dsw_min_YWT} and let
    $V$ be the inverse of $\sqrt{W^T}$, or the pseudoinverse if $W^T$ is singular.
    Set $Y' = (I_A \ot V) Y (I_A \ot V)^\dag$.
    Then $\Tr_\hA Y \le \lambda V W^T V^\dag \le \lambda I_\hB$
    (the last being equality if $W^T$ is not singular).
    And $Y' \ge (I_A \ot V) \ket{\vw}\bra{\vw} (I_A \ot V)^\dag = \proj{\rw}$.
    So $Y'$ is feasible for~\eqref{eq:weighted_dsw_min_Y} giving
    $\eqref{eq:weighted_dsw_min_Y} \le \eqref{eq:weighted_dsw_min_YWT}$.
    Therefore $\eqref{eq:weighted_dsw_min_Y}
    = \eqref{eq:weighted_dsw_min_Y_eq}
    = \eqref{eq:weighted_dsw_min_YWT_eq}
    = \eqref{eq:weighted_dsw_min_YWT}$.

    Suppose $W$ is non-singular and let $Y$ be feasible for~\eqref{eq:weighted_dsw_min_YWT}.
    Then $Y' = (I \ot W^{-T}) Y (I \ot W^{-T})^\dag$ is feasible
    for~\eqref{eq:weighted_dsw_min_YWinvT}.
    Conversely, if $Y'$ is feasible for~\eqref{eq:weighted_dsw_min_YWinvT} then
    $Y = (I \ot W^T) Y' (I \ot W^T)^\dag$ is feasible
    for~\eqref{eq:weighted_dsw_min_YWT}.
    Therefore $\eqref{eq:weighted_dsw_min_YWinvT} = \eqref{eq:weighted_dsw_min_YWT}$

    Let $T$ be feasible for~\eqref{eq:weighted_dsw_max_opnorm} with value $\lambda$.
    Let $\ket{\psi}$ be the normalized vector achieving
    \begin{align}
        \lambda &= \braopket{\psi}{ (\sqrt{W} \ot I)(T + I \ot I)(\sqrt{W} \ot I) }{\psi}.
        \label{eq:dsw_opnorm_psi}
    \end{align}
    Let $\rho$ be such that $(I \ot \sqrt{\rho}) \ket{\Phi} = \ket{\psi}$.
    This requires that $\ket{\psi}$, seen as an operator
    $\linop{\hilbdual{\hB} \to \hA}$ (under the isomorphism
    between $\hA$ and $\hilbdual{\hB}$ induced by $\ket{\Phi}$), is positive semidefinite.
    This is always achievable because~\eqref{eq:weighted_dsw_max_opnorm} is invariant
    under unitary transform on the $\hB$ side of $T$.
    Note that $\braket{\psi}{\psi} = 1$ gives $\Tr \rho = 1$.

    Define $T' = (I \ot \sqrt{\rho})^\dag T (I \ot \sqrt{\rho})$.
    Then $T' \in S^\perp \ot \linop{\hB}$ and
    \begin{align}
        \lambda
        &= \braopket{\Phi}{ (\sqrt{W} \ot \sqrt{\rho})
            (T + I \ot I)(\sqrt{W} \ot \sqrt{\rho}) }{\Phi}
        \\ &= \braopket{\rw}{(I \ot \sqrt{\rho})(T + I \ot I)(I \ot \sqrt{\rho})}{\rw}
        \\ &= \braopket{\rw}{T' + I \ot \rho}{\rw}
    \end{align}
    So $T'$ is feasible for~\eqref{eq:weighted_dsw_max_T} with value $\lambda$, giving
    $\eqref{eq:weighted_dsw_max_T} \ge \eqref{eq:weighted_dsw_max_opnorm}$.

    To show
    $\eqref{eq:weighted_dsw_max_opnorm} \ge \eqref{eq:weighted_dsw_max_T}$
    run this proof in reverse, starting with $T'$ being feasible
    for~\eqref{eq:weighted_dsw_max_T}.
    There are a couple bumps in this road.
    First, $\ket{\psi}$ may not be the eigenvector for the largest eigenvalue
    in~\eqref{eq:dsw_opnorm_psi}.
    This is not a significant issue because we only seek
    $\eqref{eq:weighted_dsw_max_opnorm} \ge \lambda$.
    The second issue is that finding $T$ satisfying
    $T' = (I \ot \sqrt{\rho})^\dag T (I \ot \sqrt{\rho})$
    requires that the null space of $T'$ contains the null space
    of $I \ot \rho$.  And indeed this is the case.  Let $P$ be the projector onto
    the null space of $\rho$.
    Then
    \begin{align}
        T' + I \ot \rho \ge 0
        &\implies (I \ot P)^\dag (T' + I \ot \rho) (I \ot P) \ge 0
        \\ &\implies (I \ot P)^\dag T' (I \ot P) \ge 0
    \end{align}
    Since $I \in S$ and $T' \in S^\perp \ot \linop{\hA}$, we have $\Tr_\hA T' = 0$.
    Therefore
    \begin{align}
        0 &= \Tr\left( P^\dag (\Tr_\hA T') P \right)
        \\ &= \Tr\left( (I \ot P)^\dag T' (I \ot P) \right)
    \end{align}
    Any positive semidefinite operator with vanishing trace vanishes so
    $(I \ot P)^\dag T' (I \ot P) = 0$; the null space of $T'$ contains the null space
    of $\rho$.

    The equivalence of~\eqref{eq:weighted_dsw_max_opnorm}
    and~\eqref{eq:weighted_dsw_max_opnorm2}
    follows directly from the relation
    $\opnorm{\sqrt{A} B \sqrt{A}} = \opnorm{\sqrt{B} A \sqrt{B}}$,
    valid for all $A, B \ge 0$.

    Any solution to \eqref{eq:weighted_dsw_max_opnorm} can be transformed into a solution to
    \eqref{eq:weighted_dsw_max_Y} of the same value by taking $Y = n^{-1} (T + I \ot I)$.
    Conversely, if $Y$ is a solution to \eqref{eq:weighted_dsw_max_Y} we can take
    $T = n Y - I \ot I$.  Note that $\TrA T = 0$ so
    $Y \in (S^\perp + \mathbb{C}I) \ot \linop{\hB}$ implies $T \in S^\perp \ot \linop{\hB}$.
    Therefore $\eqref{eq:weighted_dsw_max_Y} = \eqref{eq:weighted_dsw_max_opnorm}$.

    Similarly, any solution to~\eqref{eq:weighted_dsw_max_T} can be transformed into
    a solution to~\eqref{eq:weighted_dsw_max_Y_v2} by defining $Y = n^{-1} (T + I \ot \rho)$.
    Any solution to~\eqref{eq:weighted_dsw_max_Y_v2} can be transformed into a solution
    to~\eqref{eq:weighted_dsw_max_T} by defining $\rho = \TrA Y$ and
    $T = n Y - I \ot \rho$.
    We have $\TrA T = n \TrA Y - n \rho = 0$ so
    $T \in (\mathbb{C}I)^\perp \ot \linop{\hB}$.
    Since $Y \in (S^\perp + \mathbb{C}I) \ot \linop{\hB}$ we have
    $T \in S^\perp \ot \linop{\hB}$.
    Therefore $\eqref{eq:weighted_dsw_max_Y_v2} = \eqref{eq:weighted_dsw_max_T}$.
\end{proof}

The weighted $\dsw$ satisfies all the same basic properties of the classical $\lov$.

\begin{theorem}
    For matrices $W,X \in \psd{\hA}, W' \in \psd{\hA'}$, scalar $\alpha \ge 0$,
    and non-commutative graphs $S \subseteq \linop{\hA}, S' \subseteq \linop{\hA'}$
    the following basic properties hold:
    \begin{thmlist}
        \item $S \subseteq S' \implies \dsw(S, W) \ge \dsw(S', W)$
            \label[theorem]{thm:dsw_monotoneS}
        \item $W \le X \implies \dsw(S, W) \le \dsw(S, X)$
            \label[theorem]{thm:dsw_monotoneW}
        \item $\dsw(S, \alpha W) = \alpha \dsw(S, W)$
            \label[theorem]{thm:dsw_linear}
        \item $\dsw(S, W + X) \le \dsw(S, W) + \dsw(S, X)$
            \label[theorem]{thm:dsw_subadditive}
        \item $\dsw(\mathbb{C}I, W) = \dim(\hA) \Tr W$
            \label[theorem]{thm:dsw_empty}
        \item $\dsw(\linop{\hA}, W) = \opnorm{W}$
            \label[theorem]{thm:dsw_full}
        \item $\opnorm{W} \le \dsw(S, W) \le \dim(\hA) \Tr W$
            \label[theorem]{thm:dsw_bounded}
        \item $\dsw(S \ot S', W \ot W') = \dsw(S * S', W \ot W') = \dsw(S, W) \dsw(S', W')$
            where $S * S' = (\vecmod{S}{\mathbb{C}I}) \ot \linop{\hA'}
            + \linop{\hA} \ot (\vecmod{S'}{\mathbb{C}I}) + \mathbb{C}I \ot I$.
            \label[theorem]{thm:dsw_prod}
    \end{thmlist}
    Note that $S \ot S'$ is analogous to the strong product for classical graphs and
    $S * S'$ is analogous to the disjunctive product.
\end{theorem}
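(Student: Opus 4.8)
The plan is to handle the eight items one at a time, in each case picking whichever of the equivalent formulations in \cref{thm:weighted_dsw_equiv_forms} makes the assertion transparent; only \cref{thm:dsw_prod} requires real work.

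\Cref{thm:dsw_monotoneS,thm:dsw_linear,thm:dsw_monotoneW,thm:dsw_subadditive} are each a short manipulation of one chosen form. From the primal form \eqref{eq:weighted_dsw_min_Y}: enlarging $S$ enlarges the feasible set $\{Y \in S \ot \linop{\hB} : \dots\}$, so the minimum cannot increase (\cref{thm:dsw_monotoneS}); and since $\ket{\sqrt{\alpha W}} = \sqrt{\alpha}\,\ket{\rw}$, the rescaling $(Y, \lambda) \mapsto (\alpha Y, \alpha \lambda)$ is a bijection of feasible sets, so $\dsw(S, \alpha W) = \alpha\,\dsw(S, W)$ (\cref{thm:dsw_linear}; the case $\alpha = 0$ is trivial). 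From the dual form \eqref{eq:weighted_dsw_max_opnorm2}, in which the weight enters only as $W \ot I$ conjugated by the fixed positive operator $\sqrt{T + I \ot I}$: because $A \mapsto \opnorm{\sqrt{B}\,A\,\sqrt{B}}$ is monotone on $\psd{\cdot}$ for fixed $B \ge 0$, the hypothesis $W \le X$ makes every term of the maximum grow (\cref{thm:dsw_monotoneW}); and the triangle inequality for $\opnorm{\cdot}$ together with $\max(f + g) \le \max f + \max g$ yields \cref{thm:dsw_subadditive}.

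For the extremal graphs I would again read the value off a dual form. If $S = \mathbb{C}I$ then $S^\perp + \mathbb{C}I = \linop{\hA}$, so \eqref{eq:weighted_dsw_max_Y_v2} maximizes $n\,\braopket{\rw}{Y}{\rw}$ over all density operators $Y$ on $\hA \ot \hB$; from $\braopket{\rw}{Y}{\rw} \le \braket{\rw}{\rw}\,\opnorm{Y} \le \braket{\rw}{\rw} = \Tr W$, with equality at $Y = \proj{\rw}/\Tr W$, this gives \cref{thm:dsw_empty}. If $S = \linop{\hA}$ then $S^\perp = \{0\}$, so $T = 0$ is the only feasible point in \eqref{eq:weighted_dsw_max_opnorm} and the value is $\opnorm{(\sqrt{W} \ot I)(\sqrt{W} \ot I)} = \opnorm{W}$, giving \cref{thm:dsw_full}. \Cref{thm:dsw_bounded} is then immediate: $I \in S$ forces $\mathbb{C}I \subseteq S \subseteq \linop{\hA}$, so by \cref{thm:dsw_monotoneS} the quantity $\dsw(S, W)$ lies between $\dsw(\linop{\hA}, W) = \opnorm{W}$ and $\dsw(\mathbb{C}I, W) = \dim(\hA)\,\Tr W$.

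The substance is \cref{thm:dsw_prod}. I would first record two subspace facts: (a) $S \ot S' \subseteq S \djp S'$, obtained by writing $x = x_0 + cI$, $y = y_0 + c'I$ with $x_0 \in \vecmod{S}{\mathbb{C}I}$, $y_0 \in \vecmod{S'}{\mathbb{C}I}$, expanding $x \ot y$, and checking that each of the four resulting terms lies in one of the three summands of $S \djp S'$; and (b) $(S \djp S')^\perp = S^\perp \ot S'^\perp + S^\perp \ot \mathbb{C}I + \mathbb{C}I \ot S'^\perp$, which in particular (consistent with (a)) sits inside $(S \ot S')^\perp$. For the upper bound take $Y, Y'$ optimal for \eqref{eq:weighted_dsw_min_Y} at $(S, W)$ and $(S', W')$ with values $\lambda, \lambda'$; after the natural reordering of tensor factors, $Y \ot Y' \in (S \ot S') \ot \linop{\hB \ot \hB'} \subseteq (S \djp S') \ot \linop{\hB \ot \hB'}$, its partial trace over $\hA \ot \hA'$ is at most $\lambda \lambda' I$, and $Y \ot Y' \ge \proj{\rw} \ot \proj{\sqrt{W'}} = \proj{\sqrt{W \ot W'}}$ (using $\sqrt{W \ot W'} = \sqrt{W} \ot \sqrt{W'}$), so $\dsw(S \djp S', W \ot W') \le \lambda \lambda'$. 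For the matching lower bound take $T, T'$ optimal for \eqref{eq:weighted_dsw_max_opnorm} and set $T'' = (T + I \ot I) \ot (T' + I \ot I) - I \ot I$ (reordered); expanding the product and using $\TrA T = 0$ (which holds because $I \in S$) and the analogue for $T'$, fact (b) shows $T'' \in (S \djp S')^\perp \ot \linop{\hB \ot \hB'}$, while $T'' + I \ot I = (T + I \ot I) \ot (T' + I \ot I) \ge 0$ and, by multiplicativity of $\opnorm{\cdot}$ under tensor products, $\opnorm{(\sqrt{W \ot W'} \ot I)(T'' + I \ot I)(\sqrt{W \ot W'} \ot I)} = \dsw(S, W)\,\dsw(S', W')$; hence $\dsw(S \djp S', W \ot W') \ge \dsw(S, W)\,\dsw(S', W')$. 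Combining these two bounds with \cref{thm:dsw_monotoneS} applied to the inclusion $S \ot S' \subseteq S \djp S'$ gives $\dsw(S, W)\,\dsw(S', W') \le \dsw(S \djp S', W \ot W') \le \dsw(S \ot S', W \ot W') \le \dsw(S, W)\,\dsw(S', W')$, so all four quantities coincide. The main obstacle throughout \cref{thm:dsw_prod} is the bookkeeping: keeping the tensor-factor reorderings consistent and verifying the membership $T'' \in (S \djp S')^\perp \ot \linop{\hB \ot \hB'}$, for which the explicit description (b) of $(S \djp S')^\perp$ is the essential input.
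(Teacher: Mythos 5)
Your proof is correct and follows essentially the same approach as the paper: each of items (i)--(vii) is read off a suitable formulation from \cref{thm:weighted_dsw_equiv_forms}, and (viii) combines the primal tensor-product certificate with a dual tensor-product certificate plus monotonicity under the inclusion $S \ot S' \subseteq S \djp S'$. The only differences are cosmetic choices of which equivalent SDP form to use (e.g.\ you evaluate $\dsw(\mathbb{C}I,W)$ and $\dsw(\linop{\hA},W)$ via the dual forms \eqref{eq:weighted_dsw_max_Y_v2} and \eqref{eq:weighted_dsw_max_opnorm} rather than the primal form, and for (viii) you parametrize the dual certificate by $T'' = (T+I\ot I)\ot(T'+I\ot I) - I\ot I$ rather than by $Y \ot Y'$ in the form \eqref{eq:weighted_dsw_max_Y}, which is the same object up to the substitution $Y = n^{-1}(T + I\ot I)$), and these do not change the substance.
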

\begin{proof}
    \itemref{thm:dsw_monotoneS}:
        In~\eqref{eq:weighted_dsw_min_Y} the feasible set for
        $\dsw(S, W)$ is contained in the feasible set for $\dsw(S', W)$.

    \itemref{thm:dsw_monotoneW}:
        The objective function in~\eqref{eq:weighted_dsw_max_opnorm2}
        is monotone in $W$.

    \itemref{thm:dsw_linear}:
        The objective function in~\eqref{eq:weighted_dsw_max_opnorm2}
        is linear in $W$.

    \itemref{thm:dsw_subadditive}:
        The objective function in~\eqref{eq:weighted_dsw_max_opnorm2}
        is subadditive in $W$.

    \itemref{thm:dsw_empty}:
        Consider~\eqref{eq:weighted_dsw_min_Y_eq}.
        The conditions $Y \in S \ot \linop{\hB}$ and $\TrA Y = \lambda I$ force
        $Y = \lambda n^{-1} I \ot I$ where $n = \dim(\hA)$.
        The operator norm of $\proj{\rw}$ is
        $\braket{\rw}{\rw} = \braopket{\Phi}{W \ot I}{\Phi} = \Tr W$.
        So $Y \ge \proj{\rw} \iff \lambda n^{-1} \ge \Tr W$.
        So $\min \lambda = n \Tr W$.

    \itemref{thm:dsw_full}:
        Consider~\eqref{eq:weighted_dsw_min_Y}.
        $Y = \proj{\rw}$ is feasible and there is no more optimal solution due to the
        constraint $Y \ge \proj{\rw}$.  We have $\TrA Y = W^T$ so the optimal
        value is $\lambda = \opnorm{W^T} = \opnorm{W}$.

    \itemref{thm:dsw_bounded}:
        Since $\mathbb{C}I \subseteq S \subseteq \linop{\hA}$ this follows
        from~\itemref{thm:dsw_monotoneS},~\itemref{thm:dsw_empty},
        and~\itemref{thm:dsw_full}.

    \itemref{thm:dsw_prod}:
        Suppose $S \subseteq \linop{\hA}$ and $S' \subseteq \linop{\hA'}$.
        Let $Y,\lambda$ and $Y',\lambda'$ be optimal for~\eqref{eq:weighted_dsw_min_Y_eq}
        so $\lambda = \dsw(S, W)$ and $\lambda' = \dsw(S', X)$.
        Then $Y \ot Y' \in \linop{\hA \ot \hA' \ot \hB \ot \hB'}$ is feasible
        for~\eqref{eq:weighted_dsw_min_Y_eq} for $\dsw(S \ot S', W \ot X)$ with
        value $\lambda \lambda'$, giving
        $\dsw(S \ot S', W \ot X) \le \dsw(S, W) \dsw(S', X)$.

        Since $S \ot S' \subseteq S * S'$,~\itemref{thm:dsw_monotoneS}
        gives $\dsw(S * S', W \ot X) \le \dsw(S \ot S', W \ot X)$.

        Let $Y$ and $Y'$ be optimal for~\eqref{eq:weighted_dsw_max_Y}.
        Then $Y \in (S^\perp + \mathbb{C}I) \ot \linop{\hB} = (\vecmod{S}{\mathbb{C}I})^\perp \ot \linop{\hB}$.
        Similarly, $Y' \in (\vecmod{S'}{\mathbb{C}I})^\perp \ot \linop{\hB'}$.
        So
        \begin{align}
        Y \ot Y' &\in
            (\vecmod{S}{\mathbb{C}I})^\perp \ot (\vecmod{S'}{\mathbb{C}I})^\perp \ot \linop{\hB \ot \hB'}
        \\ &= (\vecmod{S}{\mathbb{C}I} \ot \linop{\hA'} + \linop{\hA} \ot \vecmod{S'}{\mathbb{C}I})^\perp \ot \linop{\hB \ot \hB'}
        \\ &= ((S * S')^\perp + \mathbb{C}I \ot I) \ot \linop{\hB \ot \hB'}.
        \end{align}
        And $\Tr_{\hA \ot \hA'}(Y \ot Y') = (\TrA Y)(\Tr_{\hA'} Y') = I_{\hB \ot \hB'}$ so $Y \ot Y'$ is feasible
        for~\eqref{eq:weighted_dsw_max_Y} for $\dsw(S \ot S', W \ot W)$.
        Since operator norm is multiplicative under tensor product and
        $\sqrt{W \ot X} = \sqrt{W} \ot \sqrt{X}$,
        the value of this solution is $\dsw(S, W) \dsw(S', X)$.
        Therefore $\dsw(S \ot S', W \ot W) \ge \dsw(S, W) \dsw(S', X)$.
\end{proof}

\begin{theorem}
    $\dsw(S, W)$ is uniformly continuous in $W$.
    In fact, for $W, X \in \psd{\hA}$ we have $\abs{\dsw(S, W) - \dsw(S, X)} \le n \trnorm{W - X}$
    where $n = \dim(\hA)$.
    \label{thm:dsw_continuous}
\end{theorem}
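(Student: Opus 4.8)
The plan is to extract the explicit Lipschitz estimate from the algebraic properties of $\dsw$ already in hand — monotonicity in the weight (\cref{thm:dsw_monotoneW}), subadditivity (\cref{thm:dsw_subadditive}), and the crude upper bound $\dsw(S, Z) \le n \Tr Z$ for $Z \in \psd{\hA}$ from \cref{thm:dsw_bounded} — after which uniform continuity is automatic, the bound being independent of $W$ and $X$ themselves.

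The first step, and the only one requiring an idea, is to turn the \emph{signed} operator $W - X$ into positive semidefinite shifts. Using the Jordan decomposition $W - X = P - N$ with $P, N \in \psd{\hA}$ and $PN = 0$, we record the norm identity $\trnorm{W - X} = \Tr P + \Tr N$ and, crucially, the operator identity $W + N = X + P$ between positive semidefinite operators. The second step is a short chain of inequalities: since $W \le W + N$,
\begin{align}
    \dsw(S, W) \;\le\; \dsw(S, W + N) \;=\; \dsw(S, X + P) \;\le\; \dsw(S, X) + \dsw(S, P) \;\le\; \dsw(S, X) + n \Tr P,
\end{align}
invoking in turn \cref{thm:dsw_monotoneW}, the identity $W+N = X+P$, \cref{thm:dsw_subadditive}, and \cref{thm:dsw_bounded}. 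Symmetrically, adding $P$ to $X$ gives $\dsw(S, X) \le \dsw(S, W) + n \Tr N$. Combining the two one-sided bounds,
\begin{align}
    \abs{\dsw(S, W) - \dsw(S, X)} \;\le\; n \max\{\Tr P, \Tr N\} \;\le\; n (\Tr P + \Tr N) \;=\; n \trnorm{W - X},
\end{align}
which is the asserted inequality, and uniform continuity follows immediately.

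I do not anticipate any real obstacle: once one sees that $W - X$ should be replaced by the positive semidefinite perturbations $N$ and $P$, monotonicity and subadditivity do all the work, and the constant $n$ comes directly from the coarse bound $\dsw(S, Z) \le n \Tr Z$. A more hands-on alternative would estimate the difference of the two objective values for a common feasible $T$ in the max-form \eqref{eq:weighted_dsw_max_opnorm2} via $\abs{\opnorm{A} - \opnorm{B}} \le \opnorm{A - B}$, but this route produces a cruder constant and is less transparent, so I would present the monotonicity/subadditivity argument above.
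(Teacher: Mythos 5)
Your proof is correct and takes essentially the same route as the paper's: monotonicity in $W$, subadditivity, and the crude bound $\dsw(S, Z) \le n \Tr Z$ from \cref{thm:dsw_bounded}. The only cosmetic difference is that the paper adds $\abs{W - X}$ directly (using $W - X \le \abs{W - X}$) while you split into the Jordan parts $P, N$, which in passing gives the slightly sharper one-sided estimate $n\Tr P$ before you round it up to $n\trnorm{W - X}$.
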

\begin{proof}
    Let $W, X \ge 0$.
    Using \cref{thm:dsw_monotoneW,thm:dsw_subadditive,thm:dsw_bounded}, we have
    \begin{align}
        \dsw(S, W) &= \dsw(S, X + (W-X))
        \\ &\le \dsw(S, X + \abs{W-X})
        \\ &\le \dsw(S, X) + \dsw(S, \abs{W-X})
        \\ &\le \dsw(S, X) + n \trnorm{W - X}.
    \end{align}
    Therefore $\dsw(S, W) - \dsw(S, X) \le n \trnorm{W - X}$.
    Similar logic with $W$ and $X$ swapped yields
    $\dsw(S, X) - \dsw(S, W) \le n \trnorm{W - X}$.
\end{proof}

For classical graphs there is a formulation of $\lov(G, w)$ which makes
clear that $\{ w \ge 0 : \lov(G, w) \le 1 \}$ is in fact a spectrahedral
shadow~\cite[section 29]{knuth94}.
As a side note, this form tends to be more powerful for adding extra constraints~\cite{Galli2017}.
\begin{align}
    \lov(G, w) = \min\left\{ \lambda :
        \left[
        \begin{array}{c|c}
            \lambda & \bra{w} \\
            \hline
            \ket{w} & Z
        \end{array}
        \right] \ge 0,
        \diag(Z)=w, Z_{ij}=0 \textrm{ for } i \ne j, i \nsim j
    \right\}
\end{align}

Something similar can be done for non-commutative graphs.  Note this optimization is over
$(n^2+1) \times (n^2+1)$ matrices where $n = \dim(\hA)$.
\begin{theorem}
    \begin{align}
        \dsw(S, W) = \min\left\{ \lambda :
            \left[
                \begin{array}{c|c}
                    \lambda & \bra{\vw} \\
                    \hline
                    \ket{\vw} & Z
                \end{array}
            \right] \ge 0,
            Z \in S \ot \linop{\hB},
            \TrA Z = W^T
        \right\}.
        \label{eq:dsw_grotschel}
    \end{align}
    \label{thm:dsw_grotschel}
\end{theorem}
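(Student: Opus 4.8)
The plan is to recognize \eqref{eq:dsw_grotschel} as nothing more than form \eqref{eq:weighted_dsw_min_YWT_eq} of \cref{thm:weighted_dsw_equiv_forms} rewritten through a Schur complement, so that the argument reduces to a short bookkeeping check plus one degenerate case.

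First I would dispose of $W = 0$. By \cref{thm:dsw_linear} (with $\alpha = 0$) we have $\dsw(S, 0) = 0$, and when $W = 0$ we get $\ket{\vw} = (W \ot I)\ket{\Phi} = 0$, so $\lambda = 0$, $Z = 0$ is feasible for the right-hand side of \eqref{eq:dsw_grotschel} with value $0$; since every feasible $\lambda$ there is nonnegative (it is a diagonal entry of a positive semidefinite matrix), both sides equal $0$. From now on assume $W \ne 0$; then any feasible $\lambda$ in \eqref{eq:dsw_grotschel} is strictly positive, because a positive semidefinite matrix with a vanishing diagonal entry has the whole corresponding row and column equal to zero, which here would force $\ket{\vw} = 0$, hence $W = 0$.

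Next, for fixed $\lambda > 0$ I would invoke the standard Schur complement criterion, pivoting on the scalar $(1,1)$ block: the matrix $\left[\begin{smallmatrix}\lambda & \bra{\vw}\\ \ket{\vw} & Z\end{smallmatrix}\right]$ is positive semidefinite if and only if $Z - \lambda^{-1}\ket{\vw}\bra{\vw} \ge 0$, equivalently $\lambda Z \ge \ket{\vw}\bra{\vw}$ (which in particular forces $Z \ge 0$). The substitution $Y = \lambda Z$ then carries a point feasible for \eqref{eq:dsw_grotschel} to one feasible for \eqref{eq:weighted_dsw_min_YWT_eq}: we have $Y \in S \ot \linop{\hB}$ since $Z$ is and $\lambda$ is a scalar, $\TrA Y = \lambda\,\TrA Z = \lambda W^T$, and $Y = \lambda Z \ge \ket{\vw}\bra{\vw}$; conversely $Z = \lambda^{-1} Y$ carries a point feasible for \eqref{eq:weighted_dsw_min_YWT_eq} back to one feasible for \eqref{eq:dsw_grotschel}, since then $\TrA Z = W^T$. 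The objective value $\lambda$ is preserved in both directions, so the two minimization problems have the same optimal value, attained because \eqref{eq:weighted_dsw_min_YWT_eq} is a minimum, and by \cref{thm:weighted_dsw_equiv_forms} that value is $\dsw(S, W)$.

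I do not expect a genuine obstacle: the whole content is the Schur complement identity, and the only points needing a line of care are the $W = 0$ (equivalently $\lambda = 0$) boundary, handled above, and the non-emptiness of the feasible set of \eqref{eq:dsw_grotschel} — clear from $I_\hA \in S$ by taking $Z = I_\hA \ot W^T / \dim(\hA)$ and $\lambda$ large. If one preferred to avoid pseudoinverses altogether, one could instead pivot on the $Z$ block and argue with image conditions, but pivoting on the scalar entry is cleaner and avoids the issue.
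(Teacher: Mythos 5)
Your proposal is correct and follows essentially the same route as the paper: rewrite \eqref{eq:weighted_dsw_min_YWT_eq} via the substitution $Z = \lambda^{-1} Y$ and identify $Z - \lambda^{-1}\ket{\vw}\bra{\vw} \ge 0$ with positive semidefiniteness of the block matrix through the Schur complement. Your extra care with the $W = 0$ (i.e.\ $\lambda = 0$) boundary and with feasibility is a welcome refinement that the paper's proof leaves implicit.
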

\begin{proof}
    Taking~\eqref{eq:weighted_dsw_min_YWT_eq} from \cref{thm:weighted_dsw_equiv_forms}
    and defining $Z = \lambda^{-1} Y$ gives
    \begin{align}
        \dsw(S, W)
        &= \min\{ \lambda :
            Z \in S \ot \linop{\hB},
            \TrA Z = W^T,
            Z - \lambda^{-1} \ket{\vw}\bra{\vw} \ge 0 \}.
    \end{align}
    But $Z - \lambda^{-1} \ket{\vw}\bra{\vw}$ is the Schur complement of the block matrix
    in~\eqref{eq:dsw_grotschel}, so its positive semidefiniteness is equivalent to the
    positive semidefiniteness of that block matrix.
\end{proof}


Fundamental to the theory of weighted $\lov$ for classical graphs is the
\emph{theta body}.

\begin{definition}
    The \emph{theta body} of a graph $G$, denoted $\thbody(G)$, is a set of entrywise non-negative
    vectors given by the following equivalent definitions.
    \begin{align}
        \thbody(G) &= \{ x \ge 0 : \lov(\Gc, x) \le 1 \}
        \label{eq:thbody1}
        \\ &= \{ x \ge 0 : \braket{y}{x} \le 1 \textrm{ for all } y \ge 0, \lov(G, y) \le 1 \}.
        \label{eq:thbody2}
    \end{align}
\end{definition}

We define this for non-commutative graphs by extending~\eqref{eq:thbody2} rather
than~\eqref{eq:thbody1} because graph complement is more subtle for non-commutative graphs.
(Complements of non-commutative graphs will be explored in the following sections.)
Note that~\cite{btw2019}, having defined a different theta, defines a different theta body,
which we will not be investigating here.

\begin{definition}
    The \emph{theta body} for a non-commutative graph $S$ is
    \begin{align}
        \dswbody(S) &= \{ X \in \psd{\hA} : \Tr(X W) \le 1 \textrm{ for all }
            W \in \psd{\hA}, \dsw(S, W) \le 1 \}.
    \end{align}
    \label{def:dswbody}
\end{definition}

Though we define $\dswbody(S)$ above, we will generally be more interested in its
anti-blocker, $\dswbodyS(S)$.  This in fact has a simpler definition as we shall now see.

\begin{theorem}
    The theta body of $S$ satisfies the following basic properties.
    \begin{thmlist}
        \item $\dswbody(S)$ is a convex corner.
            \label[theorem]{thm:dswbody_convex}
        \item $\dswbodyS(S) = \{ W \in \psd{\hA} : \dsw(S, W) \le 1 \}$.
            \label[theorem]{thm:dswbody_abl}
        \item $\dsw(S, W) = \max\{ \Tr(X W) : X \in \dswbody(S) \}$.
            \label[theorem]{thm:dsw_from_dswbody}
    \end{thmlist}
\end{theorem}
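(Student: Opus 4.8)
All three parts hinge on the set $\mathcal{W} := \{ W \in \psd{\hA} : \dsw(S, W) \le 1 \}$, which is exactly the set appearing in \itemref{thm:dswbody_abl}; note that \cref{def:dswbody,def:antiblocker} together say precisely that $\dswbody(S) = \mathcal{W}^\sharp$. For \itemref{thm:dswbody_convex} this already suffices: $\mathcal{W}$ is non-empty, since $0 \in \mathcal{W}$ by \cref{thm:dsw_linear} (take $\alpha = 0$), so \cref{thm:antiblocker_is_corner} makes $\dswbody(S) = \mathcal{W}^\sharp$ a convex corner.

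For \itemref{thm:dswbody_abl} the plan is to show that $\mathcal{W}$ is itself a convex corner and then invoke the second anti-blocker theorem. Non-emptiness is as above; $\mathcal{W}$ is closed because $W \mapsto \dsw(S,W)$ is continuous (\cref{thm:dsw_continuous}) and $\mathcal{W}$ is the preimage of $[0,1]$; it is convex because for $W, X \in \mathcal{W}$ and $t \in [0,1]$, \cref{thm:dsw_subadditive} followed by \cref{thm:dsw_linear} gives $\dsw(S, tW + (1-t)X) \le t\,\dsw(S,W) + (1-t)\,\dsw(S,X) \le 1$; and it is hereditary because $0 \le X \le W \in \mathcal{W}$ forces $\dsw(S,X) \le \dsw(S,W) \le 1$ by \cref{thm:dsw_monotoneW}. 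Hence \cref{thm:second_antiblocker} gives $\mathcal{W} = \mathcal{W}^{\sharp\sharp} = (\mathcal{W}^\sharp)^\sharp = \dswbody(S)^\sharp = \dswbodyS(S)$.

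For \itemref{thm:dsw_from_dswbody}, set $g(W) := \max\{ \Tr(XW) : X \in \dswbody(S) \}$. First I would check the maximum is attained, by showing $\dswbody(S)$ is compact: it is closed (being a convex corner, by \itemref{thm:dswbody_convex}), and bounded because $cI \in \mathcal{W}$ for $c := 1/\dsw(S,I) \in (0,1]$ --- here $1 \le \dsw(S,I) < \infty$ by \cref{thm:dsw_bounded} and $\dsw(S, cI) = c\,\dsw(S,I) = 1$ by \cref{thm:dsw_linear} --- so every $X \in \mathcal{W}^\sharp = \dswbody(S)$ satisfies $c\,\Tr(X) = \Tr(X\,cI) \le 1$. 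Now both $g$ and $\dsw(S,\cdot)$ are functions $\psd{\hA} \to [0,\infty)$ that are positively homogeneous of degree $1$ --- \cref{thm:dsw_linear} for $\dsw$, linearity of the trace for $g$ --- and they share the same unit sublevel set: $\{W : \dsw(S,W)\le 1\} = \mathcal{W}$ by definition, while $\{W : g(W) \le 1\} = \dswbody(S)^\sharp = \mathcal{W}^{\sharp\sharp} = \mathcal{W}$ using \itemref{thm:dswbody_abl}. Two such functions coincide: if $f_1(W) = c > 0$ then $f_1(c^{-1}W) = 1$, so $c^{-1}W$ lies in the common unit sublevel set, whence $f_2(W) = c\,f_2(c^{-1}W) \le c = f_1(W)$, and symmetrically; the case $f_i(W) = 0$ follows by scaling $W$ up by an arbitrarily large factor. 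Therefore $g = \dsw(S,\cdot)$, which is \itemref{thm:dsw_from_dswbody}.

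I do not anticipate a genuine obstacle: every ingredient is one of the already-proved properties of $\dsw(S,W)$ --- positive homogeneity, monotonicity in $W$, subadditivity, the bounds $\opnorm{W} \le \dsw(S,W) \le \dim(\hA)\Tr(W)$, and continuity --- together with the second anti-blocker theorem. The only step deserving a moment's care is \itemref{thm:dsw_from_dswbody}, where one must separately (a) justify that the supremum defining $g$ is actually attained, via the compactness of $\dswbody(S)$ (which rests on the lower bound $\dsw(S,I) \ge \opnorm{I} = 1$), and (b) promote ``equal unit sublevel sets'' to ``equal functions'' using degree-$1$ homogeneity.
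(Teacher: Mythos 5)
Your proposal is correct and follows the same route as the paper: identify $\mathcal{W} = \{W \ge 0 : \dsw(S,W) \le 1\}$, note $\dswbody(S) = \mathcal{W}^\sharp$, use \cref{thm:antiblocker_is_corner} for (i), show $\mathcal{W}$ is a convex corner and invoke the second anti-blocker theorem for (ii), and recover (iii) from equality of unit sublevel sets plus degree-$1$ homogeneity. The only difference is that you fill in more detail than the paper on part (iii) — explicitly verifying compactness of $\dswbody(S)$ so the maximum is attained, and spelling out the sublevel-set-plus-homogeneity argument that the paper compresses into ``follows from linearity of $\dsw$.''
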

\begin{proof}
    \itemref{thm:dswbody_convex}:
        $\dswbody(S) = \{ W \in \psd{\hA} : \dsw(S, W) \le 1 \}^\sharp$.
        By \cref{thm:antiblocker_is_corner}, the anti-blocker of any non-empty subset of
        $\psd{\hA}$ is a convex corner.

    \itemref{thm:dswbody_abl}:
        Define $\cornC = \{ W \in \psd{\hA} : \dsw(S, W) \le 1 \}$.
        From the monotonicity and convexity of $\dsw$
        (\cref{thm:dsw_monotoneW,thm:dsw_subadditive})
        it follows that $\cornC$ is a convex corner.
        And $\cornC^\sharp = \dswbody(S)$ follows directly from
        \cref{def:dswbody} and the definition of anti-blocker.
        By the second anti-blocker theorem (\cref{thm:second_antiblocker}),
        $\cornC = \cornC^{\sharp\sharp} = \dswbodyS(S)$.

    \itemref{thm:dsw_from_dswbody}:
        For $W \in \psd{\hA}$ we have
        \begin{align}
            \max\{ \Tr(X W) : X \in \dswbody(S) \} \le 1
            &\iff W \in \dswbodyS(S)
            \\ &\iff \dsw(S, W) \le 1.
        \end{align}
        The first implication follows from the definition of anti-blocker,
        the second from~\itemref{thm:dswbody_abl}.
        Then~\itemref{thm:dsw_from_dswbody} follows from linearity of $\dsw$.
\end{proof}

Though we've formed the definitions in terms of $\dswbody(S)$ for historical reasons,
we will generally find more use for $\dswbodyS(S)$.
The following theorem shows that $\dswbodyS(S)$ is a spectrahedral shadow.

\begin{theorem}
    \begin{align}
        \dswbodyS(S) &= \left\{ W \in \psd{\hA} :
            \left[
                \begin{array}{c|c}
                    1 & \bra{\vw} \\
                    \hline
                    \ket{\vw} & Z
                \end{array}
            \right] \ge 0,
            Z \in S \ot \linop{\hB},
            \TrA Z = W^T
        \right\}.
        \label{eq:dsw_corner_grotschel}
    \end{align}
\end{theorem}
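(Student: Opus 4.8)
The plan is to chain together two facts already proven. By \cref{thm:dswbody_abl}, $W \in \dswbodyS(S)$ precisely when $\dsw(S, W) \le 1$, and by \cref{thm:dsw_grotschel} the value $\dsw(S, W)$ is the minimum of $\lambda$ over pairs $(\lambda, Z)$ with $Z \in S \ot \linop{\hB}$, $\TrA Z = W^T$, and the block matrix appearing in~\eqref{eq:dsw_grotschel} (with corner entry $\lambda$) positive semidefinite. Hence $W \in \dswbodyS(S)$ if and only if some such pair exists with $\lambda \le 1$, and it remains only to see that this is equivalent to the existence of a $Z$ satisfying the constraints of~\eqref{eq:dsw_corner_grotschel} --- i.e.\ the same constraints with the scalar corner fixed at $1$.

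One inclusion is immediate: if $Z$ witnesses membership of $W$ in the right-hand side of~\eqref{eq:dsw_corner_grotschel}, then $(1, Z)$ is feasible for the program of \cref{thm:dsw_grotschel}, so $\dsw(S, W) \le 1$ and $W \in \dswbodyS(S)$. For the converse, suppose $\dsw(S, W) \le 1$ and pick a feasible pair $(\lambda, Z)$ with $\lambda \le 1$. The constraints $Z \in S \ot \linop{\hB}$ and $\TrA Z = W^T$ do not involve $\lambda$, so they are unaffected; and raising the corner entry from $\lambda$ to $1$ only adds a positive semidefinite term,
\begin{align}
    \left[\begin{array}{c|c} 1 & \bra{\vw} \\ \hline \ket{\vw} & Z \end{array}\right]
    &= \left[\begin{array}{c|c} \lambda & \bra{\vw} \\ \hline \ket{\vw} & Z \end{array}\right]
    + (1 - \lambda) \left[\begin{array}{c|c} 1 & 0 \\ \hline 0 & 0 \end{array}\right]
    \ge 0,
\end{align}
so $Z$ also witnesses $W$ in the right-hand side of~\eqref{eq:dsw_corner_grotschel}. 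Combining the two inclusions proves the claim.

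There is no real obstacle here: the entire content is carried by \cref{thm:dswbody_abl} and \cref{thm:dsw_grotschel}, and the only thing one checks directly is the monotonicity of the block matrix in its scalar corner, which is the one-line identity displayed above. (Equivalently, by a Schur-complement argument positive semidefiniteness of that block matrix is the condition $Z \ge \lambda^{-1} \ket{\vw}\bra{\vw}$ when $\lambda > 0$, which for $\lambda \le 1$ implies $Z \ge \ket{\vw}\bra{\vw}$; the degenerate case $\lambda = 0$ forces $\ket{\vw} = 0$, hence $W = 0$, and is trivial.)
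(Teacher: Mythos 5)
Your proposal is correct and follows exactly the route the paper intends: its proof is simply ``follows from \cref{thm:dsw_grotschel} and \cref{thm:dswbody_abl},'' and you have filled in the routine details (taking $\lambda=1$ for one inclusion and raising the corner entry from $\lambda\le 1$ to $1$ by adding a positive semidefinite term for the other).
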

\begin{proof}
    Follows from \cref{thm:dsw_grotschel} and \cref{thm:dswbody_abl}.
\end{proof}

It is worth noting that not only is $\dswbodyS(S)$ a convex corner, it is in fact available
as an SDP subroutine.  That is, the variables and constraints from~\eqref{eq:dsw_corner_grotschel} can
be part of a larger SDP.  For example, we could find the $W \in \dswbodyS(S)$ that maximizes the
inner product $\Tr(W X)$ for some given $X$, the subject of~\cref{sec:holder}.
Or the same but constraining $W$ to be block diagonal.
Or something more exotic such as the entropy of this convex corner.  Entropy of convex corners is
discussed in~\cite[section 2.4.3]{borelandthesis} and computing entropy via SDP in~\cite{Fawzi2018}.
Such numerical investigations have been instrumental in discovering many of the theorems in this
paper.


\section{Duality}
\label{sec:holder}

Classical $\lov$ functions satisfy an interesting duality relation~\cite{Grtschel1986}.
For $w, x \ge 0$ (entrywise positive vectors),
\begin{align}
    \lov(G, w) \lov(\Gc, x) \ge \braket{w}{x}
    \label{eq:classical-ineq}
\end{align}
and for every $w$ there is some $x$ that saturates this inequality.
Equivalently,
\begin{align}
    \lov(\Gc, x) = \max\{ \braket{w}{x} : \lov(G, w) \le 1 \}.
\end{align}
And in terms of theta bodies,
\begin{align}
    \thbody(\Gc) = \thbody^\flat(G)
\end{align}
where $\flat$ is the classical anti-blocker,
$\thbody^\flat(G) = \{ w \ge 0 : \braket{w}{x} \le 1 \textrm{ for all } x \in \thbody(G) \}$

The goal of this section is an analogous theorem for non-commutative graphs:
\begin{theorem}
    Let $S \in \linop{\hA}$ be a non-commutative graph and $n = \dim(\hA)$.
    For any $W, X \in \psd{\hA}$,
    \begin{align}
        \dsw(S, W) \dsw(S^\perp + \mathbb{C}I, X) \ge n \Tr(W X)
    \end{align}
    and for every $W$ there is some $X$ that saturates this inequality.
    Equivalently,
    \begin{align}
        \dsw(S^\perp + \mathbb{C}I, X) =
        \max\{ n \Tr(W X) : W \ge 0, \dsw(S, W) \le 1 \}.
        \label{eq:holder_dsw}
    \end{align}
    In terms of theta bodies,
    \begin{align}
        \dswbodyS(S^\perp + \mathbb{C}I) = n^{-1} \dswbody(S)
        \label{eq:holder_dsw_corner}
    \end{align}
    \label{thm:holder_dsw}
\end{theorem}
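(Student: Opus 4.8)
The plan is to prove the inequality $\dsw(S, W)\,\dsw(S^\perp+\mathbb{C}I, X) \ge n\Tr(WX)$ directly from the SDP formulations, and then to establish saturation by exhibiting, for each $W$, a maximizer $X$ arising from the optimal dual solution for $\dsw(S,W)$. For the inequality, I would take $Y$ optimal for $\dsw(S,W)$ in the form~\eqref{eq:weighted_dsw_min_Y} (so $Y \in S\ot\linop{\hB}$, $\TrA Y \le \dsw(S,W)\,I$, $Y \ge \proj{\rw}$) and take $Z$ optimal for $\dsw(S^\perp+\mathbb{C}I, X)$ in the form~\eqref{eq:weighted_dsw_max_Y} (so $Z \in (S^{\perp\perp}+\mathbb{C}I)\ot\linop{\hB} = (S + \mathbb{C}I)\ot\linop{\hB}$... ). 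Here I must be careful: applying~\eqref{eq:weighted_dsw_max_Y} to the graph $S^\perp+\mathbb{C}I$ requires knowing $(S^\perp+\mathbb{C}I)^\perp + \mathbb{C}I = (\vecmod{S^\perp}{\mathbb{C}I})^\perp$; since $S \supseteq \mathbb{C}I$ and $S = S^\dag$, one has $(S^\perp + \mathbb{C}I)^\perp = S \cap (\mathbb{C}I)^\perp = \vecmod{S}{\mathbb{C}I}$, so the max form gives $Z \ge 0$, $\TrA Z = I$, $Z \in (\vecmod{S}{\mathbb{C}I} + \mathbb{C}I)\ot\linop{\hB} = S\ot\linop{\hB}$, with $\dsw(S^\perp+\mathbb{C}I,X) = n\,\braopket{\sqrt{X}}{Z}{\sqrt{X}}$ where $\ket{\sqrt{X}} = (\sqrt{X}\ot I)\ket{\Phi}$. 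Then I pair: $\braopket{\sqrt{X}}{Z}{\sqrt{X}} \cdot \dsw(S,W)$ — the idea is that $Y$ and $Z$ both live in $S\ot\linop{\hB}$ and $Y \ge \proj{\rw}$ with $Z$ acting as a "test state," so $\Tr(YZ) \ge \braopket{\rw}{Z}{\rw} = \Tr(W\,\TrB\!\cdots)$, while $\TrA Y \le \dsw(S,W)\,I$ combined with $\TrA Z = I$ bounds $\Tr(YZ) \le \dsw(S,W)$. Comparing gives the inequality after sorting out the $\ket{\rw}$ versus $\ket{\sqrt X}$ bookkeeping and the factor $n$.

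For saturation, I would take the optimal dual solution $(T,\rho)$ for $\dsw(S,W)$ from~\eqref{eq:weighted_dsw_max_T}, so $T \in S^\perp\ot\linop{\hB}$, $T + I\ot\rho \ge 0$, $\Tr\rho = 1$, and $\dsw(S,W) = \braopket{\rw}{T+I\ot\rho}{\rw}$. Rescaling $Y := (T + I\ot\rho)/\dsw(S,W)$ — more precisely using the form~\eqref{eq:weighted_dsw_max_Y_v2} or~\eqref{eq:weighted_dsw_max_Y} for the graph $S^\perp + \mathbb{C}I$ whose orthogonal-complement-plus-$\mathbb{C}I$ is exactly $\vecmod{S}{\mathbb{C}I} + \mathbb{C}I = S$ — this $Y/n$ is a feasible point for computing $\dsw(S^\perp+\mathbb{C}I, X)$ via~\eqref{eq:weighted_dsw_max_Y} once I choose $X$ appropriately. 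The natural choice is to let $X$ be (proportional to) the density operator extracted from the primal optimal $Y$ of $\dsw(S,W)$ via complementary slackness: the KKT conditions for the pair~\eqref{eq:weighted_dsw_min_Y}/\eqref{eq:weighted_dsw_max_T} say the optimal $R = T + I\ot\rho$ satisfies $\ip{R, Y - \proj{\rw}} = 0$ and $\ip{I\ot\rho - R,\ Y} $-type relations, from which one reads off that $\TrB Y$ (suitably normalized) is the $X$ that turns the inequality chain above into a chain of equalities. The bulk of the work is verifying that this $X$ is feasible for the max in~\eqref{eq:holder_dsw} — i.e., that $\dsw(S, X')\le$ something — which is false in general; rather, the cleaner route is: (a) prove the inequality, (b) prove $\ge$ in~\eqref{eq:holder_dsw} by plugging the scaled primal-optimal $Y$ of $\dsw(S,W)$ directly into~\eqref{eq:weighted_dsw_max_Y} for $S^\perp+\mathbb{C}I$ with $X$ chosen as $\TrB Y / \dsw(S,W)$, checking $\Tr X = 1$... wait, one needs $X \in \psd{\hA}$ with $\dsw(S,W)\le 1$; after normalizing $W$ so $\dsw(S,W)=1$, the claim is that $\dsw(S^\perp+\mathbb{C}I, X) \ge n\Tr(WX)$ is met with equality for this $X$, and then~\eqref{eq:holder_dsw} follows by combining with the already-proved $\le$.

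Finally, the theta-body reformulation~\eqref{eq:holder_dsw_corner} is a formal consequence: by~\cref{thm:dswbody_abl}, $\dswbodyS(S^\perp+\mathbb{C}I) = \{X \ge 0 : \dsw(S^\perp+\mathbb{C}I, X)\le 1\}$, which by~\eqref{eq:holder_dsw} equals $\{X \ge 0 : n\Tr(WX)\le 1\ \forall W \ge 0,\ \dsw(S,W)\le 1\} = n^{-1}\{X\ge 0 : \Tr(WX)\le 1\ \forall W \in \dswbodyS(S)\} = n^{-1}\,\dswbody(S)^{\sharp\sharp}\cdots$ — more directly $n^{-1}(\dswbodyS(S))^\sharp = n^{-1}\dswbody(S)$ by definition of the anti-blocker and~\cref{thm:dswbody_abl}. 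The main obstacle I expect is the saturation direction: getting the constants and the transpose/conjugate conventions relating $\ket{\vw}$, $\ket{\rw}$, $W$, $W^T$ to line up so that the optimal primal witness for $\dsw(S,W)$ really does certify the value $n\Tr(WX)$ for the right $X$ in the complementary graph's program, and confirming that Slater's condition (used already for strong duality of each individual SDP) legitimately supplies the optimal dual points needed to run complementary slackness.
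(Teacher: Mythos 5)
Your intended pairing breaks at the step ``$\TrA Y \le \dsw(S,W)\,I$ combined with $\TrA Z = I$ bounds $\Tr(YZ) \le \dsw(S,W)$.''  That implication is false.  Take $\hA = \hB = \mathbb{C}^n$ with $n \ge 2$, $S = \linop{\hA}$, $Y = \lambda\proj{\Phi}$, $Z = \proj{\Phi}$.  Then $\TrA Y = \lambda I_\hB$ and $\TrA Z = I_\hB$ (recall $\ket{\Phi}$ is unnormalized), yet
\begin{align*}
    \Tr(YZ) = \lambda\Tr(\proj{\Phi}^2) = \lambda\,\braket{\Phi}{\Phi}\,\Tr\proj{\Phi} = \lambda n^2 \gg \lambda.
\end{align*}
Partial-trace constraints on $Y$ and $Z$ separately do not control the Hilbert--Schmidt pairing $\Tr(YZ)$.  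The lower bound $\Tr(YZ) \ge \braopket{\rw}{Z}{\rw}$ is fine, but even with a valid upper bound the two-sided squeeze you describe doesn't land on $n\Tr(WX)$: you would still have to relate $\braopket{\rw}{Z}{\rw}$ to $\opnorm{(\sqrt{X}\ot I)Z(\sqrt{X}\ot I)}$, and nothing in the chain does so.  A cleaner way to salvage your instinct for the $\ge$ inequality is to skip $Z$ altogether: normalize so $\dsw(S,W)=1$ and take $Y$ optimal for~\eqref{eq:weighted_dsw_min_Y_eq}, i.e.\ $Y \in S\ot\linop{\hB}$, $\TrA Y = I$, $Y \ge \proj{\rw}$.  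Since $(S^\perp+\mathbb{C}I)^\perp + \mathbb{C}I = S$, this same $Y$ is feasible for the max form~\eqref{eq:weighted_dsw_max_Y} applied to $S^\perp + \mathbb{C}I$, whence
\begin{align*}
    \dsw(S^\perp+\mathbb{C}I, X) \ \ge\ n\,\opnorm{(\sqrt{X}\ot I)Y(\sqrt{X}\ot I)}
    \ \ge\ n\,\braopket{\rw}{X\ot I}{\rw}\ =\ n\Tr(WX),
\end{align*}
using $Y \ge \proj{\rw}$ in the middle.  So the inequality direction is recoverable, but not by the route you wrote.

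The saturation direction is where your proposal remains a sketch, and you acknowledge as much.  The paper avoids the split into ``inequality + saturation by complementary slackness'' altogether.  It fixes $X$ and shows the two sides of~\eqref{eq:holder_dsw} are \emph{literally the same optimization}: replace the constraint $Y \ge \proj{\rw}$ by the equivalent $\sqrt{Y}\ket{h} = \ket{\rw}$ with $\braket{h}{h}\le 1$ (the ``handle'' lemma, \cref{thm:handle}).  This eliminates $W$ as a variable; the objective $\Tr(WX) = \braopket{\rw}{X\ot I}{\rw}$ becomes $\braopket{h}{\sqrt{Y}(X\ot I)\sqrt{Y}}{h}$, and maximizing over $\ket{h}$ and $Y$ (using the unitary freedom on the $\hB$ factor to handle the PSD requirement on the vectorization) yields exactly $\max\{\opnorm{(\sqrt{X}\ot I)Y(\sqrt{X}\ot I)} : Y\in F\} = n^{-1}\dsw(S^\perp+\mathbb{C}I, X)$.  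Both directions of the equality fall out at once, without invoking KKT conditions.  If you want to pursue the complementary-slackness route you would need to verify strict feasibility on both sides, extract the optimal $(T,\rho)$, and check that the $X$ you construct actually attains equality; the paper's reformulation sidesteps all of that.  Your treatment of~\eqref{eq:holder_dsw_corner} from~\eqref{eq:holder_dsw} via \cref{thm:dswbody_abl} and the second anti-blocker theorem is essentially correct and matches the paper.
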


Notice the extra factor $n$ compared to~\eqref{eq:classical-ineq}.
Although our weighted thetas match the classical definition for classical
graphs (\cref{thm:match_classical}), our graph complement is less dense and therefore
yields a larger $\dsw$.
Specifically, the classical complement would be
$\linspan\{ S^\perp, \ket{i}\bra{i} : i \in \{1,\dots,n\} \}$
whereas we use here $\linspan\{S^\perp, I\}$.
In~\cref{sec:block_graphs} we will investigate this further, showing that the diagonal elements
added (or, in this case, not added) into the graph complement are responsible for this scaling factor.

Before proving~\cref{thm:holder_dsw} we record a simple lemma.

\begin{lemma}
    For $M \ge 0$, the following are equivalent:
    \begin{thmlist}
        \item $M \ge \ket{x}\bra{x}$
        \item There is some $\ket{h}$ such that $\braket{h}{h} \le 1$ and
            $\sqrt{M} \ket{h}\bra{h} \sqrt{M} = \ket{x}\bra{x}$.
    \end{thmlist}
    \label{thm:handle}
\end{lemma}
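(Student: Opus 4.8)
The plan is to prove each direction by an explicit construction. For the direction (ii)$\implies$(i), suppose $\ket{h}$ satisfies $\braket{h}{h} \le 1$ and $\sqrt{M}\ket{h}\bra{h}\sqrt{M} = \ket{x}\bra{x}$. Then $\ket{x}\bra{x} = \sqrt{M}\ket{h}\bra{h}\sqrt{M} \le \sqrt{M}(\braket{h}{h}\cdot I)\sqrt{M} = \braket{h}{h}\, M \le M$, using that $\ket{h}\bra{h} \le \braket{h}{h}\, I$ (which is immediate since $I - \ket{h}\bra{h}/\braket{h}{h}$ is a projector, scaled). So this direction is a one-line sandwich estimate.

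For (i)$\implies$(ii), suppose $M \ge \ket{x}\bra{x}$. Let $V$ be the pseudoinverse of $\sqrt{M}$, so that $\sqrt{M}\, V = V \sqrt{M} = \Pi$, the projector onto the support of $M$. First I would argue that $\ket{x}$ lies in the support of $M$: since $M \ge \ket{x}\bra{x} \ge 0$, the null space of $M$ is contained in the null space of $\ket{x}\bra{x}$, hence $\Pi\ket{x} = \ket{x}$. Now set $\ket{h} = V\ket{x}$. Then $\sqrt{M}\ket{h} = \sqrt{M}\, V\ket{x} = \Pi\ket{x} = \ket{x}$, so $\sqrt{M}\ket{h}\bra{h}\sqrt{M} = \ket{x}\bra{x}$ as required. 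It remains to check $\braket{h}{h} \le 1$. Compute $\braket{h}{h} = \braopket{x}{V^\dag V}{x} = \braopket{x}{M^{+}}{x}$ where $M^{+}$ is the pseudoinverse of $M$. The inequality $\braopket{x}{M^{+}}{x} \le 1$ is exactly the statement $M \ge \ket{x}\bra{x}$ restricted to the support of $M$: by the Schur complement criterion applied to the block matrix $\bigl[\begin{smallmatrix} 1 & \bra{x} \\ \ket{x} & M \end{smallmatrix}\bigr]$ (with pseudoinverses handling the singular case, and the earlier observation that $\ket{x}\in\img M$ ensuring the block matrix is genuinely PSD), positive semidefiniteness of this block matrix is equivalent to both $M \ge \ket{x}\bra{x}$ (Schur complement of the top-left $1$) and $1 \ge \braopket{x}{M^{+}}{x}$ (Schur complement of $M$). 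Hence $\braket{h}{h} \le 1$.

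The only subtlety, and the step I would be most careful with, is the singular case: when $M$ is not invertible one must consistently use pseudoinverses and verify that $\ket{x}$ lies in the image of $M$ before invoking the Schur complement equivalence, since the standard block-matrix criterion requires this range condition when the corner block is singular. Once that is in hand the argument is purely routine linear algebra. (This lemma will presumably be used to rewrite the constraint $Y \ge \proj{\rw}$ from~\eqref{eq:weighted_dsw_min_Y} in terms of a ``handle'' vector $\ket{h}$ inside the support of $Y$, which is the natural bridge toward the duality relation of~\cref{thm:holder_dsw}.)
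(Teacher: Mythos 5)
Your proposal is correct and takes essentially the same approach as the paper: set $\ket{h} = V\ket{x}$ with $V$ the pseudoinverse of $\sqrt{M}$, use $M \ge \ket{x}\bra{x}$ to conclude that $\ket{x}$ lies in the support of $M$ so that $\sqrt{M}\ket{h} = \ket{x}$, and conjugate by $\sqrt{M}$ for the converse. Your additional Schur-complement verification that $\braket{h}{h} = \braopket{x}{M^{+}}{x} \le 1$ is sound and fills in a step the paper's proof leaves implicit; it also follows more directly from $\ket{h}\bra{h} = V\ket{x}\bra{x}V^\dag \le V M V^\dag \le I$, whose largest eigenvalue is $\braket{h}{h}$.
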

\begin{proof}
    The second implies the first because
    $I \ge \ket{h}\bra{h} \implies M \ge \sqrt{M} \ket{h}\bra{h} \sqrt{M}$.
    On the other hand if the first is true then take
    $\ket{h} = L \ket{x}$ where $L$ is the pseudoinverse of $\sqrt{M}$.
    Then $\sqrt{M} \ket{h} = P \ket{x}$ where $P$ is the projector onto the support
    of $M$.  But $M \ge \ket{x}\bra{x}$ requires $\ket{x}$ to be in the support of
    $M$ so $P \ket{x} = \ket{x}$.

\end{proof}

%

\begin{proof}[Proof of~\cref{thm:holder_dsw}]
    Fix $X \ge 0$ and consider the optimization in~\eqref{eq:holder_dsw}.
    For brevity, define the set
    $F = \{ Y \in Y \in S \ot \linop{\hB} : Y \ge 0, \TrA Y = I \}$.
    Using~\eqref{eq:weighted_dsw_min_Y_eq},
    \begin{align}
        \max\{ \Tr(W X) &: W \ge 0, \dsw(S, W) \le 1 \}
        \\ &= \max\left\{ \Tr(W X) : W \ge 0, Y \in F, Y \ge \proj{\rw} \right\}
        \\ &= \max\left\{ \braopket{\rw}{X \ot I}{\rw} : W \ge 0, Y \in F, Y \ge \proj{\rw} \right\}
        \label{eq:max_W_XI_W}
    \end{align}
    Using \cref{thm:handle},
    \begin{align}
        \eqref{eq:max_W_XI_W}
        &= \max\left\{ \braopket{\rw}{X \ot I}{\rw} : W \ge 0, Y \in F, \braket{h}{h} \le 1,
            \sqrt{Y} \ket{h} = \ket{\rw} \right\}
        \\ &= \max\left\{ \braopket{h}{\sqrt{Y} (X \ot I) \sqrt{Y}}{h} : \braket{h}{h} \le 1, Y \in F \right\}
            \label{eq:trace_yYhandle}
    \end{align}
    For the equality in~\eqref{eq:trace_yYhandle} we make use of unitary freedom on $\linop{\hB}$.
    That is, for any feasible solution to~\eqref{eq:trace_yYhandle}, to get a feasible
    solution to the prior equation we need that $\sqrt{Y}\ket{h} = \ket{\rw}$ for some
    $W$.  This requires that $\sqrt{Y}\ket{h}$, viewed as an operator
    $\linop{\hilbdual{\hB} \to \hA}$ (under the isomorphism
        between $\hA$ and $\hilbdual{\hB}$ induced by $\ket{\Phi}$), is positive semidefinite.
    That can be achieved by applying some unitary on the $\hB$ side
    of $Y$, which is allowed because the condition $Y \in F$ allows that unitary degree of
    freedom.
    Continuing,
    \begin{align}
        \eqref{eq:trace_yYhandle}
        &= \max\left\{ \opnorm{\sqrt{Y}(X \ot I)\sqrt{Y}} : Y \in F \right\}
        \\ &= \max\left\{ \opnorm{(\sqrt{X} \ot I) Y (\sqrt{X} \ot I)} : Y \in F \right\}
            \label{eq:max_sqrty_Y}
    \end{align}
    Using~\eqref{eq:weighted_dsw_max_Y} from \cref{thm:weighted_dsw_equiv_forms}
    and the fact that $Y \in S \ot \linop{\hB} \iff Y \in ((S^\perp + \mathbb{C}I)^\perp + \mathbb{C}I) \ot \linop{\hB}$
    we have $\eqref{eq:max_sqrty_Y} = n^{-1} \dsw(S^\perp + \mathbb{C}I, X)$.  Therefore~\eqref{eq:holder_dsw} holds.

    As for~\eqref{eq:holder_dsw_corner}, we have
    \begin{align}
        \dswbodyS(S^\perp + \mathbb{C}I)
        &= \{ X \ge 0 : \dsw(S^\perp + \mathbb{C}I, X) \le 1 \}
        \\ &= \{ X \ge 0 : \max\{ n \Tr(W X) : W \ge 0, \dsw(S, W) \le 1 \} \le 1 \}
        \\ &= \{ X \ge 0 : n \Tr(W X) \le 1 : \forall W \ge 0, \dsw(S, W) \le 1 \}
        \\ &= \{ n^{-1} X \ge 0 : \Tr(W X) \le 1 : \forall W \ge 0, \dsw(S, W) \le 1 \}
        \\ &= n^{-1} \dswbody(S)
    \end{align}
    Taking the anti-blocker of both sides and applying the second anti-blocker theorem
    yields~\eqref{eq:holder_dsw_corner}.
\end{proof}

As an alternate proof, we could have modified the SDP from \cref{thm:dsw_grotschel}, setting
$\lambda = 1$ and maximizing $\Tr(WX)$.
The dual of this program, after some manipulation, should then yield an SDP of the form in
\cref{thm:dsw_grotschel} for $S^\perp + \mathbb{C}I$.
Though this proof ends up being more difficult, it does have the advantage of being easy to
verify numerically: just run it in an SDP solver.


\section{Graphs with a block structure}
\label{sec:block_graphs}

The main result of the previous section, \cref{thm:holder_dsw}, is not a true generalization
of the classical $\vartheta$ duality relation~\eqref{eq:classical-ineq}.
It has two problems: an extra factor of $n$ and a different type of graph complement.
The purpose of this section is to remedy both of these problems, which as we will see are related.
If $G$ is a classical graph, $\SG = \linspan\{ \ket{i}\bra{j} : i \simeqG j \}$ is the corresponding
non-commutative graph.
The non-commutative graph for $\Gc$ is then $\SGc = \SG^\perp + \diags$ where $\diags$
is the space of diagonal matrices.
This differs from the inverse $S^\perp + \mathbb{C}I$ used in \cref{thm:holder_dsw}.
In general, how are we to know whether $S^\perp + \mathbb{C}I$ or $S^\perp + \diags$
is the appropriate complement?  Or something different from either of these?

The answer comes from~\cite{arxiv:1002.2514}, the extended arXiv version of~\cite{dsw2013}.
The non-commutative graph associated with a classical graph has a special linear algebraic structure.
We take the following definition from~\cite{arxiv:1002.2514} but omit discussion of the $S_0$-valued
inner product, which we will not need.

\begin{definition}
    For a Hilbert space $\hA$ and a $C^*$-algebra $S_0 \subseteq \linop{\hA}$,
    a non-commutative graph $S \subseteq \linop{\hA}$ is said to be an \emph{$S_0$-graph}
    if $S$ is an $S_0$ bimodule, i.e., $S_0 \subseteq S$ and $S_0 S S_0 = S$.
\end{definition}

The space $S_0$ can be thought of as the ``vertices'' of $S$.
For our purposes $S_0$ will be significant in a number of ways.
We will use it to define the graph complement, taking $\Sc = S^\perp + S_0$.
We will find that convex corners associated with $\dsw$ or with cliques end up taking maximal
values in $S_0'$, the commutant of $S_0$.
We will find the shape of $S_0$ to factor into a generalized version of \cref{thm:holder_dsw},
taking the place of the spurious factor of $n$ appearing in that theorem.

Note that if $S$ is an $S_0$-graph and $T_0$ is a subalgebra of $S_0$ then $S$ is also
a $T_0$-graph.  In particular, any $S_0$-graph is also a $\mathbb{C}I$-graph.
But when speaking of $S$ as being an $S_0$-graph or a $\mathbb{C}I$-graph,
the corresponding complement graphs will be different.

If $G$ is a classical graph and $\SG = \linspan\{ \ket{i}\bra{j} : i \simeqG j \}$
then $\SG$ is a $\diags$-graph with $\diags = \{ \ket{i}\bra{i} : i \in V(G) \}$.
And $\SGc = \SG^\perp + \diags$ is also a $\diags$ graph.
In general we will take the complement of an $S_0$-graph to be
$S^\perp + S_0$.

\begin{theorem}
    If $S$ is an $S_0$-graph, then $\Sc = S^\perp + S_0$ is also an $S_0$-graph.
\end{theorem}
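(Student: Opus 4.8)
The plan is to verify the two defining properties of an $S_0$-graph in turn: first that $\Sc = S^\perp + S_0$ is a non-commutative graph, i.e.\ $\Sc^\dag = \Sc$ and $I \in \Sc$; and then that it is an $S_0$-bimodule, i.e.\ $S_0 \subseteq \Sc$ and $S_0 \Sc S_0 = \Sc$. Throughout I use that $S_0$ is a unital $C^*$-algebra, so $I \in S_0$, $S_0^\dag = S_0$, and $S_0 S_0 S_0 = S_0$.

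For the non-commutative graph conditions: $I \in S_0 \subseteq S^\perp + S_0 = \Sc$ gives $I \in \Sc$ immediately. For self-adjointness I would use that the adjoint map $A \mapsto A^\dag$ is a conjugate-linear isometry of $\linop{\hA}$ equipped with the Hilbert--Schmidt inner product $\Tr(A^\dag B)$, and therefore sends orthogonal complements to orthogonal complements: $(V^\perp)^\dag = (V^\dag)^\perp$ for every subspace $V$. Applying this to $V = S$ and using $S = S^\dag$ yields $(S^\perp)^\dag = S^\perp$, hence $\Sc^\dag = (S^\perp)^\dag + S_0^\dag = S^\perp + S_0 = \Sc$.

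For the bimodule conditions: $S_0 \subseteq \Sc$ is immediate, and $\Sc \subseteq S_0 \Sc S_0$ follows from $I \in S_0$ via $\Sc = I\,\Sc\,I \subseteq S_0 \Sc S_0$. The step with actual content is $S_0 \Sc S_0 \subseteq \Sc$. Expanding gives $S_0 \Sc S_0 = S_0 S^\perp S_0 + S_0 S_0 S_0 \subseteq S_0 S^\perp S_0 + S_0$, so it suffices to prove the \emph{lemma} $S_0 S^\perp S_0 \subseteq S^\perp$ --- that the bimodule structure of $S$ is inherited by $S^\perp$. This I would establish by Hilbert--Schmidt duality: for $a, b \in S_0$, $T \in S^\perp$, and any $C \in S$ we have $\Tr\bigl((aTb)^\dag C\bigr) = \Tr\bigl(b^\dag T^\dag a^\dag C\bigr) = \Tr\bigl(T^\dag (a^\dag C b^\dag)\bigr)$, and since $a^\dag, b^\dag \in S_0$ and $S$ is an $S_0$-bimodule, $a^\dag C b^\dag \in S_0 S S_0 = S$; as $T \in S^\perp$ this trace vanishes for every $C \in S$, so $aTb \in S^\perp$. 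Taking linear spans gives $S_0 S^\perp S_0 \subseteq S^\perp$, completing $S_0 \Sc S_0 = \Sc$.

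I do not expect a genuine obstacle. The only point needing a moment's thought is the lemma $S_0 S^\perp S_0 \subseteq S^\perp$: it says precisely that the bimodule relation $S_0 S S_0 = S$ ``transposes'' under the Hilbert--Schmidt pairing, and it is the crux of the proof. It is also worth flagging that the unital assumption $I \in S_0$ does real work here --- it is what forces $I \in \Sc$ and what supplies the inclusion $\Sc \subseteq S_0 \Sc S_0$, without which the asserted equality $S_0 \Sc S_0 = \Sc$ could fail.
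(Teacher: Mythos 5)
Your proposal is correct and matches the paper's proof in its essential mechanism: the crux in both is the lemma $S_0 S^\perp S_0 \subseteq S^\perp$, proved via the same Hilbert--Schmidt trace-cycling computation (the paper writes $\Tr(Y^\dag K X L) = \Tr((K^\dag Y L^\dag)^\dag X)$ and uses $K^\dag Y L^\dag \in S$; you write the identical cycle with $a, b, T, C$), followed by the observation that $I \in S_0$ supplies the reverse inclusion. You are slightly more thorough than the paper in that you also explicitly verify the non-commutative graph axioms ($\Sc^\dag = \Sc$ via the conjugate-linear-isometry argument, and $I \in \Sc$), which the paper's proof takes for granted; but that is a matter of completeness rather than a different route.
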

\begin{proof}
    Clearly $S_0 \in \Sc$.  It remains to show $S_0 \Sc S_0 = \Sc$.
    Suppose $X \in S^\perp$ and $K,L \in S_0$.  We will show $K X L \in S^\perp$.
    For any $Y \in S$ we have
    \begin{align}
        \Tr(Y^\dag K X L)
        &= \Tr(L Y^\dag K X)
        \\ &= \Tr((K^\dag Y L^\dag)^\dag X).
    \end{align}
    But $K^\dag, L^\dag \in S_0$ and $S_0 S S_0 = S$ so $K^\dag Y L^\dag \in S$.
    Since $X \in S^\perp$, the trace vanishes and we have $K X L \in S^\perp$.
    Therefore $S_0 S^\perp S_0 \subseteq S^\perp$.
    And $I \in S_0$ so $S^\perp = I S^\perp I \subseteq S_0 S^\perp S_0$.
    Thus we have $S^\perp = S_0 S^\perp S_0$ and
    \begin{align}
        S_0 \Sc S_0 &= S_0 (S^\perp + S_0) S_0
        \\ &=  S_0 S^\perp S_0 + S_0 S_0 S_0
        \\ &=  S^\perp + S_0.
    \end{align}
\end{proof}

Going forward, we will be using a particular decomposition of the $S_0$ space.
The structure theorem for a finite dimensional $C^*$-algebra $S_0 \subseteq \linop{\hA}$
gives~\cite{arxiv:1002.2514}
\begin{align}
    S_0 = \bigoplus_{i=1}^r \linop{\hAi} \ot I_{\hYi}
    \qquad \textrm{with} \qquad
    \hA = \bigoplus_{i=1}^r \hAi \ot \hYi.
    \label{eq:vertex_algebra}
\end{align}

For a $C^*$-algebra $S_0 \subseteq \linop{\hA}$, we denote by $S_0'$ it's commutant:
the set of operators in $\linop{\hA}$ commuting with every element of $S_0$.
This is also a $C^*$-algebra, and takes the form
\begin{align}
    S_0' = \bigoplus_{i=1}^r I_\hAi \ot \linop{\hYi}.
    \label{eq:commutator_structure}
\end{align}
The commutant will play a significant role as we move forward:
we will show the maximal values of $\dswbodyS(S)$ are in $S_0'$.
Classical graphs have the peculiar property that $S_0' = S_0$.

The duality relations we will find for $S_0$ graphs have a more complicated structure, requiring the
use of various projection and scaling operators.

\begin{definition}
    Let $S$ be an $S_0$-graph, with $S_0$ decomposed as in~\eqref{eq:vertex_algebra}.
    Let $P_i$ be the projector onto $\hAi \ot \hYi$.
    For $W \in \linop{\hA}$ define
    \begin{align}
        \commproj(W) &= \sum_i \dimAi^{-1} I_\hAi \ot \TrAi (P_i W P_i)
        \label{eq:def_commproj}
        \\ \blockscale(W) &= \sum_i \dimYi^{-1} I_\hAi \ot \TrAi (P_i W P_i)
        \\ \diagscale &= \sum_i \dimAi^{-1} \dimYi P_i.
    \end{align}
    \label{def:blockutils}
\end{definition}

\begin{lemma}
    The entities from \cref{def:blockutils} satisfy the following basic facts.
    \begin{thmlist}
        \item $\blockscale$ and $\commproj$ are completely positive superoperators.
            \label[theorem]{thm:comproj_cp}
        \item $\commproj(W)$ is the projector onto the subspace $S_0'$, and in particular if
            $W \in S_0'$ then $\commproj(W) = W$.
            \label[theorem]{thm:comproj_projection}
        \item If $X \in S_0'$ then
            $\blockscale(X W X^\dag) = X \blockscale(W) X^\dag$ and
            $\commproj(X W X^\dag) = X \commproj(W) X^\dag$.
            \label[theorem]{thm:blockscale_commutes}
        \item $\commproj(W) = \blockscale(\sqrt{\diagscale} W \sqrt{\diagscale})$
            and $\blockscale(W) = \commproj(D^{-1/2} W D^{-1/2})$.
            \label[theorem]{thm:comproj_blockscale}
        \item $S_0'^\perp$ is in the null space of $\blockscale$.
            \label[theorem]{thm:blockscale_nullspace}
    \end{thmlist}
\end{lemma}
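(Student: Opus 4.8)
The plan is to prove all five parts by working directly with the block decomposition $S_0 = \bigoplus_i \linop{\hAi}\ot I_{\hYi}$, $\hA = \bigoplus_i \hAi\ot\hYi$ of~\eqref{eq:vertex_algebra} and the corresponding form~\eqref{eq:commutator_structure} of $S_0'$. The recurring structural observation is that each $P_i$—and hence $\diagscale = \sum_i\dimAi^{-1}\dimYi P_i$ together with $\sqrt{\diagscale}$ and $\diagscale^{-1/2}$—is \emph{block-scalar}: it lies in $S_0\cap S_0'$ and acts as a scalar multiple of the identity on the $i$-th summand $\hAi\ot\hYi$. Consequently, conjugation by any of these operators commutes with the compression $W\mapsto P_i W P_i$ and merely rescales it blockwise. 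Part~\itemref{thm:comproj_cp} is then immediate: $\commproj$ and $\blockscale$ factor through manifestly completely positive maps—the compression $W\mapsto\sum_i P_i W P_i$ (Kraus operators $\{P_i\}$), the partial traces $\TrAi$, the ampliations $B\mapsto I_{\hAi}\ot B$ (Kraus operators $\ket{k}_{\hAi}\ot I_{\hYi}$ over a basis of $\hAi$), and multiplication by the positive scalars $\dimAi^{-1}$, $\dimYi^{-1}$—and complete positivity is preserved under composition and finite sums.

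For~\itemref{thm:comproj_projection} I would first note that the image of $\commproj$ sits inside $S_0' = \bigoplus_i I_{\hAi}\ot\linop{\hYi}$ by inspection, and that for $W = \sum_i I_{\hAi}\ot W_i\in S_0'$ one has $P_i W P_i = I_{\hAi}\ot W_i$ and $\TrAi(I_{\hAi}\ot W_i) = \dimAi\,W_i$, so $\commproj(W) = W$; hence $\commproj$ is a projection with range exactly $S_0'$ (in particular $\commproj^2=\commproj$). To see it is the \emph{orthogonal} projection I would compute, for $V = I_{\hAi}\ot V_i$ supported on the $i$-th block, that $\Tr(W^\dag V) = \Tr_{\hYi}\!\big((\TrAi(P_i W P_i))^\dag V_i\big)$, whence $W\perp S_0'$ if and only if $\TrAi(P_i W P_i) = 0$ for every $i$—which is exactly the condition $\commproj(W) = 0$. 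Combined with $\commproj^2=\commproj$ this yields $W - \commproj(W)\perp S_0'$, identifying $\commproj$ with the orthogonal projector onto $S_0'$. The same characterization settles~\itemref{thm:blockscale_nullspace} at once: if $W\in S_0'^\perp$ then $\TrAi(P_i W P_i) = 0$ for all $i$, so $\blockscale(W) = \sum_i\dimYi^{-1}I_{\hAi}\ot\TrAi(P_i W P_i) = 0$.

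For~\itemref{thm:blockscale_commutes}, take $X\in S_0'$; then $[X,P_i] = 0$, so $P_i(X W X^\dag)P_i = X(P_i W P_i)X^\dag$, and the compression of $X$ to the $i$-th block is $I_{\hAi}\ot X_i$. Since $I_{\hAi}\ot X_i$ commutes past $\TrAi$ (in the form $\TrAi((I_{\hAi}\ot X_i)M(I_{\hAi}\ot X_i^\dag)) = X_i(\TrAi M)X_i^\dag$) and past the ampliation $M\mapsto I_{\hAi}\ot M$, summing over $i$ with the unchanged scalar prefactors gives $\blockscale(X W X^\dag) = X\blockscale(W)X^\dag$, and the identical computation handles $\commproj$. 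For~\itemref{thm:comproj_blockscale}, write $\sqrt{\diagscale} = \sum_i(\dimYi/\dimAi)^{1/2}P_i$; since it commutes with each $P_i$ one gets $P_i(\sqrt{\diagscale}\,W\sqrt{\diagscale})P_i = (\dimYi/\dimAi)\,P_i W P_i$, so running $\sqrt{\diagscale}\,W\sqrt{\diagscale}$ through $\blockscale$ converts the prefactor $\dimYi^{-1}$ into $\dimAi^{-1}$ and reproduces $\commproj(W)$; the mirror identity $\blockscale(W) = \commproj(\diagscale^{-1/2}W\diagscale^{-1/2})$ follows the same way with $\dimAi$ and $\dimYi$ interchanged.

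The computations are all routine bookkeeping with the decomposition; the one point that deserves care is the orthogonality claim in~\itemref{thm:comproj_projection}, where it is easy to verify that $\commproj$ is \emph{a} projection onto $S_0'$ but one must carry out the Hilbert–Schmidt calculation above to confirm it is the \emph{orthogonal} one. Nothing else in the lemma presents a genuine obstacle.
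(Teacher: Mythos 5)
Your proposal is correct and follows essentially the same route as the paper: direct computation in the block decomposition of $S_0$ and $S_0'$ from~\eqref{eq:vertex_algebra} and~\eqref{eq:commutator_structure}, with each $P_i$ acting as a block-scalar in $S_0\cap S_0'$. The only cosmetic differences are that the paper derives~\itemref{thm:comproj_blockscale} by applying~\itemref{thm:blockscale_commutes} to $\sqrt{\diagscale}\in S_0'$ rather than recomputing the block prefactors from scratch, and derives~\itemref{thm:blockscale_nullspace} from~\itemref{thm:comproj_blockscale} rather than directly from the orthogonality computation in~\itemref{thm:comproj_projection}; both of your variants are equally valid one-liners.
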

\begin{proof}
    \itemref{thm:comproj_cp}: Conjugation by $P_i$, partial trace, and direct product with a positive
    semidefinite operator are all completely positive operations; the composition and sum of these
    operations is also completely positive.

    \itemref{thm:comproj_projection}:
    That $\commproj(W) = W$ for all $W \in S_0'$ is clear by inspection.  On the other hand,
    suppose $W \in S_0'^\perp$.  Since $I_\hAi \ot M \in S_0'$ for any $1 \le i \le r$ and
    $M \in \linop{\hYi}$ (taking the other terms of the direct sum
    in~\eqref{eq:commutator_structure} to be zero), we have
    \begin{align}
        0 &= \Tr( (I_\hAi \ot M) P_i W P_i )
        \\ &= \Tr( M \TrAi(P_i W P_i) ).
    \end{align}
    Therefore all terms of~\eqref{eq:def_commproj} vanish and $\commproj(W) = 0$.

    \itemref{thm:blockscale_commutes}:
    Let $X \in S_0'$.
    Then $X = \oplus_i( I_\hAi \ot X_i )$ with $X_i \in \linop{\hYi}$.  We have
    \begin{align}
        \blockscale(X W X^\dag) &= \sum_i \dimYi^{-1} I_\hAi \ot \TrAi (P_i X W X^\dag P_i).
        \\ &= \sum_i \dimYi^{-1} I_\hAi \ot
            X_i \TrAi (P_i W P_i) X_i
        \\ &= X \left( \sum_i \dimYi^{-1} I_\hAi \ot
            \TrAi (P_i W P_i) \right) X
        \\ &= X \blockscale(W) X
    \end{align}
    The derivation for $\commproj(X W X^\dag) = X \commproj(W) X^\dag$ is analogous.

    \itemref{thm:comproj_blockscale}:
    Since $\commproj$ and $\blockscale$ differ only by scaling of the blocks, we have
    $\commproj(W) = \sqrt{\diagscale} \blockscale(W) \sqrt{\diagscale}$.
    And $\sqrt{D} \in S_0'$ so by \itemref{thm:blockscale_commutes} can be moved inside
    the parentheses, giving
    $\commproj(W) = \blockscale(\sqrt{\diagscale} W \sqrt{\diagscale})$.
    Since $D$ is invertible, we also have
    $\blockscale(W) = D^{-1/2} \commproj(W) D^{-1/2} = \commproj(D^{-1/2} W D^{-1/2})$.

    \itemref{thm:blockscale_nullspace}:
    Let $W \in S_0'^\perp$.
    Since $\commproj$ is the projector onto $S_0'$ we have $\commproj(W) = 0$.
    But as shown above,
    $\blockscale(W) = D^{-1/2} \commproj(W) D^{-1/2}$, so this vanishes as well.
\end{proof}

The following lemma explores the effect on $\dsw$ of removing the vertex set $S_0$
from $S$ and replacing it with the ``thin'' vertex set $\mathbb{C}I$.
Though it is of little interest on its own, this is the core technical lemma from which
we build the rest of the results of this section.

\begin{lemma}
    Let $S$ be an $S_0$-graph and take $\blockscale$ from \cref{def:blockutils}.
    Then
    \begin{align}
        \dsw(\vecmod{S}{S_0} + \mathbb{C}I, W) = \dsw(S, n \blockscale(W)).
        \label{eq:thin_diag}
    \end{align}
    In particular, $\dsw(\vecmod{S}{S_0} + \mathbb{C}I, W)$ is insensitive
    to any component of $W$ perpendicular to $S_0'$, the commutant of $S_0$.
    \label{thm:thin_diag}
\end{lemma}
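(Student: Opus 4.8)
The plan is to prove the identity by reducing everything to the equivalent forms of $\dsw$ established in \cref{thm:weighted_dsw_equiv_forms}, and exploiting the $S_0$-module structure of $S$ together with the unitary freedom on the $\hB$ side. The key observation is that $\vecmod{S}{S_0} + \mathbb{C}I$ and $S$ differ only in their ``vertex set,'' so the feasible sets of the relevant SDPs differ only in a controlled way that is captured by conjugating with operators in $S_0'$. I would work from the form \eqref{eq:weighted_dsw_max_Y}, which says $\dsw(S, V) = \max\{ n \opnorm{(\sqrt{V}\ot I)Y(\sqrt{V}\ot I)} : Y \in (S^\perp + \mathbb{C}I)\ot\linop{\hB},\, Y \ge 0,\, \TrA Y = I\}$, and its counterpart for $\vecmod{S}{S_0} + \mathbb{C}I$: since $(\vecmod{S}{S_0})^\perp = S^\perp + S_0$ (because $S = S_0 + \vecmod{S}{S_0}$ as an orthogonal-ish decomposition, and $S_0 \subseteq S$), the constraint set becomes $Y \in (S^\perp + S_0 + \mathbb{C}I)\ot\linop{\hB} = (S^\perp + S_0)\ot\linop{\hB}$, i.e., $Y$ is allowed an extra degree of freedom living in $S_0 \ot \linop{\hB}$.

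First I would set up both optimizations side by side. For the left-hand side, $\dsw(\vecmod{S}{S_0} + \mathbb{C}I, W) = \max\{ n\,\braopket{h}{\sqrt{Y}(W\ot I)\sqrt{Y}}{h} : \braket{h}{h}\le 1,\ Y \ge 0,\ \TrA Y = I,\ Y \in (S^\perp + S_0)\ot\linop{\hB}\}$ (using the $\ket{h}$-formulation as in the proof of \cref{thm:holder_dsw}). The extra freedom in $Y$ relative to $S$ is exactly conjugation by operators $X \in S_0'\ot\linop{\hB}$: I claim that $\{Y \ge 0 : \TrA Y = I,\ Y \in (S^\perp+S_0)\ot\linop{\hB}\}$ is the image under such conjugations of $\{Y \ge 0 : \TrA Y = I,\ Y \in (S^\perp+\mathbb{C}I)\ot\linop{\hB}\}$. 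The reason is that $S^\perp + S_0 = \{X Y X^\dag$-orbit$\}$-type closure: since $S_0 S S_0 = S$, conjugating an $S$-constrained $Y$ by an element of $S_0'$ keeps it in $(S^\perp+\mathbb{C}I)\ot\linop{\hB}$ but the diagonal-block rescalings realize precisely the $S_0$-component, and the trace constraint $\TrA Y = I$ forces the rescaling factors to match the block dimensions $\dimAi$. Tracking how $\braopket{h}{\sqrt{Y}(W\ot I)\sqrt{Y}}{h}$ transforms under $Y \mapsto (X\ot I)Y(X\ot I)^\dag$ shows that the effective weight seen by the $S$-optimization is $\blockscale$ applied to $W$, up to the factor $n$; the $\dimYi^{-1}$ normalization in $\blockscale$ (\cref{def:blockutils}) is exactly what appears when one passes the conjugation through and re-normalizes by $\TrA Y = I$. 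This gives $\dsw(\vecmod{S}{S_0}+\mathbb{C}I, W) = \dsw(S, n\blockscale(W))$.

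The second claim — insensitivity to the part of $W$ perpendicular to $S_0'$ — then follows immediately from \cref{thm:blockscale_nullspace}: $S_0'^\perp$ lies in the null space of $\blockscale$, so $\blockscale(W) = \blockscale(\commproj(W))$ depends only on the $S_0'$-component of $W$. I would state this as a one-line corollary of the main identity.

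The main obstacle will be making rigorous the claim that the $S_0$-worth of extra freedom in $Y$ is exactly generated by $S_0'$-conjugation together with block rescaling, and that the induced transformation on the objective is precisely $W \mapsto n\blockscale(W)$ with the stated normalization. Concretely, one must check: (a) given $Y$ feasible for the $\vecmod{S}{S_0}+\mathbb{C}I$ problem, one can write $Y = (X\ot I)\tilde{Y}(X\ot I)^\dag$ with $\tilde{Y}$ feasible for the $S$ problem and $X \in S_0'$ chosen so that $\TrA\tilde Y = I$ still holds (here $X$ is essentially $D^{1/2}$ up to unitaries, using \cref{thm:blockscale_commutes}); and (b) conversely, $(X\ot I)\tilde Y(X\ot I)^\dag$ is feasible for the larger problem whenever $\tilde Y$ is feasible for the smaller. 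Both directions hinge on the identity $\commproj(W) = \blockscale(\sqrt{D}W\sqrt{D})$ from \cref{thm:comproj_blockscale} and on carefully accounting for $\TrAi$ versus $\TrYi$ normalizations. Once that bookkeeping is done the factor of $n$ and the shape of $\blockscale$ drop out as forced, and the equality of the two optima follows by matching feasible solutions in both directions.
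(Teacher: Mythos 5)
Your setup is fine: passing to the max-form \eqref{eq:weighted_dsw_max_Y} and noting that $(\vecmod{S}{S_0}+\mathbb{C}I)^\perp+\mathbb{C}I = S^\perp+S_0$ is correct, and the ``insensitivity'' statement does follow from \cref{thm:blockscale_nullspace} once the identity is in hand. The gap is the central claim that the feasible set $\{Y\ge 0:\TrA Y=I,\ Y\in(S^\perp+S_0)\ot\linop{\hB}\}$ is the orbit of the $S$-side feasible set under $S_0'$-conjugation (plus block rescaling), with the objective transforming into $W\mapsto n\blockscale(W)$. That is not bookkeeping; it is the entire lemma, and in the forms you state it it breaks. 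If the conjugation is by $X\ot I$ with $X\in S_0'$ (block rescalings included, and even allowing extra unitaries on $\hB$), the orbit misses feasible points: already for $S=S_0=\diags$ on $\mathbb{C}^2$, where $\vecmod{S}{S_0}+\mathbb{C}I=\mathbb{C}I$ and every psd $Y$ with $\TrA Y=I$ is feasible on the left, the point $Y=\proj{1}\ot\proj{1}+\proj{2}\ot\proj{2}$ cannot be written as $(X\ot I)\tilde Y(X\ot I)^\dag$ with $\tilde Y\in(S^\perp+\mathbb{C}I)\ot\linop{\hB}$, because the $\hA$-diagonal blocks of any such $\tilde Y$ must be equal while the pullback of $Y$ keeps them proportional to the orthogonal projectors $\proj{1}$ and $\proj{2}$. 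If instead you allow general $Z\in S_0'\ot\linop{\hB}$, feasibility is not preserved in either direction, since $S_0'\,S\,S_0'\not\subseteq S$ in general ($S_0SS_0=S$ is an $S_0$-bimodule property, not an $S_0'$ one; take $S_0=\mathbb{C}I$, $S_0'=\linop{\hA}$), and the objective becomes $\opnorm{\sqrt{\tilde Y}\,Z^\dag(W\ot I)Z\,\sqrt{\tilde Y}}$ with effective weight $Z^\dag(W\ot I)Z$, which is not of the form $W'\ot I$, so no identification with $\dsw(S,\cdot)$ of a modified weight follows.

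For comparison, the paper proves the two inequalities by separate constructions in \cref{sec:main_proof}, neither of which is a conjugation-orbit argument. For $\dsw(\vecmod{S}{S_0}+\mathbb{C}I,W)\le\dsw(S,n\blockscale(W))$ it takes an optimal solution for the right-hand side, twirls by unitaries of $S_0$ (conjugation by $S_0$, which does preserve $S$ and $S^\perp$) to force the block form of \cref{lem:twirl_Y}, and then restores feasibility for the thin-vertex graph by adding explicit positive correction terms whose positivity is the nontrivial operator inequality $I_\hA\ot W^{-1}\ge\proj{\Phi}\ot(\TrB W)^{-1}$ of \cref{lem:Winv_vs_TrWinv}, followed by conjugation with $I\ot\sqrt{W^T}$ and further slack terms; this needs $W$ invertible, with the singular case recovered by continuity (\cref{thm:dsw_continuous}). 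For the reverse inequality it conjugates an optimal solution by $K=n\sum_{i\alpha\beta}n_i^{-1}V_{i\alpha\beta}\ot V_{i\alpha\beta}^T$, built from generalized Pauli operators within each block (an element of $S_0\ot\linop{\hB}$, not $S_0'\ot\linop{\hB}$), and computes $\TrA(KYK^\dag)$ and $K\ket{\vw}$ explicitly (\cref{lem:pauli_partial_trace} and \cref{thm:mainlemma_ge}). None of these devices appear in your sketch, and what you defer as ``the main obstacle'' is precisely the content the lemma is asking for.
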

\begin{proof}
    The proof details are tedious and are deferred to \cref{sec:main_proof}.
    Here we tie together the results from that appendix.

    If $W$ is not singular we have by~\cref{thm:mainlemma_le}
    $\dsw(\vecmod{S}{S_0} + \mathbb{C}I, W) \le \dsw(S, n \blockscale(W))$
    and by \cref{thm:mainlemma_ge}
    $\dsw(\vecmod{S}{S_0} + \mathbb{C}I, W) \ge \dsw(S, n \blockscale(W))$.
    This is extended to singular $W$ by continuity of $\dsw$,
    \cref{thm:dsw_continuous}.

    By \cref{thm:blockscale_nullspace}, $\blockscale(W)$
    is insensitive to any component of $W$ perpendicular to $S_0'$.
    Since $\dsw(S, \blockscale(W))$ is insensitive to this component,
    $\dsw(\vecmod{S}{S_0} + \mathbb{C}I, W)$ is as well.
\end{proof}

\begin{corollary}
    Let $G$ be a classical graph.  Let $S = \mathbb{C}I + \linspan\{ \ket{i}\bra{j} : i \simG j \}$.
    Then for any $W \in \psd{\hA}$,
    \begin{align}
        \dsw(S, W) = \abs{V(G)} \lov(G, \diag(W)).
    \end{align}
\end{corollary}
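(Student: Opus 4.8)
The plan is to reduce the statement to \cref{thm:thin_diag}, which is tailor-made for this situation. First I would recognize $S$ as the ``thin-vertex'' modification of the classical non-commutative graph $\SG = \linspan\{\ket{i}\bra{j} : i \simeqG j\}$. Writing $S_0 = \diags$ for the algebra of diagonal matrices, $\SG$ is a $\diags$-graph: reflexivity $i \simeqG i$ gives $\diags \subseteq \SG$, and $\diags \SG \diags = \SG$ by a direct check on the spanning operators $\ket{i}\bra{j}$. Its off-diagonal part is $\vecmod{\SG}{\diags} = \SG \cap \diags^\perp = \linspan\{\ket{i}\bra{j} : i \simG j\}$, so that
\begin{align}
    S = \mathbb{C}I + \linspan\{\ket{i}\bra{j} : i \simG j\} = \vecmod{\SG}{\diags} + \mathbb{C}I.
\end{align}
This is precisely the left-hand side of \cref{thm:thin_diag} applied to the $\diags$-graph $\SG$, which therefore gives $\dsw(S, W) = \dsw(\SG, n\blockscale(W))$ with $n = \abs{V(G)}$ and $\blockscale$ the operator from \cref{def:blockutils} associated to $S_0 = \diags$.

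The remaining step is to evaluate $\blockscale$ for the diagonal algebra. In the decomposition~\eqref{eq:vertex_algebra} the algebra $\diags$ splits into $r = n$ one-dimensional blocks, so $\hAi = \hYi = \mathbb{C}$, $P_i = \proj{i}$, and $\dimAi = \dimYi = 1$. Substituting into \cref{def:blockutils} collapses all the partial traces and tensor factors, leaving $\blockscale(W) = \sum_i W_{ii}\proj{i} = \diag(W)$, i.e.\ the pinching of $W$ onto its diagonal. Then, using linearity of $\dsw$ in its weight (\cref{thm:dsw_linear}) followed by the classical matching result \cref{thm:match_classical} (which states $\dsw(\SG, \diag(w)) = \lov(G, w)$), I obtain
\begin{align}
    \dsw(S, W) = \dsw(\SG, n\diag(W)) = n\,\dsw(\SG, \diag(W)) = \abs{V(G)}\,\lov(G, \diag(W)).
\end{align}

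I do not expect a genuine obstacle here; the content of the corollary is carried entirely by \cref{thm:thin_diag}, and the rest is bookkeeping about the diagonal vertex algebra. The one point that needs a sentence of care is a singular weight $W$, for which some $W_{ii} = 0$ and \cref{def:classical_weighted} is literally undefined (it forms the entrywise square root of an entrywise positive vector). I would dispose of this exactly as \cref{thm:thin_diag} itself handles singular $W$, namely by continuity: $\dsw(S,\cdot)$ is continuous by \cref{thm:dsw_continuous}, $\lov(G,\cdot)$ extends continuously to the closed nonnegative orthant, and the identity, once established on the positive-definite cone, propagates to its closure.
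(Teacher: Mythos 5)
Your proposal is correct and follows exactly the route the paper intends: the corollary is stated immediately after \cref{thm:thin_diag} precisely because it is the specialization to $S_0 = \diags$, where $\blockscale$ becomes the diagonal pinching and \cref{thm:match_classical} plus linearity (\cref{thm:dsw_linear}) finish the computation. Your extra remark on handling singular $W$ by continuity (\cref{thm:dsw_continuous}) is a sensible touch but not a departure from the paper's argument.
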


In terms of proof complexity, \cref{thm:thin_diag} is the main technical
result of this paper.
But this theorem tells us something about $\dsw(S, \blockscale(W))$
whereas what we really want is insight about $\dsw(S, W)$.
That is the goal of the remainder of this section.
First we need a couple lemmas about convex corners.

\begin{lemma}
    Every convex corner $\cornC$ has non-empty relative interior.
    And there is some $M \in \relint(\cornC)$ such that $P \cornC P = \cornC$ where $P$
    is the projector onto the support of $M$.
    \label{thm:corn_relint}
\end{lemma}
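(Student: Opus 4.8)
The plan is to reduce to a supporting subspace and then exhibit an explicit interior point. First I would let $\cornC \subseteq \psd{\hA}$ be a convex corner and set $\hB_0 = \linspan\{ \img(A) : A \in \cornC \} \subseteq \hA$, the smallest subspace whose ambient positive cone contains all of $\cornC$; equivalently take $P$ to be the projector onto $\hB_0$. Because $\cornC$ is convex and hereditary, $P$ equals the support projector of some single element: pick $A_1, \dots, A_k \in \cornC$ whose images span $\hB_0$, form $M_0 = k^{-1}\sum_j A_j \in \cornC$ (convexity), and observe $\img(M_0) = \sum_j \img(A_j) = \hB_0$, so $P M_0 P = M_0$ and $P$ is the support projector of $M_0$. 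I should also check $P \cornC P = \cornC$: for $A \in \cornC$ we have $\img(A) \subseteq \hB_0$, hence $PAP = A$, so $P\cornC P = \cornC$; this already gives the second sentence of the lemma once we locate a relative-interior point with support exactly $\hB_0$.

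Next I would argue that $\cornC$, viewed inside $\psd{\hB_0} = P\psd{\hA}P$, has non-empty interior relative to $\herm{\hB_0}$ — this interior is exactly $\relint(\cornC)$ since $\cornC$ lies in the affine (indeed linear) subspace $\herm{\hB_0}$. The key point: by definition of $\hB_0$, for every rank-one $\proj{v}$ with $\ket{v} \in \hB_0$ there is some $A \in \cornC$ and $\varepsilon > 0$ with $\varepsilon \proj{v} \le A$, so hereditarity puts $\varepsilon \proj{v} \in \cornC$. Averaging finitely many such rank-one elements spanning a basis of $\hB_0$ (again using convexity) produces an element $M$ with $M \ge \delta P$ for some $\delta > 0$, i.e. $M$ is a positive-definite operator on $\hB_0$. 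Then for any Hermitian $H$ on $\hB_0$ with $\norm{H}$ small, $M + H \ge 0$, and also $M + H \le 2M$ up to rescaling, but more simply: $0 \le M + H \le M'$ for a sufficiently scaled-up $M' = c M \in \cornC$ (using linearity of the cone — though convex corners need not be scaling-closed, so here I would instead take $M$ small enough at the outset, $M \in \cornC$ with $M \ge \delta P$ and $cM \in \cornC$ replaced by: choose the averaged point to have $M \le A$ for a single fixed $A \in \cornC$, then perturbations of $M$ staying below $A$ remain in $\cornC$ by hereditarity). That places a relative neighbourhood of $M$ inside $\cornC$, so $M \in \relint(\cornC)$, and $M$ has support $\hB_0$, giving $P\cornC P = \cornC$ with $P$ the support projector of this $M$.

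The main obstacle is the bookkeeping around the fact that a convex corner is \emph{not} assumed to be a cone (no closure under positive scaling): I cannot freely scale an interior candidate upward to dominate a perturbation. The clean fix is to first fix one element $A \in \cornC$ whose image is all of $\hB_0$ (the average $M_0$ above), then build the positive-definite-on-$\hB_0$ witness $M$ as a \emph{small} convex combination that is dominated by $A$ — concretely $M = (1-t) \cdot 0 + t \cdot (\text{average of rank-ones}) $ with $t$ small, ensuring $M \le A$ by choosing the rank-one pieces below $A$ in the first place — so that hereditarity ($0 \le M + H \le A \implies M+H \in \cornC$) supplies the whole relative neighbourhood. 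Everything else (spanning, averaging, identifying $\relint$ with interior-in-$\herm{\hB_0}$) is routine convex geometry.
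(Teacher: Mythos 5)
Your proof is correct and takes essentially the same route as the paper: both produce an element of $\cornC$ whose support contains the support of every element of $\cornC$ (you by averaging a finite spanning family, the paper by taking an element of maximal rank and deriving $PXP=X$ from maximality), and then use hereditarity to show that all small Hermitian perturbations, supported on that subspace, of a suitably shrunken copy of this element remain in $\cornC$, giving a relative-interior point whose support projector fixes $\cornC$. Your self-correction about not being allowed to scale upward (convex corners are not cones) lands exactly on the domination-by-a-fixed-element argument that the paper's proof uses implicitly.
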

\begin{proof}
    Let $M \in \cornC \subseteq \linop{\hA}$ have the largest rank possible.
    If $M$ is full rank we are done, as $M/2$ is in the interior of $\cornC$.
    Otherwise, let $P$ be the projector onto the support of $M$.
    Note that $PXP = X$ for all $X \in \cornC$ because otherwise $(M+X)/2 \in \cornC$
    would contradict $M$ having the largest possible rank.
    Then $\{ X \in \linop{\hA} : PXP = X \}$ defines a subspace containing $\cornC$ and
    $M/2$ is in the interior of $\cornC$ relative to this subspace.
\end{proof}

\begin{lemma}
    Let $\cornC \subseteq \psd{\hA}$ be a convex corner
    and $\spaceD \subseteq \linop{\hA}$ be a subspace with $\spaceD^\dag = \spaceD$.
    Suppose the convex corner has no structure in directions perpendicular to $\spaceD$,
    \begin{align}
        W \in \cornC,
        E \in \spaceD^\perp,
        W + E \ge 0
        &\implies
        W + E \in \cornC
        \label{eq:condition_W_E}
    \end{align}
    Then $\cornC^\sharp = \her(\cornC^\sharp \cap \spaceD)$ where $\her$ is the hereditary
    closure~\eqref{eq:her_clos}.
    In other words, the maximal elements of $\cornC^\sharp$ are all in $\spaceD$
    (assuming $\cornC^\sharp$ is bounded).
    \label{thm:antiblocker_freespace}
\end{lemma}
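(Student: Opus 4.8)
The plan is to reduce the claim to an anti‑blocker identity and then run a bipolar‑type argument inside the subspace $\spaceD$. One inclusion is free: $\cornC^\sharp\cap\spaceD\subseteq\cornC^\sharp$, and $\cornC^\sharp$ is a convex corner by \cref{thm:antiblocker_is_corner}, hence hereditary, so it contains the hereditary closure of any of its subsets; this gives $\her(\cornC^\sharp\cap\spaceD)\subseteq\cornC^\sharp$. For the reverse, taking hereditary closures and closed convex hulls never changes the anti‑blocker, so $\bigl(\her(\cornC^\sharp\cap\spaceD)\bigr)^\sharp=(\cornC^\sharp\cap\spaceD)^\sharp$; hence by the second anti‑blocker theorem (\cref{thm:second_antiblocker}) it suffices to prove $(\cornC^\sharp\cap\spaceD)^\sharp=\cornC$. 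Here $(\cornC^\sharp\cap\spaceD)^\sharp\supseteq\cornC^{\sharp\sharp}=\cornC$ is immediate from $\cornC^\sharp\cap\spaceD\subseteq\cornC^\sharp$ via \cref{thm:antiblocker_inclusion}, so everything rests on showing $(\cornC^\sharp\cap\spaceD)^\sharp\subseteq\cornC$.

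The second step is to repackage hypothesis~\eqref{eq:condition_W_E}. Let $\Pi$ denote orthogonal (Hilbert--Schmidt) projection of $\linop{\hA}$ onto $\spaceD$; since $\spaceD^\dag=\spaceD$ it preserves Hermiticity and satisfies $\Tr(M\Pi(N))=\Tr(\Pi(M)N)$ for Hermitian $M,N$. Then \eqref{eq:condition_W_E} says exactly that $\cornC=(\cornC+\spaceD^\perp)\cap\psd{\hA}$, equivalently
\begin{align}
    \cornC=\{A\in\psd{\hA}:\Pi(A)\in\Pi(\cornC)\};
\end{align}
the forward containment is trivial, and conversely if $A\ge0$ and $\Pi(A)=\Pi(W)$ with $W\in\cornC$ then $A=W+(A-W)$ with $A-W\in\spaceD^\perp$, so $A\in\cornC$ by~\eqref{eq:condition_W_E}. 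Dually, for $B\in\spaceD$ one has $\Tr(AB)=\Tr(\Pi(A)B)$, whence
\begin{align}
    \cornC^\sharp\cap\spaceD=\{B\in\psd{\hA}\cap\spaceD:\Tr(gB)\le1\text{ for all }g\in\Pi(\cornC)\}.
\end{align}

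The third step is the core argument. Take $A\in(\cornC^\sharp\cap\spaceD)^\sharp$, so $A\ge0$ and $\Tr(\Pi(A)B)=\Tr(AB)\le1$ for every $B\in\cornC^\sharp\cap\spaceD$; by the first displayed identity it suffices to show $\Pi(A)\in\Pi(\cornC)$. If this failed, then since $\Pi(\cornC)$ is a convex subset of $\spaceD$ containing $0$ (replacing it by its closed convex hull where needed to handle unbounded $\cornC$), convex separation would produce a Hermitian $g_0\in\spaceD$ with $\Tr(g_0\Pi(A))>1\ge\Tr(g_0 g)$ for all $g\in\Pi(\cornC)$; by the second displayed identity it would then suffice to know that $g_0$ can be taken positive semidefinite, for then $g_0\in\cornC^\sharp\cap\spaceD$ and $\Tr(g_0\Pi(A))\le1$, a contradiction. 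This positivity is the main obstacle: the separator handed back by the separation theorem need not lie in $\psd{\hA}$, whereas the test elements populating $\cornC^\sharp\cap\spaceD$ carry exactly that constraint. The way around it is to exploit the heredity of $\cornC$ to control how $\Pi(\cornC)$ sits relative to $\psd{\hA}$ — absorbing the faces of $\Pi(\cornC)$ in the directions where $g_0$ is negative so that $g_0$ may be replaced by a positive semidefinite separator; this is cleanest when $\cornC^\sharp$ is bounded, i.e.\ $\alpha I\in\cornC$ for some $\alpha>0$, and in the intended application $\spaceD=S_0'$ with $\Pi=\commproj$ completely positive, so $\Pi(\cornC)\subseteq\psd{\hA}$ outright and the bipolar argument inside $\spaceD\cap\psd{\hA}$ goes through directly. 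Finally, the ``in other words'' clause is immediate once the identity is proved: a maximal element of $\cornC^\sharp=\her(\cornC^\sharp\cap\spaceD)$ lies below some element of $\cornC^\sharp\cap\spaceD\subseteq\cornC^\sharp$, so by maximality it equals that element and in particular belongs to $\spaceD$.
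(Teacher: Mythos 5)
Your reduction to showing $(\cornC^\sharp\cap\spaceD)^\sharp\subseteq\cornC$ is logically sound, and the reformulation of hypothesis~\eqref{eq:condition_W_E} as $\cornC=\{A\in\psd{\hA}:\Pi(A)\in\Pi(\cornC)\}$ is correct. But the proof stops exactly at the point where the lemma's actual content lies: you need the separating functional $g_0\in\spaceD$ to be \emph{positive semidefinite} in order to place it in $\cornC^\sharp\cap\spaceD$, and you do not establish this. The remark that "heredity absorbs the faces of $\Pi(\cornC)$ in the directions where $g_0$ is negative" is a gesture, not an argument, and the fallback that in the intended application $\Pi=\commproj$ is completely positive does not close it either: $\Pi(\cornC)\subseteq\psd{\hA}$ says the \emph{set being separated} consists of positive operators, but the separating hyperplane returned by Hahn--Banach is still only Hermitian; making it positive would require a hereditarity/second-anti-blocker argument carried out \emph{inside} $\spaceD$ (with its own order and pairing), which is not among the cited tools and is not supplied. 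Moreover the lemma is stated for an arbitrary self-adjoint subspace $\spaceD$, so restricting to the case of a completely positive projection would not prove the statement as given. A secondary wrinkle: strict separation needs $\Pi(A)$ to lie outside the \emph{closure} of $\conv(\Pi(\cornC))$, whereas your hypothesis only gives $\Pi(A)\notin\Pi(\cornC)$; since $\Pi(\cornC)$ need not be closed when $\cornC$ is unbounded, this boundary case is not handled.

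For comparison, the paper proves the hard direction without any separation theorem, and positivity comes out automatically. Fix $X\in\cornC^\sharp$ and consider
\begin{align}
    p^* = \max\{ \Tr((W+E)^\dag X) : W \in \cornC,\ E \in \spaceD^\perp,\ W+E \ge 0 \},
\end{align}
which is $\le 1$ by hypothesis~\eqref{eq:condition_W_E}. Forming the Lagrangian with a dual variable $Q\ge 0$ for the constraint $W+E\ge 0$, the dual program becomes
$\min\{\max\{\Tr(W^\dag Y):W\in\cornC\} : Y\in\spaceD,\ Y\ge X\}$ (with $Y=X+Q$), and Slater's condition holds because $\cornC$ has non-empty relative interior (\cref{thm:corn_relint}). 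Strong duality then hands you $Y\in\spaceD$ with $Y\ge X$ and $Y\in\cornC^\sharp$, i.e.\ every element of $\cornC^\sharp$ is dominated by an element of $\cornC^\sharp\cap\spaceD$ — the dual multiplier $Q\ge0$ plays precisely the role of the positive separator you were missing. If you want to salvage your bipolar route, you would need to prove that missing positivity step in full generality; as written, the proposal has a genuine gap at its core step.
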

\begin{proof}

    Let $X \in \cornC^\sharp$.  We will show there is a $Y \in \cornC^\sharp \cap \spaceD$
    such that $Y \ge X$.
    Since $X \in \cornC^\sharp$ we have $\max\{ \Tr(X^\dag W) : W \in \cornC \} \le 1$,
    or using~\eqref{eq:condition_W_E}, $p^* \le 1$ where
    \begin{align}
        p^* = \max\{ \Tr((W+E)^\dag X) : W \in \cornC, E \in \spaceD^\perp, W+E \ge 0 \}.
        \label{eq:W_E_primal}
    \end{align}
    This is a convex optimization problem on the convex domain
    $\mathcal{G} = \{ (W, E) : W \in \cornC, E \in \spaceD^\perp \}$.
    We will find the dual by the recipe in~\cite[chapter 5]{boyd2004convex}.
    The Lagrangian is
    \begin{align}
        L(W, E, Q)
        &= \Tr((W+E)^\dag X) + \Tr((W+E)^\dag Q)
        \\ &= \Tr(W^\dag (X+Q)) + \Tr(E^\dag (X+Q))
    \end{align}
    on the domain $(W, E) \in \mathcal{G}, Q \ge 0$.
    For a given $Q$ the Lagrange dual is the maximization of $L$ over the
    domain of the primal,
    \begin{align}
        g(Q) &= \max\{ L(W, E, Q) : (W, E) \in \mathcal{G} \}
        \\ &= \max\{ \Tr(W^\dag (X+Q)) + \Tr(E^\dag (X+Q)) : (W, E) \in \mathcal{G} \}
        \\ &=
        \begin{cases}
            \max\{ \Tr(W^\dag (X+Q)) : W \in \cornC \}
                & \textrm{ if } X+Q \in \spaceD
            \\ \infty & \textrm{ otherwise}.
        \end{cases}
    \end{align}
    The dual of~\eqref{eq:W_E_primal} is the minimization of this over $Q \ge 0$,
    \begin{align}
        d^* &= \min\{ g(Q) : Q \ge 0 \}
        \\ &= \min\{ \max\{ \Tr(W^\dag (X+Q)) : W \in \cornC \} : X+Q \in \spaceD, Q \ge 0 \}
        \\ &= \min\{ \max\{ \Tr(W^\dag Y) : W \in \cornC \} : Y \in \spaceD, Y \ge X \}.
        \label{eq:W_E_dual}
    \end{align}
    Since $\cornC$ has non-empty relative interior by \cref{thm:corn_relint},~\eqref{eq:W_E_primal}
    is strictly feasible and Slater's condition applies, giving $d^* = p^*$.
    But $p^* \le 1$ so $d^* \le 1$ and there is a feasible solution to
    \eqref{eq:W_E_dual} with $\max\{ \Tr(W^\dag Y) : W \in \cornC \} \le 1$.
    In other words $Y \in \cornC^\sharp$.
    Since $Y \in \cornC^\sharp \cap \spaceD$ and $Y \ge X$, we have shown that any
    $X \in \cornC^\sharp$ is dominated by an element of $\cornC^\sharp \cap \spaceD$.
\end{proof}

We are now in position to leverage \cref{thm:thin_diag} into a statement about how the structure
of the vertex set $S_0$ affects $\dsw(S, W)$.

\begin{theorem}
    Let $S$ be an $S_0$-graph and $\Sc = S^\perp + S_0$ its complement.
    Take $\blockscale$ from \cref{def:blockutils}.
    Then
    \begin{align}
        \dswbody(S) &= \{ W \ge 0 : \blockscale(W) \in \dswbodyS(\Sc) \}
        \label{eq:dswbody_psi}
        \\
        \dswbodyS(S) &= \her( \dswbodyS(S) \cap S_0' )
        \label{eq:dswbody_her_diag}
    \end{align}
    where $S_0'$ is the commutant of $S_0$.
    Equivalently, for any $W \in \psd{\hA}$,
    \begin{align}
        \dsw(S, W) &= \max\{ \Tr(W Z) : Z \ge 0, \dsw(\Sc, \blockscale(Z)) \le 1 \}
        \label{eq:max_WZ}
        \\ &= \min\{ \dsw(S, X) : X \ge W, X \in S_0' \}.
        \label{eq:min_X_diag_ge}
    \end{align}
    The maximal elements of $\dswbodyS(S)$
    are all in $S_0'$.
    In particular, for classical graphs the maximal elements of
    $\dswbodyS(S)$ are diagonal.
    \label{thm:dswSW}
\end{theorem}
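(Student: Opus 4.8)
The plan is to reduce the whole theorem to a single anti-blocker identity and then feed that identity into the convex-corner machinery already in place, with \cref{thm:antiblocker_freespace} doing most of the geometric work. First I would prove
\[
    \dswbody(\Sc) = \{ X \ge 0 : \blockscale(X) \in \dswbodyS(S) \}.
\]
Since $(\Sc)^\perp = S \cap S_0^\perp = \vecmod{S}{S_0}$, applying \cref{thm:holder_dsw} to the non-commutative graph $\Sc$ gives $\dsw(\vecmod{S}{S_0} + \mathbb{C}I, X) = \max\{ n \Tr(WX) : W \ge 0,\ \dsw(\Sc, W) \le 1 \}$, and \cref{thm:thin_diag} rewrites the left side as $\dsw(S, n\blockscale(X))$. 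Dividing by $n$ (\cref{thm:dsw_linear}) gives $\dsw(S, \blockscale(X)) = \max\{ \Tr(WX) : W \in \dswbodyS(\Sc) \}$, so $\dsw(S,\blockscale(X)) \le 1$ holds iff $X$ lies in the anti-blocker of $\dswbodyS(\Sc)$, namely $\dswbody(\Sc)$; on the other hand $\dsw(S,\blockscale(X)) \le 1$ iff $\blockscale(X) \in \dswbodyS(S)$ by \cref{thm:dswbody_abl}, using $\blockscale(X) \ge 0$ (\cref{thm:comproj_cp}). This proves the identity. Since $\Sc$ is itself an $S_0$-graph with $(\Sc)^c = (S \cap S_0^\perp) + S_0 = S$ (the orthogonal decomposition of $S$ relative to the subspace $S_0$) and $\blockscale$ depends only on $S_0$, substituting $\Sc$ for $S$ in the identity yields exactly \eqref{eq:dswbody_psi}.

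Next, \eqref{eq:max_WZ} is immediate: \cref{thm:dswbody_abl} turns $\dsw(\Sc, \blockscale(Z)) \le 1$ into $\blockscale(Z) \in \dswbodyS(\Sc)$, so by \eqref{eq:dswbody_psi} the feasible set in \eqref{eq:max_WZ} equals $\dswbody(S)$ and \cref{thm:dsw_from_dswbody} closes it. For \eqref{eq:dswbody_her_diag} I would invoke \cref{thm:antiblocker_freespace} with $\cornC = \dswbody(S)$ and $\spaceD = S_0'$ (self-adjoint, being a $C^*$-algebra). Its hypothesis \eqref{eq:condition_W_E} holds: if $W \in \dswbody(S)$, $E \in (S_0')^\perp$, and $W + E \ge 0$, then $\blockscale(W + E) = \blockscale(W)$ by linearity of $\blockscale$ and \cref{thm:blockscale_nullspace}, so $W + E \in \dswbody(S)$ by \eqref{eq:dswbody_psi}. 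As $\dsw(S,\cdot) \ge \opnorm{\cdot}$ (\cref{thm:dsw_bounded}) makes $\dswbodyS(S) = \dswbody(S)^\sharp$ bounded, \cref{thm:antiblocker_freespace} gives $\dswbodyS(S) = \her(\dswbodyS(S) \cap S_0')$ and that the maximal elements of $\dswbodyS(S)$ all lie in $S_0'$; for a classical graph $S_0 = \diags = S_0'$, so those maximal elements are diagonal.

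Finally, for \eqref{eq:min_X_diag_ge} the inequality ``$\le$'' is monotonicity (\cref{thm:dsw_monotoneW}). For ``$\ge$'', expand $\dsw(S,X) = \max\{ \Tr(XZ) : Z \in \dswbody(S) \}$ (\cref{thm:dsw_from_dswbody}) and apply Sion's minimax theorem to $(X,Z) \mapsto \Tr(XZ)$ on $\{ X \in S_0' : X \ge W \} \times \dswbody(S)$: the second factor is a bounded convex corner, hence compact, so $\min_X \max_Z \Tr(XZ) = \sup_Z \inf_X \Tr(XZ)$, with the outer minimum attained because $\dsw(S,X) \ge \opnorm{X}$ confines it to a bounded set. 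For fixed $Z \ge 0$ the inner program $\inf\{ \Tr(XZ) : X \in S_0',\ X - W \ge 0 \}$ is strictly feasible, and a short Lagrangian-duality computation, using that $\commproj$ is the orthogonal projection onto $S_0'$ (\cref{thm:comproj_projection}), identifies its value with $\sup\{ \Tr(\Gamma W) : \Gamma \ge 0,\ \commproj(\Gamma) = \commproj(Z) \}$. Taking the supremum over $Z \in \dswbody(S)$: a given $\Gamma \ge 0$ satisfies $\commproj(\Gamma) = \commproj(Z)$ for some such $Z$ exactly when $\Gamma \in \dswbody(S)$, because $\commproj(\Gamma) = \commproj(Z)$ forces $\Gamma - Z \in (S_0')^\perp$, hence $\blockscale(\Gamma) = \blockscale(Z)$ (\cref{thm:blockscale_nullspace}), and then \eqref{eq:dswbody_psi} gives $Z \in \dswbody(S) \Leftrightarrow \blockscale(\Gamma) \in \dswbodyS(\Sc) \Leftrightarrow \Gamma \in \dswbody(S)$. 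The whole chain therefore collapses to $\max\{ \Tr(\Gamma W) : \Gamma \in \dswbody(S) \} = \dsw(S,W)$, proving \eqref{eq:min_X_diag_ge}.

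I expect the first two paragraphs to be mostly bookkeeping; the main obstacle is the last one. One must pin down the Lagrangian dual of the inner conic program exactly and verify its Slater condition, and --- more delicately --- justify Sion's theorem over the \emph{unbounded} minimization domain $\{ X \in S_0' : X \ge W \}$, where the coercivity $\dsw(S,X) \ge \opnorm{X}$ is what rescues the argument. The conceptual crux is that the dual feasible sets reassemble into $\dswbody(S)$; this is exactly where the ``no structure perpendicular to $S_0'$'' property established in the second paragraph is used.
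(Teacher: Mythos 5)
Your proposal is correct, and for the identities \eqref{eq:dswbody_psi}, \eqref{eq:max_WZ}, \eqref{eq:dswbody_her_diag} it runs on the same fuel as the paper: \cref{thm:holder_dsw} combined with \cref{thm:thin_diag} (using $(\Sc)^\perp = \vecmod{S}{S_0}$), then \cref{thm:dswbody_abl} and \cref{thm:antiblocker_freespace} with $\spaceD = S_0'$. A minor organizational difference is that you prove the identity for $\Sc$ by a direct chain of equivalences and then substitute $S \leftrightarrow \Sc$ using $(\Sc)^c = S$; the paper instead proves \eqref{eq:max_WZ} first and passes to \eqref{eq:dswbody_psi} via the second anti-blocker theorem (\cref{thm:second_antiblocker}), which forces it to check that the right-hand side is a convex corner. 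Your version trades that check for the (easy) observation $S = (S\cap S_0^\perp)+S_0$, a small but genuine simplification. Where you genuinely diverge is \eqref{eq:min_X_diag_ge}: the paper gets it in two lines from \eqref{eq:dswbody_her_diag} together with \cref{thm:dsw_monotoneW,thm:dsw_linear} --- if $\lambda = \dsw(S,W) > 0$ then $W/\lambda \in \dswbodyS(S)$, so by \eqref{eq:dswbody_her_diag} there is $X' \in \dswbodyS(S)\cap S_0'$ with $X' \ge W/\lambda$, and $X = \lambda X'$ is feasible with $\dsw(S,X)\le\lambda$, while monotonicity gives the reverse inequality. Your route through Sion's minimax theorem and the conic Lagrangian dual of $\inf\{\Tr(XZ) : X\in S_0',\ X\ge W\}$ is sound as far as I can tell (compactness of $\dswbody(S)$ legitimizes the swap, Slater holds since $cI \succ W$ for large $c$, and the reassembly of the dual feasible sets into $\dswbody(S)$ via $\commproj$, \cref{thm:blockscale_nullspace} and \eqref{eq:dswbody_psi} is exactly right), but it re-derives by heavy machinery what your own second paragraph already hands you for free; the "main obstacle" you flag dissolves once you notice that \eqref{eq:dswbody_her_diag} plus positive homogeneity of $\dsw$ in the weight is all that \eqref{eq:min_X_diag_ge} needs.
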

\begin{proof}
    Let $n = \dim(\hA)$.
    \Cref{thm:holder_dsw} gives
    \begin{align}
        \dsw(S, W)
        &= n \max\{ \Tr(W Z) : Z \ge 0, \dsw(S^\perp + \mathbb{C}I, Z) \le 1 \}
        \\ &= \max\{ \Tr(W Z) : Z \ge 0, \dsw(S^\perp + \mathbb{C}I, n^{-1} Z) \le 1 \}
    \end{align}
    By \cref{thm:thin_diag},
    \begin{align}
        \dsw(S^\perp + \mathbb{C}I, n^{-1} Z)
        &=
        \dsw(\Sc, \blockscale(Z)).
    \end{align}
    So relation~\eqref{eq:max_WZ} is proved.

    Consider the convex corner
    \begin{align}
        \dswbodyS(S)
        &= \{ W \in \psd{\hA} : \dsw(S, W) \le 1 \}
        \label{eq:C_phi_1}
        \\ &= \{ W \in \psd{\hA} : \Tr(W Z) \le 1 \textrm{ for all } Z \ge 0, \dsw(\Sc, \blockscale(Z)) \le 1 \}
        \label{eq:C_phi_2}
        \\ &= \{ Z \in \psd{\hA} : \dsw(\Sc, \blockscale(Z)) \le 1 \}^\sharp
        \label{eq:C_phi_3}
        \\ &= \{ Z \in \psd{\hA} : \blockscale(Z) \in \dswbodyS(\Sc) \}^\sharp
    \end{align}
    where~\eqref{eq:C_phi_1} comes from \cref{thm:dswbody_abl},~\eqref{eq:C_phi_2}
    follows from~\eqref{eq:max_WZ}, and~\eqref{eq:C_phi_3} from the definition
    of anti-blocker.
    Applying the second anti-blocker theorem (\cref{thm:second_antiblocker})
    to the last line then gives~\eqref{eq:dswbody_psi}.
    This requires the r.h.s.\ to be a convex corner.  Convexity holds because $\blockscale$ is
    linear and $\dswbodyS(\Sc)$ is convex.  Hereditarity holds because $\blockscale$ is completely
    positive, giving $Z' \le Z \implies \blockscale(Z') \le \blockscale(Z)$.

    By \cref{thm:blockscale_nullspace}, $S_0'^{\perp}$ is in the null space of $\blockscale$,
    so considering~\eqref{eq:dswbody_psi} it is clear $\dswbody(S)$ meets the conditions
    for \cref{thm:antiblocker_freespace} with $\spaceD = S_0$,
    giving~\eqref{eq:dswbody_her_diag}.
    So the maximal elements of $\dswbodyS(S)$ are all in $S_0'$.
    From this and the linearity of $\dsw$,~\eqref{eq:min_X_diag_ge} follows.

    %
\end{proof}

From~\eqref{eq:min_X_diag_ge} we see that for classical graphs we gain no new
information by using non-diagonal weights: $\dsw(S, W)$ is a function of
$\dsw(S, X)$ with $X$ diagonal, which by \cref{thm:match_classical} corresponds
to the classical quantity $\lov(G, \diag(X))$.
This leads to a correspondence between the theta bodies of classical and non-commutative graphs.

\begin{theorem}
    Let $G$ be a classical graph and $S = \{ \ket{i}\bra{j} : i \simeqG j \}$.  Then
    \begin{align}
        \dswbodyS(S) &= \her \{ \diag(w) : w \in \thbody^\flat(G) \}
        \\
        \thbody^\flat(G) &= \{ \diag(W) : W \in \dswbodyS(S) \}
    \end{align}
    where $\diag(w)$ is the diagonal matrix with entries from the vector $w$ and
    $\diag(W)$ is the vector with entries taked from the diagonal of $W$.
    \label{thm:dswbodyS_classical}
\end{theorem}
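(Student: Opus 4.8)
The plan is to derive both identities from \cref{thm:dswSW} together with \cref{thm:match_classical}, so that the only genuinely classical input needed is the identity $\thbody^\flat(G)=\{w\ge 0:\lov(G,w)\le 1\}$.

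Write $\cornC=\dswbodyS(S)$ with $S=\SG$. By \cref{thm:dswbody_abl} this is $\{W\in\psd{\hA}:\dsw(S,W)\le 1\}$. Since $\SG$ is a $\diags$-graph whose vertex algebra $S_0=\diags$ satisfies $S_0'=S_0=\diags$, the last assertion of \cref{thm:dswSW} (equation \eqref{eq:dswbody_her_diag}) gives $\cornC=\her(\cornC\cap\diags)$; in particular every element of $\cornC$ is dominated by a diagonal element of $\cornC$.

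Next I would identify $\cornC\cap\diags$ with $\thbody^\flat(G)$ under the correspondence $w\leftrightarrow\diag(w)$. A diagonal operator $W$ lies in $\cornC$ exactly when $W=\diag(w)$ for some $w\ge 0$ with $\dsw(\SG,\diag(w))\le 1$, which by \cref{thm:match_classical} (extending $\lov(G,\cdot)$ to weights with zero entries by continuity, using \cref{thm:dsw_continuous} on the non-commutative side) is the condition $\lov(G,w)\le 1$. The set $\{w\ge 0:\lov(G,w)\le 1\}$ is a closed convex corner of vectors, by monotonicity and convexity of weighted $\lov$ exactly as in the proof of \cref{thm:dswbody_abl}, and by \eqref{eq:thbody2} the classical theta body $\thbody(G)$ is its classical anti-blocker; so the diagonal case of \cref{thm:second_antiblocker} gives $\thbody^\flat(G)=\thbody(G)^\flat=\{w\ge 0:\lov(G,w)\le 1\}$. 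Hence $\cornC\cap\diags=\{\diag(w):w\in\thbody^\flat(G)\}$, and substituting this into $\cornC=\her(\cornC\cap\diags)$ yields the first displayed identity.

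For the second identity I would verify both inclusions. If $w\in\thbody^\flat(G)$ then $\diag(w)\in\cornC$ and its vector of diagonal entries is $w$, so $\thbody^\flat(G)\subseteq\{\diag(W):W\in\cornC\}$. Conversely, given $W\in\cornC$, the relation $\cornC=\her(\cornC\cap\diags)$ produces a diagonal $D=\diag(d)\in\cornC$ with $W\le D$; then $0\le W_{ii}\le D_{ii}$ for each $i$, i.e.\ $0\le\diag(W)\le d$ entrywise, and since $d\in\thbody^\flat(G)$ and $\thbody^\flat(G)$ is hereditary we conclude $\diag(W)\in\thbody^\flat(G)$. The only delicate point is the classical identification $\thbody^\flat(G)=\{w\ge 0:\lov(G,w)\le 1\}$ (in particular the treatment of weights supported on a proper subset of $V(G)$, where $\lov(G,\cdot)$ must be taken as the continuous extension); this is standard from \cite{Grtschel1986}, and all of the non-commutative content has already been absorbed into \cref{thm:dswSW}. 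The remaining steps are routine bookkeeping about the $\diag$ map and the hereditary closure.
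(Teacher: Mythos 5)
Your proposal is correct and follows essentially the same route as the paper: apply \eqref{eq:dswbody_her_diag} of \cref{thm:dswSW} with $S_0=\diags$, translate diagonal membership in $\dswbodyS(S)$ into $\lov(G,w)\le 1$ via \cref{thm:match_classical}, and use the classical identification $\thbody^\flat(G)=\{w\ge 0:\lov(G,w)\le 1\}$, with the second identity settled by dominating an arbitrary $W\in\dswbodyS(S)$ by a diagonal element and invoking hereditarity. Your extra care about zero-weight entries and the diagonal second anti-blocker theorem only makes explicit what the paper leaves implicit.
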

\begin{proof}
    Let $\diags$ be the space of diagonal matrices.
    Starting from~\eqref{eq:dswbody_her_diag} and using the correspondence between
    $\dsw$ and $\lov$ for classical graphs and diagonal weights,
    \begin{align}
        \dswbodyS(S) &= \her( \dswbodyS(S) \cap \diags )
        \\ &= \her \{ W \in \diags : W \ge 0, \dsw(S, W) \le 1 \}
        \\ &= \her \{ \diag(w) : w \ge 0, \lov(G, w) \le 1 \}
        \\ &= \her \{ \diag(w) : w \in \thbody^\flat(G) \}.
    \end{align}
    And
    \begin{align}
        \thbody^\flat(G) &= \{ w : w \ge 0, \lov(G, w) \le 1 \}
        \\ &= \{ \diag(W) : W \ge 0, W \in \diags, \dsw(S, W) \le 1 \}
        \\ &= \{ \diag(W) : W \in \diags \cap \dswbodyS(S) \}
        \\ &= \{ \diag(W) : W \in \dswbodyS(S) \}
    \end{align}
    where the last equality follows from the fact that by \cref{thm:dswSW} the
    maximal elements of $\dswbodyS(S)$ are all in $\diags$.
\end{proof}

Applying~\eqref{eq:max_WZ} with $S=S_0$, so that $\Sc = \linop{\hA}$, gives the following corollary.

\begin{corollary}
    Let $S_0 \subseteq \linop{\hA}$ be a $C^*$-algebra and $W \in \psd{\hA}$.
    Let $\diags$ be the set of diagonal matrices.
    Then
    \begin{align}
        \dsw(S_0, W) &= \max\{ \Tr(W Z) : Z \ge 0, \opnorm{\blockscale(Z)} \le 1 \}
        \\
        \dsw(\diags, W) &= \max\{ \Tr(W Z) : Z \ge 0,
            Z_{ii} \le 1 \textnormal{ for } i \in \{1, \ldots, n\} \}
    \end{align}
    The second line is equivalent to $\gamma_2^*(W)$, the dual of the
    factorization norm from~\cite{Linial2007} (but in our case restricted to positive
    semidefinite operators).
    \label{thm:dswS0}
\end{corollary}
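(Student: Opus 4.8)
The plan is to obtain both identities as the special case $S = S_0$ of the duality in \cref{thm:dswSW}. First I would check that a $C^*$-algebra $S_0 \subseteq \linop{\hA}$ is itself an $S_0$-graph: it is self-adjoint and contains $I$ (its unit, which is $I_\hA$ by the structure theorem~\eqref{eq:vertex_algebra}), and since $S_0 S_0 \subseteq S_0$ while $I \in S_0$ forces $S_0 \subseteq S_0 S_0 S_0$, we get $S_0 S_0 S_0 = S_0$, so $S_0$ is an $S_0$-bimodule. Its complement is $\Sc = S_0^\perp + S_0$, and because any subspace of $\linop{\hA}$ spans $\linop{\hA}$ together with its Hilbert--Schmidt orthogonal complement, $\Sc = \linop{\hA}$.

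Next I would substitute into~\eqref{eq:max_WZ}, which gives $\dsw(S_0, W) = \max\{ \Tr(WZ) : Z \ge 0,\ \dsw(\linop{\hA}, \blockscale(Z)) \le 1 \}$. By \cref{thm:dsw_full}, $\dsw(\linop{\hA}, X) = \opnorm{X}$ for all $X \ge 0$, so the constraint becomes $\opnorm{\blockscale(Z)} \le 1$. This is exactly the first displayed formula.

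For the second formula I would take $S_0 = \diags$ and unwind \cref{def:blockutils} using the structure theorem: for the diagonal algebra the decomposition~\eqref{eq:vertex_algebra} has $n$ blocks, each with $\hAi$ and $\hYi$ one-dimensional, so $\dimAi = \dimYi = 1$ and $P_i = \ket{i}\bra{i}$. Hence $\blockscale(Z) = \sum_i \ket{i}\bra{i} Z \ket{i}\bra{i} = \sum_i Z_{ii} \ket{i}\bra{i}$, which for $Z \ge 0$ has $\opnorm{\blockscale(Z)} = \max_i Z_{ii}$. Feeding the equivalent constraint ``$Z_{ii} \le 1$ for all $i$'' into the first formula yields the claimed expression. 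The remark about $\gamma_2^*$ then follows by recalling the semidefinite characterization of the dual factorization norm from~\cite{Linial2007}, which coincides with this maximization once restricted to positive semidefinite arguments.

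There is no substantive obstacle here; the work is entirely bookkeeping. The only points needing care are (a) verifying $S_0^\perp + S_0 = \linop{\hA}$ so that \cref{thm:dsw_full} applies, and (b) evaluating $\blockscale$ correctly on the diagonal algebra, where the one-dimensional factors make the partial traces and identity factors collapse to scalars.
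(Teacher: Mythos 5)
Your proposal is correct and follows exactly the route the paper intends: apply~\eqref{eq:max_WZ} with $S = S_0$, observe $\Sc = S_0^\perp + S_0 = \linop{\hA}$ so that \cref{thm:dsw_full} gives $\dsw(\Sc, \cdot) = \opnorm{\cdot}$, and then specialize $\blockscale$ to the diagonal algebra where all blocks are one-dimensional. The bookkeeping you carry out (checking $S_0$ is an $S_0$-graph, evaluating $\blockscale(Z) = \sum_i Z_{ii}\ket{i}\bra{i}$) is just the detail the paper leaves implicit.
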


We now present a duality relation that generalizes both the classical
inequality~\eqref{eq:classical-ineq} as well as the duality relation from the previous chapter,
\cref{thm:holder_dsw}.
This will make clear that the extra factor of $n$ in \cref{thm:holder_dsw} was due to use of the
graph complement $S^\perp + \mathbb{C}I$: for $S_0 = \mathbb{C}I$ the scaling factor in the below
theorem takes the form $D = n I$.
When $S_0 = \diags$, this theorem reduces to the classical $\lov$ duality
relation~\eqref{eq:classical-ineq}.
In this case $S_0' = S_0 = \diags$, so equality is achievable for diagonal weights.

\begin{theorem}
    Let $S$ be an $S_0$-graph, and take $\diagscale$ from \cref{def:blockutils}.
    Then for any $W, V \in \psd{\hA}$,
    \begin{align}
        \dsw(S, V) \dsw(S^\perp + S_0, W) &\ge \Tr(V \sqrt{\diagscale} W \sqrt{\diagscale}).
        \label{eq:DWD_ineq1}
    \end{align}
    And if one of $V \in S_0'$ or $W \in S_0'$, the other can be chosen (also in $S_0'$) to achieve equality.
    \label{thm:DWD}
\end{theorem}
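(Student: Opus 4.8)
The plan is to reduce both the inequality and the equality case to the maximization formula~\eqref{eq:max_WZ} of \cref{thm:dswSW}, applied to the \emph{complement} graph $\Sc = S^\perp + S_0$ rather than to $S$ itself. First I would record the two elementary facts that $\Sc$ is again an $S_0$-graph (shown in the theorem preceding~\eqref{eq:vertex_algebra}) and that its complement is $(\Sc)^\perp + S_0 = (S^\perp + S_0)^\perp + S_0 = (S \cap S_0^\perp) + S_0 = S$, using $S_0 \subseteq S$; moreover $\diagscale$ depends only on $S_0$, hence is the same for $S$ and for $\Sc$. Applying~\eqref{eq:max_WZ} with $\Sc$ in the role of $S$ then gives, for every $W \in \psd{\hA}$,
\[
    \dsw(\Sc, W) = \max\{ \Tr(W Z) : Z \ge 0,\ \dsw(S, \blockscale(Z)) \le 1 \}.
\]

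For the inequality~\eqref{eq:DWD_ineq1}, fix $V, W \in \psd{\hA}$. The case $V = 0$ is trivial, and otherwise $\dsw(S, V) \ge \opnorm{V} > 0$ by \cref{thm:dsw_bounded}, so after dividing by $\dsw(S, V)$ it suffices to treat the case $\dsw(S, V) \le 1$, i.e.\ $V \in \dswbodyS(S)$. By~\eqref{eq:dswbody_her_diag} there is $X \in \dswbodyS(S) \cap S_0'$ with $X \ge V$; since $\commproj$ is a completely positive projection onto $S_0'$ (\cref{thm:comproj_cp,thm:comproj_projection}) we get $0 \le \commproj(V) \le \commproj(X) = X$, and hereditarity of the convex corner $\dswbodyS(S)$ gives $\commproj(V) \in \dswbodyS(S)$, i.e.\ $\dsw(S, \commproj(V)) \le 1$. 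Now take $Z = \sqrt{\diagscale} V \sqrt{\diagscale} \ge 0$; by \cref{thm:comproj_blockscale} one has $\blockscale(Z) = \commproj(V)$, so $Z$ is feasible in the displayed maximum and
\[
    \dsw(\Sc, W) \ge \Tr(W \sqrt{\diagscale} V \sqrt{\diagscale}) = \Tr(V \sqrt{\diagscale} W \sqrt{\diagscale}),
\]
which is~\eqref{eq:DWD_ineq1} after undoing the scaling.

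For the equality statement I would first treat $W \in S_0'$. In the displayed maximum, replacing any feasible $Z$ by $\commproj(Z)$ leaves the objective unchanged (because $\commproj$ is the Hilbert--Schmidt orthogonal projection onto $S_0'$ and $W \in S_0'$, so $\Tr(WZ) = \Tr(W\commproj(Z))$) and preserves feasibility (a direct computation from \cref{def:blockutils} gives $\blockscale(Z) = \blockscale(\commproj(Z))$); hence the maximum is unchanged if we restrict to $Z \in \psd{\hA} \cap S_0'$. On $S_0'$, the central invertible operator $\diagscale$ lets us rewrite the constraint via \cref{thm:comproj_blockscale} as $\blockscale(Z) = \diagscale^{-1/2} Z \diagscale^{-1/2}$, and substituting $V = \diagscale^{-1/2} Z \diagscale^{-1/2}$ (a bijection of $\psd{\hA} \cap S_0'$ onto itself) turns the maximum into
\[
    \dsw(\Sc, W) = \max\{ \Tr(V \sqrt{\diagscale} W \sqrt{\diagscale}) : V \in S_0',\ V \ge 0,\ \dsw(S, V) \le 1 \}.
\]
If $W = 0$ the claimed equality holds trivially; otherwise $\dsw(\Sc, W) > 0$, a maximizer $V^*$ is nonzero, and $\hat V = V^*/\dsw(S, V^*) \in S_0'$ satisfies $\dsw(S, \hat V) = 1$ and $\Tr(\hat V \sqrt{\diagscale} W \sqrt{\diagscale}) \ge \dsw(\Sc, W)$, which together with~\eqref{eq:DWD_ineq1} forces equality. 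The case $V \in S_0'$ then follows by applying the case just proved to the $S_0$-graph $\Sc$, whose complement is $S$ and whose $\diagscale$ is unchanged.

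The main obstacle I expect is the bookkeeping with the two scaling maps $\blockscale$ and $\diagscale^{\pm 1/2}$: one must apply \cref{thm:comproj_blockscale} in the correct orientation to see both $\blockscale(\sqrt{\diagscale} V \sqrt{\diagscale}) = \commproj(V)$ and $\blockscale|_{S_0'} = \diagscale^{-1/2}(\cdot)\diagscale^{-1/2}$, and one must verify the easy but essential facts $\commproj(\dswbodyS(S)) \subseteq \dswbodyS(S)$, $\blockscale = \blockscale \circ \commproj$, and $(\Sc)^c = S$ so that \cref{thm:dswSW} may legitimately be invoked for $\Sc$. Once these are in hand, the whole argument is a short chain of substitutions inside the maximum~\eqref{eq:max_WZ} written for the complement graph.
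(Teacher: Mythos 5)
Your proposal is correct and takes essentially the same route as the paper: both proofs rest on the maximization formula~\eqref{eq:max_WZ}, the substitution $Z \mapsto \sqrt{D}(\cdot)\sqrt{D}$, and \cref{thm:comproj_blockscale}, with the only structural difference being that you invoke~\eqref{eq:max_WZ} for $\Sc$ rather than for $S$, and you establish the inequality for arbitrary $V,W$ directly (via normalization and $\commproj(V)\in\dswbodyS(S)$) instead of first proving it with $V\in S_0'$ and then lifting via~\eqref{eq:min_X_diag_ge}.
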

\begin{proof}
    Define $\Sc = S^\perp + S_0$.
    For now assume $V \in S_0'$.
    Taking~\eqref{eq:max_WZ} from \cref{thm:dswSW},
    \begin{align}
        \dsw(S, V) &=
        \max\{ \Tr(V Z) : \blockscale(Z) \in \dswbodyS(\Sc) \}
        \\ &= \max\{ \Tr(V \sqrt{D} W \sqrt{D}) : \blockscale(\sqrt{D} W \sqrt{D}) \in \dswbodyS(\Sc) \}
        \\ &= \max\{ \Tr(V \sqrt{D} W \sqrt{D}) : \commproj(W) \in \dswbodyS(\Sc) \}
        \\ &= \max\{ \Tr(V \sqrt{D} \commproj(W) \sqrt{D}) : \commproj(W) \in \dswbodyS(\Sc) \}
    \end{align}
    where the last equality uses the fact that $V \in S_0' \implies \sqrt{D} V \sqrt{D} \in S_0'$,
    so the trace can only see the $S_0'$ projection of $W$.
    Since all instances of $W$ are projected onto $S_0'$, we can equivalently just restrict this
    variable to $S_0'$, giving
    \begin{align}
        \dsw(S, V)
        &= \max\{ \Tr(V \sqrt{D} W \sqrt{D}) : W \in S_0', W \in \dswbodyS(\Sc) \}
    \end{align}
    Considering that $W \in \dswbodyS(\Sc) \iff W \ge 0, \dsw(\Sc, W) \le 1$, by linearity of $\dsw$ this can be written
    \begin{align}
        \dsw(S, V)
        &= \max\{ \Tr(V \sqrt{D} W \sqrt{D}) / \dsw(\Sc, W) : W \ge 0, W \in S_0' \}.
    \end{align}
    Therefore, under our assumption $V \in S_0'$,~\eqref{eq:DWD_ineq1} holds for all
    $W \ge 0, W \in S_0'$ and there is some $W \ge 0, W \in S_0'$ achieving equality.

    Now drop the $V, W \in S_0'$ assumptions.
    By~\eqref{eq:min_X_diag_ge} of \cref{thm:dswSW} there are $V', W' \in S_0'$
    such that $V' \ge V$, $W' \ge W$, $\dsw(S, V') = \dsw(S, V)$ and
    $\dsw(\Sc, W') = \dsw(\Sc, W)$.
    Then
    \begin{align}
        \dsw(S, V) \dsw(\Sc, W) &= \dsw(S, V') \dsw(\Sc, W')
        \\ &\ge \Tr(V' \sqrt{D} W' \sqrt{D})
        \\ &\ge \Tr(V \sqrt{D} W \sqrt{D}).
    \end{align}
\end{proof}

At this point it will be instructive to explore this bound through some simple examples.

\begin{example}
    Let $S = S_0 = \mathbb{C}I \subseteq \linop{\hA}$.  Then $\Sc = S^\perp + S_0 = \linop{\hA}$ and
    $D = nI$.
    We have $\dsw(S, V) = n \Tr(V)$ and $\dsw(\Sc, W) = \opnorm{W}$.
    \Cref{thm:DWD} then says $n \Tr(V) \opnorm{W} \ge \Tr(V \sqrt{n} W \sqrt{n}) = n \Tr(V W)$.
    And for every $V \in S_0' = \linop{\hA}$ there is $W \in S_0'$ giving equality.
\end{example}

\begin{example}
    Let $S = S_0 = \linop{\hA}$.  Then $\Sc = S^\perp + S_0 = \linop{\hA}$ and $D = n^{-1} I$.
    We have $\dsw(S, V) = \opnorm{V}$ and $\dsw(\Sc, W) = \opnorm{W}$.
    \Cref{thm:DWD} gives $\opnorm{V} \opnorm{W} \ge \Tr(V n^{-1/2} W n^{-1/2}) = n^{-1} \Tr(V W)$.
    And for every $V \in S_0' = \mathbb{C}I$ there is $W \in S_0'$ giving equality,
    i.e., $1 = \opnorm{I}^2 = n^{-1} \Tr(I)$.
\end{example}

\begin{example}
    Let $S = S_0$ for some arbitrary $C^*$-algebra $S_0 \subseteq \linop{\hA}$.
    Then $\Sc = S^\perp + S_0 = \linop{\hA}$ and $\dsw(\Sc, W) = \opnorm{W}$.
    \Cref{thm:DWD} gives $\dsw(S, V) \opnorm{W} \ge \Tr(V \sqrt{D} W \sqrt{D})$.
    If $V \in S_0'$ then $\dsw(S, V) = \max\{\Tr(V \sqrt{D} W \sqrt{D}) / \opnorm{W} \} = \Tr(V D)$,
    since the max is obtained with $W=I$.

    In the case of $S_0$ being the diagonal matrices, we have $S_0' = S_0$, $D=I$ and
    $\dsw(S, V) = \Tr(V)$.
    In this case $S$ is the classical empty graph.
\end{example}

Having proved a duality relation for $\dsw$ on an $S_0$ graph versus its $S_0$-complement, we now
translate it into a statement about the corresponding theta bodies.
Whereas classical graphs satisfy $\thbody(G) = \thbody^\flat(\Gc)$ (with $\flat$ being the classical
anti-blocker), for the case of non-commutative graphs we need a special type of anti-blocker that
bakes in the $D$ scaling seen in \cref{thm:DWD}.

Henceforth we will be using the notation
$\blockscale(\cornC) = \{ \blockscale(X) : X \in \cornC \}$
for any convex corner $\cornC$.
Note that since $\blockscale$ is convex (in fact, linear) and completely positive,
$\blockscale(\cornC)$ is a convex corner whenever $\cornC$ is.
We similarly extend $\commproj$ to operate on convex corners.

\begin{definition}
    For a convex corner $\cornC$, define
    \begin{align}
        \cornC^\abpsi &= \{ W \ge 0 : \blockscale(W) \in \cornC \}^\sharp
    \end{align}
    \label{def:abpsi}
\end{definition}

The following two lemmas will be needed for proving some useful properties of $\cornC^\abpsi$.

\begin{lemma}
    Let $\cornC$ be a convex corner such that all maximal elements are in $S_0'$.  Then
    \begin{align}
        \commproj(\cornC) &= \cornC \cap S_0'
    \end{align}
    \label{thm:corn_commproj_cap}
\end{lemma}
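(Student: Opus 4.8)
The plan is to prove the two set inclusions separately, with the nontrivial direction being $\commproj(\cornC) \subseteq \cornC \cap S_0'$. The easy inclusion is immediate from \cref{thm:comproj_projection}: if $X \in \cornC \cap S_0'$ then $\commproj(X) = X$, so $X = \commproj(X) \in \commproj(\cornC)$, giving $\cornC \cap S_0' \subseteq \commproj(\cornC)$.

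For the reverse inclusion, fix $X \in \cornC$ and consider $\commproj(X)$. First note $\commproj(X) \in S_0'$ automatically, since $\commproj$ is the orthogonal projector onto $S_0'$ by \cref{thm:comproj_projection}, and $\commproj(X) \ge 0$ since $\commproj$ is completely positive by \cref{thm:comproj_cp} and $X \ge 0$. So it only remains to check $\commproj(X) \in \cornC$. Here I would use the hypothesis in the form that has been used throughout the paper (cf.\ \cref{thm:antiblocker_freespace,thm:dswSW}): ``all maximal elements of $\cornC$ are in $S_0'$'' means precisely $\cornC = \her(\cornC \cap S_0')$, i.e.\ every element of $\cornC$ is dominated by some element of $\cornC \cap S_0'$. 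Thus there is $Y \in \cornC \cap S_0'$ with $X \le Y$. Complete positivity of $\commproj$ yields monotonicity, so $\commproj(X) \le \commproj(Y)$, and since $Y \in S_0'$ we have $\commproj(Y) = Y$ by \cref{thm:comproj_projection}. Hence $0 \le \commproj(X) \le Y$ with $Y \in \cornC$, and hereditarity \eqref{eq:her_cond} gives $\commproj(X) \in \cornC$. Combining, $\commproj(X) \in \cornC \cap S_0'$, so $\commproj(\cornC) \subseteq \cornC \cap S_0'$, completing the proof.

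The only point that needs care is the reading of the hypothesis: one should interpret ``$\cornC$ has all maximal elements in $S_0'$'' as ``$\cornC$ is the hereditary closure of $\cornC \cap S_0'$'', so that the domination $X \le Y$ with $Y \in \cornC \cap S_0'$ is available for \emph{every} $X \in \cornC$ (this holds for the bounded convex corners arising here, where maximal elements exist and dominate everything). Beyond that, the argument is just monotonicity of the completely positive map $\commproj$ together with the fact that $\commproj$ fixes $S_0'$ pointwise, so I do not expect any real obstacle.
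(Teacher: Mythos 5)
Your proof is correct and is essentially the same as the paper's: both directions are handled identically, using that $\commproj$ fixes $S_0'$ for the easy inclusion, and for the reverse using domination $X \le Y$ with $Y \in \cornC \cap S_0'$, monotonicity of the completely positive map $\commproj$, $\commproj(Y)=Y$, and hereditarity. Your remark on reading the hypothesis as ``every element is dominated by an element of $\cornC \cap S_0'$'' matches the (implicit) step in the paper's own argument.
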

\begin{proof}
    Clearly
    $\commproj(\cornC) \supseteq \cornC \cap S_0'$
    because $\commproj(W) = W$ whenever $W \in S_0'$.

    Suppose $W \in \commproj(\cornC)$.  Then there is $X \in \cornC$ with
    $W = \commproj(X)$.  Since maximal elements of $\cornC$ are in $S_0'$, there is
    some $Y \in \cornC \cap S_0'$ such that $Y \ge X$.
    But $\commproj$ is completely positive, so $W = \commproj(X) \le \commproj(Y) = Y$.
    Because $\cornC$ is hereditary, $W \in \cornC$.
    Being in the image of $\commproj$, we have $W \in S_0'$.
    Therefore $\commproj(\cornC) \subseteq \cornC \cap S_0'$.
\end{proof}

\begin{lemma}
    Let $\cornC \subseteq \linop{\hA}$ be a convex corner and $Z \in \linop{\hA}$ invertible.
    Then $(Z \cornC Z^\dag)^\sharp = Z^{-\dag} \cornC^\sharp Z^{-1}$.
    \label{thm:antiblocker_mult}
\end{lemma}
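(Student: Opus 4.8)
The plan is to unfold both sides directly from \cref{def:antiblocker} and exploit cyclicity of the trace. The key preliminary observation is that, because $Z$ is invertible, the map $X \mapsto Z^\dag X Z$ is a linear bijection of $\herm{\hA}$ onto itself which restricts to a bijection of $\psd{\hA}$ onto itself, with inverse $X \mapsto Z^{-\dag} X Z^{-1}$; in particular $Z \cornC Z^\dag \subseteq \psd{\hA}$ (so its anti-blocker is defined), and $B \ge 0$ if and only if $Z^\dag B Z \ge 0$.

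Now fix $B \in \psd{\hA}$. By definition, $B \in (Z \cornC Z^\dag)^\sharp$ precisely when $\Tr\big((Z X Z^\dag) B\big) \le 1$ for every $X \in \cornC$. By cyclicity of the trace, $\Tr(Z X Z^\dag B) = \Tr\big(X (Z^\dag B Z)\big)$, so this condition is equivalent to $\Tr\big(X (Z^\dag B Z)\big) \le 1$ for all $X \in \cornC$, which (using $Z^\dag B Z \ge 0$) says exactly that $Z^\dag B Z \in \cornC^\sharp$. Writing $C = Z^\dag B Z$, so that $B = Z^{-\dag} C Z^{-1}$, we conclude that $B \in (Z \cornC Z^\dag)^\sharp$ if and only if $B = Z^{-\dag} C Z^{-1}$ for some $C \in \cornC^\sharp$. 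Since $C \mapsto Z^{-\dag} C Z^{-1}$ is a bijection of $\psd{\hA}$, this is precisely the claim $(Z \cornC Z^\dag)^\sharp = Z^{-\dag}\, \cornC^\sharp\, Z^{-1}$.

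There is no real obstacle here: every step in the chain above is an equivalence, so no separate argument for the reverse inclusion is required. The only point needing minor care is the preliminary observation that conjugation by an invertible operator preserves positive semidefiniteness in both directions, which is what lets us pass freely between the conditions $B \ge 0$ and $Z^\dag B Z \ge 0$ and hence conclude that the two sets actually coincide rather than merely being comparable.
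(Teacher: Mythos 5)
Your proof is correct and follows essentially the same route as the paper's: unfold the definition of the anti-blocker, use cyclicity of the trace to rewrite $\Tr(B\,ZXZ^\dag)$ as $\Tr((Z^\dag B Z)X)$, and then identify the resulting set with $Z^{-\dag}\cornC^\sharp Z^{-1}$ via the conjugation bijection. Your added remarks on positivity being preserved in both directions are a welcome bit of extra care but do not change the argument.
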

\begin{proof}
    \begin{align}
        (Z \cornC Z^\dag)^\sharp
        &= \{ W \ge 0 : \Tr(W^\dag X) \le 1 \textrm{ for all } X \in Z \cornC Z^\dag \}
        \\ &= \{ W \ge 0 : \Tr(W^\dag Z X Z^\dag) \le 1 \textrm{ for all } X \in \cornC \}
        \\ &= \{ W \ge 0 : \Tr((Z^\dag W Z)^\dag X) \le 1 \textrm{ for all } X \in \cornC \}
        \\ &= \{ W \ge 0 : Z^\dag W Z  \in \cornC^\sharp \}
        \\ &= Z^{-\dag} \cornC^\sharp Z^{-1}.
    \end{align}
\end{proof}

\begin{lemma}
    For a convex corner $\cornC$, the following hold.
    \begin{thmlist}
        \item The maximal elements of $\cornC^\abpsi$ are in $S_0'$.
            \label[theorem]{thm:abpsi_max_S0}
        \item $\cornC^\abpsi = \her \blockscale(\cornC^\sharp)$, if the maximal elements of
            $\cornC$ are all in $S_0'$.
            \label[theorem]{thm:abpsi_alternate}
        \item $\cornC^{\abpsi\abpsi} = \her( \cornC \cap S_0' )$.
            \label[theorem]{thm:abpsi_involution}
    \end{thmlist}
\end{lemma}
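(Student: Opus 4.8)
The plan is to route everything through the convex corner $\mathcal{E} := \{ W \ge 0 : \blockscale(W) \in \cornC \}$, which is a convex corner because $\blockscale$ is linear, continuous and completely positive, and which satisfies $\cornC^\abpsi = \mathcal{E}^\sharp$ by \cref{def:abpsi}. First I would record three elementary facts, all immediate from \cref{def:blockutils}, \cref{thm:comproj_blockscale} and \cref{thm:blockscale_nullspace}: (a) $\blockscale$ is a self-adjoint superoperator and $\commproj$ is the orthogonal projection onto $S_0'$, so $\Tr(ZW) = \Tr(Z\commproj(W))$ whenever $Z \in S_0'$; (b) $\commproj(W) = \diagscale\,\blockscale(W)$, with $\diagscale$ central in $S_0'$, and consequently $\blockscale$ restricted to $S_0'$ is multiplication by $\diagscale^{-1}$; (c) $\{\blockscale(W) : W \ge 0\} = \psd{\hA}\cap S_0'$, since the range of $\blockscale$ lies in $S_0'$ while $B = \blockscale(\diagscale B)$ recovers any $B \in \psd{\hA}\cap S_0'$, and hence (as $\cornC \subseteq \psd{\hA}$) $\{\blockscale(W) : W \ge 0,\ \blockscale(W)\in\cornC\} = \cornC\cap S_0'$.

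For \itemref{thm:abpsi_max_S0}: since $\blockscale$ vanishes on $S_0'^\perp$, the corner $\mathcal{E}$ has no structure perpendicular to the self-adjoint subspace $S_0'$, i.e.\ it satisfies hypothesis \eqref{eq:condition_W_E} of \cref{thm:antiblocker_freespace} with $\spaceD = S_0'$. That lemma immediately gives $\cornC^\abpsi = \mathcal{E}^\sharp = \her(\mathcal{E}^\sharp\cap S_0')$, so the maximal elements of $\cornC^\abpsi$ lie in $S_0'$.

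For \itemref{thm:abpsi_alternate}: it remains to identify $\mathcal{E}^\sharp\cap S_0'$. For $Z \in S_0'$, facts (a) and (b) give $\Tr(ZW) = \Tr((Z\diagscale)\,\blockscale(W))$, and by (c) the operator $\blockscale(W)$ runs over $\cornC\cap S_0'$ as $W$ runs over $\mathcal{E}$; hence $Z\in\mathcal{E}^\sharp\cap S_0'$ iff $Z \ge 0$, $Z\in S_0'$, and $\Tr((Z\diagscale)B)\le 1$ for all $B\in\cornC\cap S_0'$. This is where the hypothesis is used: if every maximal element of $\cornC$ lies in $S_0'$, then every $B\in\cornC$ is dominated by one in $\cornC\cap S_0'$, so since $Z\diagscale\ge 0$ the last condition is equivalent to $Z\diagscale\in\cornC^\sharp$; writing $V = Z\diagscale$, i.e.\ $Z = \blockscale(V)$, this reads $\mathcal{E}^\sharp\cap S_0' = \blockscale(\cornC^\sharp\cap S_0')$. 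Finally $\blockscale(\cornC^\sharp) = \blockscale(\cornC^\sharp\cap S_0')$: for $Y\in\cornC^\sharp$ one has $\blockscale(Y) = \blockscale(\commproj(Y))$ by \cref{thm:blockscale_nullspace}, and $\commproj(Y)\in\cornC^\sharp\cap S_0'$ since it is positive semidefinite and $\Tr(\commproj(Y)B) = \Tr(Y\commproj(B))\le 1$ for $B\in\cornC$, using $\commproj(\cornC) = \cornC\cap S_0'\subseteq\cornC$ from \cref{thm:corn_commproj_cap}. Combining, $\cornC^\abpsi = \her(\mathcal{E}^\sharp\cap S_0') = \her\blockscale(\cornC^\sharp)$.

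For \itemref{thm:abpsi_involution}: apply \itemref{thm:abpsi_alternate} with $\cornC^\abpsi$ in place of $\cornC$, which is legitimate because its maximal elements lie in $S_0'$ by \itemref{thm:abpsi_max_S0}; this gives $\cornC^{\abpsi\abpsi} = \her\blockscale((\cornC^\abpsi)^\sharp)$. But $(\cornC^\abpsi)^\sharp = (\mathcal{E}^\sharp)^\sharp = \mathcal{E}$ by the second anti-blocker theorem (\cref{thm:second_antiblocker}), so $\cornC^{\abpsi\abpsi} = \her\blockscale(\mathcal{E}) = \her\{\blockscale(W):W\ge 0,\ \blockscale(W)\in\cornC\} = \her(\cornC\cap S_0')$ by fact (c). I expect the bottleneck to be \itemref{thm:abpsi_alternate}: keeping the scalings $\blockscale$, $\commproj$, $\diagscale$ straight, and the maximal-element step, which tacitly assumes $\cornC$ is bounded so that maximal elements exist — harmless in every application in this paper, since the corners involved lie in the unit ball by \cref{thm:dsw_bounded}. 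An alternative organization that defers the $S_0'$ computation writes $\mathcal{E} = \diagscale^{1/2}\{V\ge 0:\commproj(V)\in\cornC\}\diagscale^{1/2}$ via \cref{thm:comproj_blockscale}, pulls the conjugation out of the anti-blocker with \cref{thm:antiblocker_mult}, and then runs the same restriction argument on the cleaner corner $\{V\ge 0:\commproj(V)\in\cornC\}$.
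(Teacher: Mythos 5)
Your proposal is correct and follows essentially the same route as the paper: part (i) via \cref{thm:antiblocker_freespace} with the null space of $\blockscale$, part (ii) by identifying $\cornC^\abpsi \cap S_0'$ with $\blockscale(\cornC^\sharp)$ using the $\commproj$/$\blockscale$/$\diagscale$ relations, \cref{thm:corn_commproj_cap}, and the maximal-elements hypothesis, and part (iii) by applying (ii) to $\cornC^\abpsi$ together with the second anti-blocker theorem applied to $\mathcal{E} = \{W \ge 0 : \blockscale(W) \in \cornC\}$. The only cosmetic difference is that in (ii) you evaluate the pairing $\Tr(ZW) = \Tr((Z\diagscale)\,\blockscale(W))$ for $Z \in S_0'$ directly, rather than routing through $\commproj(\mathcal{E}) = \sqrt{\diagscale}\,(\cornC \cap S_0')\,\sqrt{\diagscale}$ and \cref{thm:antiblocker_mult} as the paper does.
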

\begin{proof}
    \itemref{thm:abpsi_max_S0}:
    Since $S_0'^\perp$ is in the null space of $\blockscale$, \cref{thm:antiblocker_freespace}
    applies.

    \itemref{thm:abpsi_alternate}:
    Let $\cornB = \{ W \ge 0 : \blockscale(W) \in \cornC \}$ so that
    $\cornC^\abpsi = \cornB^\sharp$.
    Then
    \begin{align}
        \commproj(\cornB)
        &= \{ \commproj(W) : W \ge 0, \blockscale(W) \in \cornC \}
        \\ &= \{ \commproj(W) : W \ge 0, D^{-1/2} \commproj(W) D^{-1/2} \in \cornC \}
        \\ &= \{ \commproj(W) : W \ge 0, \commproj(W) \in \sqrt{D} \cornC \sqrt{D} \}
        \\ &= \sqrt{D} \cornC \sqrt{D} \cap S_0'
        \\ &= \sqrt{D} (\cornC \cap S_0') \sqrt{D}
        \label{eq:deltaB_DCD}
    \end{align}
    where the second line uses \cref{thm:blockscale_commutes,thm:comproj_blockscale}
    and the fourth line uses that the image of $W \ge 0$ under $\commproj$ is
    $\psd{\hA} \cap S_0'$.
    Since the maximal elements of $\cornC^\abpsi$ are all in $S_0'$, and the rest follows from
    the hereditary condition, we need only consider $\cornC^\abpsi \cap S_0'$.
    \begin{align}
        \cornC^\abpsi \cap S_0'
        &= \{ X \in S_0' : X \ge 0, \Tr(X W) \le 1 \textrm{ for all } W \in \cornB \}
        \\ &= \{ X \in S_0' : X \ge 0, \Tr(\commproj(X) W) \le 1 \textrm{ for all } W \in \cornB \}
        \\ &= \{ X \in S_0' : X \ge 0, \Tr(X \commproj(W)) \le 1 \textrm{ for all } W \in \cornB \}
    \end{align}
    where the second line uses $\commproj(X) = X$ when $X \in S_0'$ and the third line follows
    because $\commproj$ is a projection (onto $S_0'$) and hence is self-adjoint (as a superoperator).
    Now, $\{ \commproj(W) : W \in \cornB\}$ is equivalent to $\{ W : W \in \commproj(\mathcal{B}) \}$, so
    \begin{align}
        \cornC^\abpsi \cap S_0'
        &= \{ X \in S_0' : X \ge 0, \Tr(X W) \le 1 \textrm{ for all } W \in \commproj(\cornB) \}
        \\ &= S_0' \cap (\commproj(B))^\sharp.
    \end{align}
    Since $S_0'^\perp$ is in the null space of $\commproj$, \cref{thm:antiblocker_freespace} applies
    and the maximal elements of $(\commproj(B))^\sharp$ are all in $S_0'$.
    Apply \cref{thm:corn_commproj_cap} to the right hand side to give
    $\cornC^\abpsi \cap S_0' = \commproj((\commproj(B))^\sharp)$.
    Substituting~\eqref{eq:deltaB_DCD} gives
    \begin{align}
        \cornC^\abpsi \cap S_0'
        &= \commproj((\sqrt{D}(\cornC \cap S_0') \sqrt{D})^\sharp)
        \\ &= \commproj(D^{-1/2} (\cornC \cap S_0')^\sharp D^{-1/2})
        \\ &= \commproj(D^{-1/2} \cornC^\sharp D^{-1/2})
        \\ &= \blockscale(C^\sharp)
    \end{align}
    where the second line uses \cref{thm:antiblocker_mult}, and the third line uses
    $\cornC^\sharp = (\cornC \cap S_0')^\sharp$ because the maximal elements of $\cornC$ are in
    $S_0'$ and the anti-blocker only cares about the maximal elements.
    The last line uses \cref{thm:comproj_blockscale}.

    Since, by~\itemref{thm:abpsi_max_S0}, all maximal elements of $\cornC^\abpsi$ are in $S_0'$, we have
    \begin{align}
        \cornC^\abpsi
        &= \her(\cornC^\abpsi \cap S_0')
        \\ &= \her(\blockscale(C^\sharp))
    \end{align}

    \itemref{thm:abpsi_involution}:
    The maximal elements of $\cornC^\abpsi$ are in $S_0'$ so we can apply~\itemref{thm:abpsi_alternate}.
    \begin{align}
        \cornC^{\abpsi\abpsi}
        &= \her( \blockscale(\cornC^{\abpsi\sharp}))
        \\ &= \her( \blockscale(\{ W \ge 0 : \blockscale(W) \in \cornC \}^{\sharp\sharp}))
        \\ &= \her( \blockscale(\{ W \ge 0 : \blockscale(W) \in \cornC \}))
    \end{align}
    where the second line uses \cref{def:abpsi} and the third line uses the second anti-blocker
    theorem.  That requires the argument to be a convex corner, which it is: convexity follows from
    linearity of $\blockscale$ and convexity of $\cornC$ whereas hereditarity follows from
    hereditarity of $\cornC$ and $\blockscale$ being positive semidefinite.
    Moving the first $\blockscale$ into the set notation, we have
    \begin{align}
        \cornC^{\abpsi\abpsi}
        &= \her( \{ \blockscale(W) : W \ge 0, \blockscale(W) \in \cornC \})
        \\ &= \her( \cornC \cap S_0' )
    \end{align}
    where the second line holds because the image of $\blockscale$ is $S_0'$.
\end{proof}

Finally, we apply the $\abpsi$ anti-blocker to give a duality relation between theta bodies.
When $S_0 = \diags$ (the diagonal matrices), this reproduces the classical result
$\thbody(\Gc) = \thbody^\flat(G)$
When $S_0 = \mathbb{C}I$ this reproduces the theta body relation from \cref{thm:holder_dsw}.

\begin{theorem}
    Let $S$ be an $S_0$-graph and $\Sc = S^\perp + S_0$ its complement graph.  Then
    \begin{align}
        \dswbodyS(\Sc) &= \dswbody^{\sharp\abpsi}(S)
    \end{align}
    \label{thm:dswbody_abpsi}
\end{theorem}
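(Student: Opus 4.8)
The plan is to obtain the identity by applying the $\abpsi$ anti-blocker to the relation between $\dswbody(S)$ and $\dswbodyS(\Sc)$ already established in \cref{thm:dswSW}, and then to collapse the resulting double $\abpsi$ using \cref{thm:abpsi_involution} together with the fact that all maximal elements of $\dswbodyS(\Sc)$ lie in $S_0'$.

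First I would start from equation~\eqref{eq:dswbody_psi} of \cref{thm:dswSW}, namely $\dswbody(S) = \{ W \ge 0 : \blockscale(W) \in \dswbodyS(\Sc) \}$, and apply the ordinary anti-blocker $\sharp$ to both sides. The left-hand side becomes $\dswbody^\sharp(S) = \dswbodyS(S)$. The right-hand side is, by \cref{def:abpsi} applied with $\cornC = \dswbodyS(\Sc)$, exactly $\bigl(\dswbodyS(\Sc)\bigr)^\abpsi$. Hence $\dswbodyS(S) = \bigl(\dswbodyS(\Sc)\bigr)^\abpsi$.

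Next I would apply $\abpsi$ to both sides of this equation. The left-hand side becomes $\dswbody^{\sharp\abpsi}(S)$, the quantity of interest. The right-hand side becomes $\bigl(\dswbodyS(\Sc)\bigr)^{\abpsi\abpsi}$, which by \cref{thm:abpsi_involution} equals $\her\bigl(\dswbodyS(\Sc) \cap S_0'\bigr)$. Finally, since $\Sc = S^\perp + S_0$ is itself an $S_0$-graph, \cref{thm:dswSW} applies to $\Sc$ and gives $\dswbodyS(\Sc) = \her\bigl(\dswbodyS(\Sc) \cap S_0'\bigr)$. Chaining these equalities yields $\dswbody^{\sharp\abpsi}(S) = \her\bigl(\dswbodyS(\Sc) \cap S_0'\bigr) = \dswbodyS(\Sc)$, which is the assertion.

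Since all the substantive work has been front-loaded into \cref{thm:dswSW} and \cref{thm:abpsi_involution}, the only point requiring care is the convex-corner bookkeeping: one must check that $\dswbodyS(S)$ and $\dswbodyS(\Sc)$ are genuine convex corners --- they are, being anti-blockers (\cref{thm:antiblocker_is_corner}) --- so that $\abpsi$, the second anti-blocker theorem, and \cref{thm:abpsi_involution} may all be invoked on them, and one must confirm that $\Sc$ legitimately counts as an $S_0$-graph, which was shown earlier in this section. With those points dispatched the argument is a short formal manipulation and presents no real obstacle.
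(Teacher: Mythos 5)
Your proposal is correct. It opens exactly as the paper does: taking the anti-blocker of~\eqref{eq:dswbody_psi} and recognizing the right-hand side via \cref{def:abpsi} gives $\dswbodyS(S) = \dswbody^{\sharp\abpsi}(\Sc)$. Where you diverge is in the finishing step. The paper simply applies this derived identity with $\Sc$ in place of $S$ (both are $S_0$-graphs and $S^{cc} = S$, since $(S^\perp + S_0)^\perp + S_0 = (S \cap S_0^\perp) + S_0 = S$), which immediately yields $\dswbodyS(\Sc) = \dswbody^{\sharp\abpsi}(S)$ in one line. You instead apply $\abpsi$ a second time and collapse $\bigl(\dswbodyS(\Sc)\bigr)^{\abpsi\abpsi}$ via \cref{thm:abpsi_involution}, then identify $\her\bigl(\dswbodyS(\Sc)\cap S_0'\bigr)$ with $\dswbodyS(\Sc)$ by invoking~\eqref{eq:dswbody_her_diag} for the graph $\Sc$. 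Both finishes are legitimate and rest on $\Sc$ being an $S_0$-graph; the paper's is lighter, needing only the involutive nature of complementation, while yours leans on the heavier $\abpsi$-involution machinery and on the structural fact that the maximal elements of $\dswbodyS(\Sc)$ lie in $S_0'$ --- which has the side benefit of making that consistency explicit, at the cost of a slightly longer chain of lemmas. Your bookkeeping remarks (that the sets involved are convex corners, being anti-blockers, and that $\Sc$ is an $S_0$-graph) cover the hypotheses actually needed, so there is no gap.
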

\begin{proof}
    Starting from \eqref{eq:dswbody_psi},
    \begin{align}
        \dswbody(S) &= \{ Z \ge 0 : \blockscale(Z) \in \dswbodyS(\Sc) \}
        \\ \dswbodyS(S) &= \{ Z \ge 0 : \blockscale(Z) \in \dswbodyS(\Sc) \}^\sharp
        \\ &= \dswbody^{\sharp\abpsi}(\Sc)
    \end{align}
    Applying this to $\Sc$, which is also an $S_0$ graph, gives
    $\dswbodyS(\Sc) = \dswbody^{\sharp\abpsi}(S^{cc}) = \dswbody^{\sharp\abpsi}(S)$.
\end{proof}


\section{A sandwich theorem and perfect graphs}
\label{sec:facets}

For classical graphs the theta body is sandwiched between the vertex packing polytope $\vp(G)$,
defined as the convex hull of indicator functions of independent sets, and the fractional
vertex packing polytope, equal to $\vp^\flat(\Gc)$:~\cite{Grtschel1986}
\begin{align}
    \vp(G) \subseteq \thbody(G) \subseteq \vp^\flat(\Gc).
    \label{eq:classical_sandwich}
\end{align}
Or, considering the complement graph and making use of the fact that
$\thbody(\Gc) = \thbody^\flat(G)$,
\begin{align}
    \vp(\Gc) \subseteq \thbody^\flat(G) \subseteq \vp^\flat(G).
    \label{eq:classical_sandwich_cmpl}
\end{align}
We will generalize this latter statement to non-commutative graphs.
Note that $\vp(\Gc)$ is the polytope of indicator functions of cliques of $G$.
Given that
\begin{align}
    \alpha(G)      &= \max\left\{ \sum w_i : w \in \vp(G) \right\}
    \\ \lov(G)     &= \max\left\{ \sum w_i : w \in \thbody(G) \right\}
    \\ \chi^*(\Gc) &= \max\left\{ \sum w_i : w \in \vp^\flat(\Gc) \right\}
\end{align}
(where $\alpha$ is independence number and $\chi^*$ is fractional chromatic
number),~\eqref{eq:classical_sandwich} can be seen as a more granular version of the well known
sandwich inequality $\alpha(G) \le \lov(G) \le \chi^*(\Gc)$.

Also from~\cite{Grtschel1986} comes the marvelous result that the following are equivalent:
\begin{enumerate}
    \item $G$ is a perfect graph.
    \item $\thbody(G)$ is a polytope.
    \item $\thbody(G) = \vp(G)$.
    \item $\thbody(G) = \vp^\flat(\Gc)$.
\end{enumerate}
We will show this partially generalizes to non-commutative graphs.
There is currently no definition of ``perfect'' for non-commutative graphs, so we may take the
non-commutative generalization of $\vp(G) = \vp^\flat(\Gc)$ to be the definition.
Since we are using an operator generalization of convex corners and anti-blockers, there are
(at least) two possible analogues of ``polytope'': we could consider convex corners
generated by finitely many vertices, or those bounded by finitely many inequalities.
We will find that if $\dswbodyS(S)$ is finitely generated then it is equal to the non-commutative
generalization of $\vp(\Gc)$, but that the converse is not true.
This can be considered a first dipping of the toes into the theory of non-commutative perfect
graphs, with many questions remaining open.

The following definition from~\cite{btw2019} gives the non-commutative generalization of
$\vp(\Gc)$ that we will be using.
They also define $S$-abelian and $S$-clique projectors, the former being analogous to independent
sets and the latter being a looser definition of clique, but we will not be using those here.

\begin{definition}
    Let $S \subseteq \linop{\hA}$ be a non-commutative graph.  A projector $P \in \linop{\hA}$
    is called $S$-full if $P \linop{\hA} P \subseteq S$.
    We define the convex corner generated by these projections,
    \begin{align}
        \fp(S) = \her( \convcl\{ P : P \textnormal{ an $S$-full projection} \} )
    \end{align}
\end{definition}

\begin{theorem}
    Let $S \subseteq \linop{\hA}$ be an $S_0$-graph.  If $P$ is a maximal $S$-full projection then
    $P \in S_0'$.  Consequently, the maximal elements of $\fp(S)$ are in $S_0'$.
    \label{thm:fp_in_S1}
\end{theorem}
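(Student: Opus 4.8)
The plan is to show that a maximal $S$-full projection $P$ coincides with its ``$S_0'$-envelope'' $\tilde P$, the smallest projection lying in the commutant $S_0'$ with $\tilde P \ge P$: since $\tilde P$ will turn out to be $S$-full and $\tilde P \ge P$, maximality of $P$ forces $P = \tilde P \in S_0'$. (One may also picture $\tilde P = \bigvee_{U} U P U^\dag$ with $U$ ranging over the unitaries of $S_0$, each $U P U^\dag$ being again $S$-full because $S_0 S S_0 = S$; that is the ``reason'' $\tilde P$ should stay $S$-full.) The second assertion follows quickly from the first: every $S$-full projection is dominated, in the order $\le$, by a maximal one (take one of largest rank among $S$-full projections above it), so $\fp(S) = \her(\convcl\{P : P \textnormal{ a maximal } S\textnormal{-full projection}\})$; since the first part places every such $P$ in the closed subspace $S_0'$, every convex combination and every limit of such remains in $S_0'$, hence the maximal elements of $\fp(S)$ lie in $S_0'$.

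First I would make $\tilde P$ explicit using the block structure $S_0' = \bigoplus_i I_{\hAi} \ot \linop{\hYi}$, $\hA = \bigoplus_i \hAi \ot \hYi$. A projection in $S_0'$ has the form $\bigoplus_i I_{\hAi} \ot R_i$, and it dominates $P$ precisely when, for each $i$, the $i$-th block of $\img P$ lies inside $\hAi \ot \img R_i$. The minimal choice is $R_i = Q_i$, the projection onto $\linspan\{(\bra x \ot I_{\hYi}) P_i \ket\psi : \ket\psi \in \img P,\ \ket x \in \hAi\}$ (equivalently, the support of $\TrAi$ of the $i$-th diagonal block of $P$), where $P_i$ is the projection onto $\hAi \ot \hYi$. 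Set $\tilde P = \bigoplus_i I_{\hAi} \ot Q_i$; by construction $\tilde P$ is a projection, $\tilde P \in S_0'$, and $\tilde P \ge P$.

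The crux is verifying that $\tilde P$ is $S$-full, i.e. $\tilde P \linop{\hA} \tilde P \subseteq S$. Since $\tilde P \linop{\hA} \tilde P$ is the span of $\ket a\bra b$ with $\ket a, \ket b \in \img\tilde P = \bigoplus_i \hAi \ot \img Q_i$, by bilinearity it suffices to treat $\ket a = \ket\alpha \ot \ket q$ in a single block $i$ and $\ket b = \ket\beta \ot \ket{q'}$ in a single block $j$. Expanding $\ket q$ and $\ket{q'}$ through the definition of $Q_i, Q_j$, one writes $\ket a = \sum_l K_l \ket{\psi_l}$ and $\ket b = \sum_m L_m^\dag \ket{\psi'_m}$ with $\ket{\psi_l}, \ket{\psi'_m} \in \img P$, where $K_l$ is the operator equal to $\ket\alpha\bra{x_l} \ot I_{\hYi}$ on block $i$ and zero elsewhere and $L_m$ is $\ket{x'_m}\bra\beta \ot I_{\hYj}$ on block $j$ and zero elsewhere, so that $K_l, L_m \in S_0$ (using that $S_0$ is $\ast$-closed). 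Then $\ket a\bra b = \sum_{l,m} K_l\, \ket{\psi_l}\bra{\psi'_m}\, L_m$, and each $\ket{\psi_l}\bra{\psi'_m} = P\ket{\psi_l}\bra{\psi'_m}P$ lies in $P\linop{\hA}P \subseteq S$ because $P$ is $S$-full; hence every summand lies in $S_0 S S_0 = S$ and $\ket a\bra b \in S$. I expect this to be the only real work: one must see that the $S_0$-compressions of $\img P$ which span $\img Q_i$ are produced by genuine elements of $S_0$, so that the bimodule identity $S_0 S S_0 = S$ applies. Checking $\tilde P \in S_0'$, $\tilde P \ge P$, and the reduction for $\fp(S)$ are routine.

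With $\tilde P$ shown to be $S$-full and $\ge P$, maximality of $P$ gives $P = \tilde P \in S_0'$, proving the first claim, and the statement about $\fp(S)$ follows as in the first paragraph.
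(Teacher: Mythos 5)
Your proposal is correct. It shares the paper's key mechanism -- use the bimodule identity $S_0 S S_0 = S$ to manufacture an $S$-full projection at least as large as $P$, then let maximality collapse the two -- but the execution is genuinely different. The paper never invokes the block decomposition: it writes $P = \sum_i \ket{\psi_i}\bra{\psi_i}$, argues vector-by-vector that $X\ket{\psi_i}$ must stay in the range of $P$ for every $X \in S_0$ (else adjoining it would give a strictly larger $S$-full space), and then concludes $U P U^\dag = P$ for every unitary $U \in S_0$, hence $P$ commutes with the algebra these unitaries generate, i.e.\ $P \in S_0'$. You instead use the Wedderburn structure $S_0' = \bigoplus_i I_{\hAi} \ot \linop{\hYi}$ to build, in one shot, the explicit envelope $\tilde P = \bigoplus_i I_{\hAi} \ot Q_i \ge P$ and verify directly that $\tilde P$ is $S$-full by writing vectors of its range as $K\ket{\psi}$ with $K \in S_0$, $\ket\psi \in \img P$ -- which is the same algebraic fact the paper uses, just packaged constructively; membership in $S_0'$ is then automatic rather than deduced via unitary invariance. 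Your route is a bit longer and leans on the decomposition~\eqref{eq:vertex_algebra}, but it makes the enlarged projection concrete (and even identifies it as the support of $\TrAi$ of the diagonal blocks of $P$), whereas the paper's argument is shorter and decomposition-free. Your handling of the second assertion (every $S$-full projection is dominated by a maximal one, so the maximal elements of $\fp(S)$ sit in the closed convex hull of maximal $S$-full projections, which lies in the subspace $S_0'$) matches the paper's; your extra remark that a rank-maximal $S$-full projection above a given one is maximal is a welcome detail the paper leaves implicit.
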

\begin{proof}
    Let $P$ be a maximal $S$-full projection.  Then $P = \sum_{i=1}^m \ket{\psi_i}\bra{\psi_i}$ for some
    collection of normalized vectors $\{ \ket{\psi_i} \}$, and $\ket{\psi_i}\bra{\psi_j} \in S$ for
    all $i,j \in \{ 1, \dots, m \}$.
    By the definition of an $S_0$ graph, $S_0 S = S S_0 = S$.
    Therefore, for any $X \in S_0$ we have $X \ket{\psi_i}\bra{\psi_j} \in S$ and
    $\ket{\psi_i}\bra{\psi_j} X^\dag \in S$.
    It must be that $X \ket{\psi_i} \in \linspan\{ \ket{\psi_1}, \dots, \ket{\psi_m} \}$ for all $X \in S_0$ and
    all $i$.  Otherwise, $\linspan\{ X \ket{\psi_i}, \ket{\psi_1}, \dots, \ket{\psi_m} \}$ would
    define a larger $S$-full space, in contradiction to $P$ being maximal.

    Consider a unitary $U \in S_0$.  By the above reasoning, $U \ket{\psi_i}$ is in the support of
    $P$ for all $i \in \{ 1, \dots, m \}$.
    Then $U P U^\dag = \sum_i U \ket{\psi_i}\bra{\psi_i} U^\dag$ has support contained in the
    support of $P$.  Since $U P U^\dag$ is also a projector, it must be that $U P U^\dag = P$.
    Equivalently, $U P = P U$.
    Since $P$ commutes with all unitary $U \in S_0$, it must also commute with the algebra generated
    by those unitaries, which is all of $S_0$.  Therefore $P \in S_0'$.

    Since each maximal $S$-full projection is in $S_0'$, so must be the closure of their convex
    hull,
    \begin{align}
        \convcl\{ P : P \textnormal{ a maximal $S$-full projection} \} \subseteq S_0'.
    \end{align}
    Every maximal element of $\fp(S)$ must be a member of this, because every $S$-full projection
    is dominated by some maximal $S$-full projection.
\end{proof}

We are now ready to present our sandwich theorem, the generalization
of~\eqref{eq:classical_sandwich_cmpl} to $S_0$-graphs.

\begin{theorem}
    \begin{align}
        \fp(S) &\subseteq \dswbodyS(S) \subseteq \fp^\abpsi(\Sc)
        \label{eq:dswbody_sandwich}
    \end{align}
    \label{thm:dswbody_sandwich}
\end{theorem}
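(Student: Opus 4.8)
The plan is to establish the two inclusions in \eqref{eq:dswbody_sandwich} in turn, reducing the second to the first via anti-blocker manipulations.

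\emph{First inclusion, $\fp(S) \subseteq \dswbodyS(S)$.} By \cref{thm:dswbody_abl} the set $\dswbodyS(S) = \{W \in \psd{\hA} : \dsw(S,W) \le 1\}$ is a convex corner, and $\fp(S)$ is by definition the smallest convex corner containing all $S$-full projections, so it suffices to show $\dsw(S,P) \le 1$ for every $S$-full projection $P$. I would do this by exhibiting a feasible point for \eqref{eq:weighted_dsw_min_Y} with $\lambda = 1$: take $Y = \proj{\rw}$ where, since $\sqrt{P} = P$, $\ket{\rw} = (P \ot I)\ket{\Phi} = \sum_i (P\ket{i}) \ot \ket{i}$. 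Expanding, $Y = \sum_{ij} (P\ket{i}\bra{j}P) \ot \ket{i}\bra{j}$, and since $P\linop{\hA}P \subseteq S$ each block $P\ket{i}\bra{j}P$ lies in $S$, hence $Y \in S \ot \linop{\hB}$. Furthermore $\TrA Y = P^T$, which is again a projector so $\TrA Y \le I$, and $Y \ge \proj{\rw}$ trivially. Thus $P$ is feasible with value $1$, so $P \in \dswbodyS(S)$, and the inclusion follows.

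\emph{Second inclusion, $\dswbodyS(S) \subseteq \fp^\abpsi(\Sc)$.} Write $\cornB = \{W \ge 0 : \blockscale(W) \in \fp(\Sc)\}$, so that $\fp^\abpsi(\Sc) = \cornB^\sharp$ by \cref{def:abpsi}. One checks (exactly as for the analogous set in the proof of \cref{thm:dswSW}) that $\cornB$ is a convex corner: linearity of $\blockscale$ gives convexity, complete positivity of $\blockscale$ (\cref{thm:comproj_cp}) together with hereditarity of $\fp(\Sc)$ gives hereditarity, and continuity of $\blockscale$ gives closedness. Since both $\dswbodyS(S)$ and $\cornB^\sharp$ are convex corners, \cref{thm:antiblocker_inclusion,thm:second_antiblocker} make the desired inclusion equivalent to $\cornB = \cornB^{\sharp\sharp} \subseteq \dswbodyS(S)^\sharp = \dswbody(S)$, the last equality being recorded in the proof of \cref{thm:dswbody_abl}. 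Now \eqref{eq:dswbody_psi} identifies $\dswbody(S) = \{W \ge 0 : \blockscale(W) \in \dswbodyS(\Sc)\}$, so it remains to show that $W \ge 0$ with $\blockscale(W) \in \fp(\Sc)$ forces $\blockscale(W) \in \dswbodyS(\Sc)$; since $\Sc = S^\perp + S_0$ is again an $S_0$-graph, this is precisely $\fp(\Sc) \subseteq \dswbodyS(\Sc)$, i.e.\ the first inclusion applied to $\Sc$.

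\emph{Main obstacle.} The substantive point is the observation that $S$-full projections vectorize to feasible points of \eqref{eq:weighted_dsw_min_Y}; once that is seen, the first inclusion is routine. The only place that needs care is the bookkeeping in the second step — verifying $\cornB$ is a genuine convex corner so that the second anti-blocker theorem applies, and correctly combining \eqref{eq:dswbody_psi} with $\dswbodyS(S)^\sharp = \dswbody(S)$ — but with those in hand the second inclusion becomes a formal consequence of the first applied to the complement graph.
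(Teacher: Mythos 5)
Your proposal is correct and follows essentially the same route as the paper: vectorize an $S$-full projector to get a feasible point for \eqref{eq:weighted_dsw_min_Y} with $\lambda = 1$ (you just spell out the feasibility check the paper leaves implicit), then obtain the second inclusion by applying the first to $\Sc$ and passing through anti-blockers. The only cosmetic difference is that you invoke \eqref{eq:dswbody_psi} directly where the paper cites \cref{thm:dswbody_abpsi}, which is itself derived from that same relation.
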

\begin{proof}
    Let $P$ be an $S$-full projector.
    Since $Y = \proj{P}$ is feasible for~\eqref{eq:weighted_dsw_min_Y}
    with $\lambda=1$, we have $\dsw(S, P) \le 1$ so $P \in \dswbodyS(S)$.
    Since $\dswbodyS(S)$ is a convex corner and contains all $S$-full projectors, it must also
    contain the convex corner generated by the $S$-full projectors.
    Therefore $\fp(S) \subseteq \dswbodyS(S)$.

    Applying the above results to the graph $\Sc$, we have
    \begin{align}
        \fp(\Sc) &\subseteq \dswbodyS(\Sc)
        \\
        \{ W \ge 0 : \blockscale(W) \in \fp(\Sc) \} &\subseteq
            \{ W \ge 0 : \blockscale(W) \in \dswbodyS(\Sc) \}
        \\
        \{ W \ge 0 : \blockscale(W) \in \fp(\Sc) \}^\sharp &\supseteq
            \{ W \ge 0 : \blockscale(W) \in \dswbodyS(\Sc) \}^\sharp
        \\
        \fp^\abpsi(\Sc) &\supseteq \dswbody^{\sharp\abpsi}(\Sc)
        \\ &= \dswbodyS(S)
    \end{align}
    where the third line uses the fact that taking the anti-blocker of both sides reverses the
    inclusion order (\cref{thm:antiblocker_inclusion}) and the last uses the duality relation
    of \cref{thm:dswbody_abpsi}.
\end{proof}

It's worth noting this sandwich theorem is symmetric under graph complement.
Taking the~$\abpsi$~anti-blocker of~\eqref{eq:dswbody_sandwich} and considering that this reverses
inclusion order and that $\dswbody^{\sharp\abpsi}(S) = \dswbodyS(\Sc)$, we get
$\fp^\abpsi(S) \supseteq \dswbodyS(\Sc) \supseteq \fp(\Sc)$.
This is equivalent to~\eqref{eq:dswbody_sandwich} applied to the complement graph.

It is also worth noting that the right inclusion of~\eqref{eq:dswbody_sandwich} was derived from
the left inclusion without using any special properties of $\fp(S)$.
Any convex corner contained in $\dswbodyS(S)$ would give a similar sandwich theorem.
This opens the possibility of tightening~\eqref{eq:dswbody_sandwich} by finding some convex corner
larger than $\fp(S)$ but still contained in $\dswbodyS(S)$.

Since a classical graph is perfect if and only if $\vp(G) = \vp^\flat(\Gc)$,
we take the non-commutative analogue to be one possible definition of perfect for non-commutative
graphs.

\begin{definition}
    An $S_0$-graph $S$ is \emph{$\fp$-perfect} if $\fp(S) = \fp^\abpsi(\Sc)$.
\end{definition}

Note that, by~\cref{thm:dswbody_sandwich}, $\fp$-perfect graphs necessarily also satisfy
$\dswbodyS(S) = \fp(S)$.

\begin{theorem}
    For a graph $G$ define $S = \linspan\{ \ket{i}\bra{j} : i \simeqG j \}$
    and $S_0 = \linspan\{ \ket{i}\bra{i} : i \in V(G) \}$.
    Then $S$ is $\fp$-perfect if and only if $G$ is perfect.
\end{theorem}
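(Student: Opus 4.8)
The plan is to compute both sides of the asserted identity $\fp(S) = \fp^\abpsi(\Sc)$ explicitly, identifying each with the hereditary closure of the diagonal embedding of a classical polytope, and then to invoke the classical characterization of perfect graphs. Throughout I exploit the special structure of $S_0 = \diags$: in the decomposition~\eqref{eq:vertex_algebra} every $\hAi$ and $\hYi$ is one-dimensional, so $\diagscale = I$, both $\blockscale$ and $\commproj$ are the diagonal pinching $W \mapsto \sum_i W_{ii}\proj{i}$, and $S_0' = S_0 = \diags$; also $\Sc = S^\perp + \diags = S_{\Gc}$, as recorded in \cref{sec:block_graphs}.

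First I would identify $\fp(S)$. For a vertex set $K$, the diagonal projector $P_K = \sum_{i\in K}\proj{i}$ satisfies $P_K\linop{\hA}P_K = \linspan\{\ket{i}\bra{j} : i,j\in K\}$, which is contained in $S = \SG$ if and only if $i\simeqG j$ for all $i,j\in K$, i.e.\ if and only if $K$ is a clique of $G$. By \cref{thm:fp_in_S1} every maximal $S$-full projector lies in $S_0' = \diags$, hence is of this form, so the maximal $S$-full projectors are exactly the $P_K$ with $K$ a maximal clique. Since every $S$-full projector is dominated (in the Loewner order, via inclusion of ranges) by a maximal one, restricting to these does not change the generated convex corner, and so $\fp(S) = \her(\convcl\{P_K : K\text{ a clique of }G\}) = \her\{\diag(v) : v\in\vp(\Gc)\}$, where we used that $\convcl\{P_K : K\text{ a clique}\}$ is the diagonal embedding of $\vp(\Gc) = \conv\{\mathbf 1_K : K\text{ a clique of }G\}$, a classical convex corner. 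Applying this to $\Gc$ gives $\fp(\Sc) = \her\{\diag(v) : v\in\vp(G)\}$.

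Second, I would push $\fp(\Sc)$ through the $\abpsi$-machinery. Its maximal elements are diagonal by \cref{thm:fp_in_S1}, so \cref{thm:abpsi_alternate} gives $\fp^\abpsi(\Sc) = \her(\blockscale(\fp(\Sc)^\sharp))$. From $\Tr(W\diag(v)) = \langle\diag(W),v\rangle$ one reads off $\fp(\Sc)^\sharp = \{W\ge 0 : \diag(W)\in\vp^\flat(G)\}$, and applying the pinching $\blockscale$ yields $\blockscale(\fp(\Sc)^\sharp) = \{\diag(w) : w\in\vp^\flat(G)\}$, hence $\fp^\abpsi(\Sc) = \her\{\diag(w) : w\in\vp^\flat(G)\}$. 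Since $\vp(\Gc)$ and $\vp^\flat(G)$ are both classical convex corners, intersecting the two hereditary closures with $\diags$ recovers the polytopes themselves; therefore $\fp(S) = \fp^\abpsi(\Sc)$ if and only if $\vp(\Gc) = \vp^\flat(G)$. By the Gr\"otschel--Lov\'asz--Schrijver theorem, $\vp(\Gc) = \vp^\flat(G)$ holds exactly when $G$ is perfect: indeed, $G$ perfect is equivalent to $\mathrm{STAB}(G) = \mathrm{QSTAB}(G)$, i.e.\ $\vp(G) = \vp^\flat(\Gc)$, which in turn is equivalent to $\vp(\Gc) = \vp^\flat(G)$ on taking the (involutive) $\flat$ anti-blocker of both sides; the first equivalence also follows by combining the classical sandwich~\eqref{eq:classical_sandwich} with the equivalences listed just after it.

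The step I expect to be the main obstacle is the first one: showing that $\fp(S)$ is precisely the diagonal embedding of $\vp(\Gc)$. This needs (i) that $P_K$ is $S$-full iff $K$ is a clique, (ii) that there are no further maximal $S$-full projectors --- this is exactly where \cref{thm:fp_in_S1} is used, to rule out ``non-diagonal cliques'' in $\linop{\hA}$ --- and (iii) that heredity and convexity bridge the gap between these finitely many diagonal projectors and the whole polytope, for which one uses that $\conv\{\mathbf 1_K : K\text{ a clique}\}$ is already down-closed and hence equals $\vp(\Gc)$. Everything after that is routine bookkeeping with the pinching map and the anti-blocker duality already established in the excerpt.
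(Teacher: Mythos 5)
Your proposal is correct, but it takes a genuinely different route from the paper's proof. You compute both sides of $\fp(S) = \fp^\abpsi(\Sc)$ exactly: using \cref{thm:fp_in_S1} to reduce the $S$-full projectors to clique projectors $P_K$ and hence $\fp(S) = \her\{\diag(v) : v \in \vp(\Gc)\}$, and then \cref{thm:abpsi_alternate} plus the pinching structure of $\blockscale$ (with $D = I$, $S_0' = \diags$) to get $\fp^\abpsi(\Sc) = \her\{\diag(w) : w \in \vp^\flat(G)\}$; the equivalence with perfection then follows by intersecting with $\diags$ (which recovers the classical corners, since the diagonal compression of the Loewner order is the entrywise order and both polytopes are down-closed) and invoking the Gr\"otschel--Lov\'asz--Schrijver characterization together with the involutivity of the classical anti-blocker. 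The paper instead never computes $\fp(S)$ or $\fp^\abpsi(\Sc)$ in full: it works only with their diagonal compressions, importing $\commproj(\fp(S)) = \vp(\Gc)$, $\commproj(\fp^\sharp(S)) = \vp^\flat(\Gc)$ and the transfer lemma for anti-blockers from~\cite{btw2019} to obtain $\commproj(\fp^\abpsi(\Sc)) = \vp^\flat(G)$, and it needs the sandwich theorem (\cref{thm:dswbody_sandwich}) to supply the inclusion $\fp(S) \subseteq \fp^\abpsi(\Sc)$ in the reverse direction. Your approach buys a self-contained and symmetric argument (a single ``iff'' via explicit descriptions of both corners, no sandwich theorem, and in effect a re-derivation of~\eqref{eq:fpS_vpGc}), at the cost of re-proving material the paper simply cites; the paper's route is shorter given those imports. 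Two small points worth making explicit if you write this up: every $S$-full projector is dominated by a maximal one (finite dimension), which is what licenses discarding the non-diagonal $S$-full projectors when forming the generated corner, and $\her$ of a compact convex set of diagonal embeddings meets $\diags$ exactly in the embedded classical corner because $X \le \diag(v)$ with $X$ diagonal forces $\diag(X) \le v$ entrywise.
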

\begin{proof}
    The graph $G$ is perfect if and only if $\vp(\Gc) = \vp^\flat(G)$
    where $\vp$ is the vertex packing polytope, the convex hull of incidence vectors (along the
    diagonal) of independent sets and $\vp^\flat(G) = \vp^\sharp(G) \cap S_0'$ is the diagonal
    anti-blocker~\cite{Grtschel1986}.
    Note $\vp(\Gc)$ is the convex hull of incidence vectors of cliques,
    \begin{align}
        \vp(\Gc) = \conv\left\{ \sum_{i \in X} \ket{i}\bra{i} : X \subseteq V(G)
            \textrm{ is a clique of } G \right\}.
    \end{align}

    By~\cite[theorem 3.5, corollary 3.7]{btw2019},
    \begin{align}
    \commproj(\fp(S)) &= \fp(S) \cap S_0' = \vp(\Gc)
    \label{eq:fpS_vpGc}
    \\
    \commproj(\fp^\sharp(S)) &= \fp^\sharp(S) \cap S_0' = \vp^\flat(\Gc)
    \label{eq:fpsS_vpfGc}
    \end{align}

    And by~\cite[lemma 3.6]{btw2019}, if $\cornA$ is a diagonal convex corner and $\cornB$ is a
    convex corner such that $\cornA = \diags \cap \cornB = \commproj(\cornB)$,
    where $\diags = S_0 = S_0'$ are the diagonal operators, then
    $\cornA^\flat = \diags \cap \cornB^\sharp = \commproj(\cornB)$.
    Taking $\cornB = \{ W \ge 0 : \commproj(W) \in \fp(\Sc) \}$, we have
    $\diags \cap \cornB = \commproj(\cornB) = \diags \cap \fp(\Sc)$.
    By~\eqref{eq:fpS_vpGc}, $\diags \cap \fp(\Sc) = \vp(G)$, so taking $\cornA = \vp(G)$
    gives $\cornA^\flat = \vp^\flat(G) = \commproj(\cornB^\sharp)$.
    By \cref{def:abpsi}, $\cornB^\sharp = \fp^\abpsi(\Sc)$.
    So
    \begin{align}
        \commproj(\fp^\abpsi(\Sc)) &= \vp^\flat(G).
        \label{eq:dfpabpsi_vpflat}
    \end{align}

    Suppose $S$ is $\fp$-perfect.  Then
    \begin{align}
        \fp(S) = \fp^\abpsi(\Sc)
        &\implies \commproj(\fp(S)) = \commproj(\fp^\abpsi(\Sc))
        \\ &\implies \vp(\Gc) = \vp^\flat(G),
    \end{align}
    where the second implication follows from~\eqref{eq:fpS_vpGc} and~\eqref{eq:dfpabpsi_vpflat}.
    So $G$ is perfect.

    On the other hand, suppose $G$ is perfect so $\vp(\Gc) = \vp^\flat(G)$.
    Then
    \begin{align}
        \vp(\Gc) = \vp^\flat(G)
        &\implies \fp(S) \cap S_0' = \commproj(\fp^\sharp(\Sc))
        \\ &\implies \fp(S) \supseteq \commproj(\fp^\sharp(\Sc))
        \\ &\implies \her(\fp(S)) \supseteq \her(\commproj(\fp^\sharp(\Sc)))
        \\ &\implies \fp(S) \supseteq \fp^\abpsi(\Sc),
    \end{align}
    where the first implication follows from~\eqref{eq:fpS_vpGc}-\eqref{eq:fpsS_vpfGc}
    and the last from $\fp(S) = \her(\fp(S))$ and \cref{thm:abpsi_alternate}
    (since $\commproj$ and $\blockscale$ are equivalent when $S_0 = \diags$).
    By \cref{thm:dswbody_sandwich}, $\fp(S) \subseteq \fp^\abpsi(\Sc)$
    so in fact $\fp(S) = \fp^\abpsi(\Sc)$ and $S$ is $\fp$-perfect.
\end{proof}

For classical graphs the weak perfect graph theorem states that a graph is perfect if and only if
its complement is perfect.
This holds also for $\fp$-perfect graphs.

\begin{theorem}
    If an $S_0$-graph $S$ is $\fp$-perfect then $\Sc$ is $\fp$-perfect.
\end{theorem}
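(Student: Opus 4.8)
The plan is to unwind the definition of $\fp$-perfection for $\Sc$ into a claim about $\fp(S)$ and $\fp(\Sc)$, and then simply apply the $\abpsi$ anti-blocker to the hypothesis $\fp(S) = \fp^\abpsi(\Sc)$. Recall that ``$\Sc$ is $\fp$-perfect'' means $\fp(\Sc) = \fp^\abpsi(\cmplcmpl{S})$, and since $\cmplcmpl{S} = S$ (the same fact used in the proof of \cref{thm:dswbody_abpsi}) this is exactly $\fp(\Sc) = \fp^\abpsi(S)$. So it suffices to derive this equality from the hypothesis.

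First I would apply $\abpsi$ to both sides of $\fp(S) = \fp^\abpsi(\Sc)$, obtaining $\fp^\abpsi(S) = \fp^{\abpsi\abpsi}(\Sc)$. Then, invoking \cref{thm:abpsi_involution} with $\cornC = \fp(\Sc)$, the right-hand side equals $\her(\fp(\Sc) \cap S_0')$. Thus the target reduces to showing $\her(\fp(\Sc) \cap S_0') = \fp(\Sc)$. This is where \cref{thm:fp_in_S1} enters: the maximal elements of $\fp(\Sc)$ all lie in $S_0'$. The inclusion $\her(\fp(\Sc) \cap S_0') \subseteq \fp(\Sc)$ is immediate from heredity of the convex corner $\fp(\Sc)$; for the reverse, any $X \in \fp(\Sc)$ is dominated by a maximal element $Y$ of $\fp(\Sc)$, which lies in $S_0'$ by \cref{thm:fp_in_S1}, so $X \le Y \in \fp(\Sc) \cap S_0'$ and hence $X \in \her(\fp(\Sc) \cap S_0')$. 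Chaining these equalities gives $\fp(\Sc) = \fp^\abpsi(S)$, i.e.\ $\Sc$ is $\fp$-perfect.

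The argument is essentially a one-liner once the machinery of the $\abpsi$ anti-blocker is in place, so I do not anticipate any genuine obstacle. The only step that needs a sentence of justification is the identity $\her(\fp(\Sc) \cap S_0') = \fp(\Sc)$, which follows from heredity of convex corners together with \cref{thm:fp_in_S1}; everything else is a direct appeal to the involution property in \cref{thm:abpsi_involution} and to the elementary fact that equal convex corners have equal $\abpsi$ anti-blockers.
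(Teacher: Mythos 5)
Your proposal is correct and follows essentially the same route as the paper: apply the $\abpsi$ anti-blocker to the hypothesis, use \cref{thm:abpsi_involution} to get $\fp^{\abpsi\abpsi}(\Sc) = \her(\fp(\Sc) \cap S_0')$, and identify this with $\fp(\Sc)$ via \cref{thm:fp_in_S1}. Your extra remarks (the identity $\cmplcmpl{S} = S$ and the explicit two-inclusion argument for $\her(\fp(\Sc) \cap S_0') = \fp(\Sc)$) simply spell out steps the paper leaves implicit.
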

\begin{proof}
    Suppose $S$ is $\fp$-perfect.  Starting from the definition of $\fp$-perfect and applying
    \cref{thm:abpsi_involution} we have
    \begin{align}
        \fp(S) &= \fp^\abpsi(\Sc)
        \\
        \fp(S)^\abpsi &= \fp^{\abpsi\abpsi}(\Sc)
        \\ &= \her(\fp(\Sc) \cap S_0')
        \\ &= \fp(\Sc)
    \end{align}
    where the last equality follows from the fact that $\fp(\Sc)$ is generated by
    elements of $S_0'$ (\cref{thm:fp_in_S1}).
\end{proof}

We give an example of an $\fp$-perfect graph which will also later serve as an
important counterexample.
This $S_0$-graph is not a classical graph, though its complement is the classical empty graph (or,
it would be aside from the fact we take $S_0 = \mathbb{C}I$ rather than the algebra of diagonal
matrices).

\begin{example}
    The graph
    \begin{align}
        S = \left\{
            \begin{pmatrix} a & b \\ c & a \end{pmatrix}
            : a, b, c \in \mathbb{C}
        \right\}
        \label{eq:perfect_2x2_S}
    \end{align}
    is an $\fp$-perfect $S_0$-graph with $S_0 = \mathbb{C}I$.
    \label{thm:perfect_2x2}
\end{example}
\begin{proof}
    Clearly no $S$-full projector can be rank-2 because $S \ne \linop{\hA}$.
    Rank-1 projectors have trace 1, so to be in~\eqref{eq:perfect_2x2_S} they must have $a=1/2$.
    In fact, any such projector is easily verified to be $S$-full.
    We have then
    \begin{align}
        \fp(S) &= \her \left( \convcl \left\{
            \frac{1}{2} \begin{pmatrix} 1 & \phi^\dag \\ \phi & 1 \end{pmatrix}
            : \phi \in \mathbb{C}, \abs{\phi} = 1 \right\} \right)
            \label{eq:perfect_2x2_fp}
        \\ &= \{ M \in \linop{\mathbb{C}^2} : M \ge 0, M_{11} \le 1/2, M_{22} \le 1/2 \}.
    \end{align}
    The complement graph is
    \begin{align}
        \Sc = S^\perp + \mathbb{C} I = \{ M \in \linop{\mathbb{C}^2} : M \textnormal{ is diagonal} \}.
    \end{align}
    Again there are no rank-2 $\Sc$-full projectors.  And clearly the only rank-1
    projectors in $\Sc$ are the projectors onto the two basis vectors.
    We have then
    \begin{align}
        \fp(\Sc) &= \her \left( \convcl \left\{
            \begin{pmatrix} 1 & 0 \\ 0 & 0 \end{pmatrix},
            \begin{pmatrix} 0 & 0 \\ 0 & 1 \end{pmatrix}
         \right\} \right).
    \end{align}
    Since the anti-blocker only cares about the extreme points of $\fp(\Sc)$, the anti-blocker is
    \begin{align}
        \fp^\sharp(\Sc)
        &= \{ M \in \linop{\mathbb{C}^2} : M \ge 0, M_{11} \le 1, M_{22} \le 1 \}
        \\ &= 2 \fp(S).
    \end{align}
    With $S_0 = \mathbb{C}I \subseteq \linop{\mathbb{C}^2}$
    we have $\blockscale(W) = W/2$ so
    \begin{align}
        \fp^\abpsi(\Sc)
        &= \{ W \ge 0 : \blockscale(W) \in \fp(\Sc) \}^\sharp
        \\ &= \{ W \ge 0 : W/2 \in \fp(\Sc) \}^\sharp
        \\ &= ( 2 \fp(\Sc) )^\sharp
        \\ &= \frac{1}{2} \fp^\sharp(\Sc)
        \\ &= \fp(S)
    \end{align}
\end{proof}

For classical graphs,~\cite{Grtschel1986} showed that a graph $G$ is perfect if and only if
$\thbody(G)$ is a polytope, and that facets of $\thbody(G)$ correspond to cliques of $G$.
The remainder of this section will be devoted to exploring to which extent this generalizes
to $S_0$-graphs.
We are using convex corners on operator spaces, which are more complicated than the diagonal convex
corners which $\thbody(G)$ lives in.
Diagonal convex corners are polyhedral iff they are finitely generated iff they are defined by
finitely many inequalities.
This is not true for the non-diagonal convex corners we are using.
For example, $\her(\{ I \})$, the set of bounded semidefinite operators, is a finitely generated
convex corner but is not polyhedral.
Before proceeding we will need some groundwork on the geometry of convex corners and anti-blockers.

\begin{definition}
    Let $\cornC \subseteq \herm{\hA}$ be a nonempty closed convex set and $T \subseteq \cornC$ a subset.
    A \emph{supporting hyperplane} of $\cornC$ at $T$ is an affine subspace
    $\{ X \in \herm{\hA} : \Tr(X Y) = \alpha \}$,
    with $Y \in \herm{\hA}$ and $\alpha \in \mathbb{R}$,
    such that $\Tr(X Y) \le \alpha$ for all $X \in \cornC$ and $\Tr(X Y) = \alpha$
    for all $X \in T$.

    A \emph{vertex} $X \in \cornC$ is an element on the boundary of $\cornC$ such that the intersection
    of all supporting hyperplanes of $\cornC$ at $X$ is an affine subspace of dimension 0.
    In other words, a supporting hyperplane can be ``wiggled'' a small amount in every direction,
    pivoting on $X$.

    A \emph{facet} $\mathcal{F} \subseteq \cornC$ is a subset of the boundary of $\cornC$,
    of affine codimension 1 (affine dimension $\dim(\linop{\hA})-1$),
    defined by a single supporting hyperplane:
    $\mathcal{F} = \{ X \in \cornC : \Tr(X Y) = \alpha \}$.
    Note that facets are necessarily convex.
    \label{def:vertex_facet}
\end{definition}

Many authors only require that vertices are not part of any line segment in $\cornC$.
We take the stricter definition from~\cite[definition 2.3]{gallier2008notes}.
The corners of a cube are vertices, the boundary points of a closed ball or cylinder are not.
Also, the boundary points of a closed ball are not facets: they are defined by a single supporting
hyperplane but are not affine codimension 1.

We will need the following facts regarding the geometry of convex corners.

\begin{theorem}
    Let $\cornC \subseteq \linop{\hA}$ be a convex corner.  Then the following hold.
    \begin{thmlist}
        \item
        If $\dim(\hA) > 1$ and $\cornC$ has a facet then $\cornC$ is necessarily of full dimension,
        containing some positive definite element.
        \label[theorem]{thm:facet_gives_full_dim}

        \item
        For $\dim(\hA) > 1$,
        supporting hyperplanes defining facets of $\cornC$ do not pass through the origin.
        Therefore, facets always take the form
        \begin{align}
            \mathcal{F} = \{ X \in \cornC : \Tr(X Y) = 1 \}.
            \label{eq:polar_hyperplane}
        \end{align}
        \label[theorem]{thm:facet_alpha_1}

        \item
        If~\eqref{eq:polar_hyperplane} is a facet then $Y$ is a vertex of $\cornC^\sharp$.
        \label[theorem]{thm:facet_is_vertex}

        \item
        Non-zero vertices of $\cornC$ are maximal in the sense that if $X$ is a vertex and
        $Y \ge X, Y \ne X$ then $Y \not\in \cornC$.
        \label[theorem]{thm:vertex_is_maximal}

        \item
        Suppose $\cornC$ is finitely generated and
        $\{ X_i : i \in \{ 0, \ldots, m \} \}$ is a minimal set of generators,
        so $\cornC = \her(\conv\{ X_i : i \in \{ 0, \ldots, m \} \} )$.
        Then each $X_i$ is a vertex of $\cornC$ and
        \begin{align}
            \mathcal{F} = \{ Z \in \cornC^\sharp : \Tr(X_i Z) = 1 \}
            \label{eq:facet_from_vert}
        \end{align}
        is a facet of $\cornC^\sharp$.
        \label[theorem]{thm:finitely_generated_is_facet}
    \end{thmlist}
\end{theorem}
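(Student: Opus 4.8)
The plan is to dispatch the five parts in order, using only \cref{thm:corn_relint}, the second anti-blocker theorem (\cref{thm:second_antiblocker}), and one dimension count: writing $n=\dim\hA$, convex corners live in the $n^2$-dimensional real space $\herm{\hA}$, so by \cref{def:vertex_facet} facets have affine dimension $n^2-1$; and if $\cornC\subseteq P\herm{\hA}P$ for a projector $P$ then $\dim_{\mathrm{aff}}\cornC\le(\rank P)^2$, an inequality that together with $(\rank P)^2\ge n^2-1$ forces $\rank P=n$ whenever $n\ge2$. For (i): a facet $\mathcal F\subseteq\cornC$ gives $\dim_{\mathrm{aff}}\cornC\ge n^2-1$; \cref{thm:corn_relint} produces $M\in\relint(\cornC)$ whose support projector $P$ satisfies $P\cornC P=\cornC$, so $\cornC\subseteq P\herm{\hA}P$ and the count forces $P=I$, i.e.\ $M$ is positive definite and $\cornC\supseteq\her(\{M\})$ is full-dimensional; I record also that $\cornC$ then contains $\lambda_{\min}(M)\proj v$ for every unit $v$. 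For (ii): write the facet-defining supporting hyperplane as $\{X:\Tr(XY)=\alpha\}$ with $\Tr(XY)\le\alpha$ on $\cornC$; since $0\in\cornC$, $\alpha\ge0$. If $\alpha=0$, testing against the $\lambda_{\min}(M)\proj v$ forces $Y\le0$, whence $\Tr(XY)=0$ with $X\ge0$, $-Y\ge0$ forces disjoint supports and $\mathcal F\subseteq P_0\herm{\hA}P_0$ with $P_0$ the projector onto $\ker Y\subsetneq\hA$, giving $\dim\mathcal F\le(n-1)^2<n^2-1$, a contradiction; so $\alpha>0$ and rescaling $Y$ gives \eqref{eq:polar_hyperplane}.

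For (iii) I first show $Y\ge0$, so $Y\in\cornC^\sharp$. Pick $X_0$ in the relative interior of $\mathcal F$; as $\dim\mathcal F=n^2-1$ its affine hull is the full hyperplane $H=\{X:\Tr(XY)=1\}$, so a full-dimensional ball of $H$ about $X_0$ sits in $\cornC\subseteq\psd{\hA}$. If $X_0$ is not positive definite, a kernel vector $v$ of $X_0$ forces $\braopket{v}{Z}{v}=0$ for every $Z\in H-X_0$, hence $H-X_0=(\proj v)^\perp$ and $Y\in\mathbb R\,\proj v$; positivity of $\max_{\cornC}\Tr(\cdot Y)=1$ then forces $Y=\beta\proj v$ with $\beta>0$. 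If $X_0$ is positive definite and $Y$ had a negative eigenvector $w$, then $X_0-s\proj w\in\cornC$ for small $s>0$ by hereditarity yet $\Tr\!\big((X_0-s\proj w)Y\big)>1$, contradicting the supporting inequality. Either way $Y\ge0$. For vertexhood, choose $n^2$ affinely independent points $X_1,\dots,X_{n^2}\in\mathcal F$; since their affine hull $H$ misses the origin they form a basis of $\herm{\hA}$, each $\{Z:\Tr(X_kZ)=1\}$ is a supporting hyperplane of $\cornC^\sharp$ at $Y$ (as $X_k\in\cornC$ and $\Tr(X_kY)=1$), and $Y$ is their unique common solution, so the intersection of all supporting hyperplanes of $\cornC^\sharp$ at $Y$ is $\{Y\}$: $Y$ is a vertex.

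Part (iv) is the delicate one, so I would first prove the sublemma that a vertex is an extreme point: if $X=\tfrac12(A+B)$ with $A,B\in\cornC$, every supporting hyperplane of $\cornC$ at $X$ also passes through $A$ and $B$, forcing $A=B=X$. Now let $X\ne0$ be a vertex and suppose $Y\in\cornC$ with $Y\ge X$, $Y\ne X$. Then $0\le X\le Y$, so $X\in[0,Y]\subseteq\cornC$, and since $X$ maximizes $\Tr(\cdot W)$ over $\cornC$ it maximizes it over $[0,Y]$ for every $W$ in the normal cone $N_\cornC(X)$ (the Hermitian $W$ for which $X$ maximizes $\Tr(\cdot W)$ over $\cornC$). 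Writing $X=\sqrt Y\,T\,\sqrt Y$ with $0\le T\le Q$, $Q$ the range projector of $Y$, the maximizer property forces $T$ to commute with $\sqrt Y\,W\,\sqrt Y$ for every such $W$; but $X$ a vertex means $N_\cornC(X)$ spans $\herm{\hA}$ (else the common intersection of its supporting hyperplanes exceeds $\{X\}$), so $\{\sqrt Y\,W\,\sqrt Y\}$ spans $\herm(\mathrm{range}\,Y)$ and $T=cQ$, whence $X=cY$ with $0<c<1$; then $X$ lies in the relative interior of the segment $[0,Y]\subseteq\cornC$, contradicting extremity. I expect this reduction — passing to the operator interval $[0,Y]$ and using the fullness of the normal cone at a vertex to collapse the compression $T$ — to be the main obstacle, since the naive "perturb $X$ to raise a supporting functional" is blocked precisely by that functional being normal.

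For (v), minimality of $\{X_i\}$ yields: (a) each $X_i$ is extreme in $\cornC$ (a splitting $X_i=\tfrac12(A+B)$ forces, via $A,B\le$ convex combinations of the generators, that only $X_i$ occurs, then $A=B=X_i$); (b) $\max_{Z\in\cornC^\sharp}\Tr(X_iZ)=1$, else $X_i$ is dominated by a convex combination of the other generators; and (c) the constraint $\Tr(X_iZ)\le1$ is irredundant in $\cornC^\sharp=\{Z\ge0:\Tr(X_jZ)\le1\ \forall j\}$, else the second anti-blocker theorem (\cref{thm:second_antiblocker}) would put $X_i\in\her(\conv\{X_j:j\ne i\})$. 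From (c), take an interior point $Z_1$ of the bounded full-dimensional corner $\cornC^\sharp$ and a point $Z_0\ge0$ violating only the $i$-th constraint; the segment $[Z_1,Z_0]$ meets $\{Z:\Tr(X_iZ)=1\}$ at a positive definite $Z_2$ at which all other defining inequalities are strict, so a small ball of that hyperplane about $Z_2$ lies in $\cornC^\sharp$ and $\mathcal F=\{Z\in\cornC^\sharp:\Tr(X_iZ)=1\}$ has affine dimension $n^2-1$, hence is a facet—giving \eqref{eq:facet_from_vert}. Finally, $n^2$ affinely independent points of $\mathcal F$ furnish $n^2$ supporting hyperplanes of $\cornC$ at $X_i$ whose normals form a basis of $\herm{\hA}$ with unique common point $X_i$, so $X_i$ is a vertex of $\cornC$. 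The degenerate case $n=1$, where $\herm{\hA}=\mathbb R$ and every bounded convex corner is an interval, is immediate throughout.
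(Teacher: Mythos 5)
Your proposal is correct, and parts (i)--(iii) run essentially along the same lines as the paper, though with some local variation: in (ii) you use the existence of the positive definite $M$ from (i) to test $Y$ against rank-one operators, whereas the paper argues more directly that $\epsilon Z \in \cornC$ for any $Z \ge 0$; and in (iii) your case split on whether $X_0$ is positive definite is formally valid but the first branch is actually vacuous (a kernel vector $v$ of $X_0$ would give $\Tr(X_0 Y) = \beta\braopket{v}{X_0}{v} = 0 \ne 1$), while the paper instead shows $Y \ge 0$ by producing $Z > 0$ with $\Tr(ZY)=0$ and deriving $X_0 + \epsilon Y \in \cornC$ as a contradiction. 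The interesting departures are in (iv) and (v). For (iv), the paper constructs an explicit one-parameter family $Z \pm W$ with $W = \epsilon\sqrt{1-\epsilon^2}(R_X R_Y^\dag + R_Y R_X^\dag)$ lying in $\cornC$, and then lets $\epsilon \to 0$ so the $O(\epsilon)$ term $W$ dominates the $O(\epsilon^2)$ term $Z-X$, forcing every supporting hyperplane at $X$ to be parallel to $W$. You instead reason abstractly: vertexhood is equivalent to the normal cone $N_\cornC(X)$ spanning $\herm{\hA}$, and parametrizing the operator interval $[0,Y]$ as $\sqrt{Y}\,[0,Q]\,\sqrt{Y}$ shows each normal $W$ forces $T$ (with $X=\sqrt{Y}T\sqrt{Y}$) to commute with $\sqrt{Y}W\sqrt{Y}$; spanning collapses $T$ to $cQ$, giving $X = cY$, a proper convex combination of $0$ and $Y$, contradicting the extremity of vertices which you establish first as a sublemma. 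This is conceptually cleaner and yields the extreme-point fact as a byproduct, at the cost of a spectral argument on the interval that the paper avoids. For (v), the paper sets up an auxiliary SDP minimizing $\lambda$ with $\Tr(X_0 Y)=1$, $\Tr(X_i Y) \le \lambda$, and uses Lagrangian duality plus Slater to get $\lambda < 1$ and hence strict slack on the other constraints, then perturbs to exhibit $n^2$ affinely independent points of the facet. You bypass SDP duality entirely: irredundancy of the constraint $\Tr(X_0 Z) \le 1$ follows directly from the second anti-blocker theorem applied to the minimality hypothesis, and then a standard interior-point/violating-point segment argument produces a strictly feasible $Z_2$ on the hyperplane with a full-dimensional ball around it. Both routes are valid; yours replaces conic duality with a shorter convex-geometry argument. (A minor remark: your invocation of ``bounded'' for $\cornC^\sharp$ in (v) is not needed and not obviously justified, but nothing in your argument depends on it.)
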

\begin{proof}
    Proofs are in \cref{sec:corner_geometry}.
\end{proof}

Intriguingly, many of these facts are not true for diagonal convex corners.
That is to say, the usage of the Loewner order for the hereditarity condition,
i.e.\ $X \in \cornC, 0 \le Y \le X \implies Y \in \cornC$, imposes stricter requirements
on the existence of vertices and facets for non-commutative convex corners as compared
to diagonal convex corners, in which hereditarity is defined by elementwise less-than of
vectors.

For example, consider the diagonal convex corner
$\{ (x, y) \subseteq \mathbb{R}^2 : x,y \ge 0, x,y \le 1\}$.
The edge $y=0$ forms a facet whose supporting hyperplane passes through the origin,
in violation of \cref{thm:facet_alpha_1}.
The point $x=1, y=0$ is a vertex but is not maximal, in violation of \cref{thm:vertex_is_maximal}.
The analogous non-commutative convex corner is
$\cornC = \{ X \in \psd{\mathbb{R}^2} : \opnorm{X} \le 1 \}$.
Here too we have that $\diag(0,1)$ is not a maximal element, but in this case it is also not
a vertex: the tangent cone of $\cornC$ at $\diag(0,1)$ contains the line
$\left( \begin{smallmatrix} 0 & \epsilon \\ \epsilon & 1 \end{smallmatrix} \right)$.

We are ready to show that facets of $\dswbody(S)$ correspond to $S$-full projectors.
The derivation follows in spirit that of~\cite{Grtschel1986}, but using the language of compatible
matrices from~\cite[chapter 29]{knuth94}.
The following lemma states that if we take optimal solutions for $\dsw$ for a non-commutative graph
and its complement that saturate the duality relation of~\cref{thm:holder_dsw}, then the matrices
associated with the Schur complement form of the $\dsw$ SDP (\cref{thm:dsw_grotschel})
will be orthogonal to each other.
From this we can read off some useful relations among the blocks of those matrices.
When $\dswbody(S)$ has a facet, there are multiple linearly independent instances of these
relations, forcing the matrix from~\cref{thm:dsw_grotschel} into a particular form: it must
be rank-1.
Such a solution must necessarily correspond to an $S$-full projector.

\begin{lemma}
    Let $S$ be a non-commutative graph.
    Suppose $W, V \in \psd{\hA}$ saturate the inequality of \cref{thm:holder_dsw}
    so\footnote{We use the complement $S^\perp + \mathbb{C}I$ regardless of whether
    $S_0 = \mathbb{C}I$.  It is possible to make a more complicated version of this lemma using
    the complement $S^\perp + S_0$, but this will not be necessary.}
    \begin{align}
        \dsw(S, V) \dsw(S^\perp + \mathbb{C}I, W) = n \Tr(V W).
        \label{eq:compatible_hypothesis}
    \end{align}
    Let $Z$ be optimal for~\eqref{eq:dsw_grotschel} for $\lambda = \dsw(S, V)$
    and $Z'$ be optimal for $\lambda' = \dsw(S^\perp + \mathbb{C}I, W)$.
    Then
    \begin{align}
        \lambda \ket{\vw} &= n Z' \ket{\vv}
        \label{eq:WZV}
        \\
        \lambda' \ket{\vv} &= n Z \ket{\vw}
        \label{eq:VZW}
        \\
        \ket{\vv}\bra{\vw} &= n Z Z'.
        \label{eq:VWZZ}
    \end{align}
    \label{thm:compatible}
\end{lemma}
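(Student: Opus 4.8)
The plan is to work with the two semidefinite programs and their duals from \cref{thm:dsw_grotschel}, and to extract the stated relations from the complementary slackness conditions that hold precisely when the duality inequality of \cref{thm:holder_dsw} is saturated.

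\textbf{Setup.} First I would write out both Schur-complement SDPs explicitly. For $\dsw(S,V)$ with $\lambda = \dsw(S,V)$, the optimal $Z$ satisfies $Z \in S \ot \linop{\hB}$, $\TrA Z = V^T$, and the block matrix $\bigl(\begin{smallmatrix} \lambda & \bra{\vv} \\ \ket{\vv} & Z \end{smallmatrix}\bigr) \ge 0$; by the Schur complement this last condition is $Z \ge \lambda^{-1} \ket{\vv}\bra{\vv}$. Similarly, for $\dsw(S^\perp + \mathbb{C}I, W)$ with $\lambda' = \dsw(S^\perp + \mathbb{C}I, W)$, the optimal $Z'$ satisfies $Z' \in (S^\perp + \mathbb{C}I) \ot \linop{\hB}$, $\TrA Z' = W^T$, and $Z' \ge \lambda'^{-1} \ket{\vw}\bra{\vw}$. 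The key structural fact I would record up front is that $S \ot \linop{\hB}$ and $(S^\perp + \mathbb{C}I) \ot \linop{\hB}$ are ``almost orthogonal'': for $Z \in S \ot \linop{\hB}$ and $Z' \in (S^\perp) \ot \linop{\hB}$ we have $\Tr(Z^\dag Z') = 0$, so in general $\Tr(Z^\dag Z')$ only picks up the $\mathbb{C}I \ot \linop{\hB}$ part of $Z'$; but that part is controlled by $\TrA$.

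\textbf{The trace identity.} Next I would compute $\Tr(Z Z')$ in two ways. On one hand, writing $Z' = Z'_\perp + I \ot \rho$ with $Z'_\perp \in S^\perp \ot \linop{\hB}$ (this decomposition exists since $Z' \in (S^\perp + \mathbb{C}I)\ot\linop{\hB}$), orthogonality kills the $Z'_\perp$ term and $\Tr(Z(I \ot \rho)) = \Tr((\TrA Z)\rho) = \Tr(V^T \rho)$. Matching $\rho$ to $W^T$ (up to the freedom in the decomposition, which I would need to pin down — the point being $\TrA Z' = W^T$ forces the $\mathbb{C}I$-component of $Z'$, when $S^\perp \perp \mathbb{C}I$, i.e.\ $\Tr_\hA$ of the $S^\perp$ part vanishes) gives $\Tr(ZZ') = \Tr(V^T W^T)/n = \Tr(VW)/n$ after accounting for normalization. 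I would then use the saturation hypothesis \eqref{eq:compatible_hypothesis}, $\lambda \lambda' = n\Tr(VW)$, to get $\Tr(Z Z') = \lambda\lambda'/n^2$. On the other hand, from $Z \ge \lambda^{-1}\ket{\vv}\bra{\vv} \ge 0$ and $Z' \ge \lambda'^{-1}\ket{\vw}\bra{\vw} \ge 0$ we get, using $\Tr(AB) \ge \Tr((\text{lower bound of }A)B)$ for PSD operators,
\begin{align}
    \Tr(Z Z') \ge \lambda^{-1}\braopket{\vv}{Z'}{\vv} \ge \lambda^{-1}\lambda'^{-1}\abs{\braket{\vv}{\vw}}^2 = \lambda^{-1}\lambda'^{-1}(\Tr(VW))^2,
\end{align}
since $\braket{\vv}{\vw} = \braopket{\Phi}{W^\dag W}{\Phi} = \Tr(W^2)$ when $V=W$ — more carefully, $\braket{\vv}{\vw} = \Tr(V^\dag W)$ in general, which is real and positive here. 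Combining with the saturation value forces \emph{both} inequalities in the chain to be equalities.

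\textbf{Reading off the relations.} Equality in $\Tr(ZZ') = \lambda^{-1}\braopket{\vv}{Z'}{\vv}$ means $\Tr\bigl((Z - \lambda^{-1}\ket{\vv}\bra{\vv})Z'\bigr) = 0$; since both factors are PSD this forces $(Z - \lambda^{-1}\ket{\vv}\bra{\vv})Z' = 0$, i.e.\ $ZZ' = \lambda^{-1}\ket{\vv}\bra{\vv}Z'$. Equality in the second step, $\braopket{\vv}{Z'}{\vv} = \lambda'^{-1}\abs{\braket{\vv}{\vw}}^2$, means $\braopket{\vv}{(Z' - \lambda'^{-1}\ket{\vw}\bra{\vw})}{\vv} = 0$, and since $Z' - \lambda'^{-1}\ket{\vw}\bra{\vw} \ge 0$ this gives $(Z' - \lambda'^{-1}\ket{\vw}\bra{\vw})\ket{\vv} = 0$, that is $Z'\ket{\vv} = \lambda'^{-1}\braket{\vw}{\vv}\ket{\vw}$. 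Now I must identify the scalar $\braket{\vw}{\vv}$: it equals $\Tr(W^\dag V)$, and combined with saturation $\lambda\lambda' = n\Tr(VW)$ one gets $\braket{\vw}{\vv} = \lambda\lambda'/n$, so $Z'\ket{\vv} = \lambda'^{-1}(\lambda\lambda'/n)\ket{\vw} = (\lambda/n)\ket{\vw}$, which is \eqref{eq:WZV}. By the symmetric argument (swapping the roles of $(S, V, Z, \lambda)$ and $(S^\perp + \mathbb{C}I, W, Z', \lambda')$, using that $\Tr(ZZ')$ is symmetric and that $S^\perp + \mathbb{C}I$ has complement $S$ in the relevant sense) I get \eqref{eq:VZW}: $Z\ket{\vw} = (\lambda'/n)\ket{\vv}$. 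Finally, substituting \eqref{eq:WZV} into $ZZ' = \lambda^{-1}\ket{\vv}\bra{\vv}Z'$ gives $n Z Z' = n\lambda^{-1}\ket{\vv}(\bra{\vv}Z') = n\lambda^{-1}\ket{\vv}\bra{\vw}(\lambda/n) \cdot$ (taking adjoint of \eqref{eq:WZV}, $\bra{\vv}Z' = (\lambda/n)\bra{\vw}$, using $Z'$ Hermitian) $= \ket{\vv}\bra{\vw}$, which is \eqref{eq:VWZZ}.

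\textbf{Main obstacle.} I expect the bookkeeping with normalization factors of $n$ and the precise value of the scalars $\braket{\vv}{\vw}$, $\braopket{\Phi}{W^\dag V}{\Phi}$, etc., to be the fiddly part — in particular verifying that $\braket{\vv}{\vw} = \Tr(V^\dag W)$ under the conventions $\ket{\vv} = (V \ot I)\ket{\Phi}$, $\ket{\vw} = (W \ot I)\ket{\Phi}$ (wait: here $\ket{\vv}$ should be $(\sqrt{V}\ot I)\ket\Phi$ or $(V\ot I)\ket\Phi$ depending on which SDP form is being used — one must be careful that \eqref{eq:dsw_grotschel} uses $\ket{\vw}=(W\ot I)\ket\Phi$, so the analogous $\ket{\vv}=(V\ot I)\ket\Phi$, giving $\braket{\vv}{\vw} = \braopket{\Phi}{V^\dag W \ot I}{\Phi} = \Tr(V^\dag W) = \Tr(VW)$). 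The other subtle point is justifying the decomposition $Z' = Z'_\perp + I \ot \rho$ and showing $\rho$ is determined by $W$ — this uses $I \in S$ so $S^\perp \perp \mathbb{C}I$, hence $\Tr_\hA(Z'_\perp)$ lies in $(\mathbb{C})^\perp$... actually this needs $\Tr_\hA$ of the $S^\perp$ component; the cleanest route is to avoid the decomposition entirely and compute $\Tr(ZZ')$ directly using $Z \in S\ot\linop\hB$, $Z' \in (S^\perp+\mathbb{C}I)\ot\linop\hB$, noting $S \perp S^\perp$ forces $\Tr(ZZ')$ to depend only on the $\mathbb{C}I\ot\linop\hB$ part of $Z'$, which equals $\frac1n I \ot \Tr_\hA Z' = \frac1n I \ot W^T$, so $\Tr(ZZ') = \frac1n\Tr(Z(I\ot W^T)) = \frac1n\Tr((\Tr_\hA Z) W^T) = \frac1n\Tr(V^T W^T) = \frac1n\Tr(VW)$. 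That settles it.
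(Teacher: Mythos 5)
Your proof is correct, and it rests on the same two computations as the paper's — the saturation identity $\lambda\lambda' = n\Tr(VW)$ and the observation that $\Tr(ZZ') = n^{-1}\Tr(VW)$, since $S\ot\linop{\hB} \perp S^\perp\ot\linop{\hB}$ means the Hilbert--Schmidt inner product only sees the $\mathbb{C}I\ot\linop{\hB}$ components, which are fixed by the constraints $\TrA Z = V^T$, $\TrA Z' = W^T$ — but the packaging differs. The paper forms the two $(n^2+1)\times(n^2+1)$ block matrices $M$ and $M'$ from \cref{thm:dsw_grotschel} (the second conjugated by $\diag(1,-nI)$), shows $\Tr(MM')=0$ from exactly those two facts, invokes that orthogonal positive semidefinite operators have orthogonal supports to conclude $MM'=0$, and reads off all three relations simultaneously as the vanishing blocks of the product. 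You instead work at the level of the Schur complements $Z \ge \lambda^{-1}\proj{\vv}$ and $Z' \ge \lambda'^{-1}\proj{\vw}$ and extract the relations one at a time from tightness in a chain of inequalities, using the standard facts that $A,B\ge 0$, $\Tr(AB)=0$ implies $AB=0$, and $A\ge 0$, $\braopket{v}{A}{v}=0$ implies $A\ket{v}=0$; your symmetric chain for $Z\ket{\vw}$ and the adjoint of \eqref{eq:WZV} to get \eqref{eq:VWZZ} both go through. The two routes are mathematically equivalent — the paper's $MM'=0$ is your complementary slackness assembled in block form — with the paper's device making the precise shape of the three equations transparent, while yours is more elementary and avoids the sign/scale conjugation trick. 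One small caveat in your version: the Schur-complement rewriting divides by $\lambda$ and $\lambda'$ (and your endpoint matching divides by $\Tr(VW)$), so you should dispose of the degenerate case $\lambda\lambda'=0$ separately; by \cref{thm:dsw_bounded} this forces $V=0$ or $W=0$, where all three claims are trivial, whereas the paper's block-matrix argument never divides and handles this uniformly.
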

\begin{proof}
    Consider the block matrices from~\eqref{eq:dsw_grotschel} for $S$ and $S^\perp + \mathbb{C}I$,
    with the second conjugated by a diagonal matrix to change the sign of and scale the second row
    and column.
    \begin{align}
        M &=
        \left( \begin{array}{c|c}
            \lambda & \bra{\vv} \\
            \hline
            \ket{\vv} & Z
        \end{array} \right)
        \\ M' &=
        \left( \begin{array}{c|c}
            1 & 0 \\
            \hline
            0 & -n I
        \end{array} \right)
        \left( \begin{array}{c|c}
            \lambda' & \bra{\vw} \\
            \hline
            \ket{\vw} & Z'
        \end{array} \right)
        \left( \begin{array}{c|c}
            1 & 0 \\
            \hline
            0 & -n I
        \end{array} \right)
        \\ &=
        \left( \begin{array}{c|c}
            \lambda' & -n \bra{\vw} \\
            \hline
            -n \ket{\vw} & n^2 Z'
        \end{array} \right)
    \end{align}

    Their product is
    \begin{align}
        M M' &=
        \left( \begin{array}{c|c}
            \lambda \lambda' - n \braket{\vv}{\vw}
            & -\lambda n \bra{\vw} + n^2 \bra{\vv} Z'
            \\ \hline
            \lambda' \ket{\vv} - n Z \ket{\vw}
            & -n \ket{\vv}\bra{\vw} + n^2 Z Z'
        \end{array} \right)
        \label{eq:grot_prod}
    \end{align}
    We will show that $M$ and $M'$ are orthogonal under the Hilbert-Schmidt inner product,
    i.e., $\Tr(M M') = 0$.
    The upper-left block vanishes: $\braket{\vv}{\vw} = \Tr(VW)$ and
    by~\eqref{eq:compatible_hypothesis}, $\lambda \lambda' = n \Tr(VW)$.
    What remains of the trace is the lower-right block,
    \begin{align}
        \Tr(M M') &= -n \Tr( \ket{\vv}\bra{\vw} ) + n^2 \Tr(Z Z')
        \\ &= -n \Tr(VW) + n^2 \Tr(Z Z').
    \end{align}

    Since $Z \in S \ot \linop{B}$ and $Z' \in (S^\perp + \mathbb{C}I) \ot \linop{B}$,
    the Hilbert-Schmidt inner product of $Z$ and $Z'$ can only see the projection of
    these variables onto $\mathbb{C}I \ot \linop{B}$.
    So we have
    \begin{align}
        \Tr(Z Z')
        &= \Tr\left( ( n^{-1} I \ot \TrA Z ) ( n^{-1} I \ot \TrA Z' ) \right)
        \\ &= n^{-1} \Tr\left( ( \TrA Z ) ( \TrA Z' ) \right)
        \\ &= n^{-1} \Tr( V^T W^T )
        \\ &= n^{-1} \Tr( V W ),
    \end{align}
    where the third line comes from the condition $\Tr_A Z = V^T$, $\Tr_A Z' = W^T$
    of~\eqref{eq:dsw_grotschel}.
    Therefore $\Tr(M M') = 0$ and the $M$ and $M'$ operators are orthogonal.
    Positive semidefinite orthogonal operators have orthogonal supports, so in fact
    $M M' = 0$.
    The blocks of $M M'$ listed in~\eqref{eq:grot_prod} being zero
    gives~\eqref{eq:WZV}-\eqref{eq:VWZZ}.
\end{proof}

\begin{theorem}
    If $S \subseteq \linop{A}$ is a non-commutative graph and
    \begin{align}
        \mathcal{F} = \{ X \in \dswbody(S) : \Tr(X V) = 1 \}
    \end{align}
    is a facet of $\dswbody(S)$ then $V$ is a maximal $S$-full projector.
    Note that by \cref{thm:facet_alpha_1}, all facets of a convex corner take this form.
    \label{thm:facet_is_clique}
\end{theorem}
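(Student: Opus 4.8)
The plan is to use the facet to impose enough linear constraints on the Gr\"otschel-type semidefinite program of \cref{thm:dsw_grotschel} computing $\dsw(S,V)$ that its matrix variable is forced to be rank one, and then to recognise from that rank-one solution that $V$ is a maximal $S$-full projector.

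First I would record that $\dsw(S,V) = 1$ and that $V$ is a nonzero vertex of $\dswbodyS(S)$. Since $\mathcal{F}$ is a facet it is nonempty, so some $X_0 \in \dswbody(S)$ has $\Tr(X_0 V) = 1$, giving $\dsw(S,V) \ge 1$ by \cref{thm:dsw_from_dswbody}; and \cref{thm:facet_is_vertex} applied to $\cornC = \dswbody(S)$ (whose anti-blocker is $\dswbodyS(S)$) shows $V$ is a vertex of $\dswbodyS(S)$, so $V \in \dswbodyS(S)$ and $\dsw(S,V) \le 1$ by \cref{thm:dswbody_abl}; also $V \ne 0$. Now fix $X \in \mathcal{F}$. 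By \eqref{eq:holder_dsw_corner} and \cref{thm:dsw_linear}, $X \in \dswbody(S)$ gives $\dsw(S^\perp + \mathbb{C}I, X) \le n$, so the inequality of \cref{thm:holder_dsw} reads $n = n\Tr(VX) \le \dsw(S,V)\,\dsw(S^\perp + \mathbb{C}I, X) \le n$: the pair $(V,X)$ saturates it, with $\dsw(S^\perp + \mathbb{C}I, X) = n$. Fix one optimal $Z$ for the SDP \eqref{eq:dsw_grotschel} computing $\dsw(S,V)$ --- so $Z \in S \ot \linop{\hB}$, $Z \ge 0$, $\TrA Z = V^T$, and $Z$ is the lower-right block of a positive semidefinite $2\times 2$ block matrix with corner $1$ and off-diagonal block $\ket{\vv} = (V \ot I)\ket{\Phi}$ --- a choice that does not depend on $X$. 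Then \cref{thm:compatible} applied to $(V,X)$ with $\lambda = 1$ and $\lambda' = n$ yields, via \eqref{eq:VZW}, that $Z\,(X \ot I)\ket{\Phi} = \ket{\vv}$ for every $X \in \mathcal{F}$.

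Next comes the dimension count. Because $\mathcal{F}$ is a facet, its affine span has codimension one in $\herm{\hA}$; since $\mathcal{F} \subseteq \{X : \Tr(XV) = 1\}$, the real span of $\{X - X' : X, X' \in \mathcal{F}\}$ is all of $\{H \in \herm{\hA} : \Tr(HV) = 0\}$, and so its complex span is $\{M \in \linop{\hA} : \Tr(MV) = 0\}$, of codimension one in $\linop{\hA}$. As $Z$ annihilates $(X-X') \ot I\,\ket{\Phi}$ for all $X,X' \in \mathcal{F}$, it annihilates $(M \ot I)\ket{\Phi}$ for every $M$ with $\Tr(MV) = 0$, which under vectorization is a codimension-one subspace of $\hA \ot \hB$; hence $\rank Z \le 1$, and $Z \ne 0$ since $\TrA Z = V^T \ne 0$, so $Z = \ket{M}\bra{M}$ with $\ket{M} = (M \ot I)\ket{\Phi}$, $M \ne 0$. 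Now $\TrA(\ket{M}\bra{M}) = (M^\dag M)^T = V^T$ gives $M^\dag M = V$; expanding $\ket{M}\bra{M} = \sum_{i,j}(M\ket{i}\bra{j}M^\dag) \ot \ket{i}\bra{j} \in S \ot \linop{\hB}$ gives $M\linop{\hA}M^\dag \subseteq S$; and positive semidefiniteness of the block matrix (with lower-right block $\ket{M}\bra{M}$) forces $\ket{\vv} = c\ket{M}$ for a scalar $c \ne 0$, with $\abs{c} \le 1$ by a Schur-complement computation. Hence $V = cM$, so $V\linop{\hA}V \subseteq S$, and $M^\dag M = V$ becomes $V^2 = \abs{c}^2 V$; as $V \ge 0$ this forces $V = \abs{c}^2 P$ for a projector $P \ne 0$ with $P\linop{\hA}P = \abs{c}^{-4}V\linop{\hA}V \subseteq S$, i.e.\ $P$ is $S$-full. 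Since $\proj{P}$ is feasible for \eqref{eq:weighted_dsw_min_Y} with $\lambda = 1$ (as in the proof of \cref{thm:dswbody_sandwich}), $\dsw(S,P) \le 1$, so $1 = \dsw(S,V) = \abs{c}^2\dsw(S,P) \le \abs{c}^2 \le 1$, forcing $\abs{c} = 1$ and $V = P$. Finally, $V$ is a nonzero vertex of $\dswbodyS(S)$, hence maximal there by \cref{thm:vertex_is_maximal}; any $S$-full projector $P' \ge V$ lies in $\dswbodyS(S)$, hence equals $V$, so $V$ is a maximal $S$-full projector.

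The main obstacle I anticipate is the dimension-counting step: passing from the purely geometric fact that $\mathcal{F}$ is a facet to the statement that $Z$ satisfies enough independent linear relations to be rank one, while keeping the real and complex spans straight and correctly identifying $\ker Z$. The remaining ingredients --- the identity $\TrA(\ket{M}\bra{M}) = (M^\dag M)^T$, the Schur-complement bound $\abs{c} \le 1$, and the feasibility of $\proj{P}$ --- are routine and are essentially already established elsewhere in the paper.
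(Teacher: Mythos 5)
Your proposal is correct and takes essentially the same route as the paper's proof: vertexhood of $V$ via \cref{thm:facet_is_vertex}, saturation of the duality of \cref{thm:holder_dsw} for every element of the facet, then \cref{thm:compatible} together with the facet's dimension to force the optimal $Z$ of \cref{thm:dsw_grotschel} to be rank one, from which $V$ is read off as a maximal $S$-full projector. The only differences are bookkeeping: the paper rescales the facet elements by $n^{-1}$ and fixes the normalization directly (getting $Z = \proj{\vv}$ and hence $(V^T)^2 = V^T$), whereas you carry a scalar $c$ and eliminate it at the end using $\dsw(S,P) \le 1$ and $\dsw(S,V)=1$.
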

\begin{proof}
    Define $\Sc = S^\perp + \mathbb{C}I$ and let $n = \dim(\hA)$.
    By \cref{thm:facet_is_vertex}, $V$ is a vertex of $\dswbodyS(S)$.
    By \cref{thm:vertex_is_maximal} vertices are maximal, so $\dsw(S, V) = 1$.

    Being a facet, $\mathcal{F}$ has $\dim(\linop{\hA}) = n^2$ elements spanning a
    $\dim(\linop{\hA})-1 = n^2-1$
    dimensional affine subspace.
    Label these elements $X_1, \ldots, X_{n^2}$.

    Since each $X_i \in \mathcal{F}$, we have
    \begin{align}
        \Tr(X_i V) &= 1
        \\ \dsw(\Sc, n^{-1} X_i) &= 1
    \end{align}
    where the second line comes from~\eqref{eq:holder_dsw_corner} of \cref{thm:holder_dsw}:
    $X_i$ being on the boundary of $\dswbody(S)$ means $n^{-1} X_i$ is on the boundary of
    $\dswbodyS(\Sc)$.
    Defining $W_i = n^{-1} X_i$, we have
    \begin{align}
        n \Tr(W_i V) &= 1
        \\ \dsw(\Sc, W_i) &= 1.
    \end{align}
    By \cref{thm:compatible} there is
    $Z$ optimum for~\eqref{eq:dsw_grotschel} for $\dsw(S, V)$ with
    \begin{align}
        \ket{\vv} &= n Z \ket{\vw_i}.
        \label{eq:facets_VZW}
    \end{align}
    Since the $X_i$ span an $n^2-1$ dimensional affine subspace of $\linop{\hA}$,
    so do the $W_i$.
    And the $\ket{\vw_i}$ span an $n^2-1$ dimensional affine subspace of $\hA \ot \hB$.
    Their differences then span a codimension $1$ subspace of $\hA \ot \hB$.
    Since
    \begin{align}
        Z (\ket{\vw_i} - \ket{\vw_j})
        &= n^{-1} (\ket{V} - \ket{V})
        \\ &= 0,
    \end{align}
    we have that $Z$ is rank $1$.
    Considering~\eqref{eq:facets_VZW} and
    $\braket{\vv}{\vw_i} = \Tr(W_i V) = n^{-1}$,
    we have
    \begin{align}
        Z &= \proj{\vv}.
    \end{align}
    This gives $\TrA Z = (V^T)^2$ but by~\eqref{eq:dsw_grotschel} we have
    $\TrA Z = V^T$.  Therefore $V$ is a projector.
    For any $M \in \linop{\hA}$ we have
    \begin{align}
        V M V^\dag &= \TrB( (I \ot M^T) \proj{\vv} )
        \\ &= \TrB( (I \ot M^T) Z )
        \\ &\in S
    \end{align}
    where the last relation follows from $Z \in S \ot \linop{\hB}$,
    a requirement of~\eqref{eq:dsw_grotschel}.
    Therefore $V$ is an $S$-full projector.
    Since $V$ is a vertex of $\dswbodyS(S)$, it is maximal in $\dswbodyS(S)$.
    Therefore $V$ is a maximal $S$-full projector.
\end{proof}

After having shown that facets of $\thbody(G)$ correspond to cliques of $G$,~\cite{Grtschel1986}
is able to immediately state that $\thbody(G)$ being a polytope implies $G$ is perfect:
$\thbody(G)$ being defined by clique constraints forces $\thbody(G) = \vp^\flat(\Gc)$.
Since $\thbody^\flat(G) = \thbody(\Gc)$ must also be polyhedral we similarly have
$\thbody(G) = \vp(G)$, giving $\vp(G) = \vp^\flat(\Gc)$ (i.e.\ $G$ is perfect).
For non-commutative graphs the situation is not so simple.
What we will be able to show is that if $\dswbodyS(S)$ is finitely generated, then it is
equal to $\fp(S)$.
But the converse does not necessarily hold.
And $\dswbodyS(S) = \fp(S)$ does not necessarily mean that $S$ is $\fp$-perfect.

\begin{theorem}
    If $\dswbodyS(S)$ is finitely generated then it is equal to $\fp(S)$.
    \label{thm:fin_gen_dswbody_fp}
\end{theorem}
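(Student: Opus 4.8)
The plan is to realize $\dswbodyS(S)$ as sandwiched between $\fp(S)$ and itself. One of the two inclusions, $\fp(S) \subseteq \dswbodyS(S)$, is already available from \cref{thm:dswbody_sandwich}, so the entire content of the theorem is the reverse inclusion $\dswbodyS(S) \subseteq \fp(S)$ under the finite-generation hypothesis. I would first dispose of the trivial case $\dim(\hA) = 1$: then $S = \linop{\hA}$ and both $\dswbodyS(S)$ and $\fp(S)$ equal $\her(\{I\})$. So assume $\dim(\hA) > 1$, which is also what \cref{thm:facet_is_clique} tacitly requires.

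Next I would pick a minimal finite set of generators $\{V_0, \ldots, V_m\}$ of $\dswbodyS(S)$, so that $\dswbodyS(S) = \her(\conv\{V_0, \ldots, V_m\})$; such a set exists by starting from any finite generating set and deleting redundant elements one at a time. Since $0 \le X$ for every $X \in \psd{\hA}$, a zero generator is always redundant as long as $\dswbodyS(S) \ne \{0\}$, which holds because $\dswbodyS(S)$ contains small positive multiples of $I$; hence every $V_i$ is nonzero. The key bookkeeping step is to note that $\dswbodyS(S)^\sharp = \dswbody(S)$ (immediate from \cref{def:dswbody} and \cref{thm:dswbody_abl}) and conversely $\dswbody(S)^\sharp = \dswbodyS(S)$ by the second anti-blocker theorem \cref{thm:second_antiblocker}, so that \cref{thm:finitely_generated_is_facet} can be applied with $\cornC = \dswbodyS(S)$ and $\cornC^\sharp = \dswbody(S)$. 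That theorem then tells us that for each $i$ the set $\mathcal{F}_i = \{X \in \dswbody(S) : \Tr(X V_i) = 1\}$ is a facet of $\dswbody(S)$.

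Then I would invoke \cref{thm:facet_is_clique}: a facet of $\dswbody(S)$ of precisely this form forces $V_i$ to be a maximal $S$-full projector; in particular each $V_i$ is an $S$-full projector, hence $V_i \in \fp(S)$. Since $\fp(S)$ is a convex corner containing all of $V_0, \ldots, V_m$, it contains their hereditary convex hull, which is exactly $\dswbodyS(S)$. Combining this with $\fp(S) \subseteq \dswbodyS(S)$ yields $\dswbodyS(S) = \fp(S)$.

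I do not anticipate a genuine obstacle, since the substantive work is already carried out in \cref{thm:finitely_generated_is_facet,thm:facet_is_clique}; the chain "finitely generated $\Rightarrow$ its minimal generators cut out facets of the anti-blocker $\Rightarrow$ those generators are $S$-full projectors" does the job almost mechanically. The only places that need a little care are getting the anti-blocker directions right so that \cref{thm:finitely_generated_is_facet} is applied to $\dswbodyS(S)$ rather than $\dswbody(S)$, confirming that a minimal generating set exists and contains no zero element, and handling the one-dimensional edge case separately.
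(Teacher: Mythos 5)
Your proposal follows exactly the paper's argument: take a minimal finite generating set of $\dswbodyS(S)$, use \cref{thm:finitely_generated_is_facet} to see each generator cuts out a facet of $\dswbody(S)$, apply \cref{thm:facet_is_clique} to conclude each generator is an $S$-full projector, then close with the sandwich \cref{thm:dswbody_sandwich}. The extra care you take with the $\dim(\hA)=1$ case and with ruling out a zero generator is sound and slightly more scrupulous than the paper, but the route is the same.
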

\begin{proof}
    Suppose $\dswbodyS(S)$ is finitely generated and let
    $\{ X_i : i \in \{ 0, \ldots, m \} \}$ be a minimal set of generators,
    so $\cornC = \her(\conv\{ X_i : i \in \{ 0, \ldots, m \} \} )$.
    By \cref{thm:finitely_generated_is_facet}, for each $X_i$,
    \begin{align}
        \mathcal{F}_i &= \{ Z \ge 0 : \Tr(X_i Z) = 1 \}
    \end{align}
    is a facet of $\dswbody^{\sharp\sharp}(S) = \dswbody(S)$.
    By \cref{thm:facet_is_clique} these facets correspond to $S$-full projectors.  Specifically, each
    $X_i$ is an $S$-full projector.
    All $S$-full projectors are in $\fp(S)$, so we have $\dswbodyS(S)$ being generated by elements
    of $\fp(S)$; therefore $\dswbodyS(S) \subseteq \fp(S)$.
    But by \cref{thm:dswbody_sandwich}, $\fp(S) \subseteq \dswbodyS(S)$.
    Therefore $\fp(S) = \dswbodyS(S)$.
\end{proof}

This invites a second possible definition of perfection for non-commutative graphs.

\begin{definition}
    A non-commutative graph $S$ is \emph{$\fg$-perfect} if $\dswbodyS(S)$ is finitely generated.
\end{definition}

\begin{theorem}
    For a graph $G$ define $S = \linspan\{ \ket{i}\bra{j} : i \simeqG j \}$
    and $S_0 = \linspan\{ \ket{i}\bra{i} : i \in V(G) \}$.
    Then $S$ is $\fg$-perfect if and only if $G$ is perfect.
\end{theorem}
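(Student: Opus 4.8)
The plan is to route everything through the correspondence of \cref{thm:dswbodyS_classical}, which already relates $\dswbodyS(S)$ to the classical object $\thbody^\flat(G)$: namely $\dswbodyS(S) = \her\{\diag(w) : w \in \thbody^\flat(G)\}$ and $\thbody^\flat(G) = \{\diag(W) : W \in \dswbodyS(S)\}$, while from \cref{sec:holder} we have $\thbody^\flat(G) = \thbody(\Gc)$. Thus it suffices to prove that $\dswbodyS(S)$ is finitely generated if and only if $\thbody^\flat(G)$ is a polytope; the classical equivalences recalled in \cref{sec:facets} (``$\thbody(H)$ is a polytope $\iff H$ is perfect'') together with the weak perfect graph theorem ($G$ is perfect $\iff \Gc$ is perfect) then identify this with ``$\Gc$ perfect'' and hence with ``$G$ perfect''.

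First I would do the direction $G$ perfect $\implies S$ is $\fg$-perfect. If $G$ is perfect then $\Gc$ is perfect, so $\thbody^\flat(G) = \thbody(\Gc)$ is a polytope, say with vertices $w_1, \dots, w_k$. Since $w \mapsto \diag(w)$ is linear, $\{\diag(w) : w \in \thbody^\flat(G)\} = \conv\{\diag(w_1), \dots, \diag(w_k)\}$, so by \cref{thm:dswbodyS_classical} we get $\dswbodyS(S) = \her(\conv\{\diag(w_j) : 1 \le j \le k\})$, which is generated by the finite set $\{\diag(w_1), \dots, \diag(w_k)\}$. Hence $\dswbodyS(S)$ is finitely generated.

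For the converse, suppose $\dswbodyS(S)$ is finitely generated and let $\{X_0, \dots, X_m\}$ be a minimal set of generators, so $\dswbodyS(S) = \her(\conv\{X_0, \dots, X_m\})$. By \cref{thm:finitely_generated_is_facet} each $X_i$ is a vertex of $\dswbodyS(S)$; by \cref{thm:vertex_is_maximal} every nonzero vertex is a maximal element; and by \cref{thm:dswSW} the maximal elements of $\dswbodyS(S)$ are diagonal (here $S_0 = S_0' = \diags$). Hence each $X_i$ is diagonal, $X_i = \diag(w_i)$ with $w_i \ge 0$. Applying the diagonal map and \cref{thm:dswbodyS_classical} gives $\thbody^\flat(G) = \{\diag(W) : W \in \her(\conv\{\diag(w_0), \dots, \diag(w_m)\})\}$, which one checks equals the classical hereditary closure of $\conv\{w_0, \dots, w_m\}$ — a polytope. (The inclusion ``$\subseteq$'' holds because $0 \le W \le D$ with $D$ diagonal forces $0 \le \diag(W) \le \diag(D)$ entrywise; the inclusion ``$\supseteq$'' holds because $0 \le u \le \sum_i \lambda_i w_i$ is realized by $\diag(u) \le \sum_i \lambda_i \diag(w_i)$ in the Loewner order.) Therefore $\thbody(\Gc) = \thbody^\flat(G)$ is a polytope, so $\Gc$, and hence $G$, is perfect.

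The argument is mostly assembly; the step requiring care — and the place where the non-commutative machinery genuinely enters — is the converse, where one must chain \cref{thm:finitely_generated_is_facet}, \cref{thm:vertex_is_maximal}, and \cref{thm:dswSW} to force a minimal generating set of $\dswbodyS(S)$ into the diagonal subspace, and then remember to pass from $\Gc$ back to $G$ via the weak perfect graph theorem. An entirely parallel argument for the forward direction would avoid \cref{thm:dswbodyS_classical} altogether: $G$ perfect makes $S$ $\fp$-perfect, hence $\dswbodyS(S) = \fp(S)$, and by \cref{thm:fp_in_S1} the maximal $S$-full projectors lie in $S_0' = \diags$, i.e.\ are the coordinate projectors onto the finitely many maximal cliques of $G$, so $\fp(S)$ — and therefore $\dswbodyS(S)$ — is finitely generated.
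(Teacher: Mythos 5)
Your proof is correct, and the forward direction ($G$ perfect $\Rightarrow$ $\fg$-perfect) is essentially the paper's argument: push the finitely many vertices of the polytope $\thbody^\flat(G)=\thbody(\Gc)$ through $w\mapsto\diag(w)$ and use \cref{thm:dswbodyS_classical}. Where you genuinely diverge is the converse. The paper never forces the generators of $\dswbodyS(S)$ to be diagonal: it simply observes that the map $W\mapsto\diag(W)$ (diagonal entries as a vector) is linear and order-preserving ($0\le W\le D$ implies $0\le\diag(W)\le\diag(D)$ entrywise), so by \cref{thm:dswbodyS_classical} the images $\diag(W_1),\dots,\diag(W_m)$ of \emph{any} finite generating set generate $\thbody^\flat(G)$ as a diagonal convex corner, which is therefore a polytope, and perfection of $G$ follows from Gr\"otschel--Lov\'asz--Schrijver. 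You instead take a minimal generating set and chain \cref{thm:finitely_generated_is_facet}, \cref{thm:vertex_is_maximal}, and \cref{thm:dswSW} to conclude the generators themselves lie in $\diags$, then transport the corner to the classical side. This is valid (the small case $X_i=0$ is harmless since $0$ is diagonal, and a minimal generating set exists by discarding generators dominated by convex combinations of the others), and it buys the extra structural fact that a minimal generating set of $\dswbodyS(S)$ is automatically diagonal; but it leans on the appendix geometry machinery where a two-line projection argument suffices, and it needs the small auxiliary verification that the diagonal map identifies $\her(\conv\{\diag(w_i)\})$ with the classical hereditary closure of $\conv\{w_i\}$, which you supply. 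Your remark that the forward direction could alternatively go through $\fp$-perfection and \cref{thm:fp_in_S1} (maximal $S$-full projectors are exactly the maximal clique projectors, so $\fp(S)$ is finitely generated) is also sound and is a nice bypass of \cref{thm:dswbodyS_classical}, though it invokes the $\fp$-perfect correspondence theorem rather than the classical polytope characterization directly.
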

\begin{proof}
    If $G$ is perfect then $\thbody^\flat(G) = \thbody(\Gc)$ is a polytope~\cite{Grtschel1986}, and
    hence is finitely generated.
    By \cref{thm:dswbodyS_classical}, $\dswbodyS(S) = \her \{ \diag(w) : w \in \thbody^\flat(G) \}$.
    If $w_1, \dots, w_n$ are generators of $\thbody^\flat(G)$ then
    $\diag(w_1), \dots, \diag(w_n)$ generate $\dswbodyS(S)$.
    So $S$ is $\fg$-perfect.

    On the other hand, suppose $S$ is $\fg$-perfect.
    By \cref{thm:dswbodyS_classical}, $\thbody^\flat(G) = \{ \diag(W) : W \in \dswbodyS(S) \}$.
    If $W_1, \dots, W_n$ generate $\dswbodyS(S)$ then
    $\diag(W_1), \dots, \diag(W_n)$ generate $\thbody^\flat(G)$.
    Being finitely generated, $\thbody^\flat(G)$ is a polytope.
    Then by~\cite{Grtschel1986} it is perfect.
\end{proof}

We close this section with a series of examples exploring the relation between
$\fp$-perfect graphs and $\fg$-perfect graphs.

\begin{example}
    It is not necessarily the case that $S$ being $\fg$-perfect implies $\Sc$ is $\fg$-perfect.
    Consider $S = \linop{\hA}$, $S_0 = \mathbb{C}I$.
    $S$ is $\fg$-perfect but $\Sc$ is not.
\end{example}
\begin{proof}
    We have $\dswbodyS(S) = \dswbodyS(\linop{\hA}) = \{ X \ge 0 : \opnorm{X} \le 1 \}$ which is generated
    by a single element: $\dswbodyS(S) = \her(\{ I \})$.
    On the other hand, $\dswbodyS(\Sc) = \{ X \ge 0 : \dim(\hA) \Tr(X) \le 1 \}$ which is not finitely
    generated: $\dim(\hA)^{-1} P$ is maximal in $\dswbodyS(\Sc)$ for any rank-1 projector $P$.
\end{proof}

\begin{example}
    $\fg$-perfect does not imply $\fp$-perfect.
    Again consider $S = \linop{\hA}$, $S_0 = \mathbb{C}I$.
    $S$ is $\fg$-perfect but is not $\fp$-perfect.
\end{example}
\begin{proof}
    That $S$ is $\fg$-perfect was shown in the previous example.
    $\fp(S) = \her(\{ I \})$ because $I$ is an $S$-full projector.
    On the other hand, there are no $S$-full projectors of $\Sc = \mathbb{C}I$,
    so $\fp(\Sc) = \{ 0 \}$ and $\fp^\abpsi(\Sc) = \psd{\hA}$.
    $\fp(S) \ne \fp^\abpsi(\Sc)$ so $S$ is not $\fp$-perfect.
\end{proof}

\begin{example}
    $\fp$-perfect does not imply $\fg$-perfect.
    A counterexample is the graph from \cref{thm:perfect_2x2}.
    This is not $\fg$-perfect, though its complement is.
\end{example}
\begin{proof}
    In \cref{thm:perfect_2x2} this graph was shown to be $\fp$-perfect, so $\dswbodyS(S) = \fp(S)$.
    By~\eqref{eq:perfect_2x2_fp}, $\fp(S)$ is not finitely generated:
    any rank-1 projector of the form
    $\left\{ \frac{1}{2} \left( \begin{smallmatrix} 1 & \phi \\ \phi^\dag & 1 \end{smallmatrix} \right)
        : \phi \in \mathbb{C}, \abs{\phi} = 1 \right\}$
    is maximal in $S$.
\end{proof}

\begin{example}
    Maximal $S$-full projectors are not necessarily vertices of $\fp(S)$, though they are maximal
    elements.
\end{example}
\begin{proof}
    Consider the graph from \cref{thm:perfect_2x2}.
    Take the maximal $S$-full projector
    $X = \frac{1}{2} \left( \begin{smallmatrix} 1 & 1 \\ 1 & 1 \end{smallmatrix} \right)$,
    corresponding to $\phi=1$ in~\eqref{eq:perfect_2x2_fp}.
    Notice that $\fp(S)$ has a tangent line at this point given by $\phi = 1 \pm i \epsilon$.
    Any supporting hyperplane of $\fp(S)$ at $X$ must contain this line.
    Therefore the intersection of all supporting hyperplanes is not dimension 0, and
    $X$ is not a vertex.
    In fact, by the symmetry of~\eqref{eq:perfect_2x2_fp}, all maximal $S$-full projectors are not
    vertices.
\end{proof}


\section{Conclusion and open questions}

Broadly speaking, there were three interrelated topics explored in this paper:
the weighed theta function, the geometry of convex corners, and perfect graphs.
While the basic theory of weighted theta functions is essentially complete,
the theory of perfect graphs stands as little more than an invitation to the topic.

There are numerous open questions surrounding all aspects of this work.
For the weighted theta function, the biggest question is whether there is any application.
In terms of what may be generalized to non-commutative graphs, the most tantalizing application
is the result of~\cite{ACIN2017489} that
independence number approaches $\lov$ when activated through the strong product:
\begin{align}
    \sup_H \frac{\alpha(G \boxtimes H)}{\lov(G \boxtimes H)} = 1.
    \label{eq:alpha_activated}
\end{align}
The first step of the proof uses $\alpha(G \boxtimes \Gc) = n$ or, rather, the weighted version of
this.  This is trivial for classical graphs since $\{(v,v) : v \in V(G)\}$ is an independent
set.  The analogous statement for non-commutative graphs seems not so easy.
Possibly the rest of the proof of~\eqref{eq:alpha_activated} could be made to work.

The entropy of the theta body is investigated in~\cite{Marton1993}.
Entropy of non-commutative convex corners is explored in~\cite[section 2.4.3]{borelandthesis}.
Is there application for the entropy of the theta body for non-commutative graphs?
Using the techniques of~\cite{Fawzi2018}, such a quantity can be estimated to arbitrary precision
via an SDP.

An alternate definition of weighted theta was presented in~\cite{btw2019}.  Does this support
a duality relation like \cref{thm:holder_dsw} or \cref{thm:DWD}?
Are maximal elements of the corresponding theta body anti-blocker in $S_0'$ like we have in
\cref{thm:dswSW}?

While we have made much progress on understanding the geometry of vertices and facets for
non-commutative convex corners, a few questions remain open.
Is it true that vertices of a convex corner always correspond to facets of the anti-blocker?
Is a convex corner defined by finitely many inequalities if and only if its anti-blocker is finitely
generated?
If a convex corner is generated by its vertices, does this mean it is finitely generated?

As for perfect graphs, we have offered nothing beyond a couple definitions and some examples.
The most pressing question is whether there is some analogue to the strong perfect graph theorem.
That is to say, can $\fp$- or $\fg$-perfection be characterized by forbidden subgraphs?
If not, is there any other characterization?

It seems unsatisfactory that $S = \mathbb{C}I$, $S_0 = \mathbb{C}I$ is not
$\fp$-perfect.
Can this be remedied by finding a tighter version of the sandwich theorem
(\cref{thm:dswbody_sandwich})?
For instance, perhaps we should replace $\fp(S)$, the non-commutative analogue of the clique
polytope, with a convex corner generated by entanglement assisted cliques (however those
may be defined).
Expanding the definition of perfect graphs by tightening the sandwich theorem has some precedent,
e.g., circular-perfect graphs~\cite{PECHER2021103224}.

Source code in the Julia language for computing weighted thetas on non-commutative graphs
is available~\cite{githubncgraphs}.


\begin{acknowledgments}
    I'd like to thank Andreas Winter and Ivan Todorov for many helpful discussions on theta
    functions and non-commutative graphs, and Eric Hanson for discussions about the
    \texttt{Convex.jl} optimization package.
\end{acknowledgments}


\appendix
\section{Proof of main theorem}
\label{sec:main_proof}

The goal of this appendix is a constructive proof of \cref{thm:thin_diag},
showing how to convert a feasible solution for $\dsw(\vecmod{S}{S_0} + \mathbb{C}I, W)$
into a feasible solution for $\dsw(S, n \blockscale(W))$, and vice versa.

\begin{lemma}
    Let $W \in \hB \ot \hZ$ be Hermitian and suppose $W$ and $\TrB W$ are invertible.  Then
    \begin{align}
        I_\hA \ot W^{-1} &\ge \proj{\Phi} \ot (\TrB W)^{-1}.
    \end{align}
    \label{lem:Winv_vs_TrWinv}
\end{lemma}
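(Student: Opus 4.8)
The plan is to exhibit the right-hand side as an orthogonal projector after an invertible change of variables. Let $J\colon\hZ\to\hA\ot\hB\ot\hZ$ be the linear map $J\ket{z}=\ket{\Phi}\ot\ket{z}$, so that $J$ is the identity on $\hZ$ and creates $\ket{\Phi}$ on $\hA\ot\hB$. Then $\proj{\Phi}\ot(\TrB W)^{-1}=J\,(\TrB W)^{-1}J^\dag$, so the asserted inequality is equivalent to $I_\hA\ot W^{-1}\ge J(\TrB W)^{-1}J^\dag$. The single computation I would record is
\[
    J^\dag(I_\hA\ot M)J=\TrB M\qquad\text{for every }M\in\linop{\hB\ot\hZ},
\]
which is immediate from $\ket{\Phi}=\sum_a\ket{a}_\hA\ket{a}_\hB$ once one pairs the $\hA$-labels: $J^\dag(I_\hA\ot M)J=\sum_a(\bra{a}_\hB\ot I_\hZ)\,M\,(\ket{a}_\hB\ot I_\hZ)=\TrB M$.

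Next I would conjugate the inequality by the invertible operator $I_\hA\ot\sqrt{W}$. This sends $I_\hA\ot W^{-1}$ to the identity and sends $J(\TrB W)^{-1}J^\dag$ to $B(\TrB W)^{-1}B^\dag$, where $B:=(I_\hA\ot\sqrt{W})\,J$. By the identity above, $B^\dag B=J^\dag(I_\hA\ot W)J=\TrB W$, which is invertible by hypothesis; hence $B(\TrB W)^{-1}B^\dag=B(B^\dag B)^{-1}B^\dag$ is the orthogonal projector onto the range of $B$, and in particular $B(\TrB W)^{-1}B^\dag\le I$. Conjugating back by $(I_\hA\ot\sqrt{W})^{-1}$ then gives $I_\hA\ot W^{-1}\ge J(\TrB W)^{-1}J^\dag=\proj{\Phi}\ot(\TrB W)^{-1}$.

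The one thing to watch is the bookkeeping of the three tensor factors in the definition of $J$ and in the identity $J^\dag(I_\hA\ot M)J=\TrB M$: $J$ acts as the identity on the ``$W$-copy'' of $\hZ$ while $\ket{\Phi}$ lives on $\hA\ot\hB$. After that, the proof is just this one-line identity together with the elementary fact that $B(B^\dag B)^{-1}B^\dag$ is a projection (hence $\le I$), with the invertibility of $W$ and of $\TrB W$ entering exactly where $(I_\hA\ot\sqrt{W})^{-1}$ and $(\TrB W)^{-1}$ are formed. Equivalently one could deduce the inequality from $\left(\begin{smallmatrix}I_\hA\ot W^{-1}&J\\ J^\dag&\TrB W\end{smallmatrix}\right)\ge 0$ via Schur complements, but the projector argument is shorter and makes the role of the hypotheses transparent.
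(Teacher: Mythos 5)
Your proof is correct and is essentially the paper's own argument: the paper directly verifies that $(I_\hA \ot \sqrt{W})\left(\proj{\Phi} \ot (\TrB W)^{-1}\right)(I_\hA \ot \sqrt{W})$ is a projector, which is exactly your $B(B^\dag B)^{-1}B^\dag$, and both computations rest on the same identity $(\bra{\Phi} \ot I_\hZ)(I_\hA \ot W)(\ket{\Phi} \ot I_\hZ) = \TrB W$. Like the paper, you implicitly use positive definiteness of $W$ when forming $\sqrt{W}$ (which matches how the lemma is actually applied, even though it is stated only for Hermitian $W$), so no new gap is introduced.
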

\begin{proof}
    Consider
    \begin{align}
        P &= (I_\hA \ot \sqrt{W})
        \left( \proj{\Phi} \ot (\TrB W)^{-1} \right)
        (I_\hA \ot \sqrt{W}).
    \end{align}
    This is a projector since it is Hermitian and
    (as depicted in~\cref{fig:Winv_P}),
    \begin{align}
        P^2 &=
        (I_\hA \ot \sqrt{W})
        \left( \proj{\Phi} \ot (\TrB W)^{-1} \right)
        (I_\hA \ot W)
        \left( \proj{\Phi} \ot (\TrB W)^{-1} \right)
        (I_\hA \ot \sqrt{W})
    \\ &=
        (I_\hA \ot \sqrt{W})
        (\ket{\Phi} \ot I_\hZ)
        (\TrB W)^{-1}
            (\bra{\Phi} \ot I_\hZ)
            (I_\hA \ot W)
            (\ket{\Phi} \ot I_\hZ)
        (\TrB W)^{-1}
        (\bra{\Phi} \ot I_\hZ)
        (I_\hA \ot \sqrt{W})
    \\ &=
        (I_\hA \ot \sqrt{W})
        (\ket{\Phi} \ot I_\hZ)
        (\TrB W)^{-1}
        (\TrB W)
        (\TrB W)^{-1}
        (\bra{\Phi} \ot I_\hZ)
        (I_\hA \ot \sqrt{W})
    \\ &=
        (I_\hA \ot \sqrt{W})
        (\ket{\Phi} \ot I_\hZ)
        (\TrB W)^{-1}
        (\bra{\Phi} \ot I_\hZ)
        (I_\hA \ot \sqrt{W})
    \\ &= P.
    \label{eq:Winv_P}
    \end{align}
    As a projector, we have $P \le I_\hA \ot I_\hB \ot \hZ$, so
    \begin{align}
        (I_\hA \ot \sqrt{W})
        \left( \proj{\Phi} \ot (\TrB W)^{-1} \right)
        (I_\hA \ot \sqrt{W})
        &\le I_\hA \ot I_\hB \ot I_\hZ
        \\
        \proj{\Phi} \ot (\TrB W)^{-1}
        &\le I_\hA \ot W^{-1}.
    \end{align}
\end{proof}

\begin{figure}
    \centering
    \includegraphics[scale=1.0]{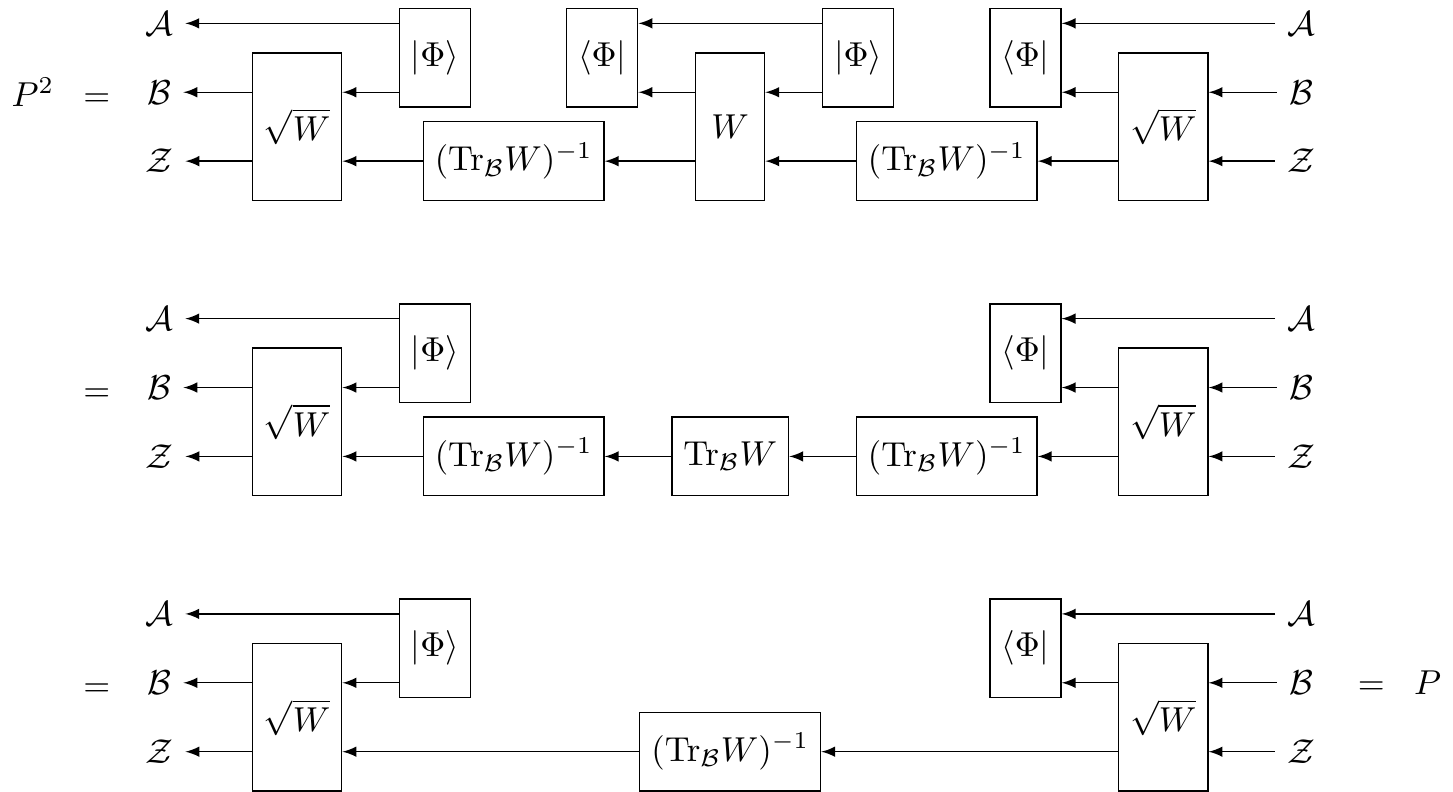}
    \caption{
        Diagram for~\eqref{eq:Winv_P}.
    }
    \label{fig:Winv_P}
\end{figure}

\begin{lemma}
    Let $G$ be a subgroup of the unitary operators on $\hA$.
    Define
    \begin{align}
        P &= \int U dU
        \\ \twirlU(\rho) &= \int U \rho U^\dag dU
    \end{align}
    where integration is with respect to the Haar measure on $G$.
    Then for $\rho \in \linop{\hA}, \rho \ge 0$,
    \begin{align}
        P \rho P^\dag \le \twirlU(\rho).
    \end{align}
    \label{thm:twirl_proj}
\end{lemma}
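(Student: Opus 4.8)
The plan is to identify $P$ as the orthogonal projector onto the $G$-invariant subspace, show that it commutes with $\twirlU(\rho)$, and establish the identity $P \rho P^\dag = P\,\twirlU(\rho)$; the asserted inequality then follows in one line. First I would record the elementary invariance facts, with the Haar measure normalized to a probability measure. Left- and right-invariance give $W P = P W = P$ for every $W \in G$, and inversion-invariance gives $P^\dag = \int U^\dag\, dU = \int U\, dU = P$ (so in particular $P \rho P^\dag = P \rho P$); consequently $P^2 = \int\!\!\int UV\, dU\, dV = \int P\, dV = P$, i.e.\ $P$ is an orthogonal projector.

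Next I would compute the twirl against $P$. Since $G$ is a subgroup, $U^\dag \in G$ whenever $U \in G$, hence $U^\dag P = P$, and therefore
\begin{align}
    \twirlU(\rho)\, P = \int U \rho\, U^\dag P\, dU = \int U \rho P\, dU = \Bigl( \int U\, dU \Bigr) \rho P = P \rho P .
\end{align}
The mirror-image computation, using $P\, U^\dag = P$, yields $P\,\twirlU(\rho) = P \rho P$ as well. In particular $P$ commutes with $\twirlU(\rho)$.

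Finally, $\twirlU(\rho) = \int U \rho U^\dag\, dU \ge 0$, being an average of positive semidefinite operators. Combining this with the two identities just proved and with $P^2 = P$,
\begin{align}
    \twirlU(\rho) - P \rho P^\dag
    = \twirlU(\rho) - P\,\twirlU(\rho)
    = (I - P)\,\twirlU(\rho)\,(I - P) \ge 0 ,
\end{align}
where the middle equality uses that $\twirlU(\rho)$ commutes with the projector $I - P$ (so the right factor $I-P$ can be moved to the left and absorbed via $(I-P)^2 = I-P$), and the final inequality holds because $I - P$ is Hermitian and conjugation preserves positive semidefiniteness. This is exactly $P \rho P^\dag \le \twirlU(\rho)$.

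I do not expect a genuine obstacle; the only step needing attention is the cancellation in the last display, which hinges on having first proved that $P$ and $\twirlU(\rho)$ commute. (If one preferred to avoid the operator manipulations, the same result follows by testing against an arbitrary $\ket{v}$: writing $\ket{w_U} = U^\dag \ket{v}$ turns the claim into $\braopket{\int w_U\, dU}{\rho}{\int w_U\, dU} \le \int \braopket{w_U}{\rho}{w_U}\, dU$, which is Jensen's inequality for the convex quadratic form $\ket{w} \mapsto \braopket{w}{\rho}{w}$ associated with $\rho \ge 0$. I would keep this only as a remark, since the operator argument above is shorter.)
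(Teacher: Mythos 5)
Your proof is correct and follows essentially the same route as the paper: both show $P$ is a (Hermitian) projector that commutes with $\twirlU(\rho)$, identify $P\rho P^\dag$ with $P\,\twirlU(\rho)$ (the paper via a change-of-variables in the iterated integrals, you via the invariances $UP=PU=P$), and conclude by the fact that conjugating a positive semidefinite operator by a commuting projector can only shrink it. Your version merely makes the final ``shrinking'' step explicit through $(I-P)\,\twirlU(\rho)\,(I-P)\ge 0$, which is fine.
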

\begin{proof}
    We first show $P \twirlU(\rho) P^\dag = P \rho P^\dag$.
    Since we are integrating over a group,
    change of variables $U' = U W$ and $V' = W^\dag V$ gives
    \begin{align}
        P \twirlU(\rho) P^\dag &= \iiint (U W \rho W^\dag V) dU dV dW
        \\ &= \iiint (U' \rho V') dU' dV' dW
        \\ &= P \rho P^\dag.
    \end{align}
    $P$ commutes with $\twirlU(\rho)$.
    Indeed, taking $V' = UV$,
    \begin{align}
        P \twirlU(\rho)
        &= \iint (U V \rho V^\dag) dV dU
        \\ &= \iint (V' \rho V'^\dag U) dV' dU
        \\ &= \twirlU(\rho) P.
    \end{align}

    Since $P = P^\dag = P^2$, it is a projector.
    A projector that commutes with a positive semidefinite operator shrinks it,
    $P \twirlU(\rho) P^\dag \le \twirlU(\rho)$.
    Since $P \twirlU(\rho) P^\dag = P \rho P^\dag$, we are done.
\end{proof}

\begin{lemma}
    Let $G$ be a subgroup of the unitary operators on $\hA$ and define
    \begin{align}
        T &= \int (U \ot \conj{U}) dU.
    \end{align}
    If $Y \in \linop{\hA \ot \hB}, Y \ge 0$ then
    \begin{align}
        \TrA (T Y T^\dag) \le \int \left( \conj{U} (\TrA Y) \conj{U}^\dag \right) dU
    \end{align}
    where $\conj{U}$ is defined such that
    $(I \ot \conj{U}) \ket{\Phi} = (U^\dag \ot I) \ket{\Phi}$.
    \label{thm:twirl_partial_trace}
\end{lemma}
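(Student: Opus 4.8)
The plan is to recognise $T$ as exactly the averaging projector of \cref{thm:twirl_proj}, applied not to $G$ itself but to the image of $G$ under the representation $U \mapsto U \ot \conj{U}$ on $\hA \ot \hB$, and then to push the resulting Loewner inequality through the partial trace $\TrA$, which is positive and hence order preserving.

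First I would note that complex conjugation is multiplicative and commutes with taking adjoints, so $U \mapsto U \ot \conj{U}$ is a group homomorphism from $G$ into the unitary group of $\hA \ot \hB$; let $\tilde{G}$ be its image, equipped with the pushforward of the Haar measure on $G$, which is a left- and right-invariant probability measure (the homomorphism need not be injective, but this causes no difficulty). Then $T = \int_{\tilde{G}} V\, dV$, and if we write $\twirlU(Y) = \int (U \ot \conj{U})\, Y\, (U \ot \conj{U})^\dag\, dU = \int_{\tilde{G}} V Y V^\dag\, dV$, then \cref{thm:twirl_proj} applied to $\tilde{G}$ gives
\begin{align}
    T Y T^\dag \le \twirlU(Y).
\end{align}
(Alternatively one re-runs the three lines of that proof: $T = T^\dag = T^2$ is a projector; $T \twirlU(Y) T^\dag = T Y T^\dag$ by invariance of the measure; $T$ commutes with $\twirlU(Y)$; hence $T Y T^\dag = T \twirlU(Y) T^\dag \le \twirlU(Y)$. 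This uses $Y \ge 0$.)

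Applying $\TrA$ to both sides and using that $\TrA$ preserves the Loewner order gives $\TrA(T Y T^\dag) \le \TrA(\twirlU(Y))$, so it remains only to simplify the right-hand side. Linearity moves $\TrA$ inside the integral, and for each $U$, since $\conj{U}$ acts on $\hB$ alone,
\begin{align}
    \TrA\!\left( (U \ot \conj{U})\, Y\, (U^\dag \ot \conj{U}^\dag) \right)
    &= \conj{U}\, \TrA\!\left( (U \ot I)\, Y\, (U^\dag \ot I) \right) \conj{U}^\dag
    \\ &= \conj{U}\, (\TrA Y)\, \conj{U}^\dag,
\end{align}
where the first equality holds because $\conj{U}$ acts on the $\hB$ factor alone and the second uses cyclicity of the trace over the $\hA$ factor, so $\TrA((U \ot I)Y(U^\dag \ot I)) = \TrA(Y(U^\dag U \ot I)) = \TrA Y$. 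Hence $\TrA(\twirlU(Y)) = \int \conj{U}\, (\TrA Y)\, \conj{U}^\dag\, dU$, which is exactly the claimed bound.

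I do not expect a real obstacle. The only points needing care are verifying that $U \mapsto U \ot \conj{U}$ is indeed a homomorphism (so that \cref{thm:twirl_proj}, or its short proof, transfers to the pushed-forward invariant measure on $\tilde{G}$) and the elementary identity $\TrA((U \ot I)Y(U^\dag \ot I)) = \TrA Y$ together with the fact that operators supported on $\hB$ factor out of $\TrA$. Both are routine; the substance of the argument is entirely contained in \cref{thm:twirl_proj}.
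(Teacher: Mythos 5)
Your proof is correct and follows essentially the same route as the paper's: verify that $U \mapsto U \ot \conj{U}$ is a group homomorphism so that \cref{thm:twirl_proj} applies to give $T Y T^\dag \le \int (U \ot \conj{U}) Y (U \ot \conj{U})^\dag\,dU$, then push the Loewner inequality through $\TrA$ and observe that the $U$ on the $\hA$ factor disappears under the partial trace. The paper writes these last two steps with slightly casual $\le$ signs where you correctly supply equalities, but the argument is the same.
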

\begin{proof}
    The operators $U \ot \conj{U}$ form a group,
    $(U \ot \conj{U})(V \ot \conj{V}) = UV \ot \conj{U} \; \conj{V} = UV \ot \conj{UV}$,
    so \cref{thm:twirl_proj} applies, giving
    \begin{align}
        T Y T^\dag &\le \int \left( (U \ot \conj{U}) Y (U \ot \conj{U})^\dag \right) dU.
    \end{align}
    Taking the partial trace,
    \begin{align}
        \TrA(T Y T^\dag)
        &\le \int \TrA \left( (U \ot \conj{U}) Y (U \ot \conj{U})^\dag \right) dU.
        \\ &\le \int \TrA \left( (I_\hA \ot \conj{U}) Y (I_\hA \ot \conj{U})^\dag \right) dU.
        \\ &\le \int \left( \conj{U} (\TrA Y) \conj{U}^\dag \right) dU.
    \end{align}
\end{proof}

For a $C^*$-algebra $S_0 \subseteq{\hA}$ we have, by the structure theorem for finite dimensional
$C^*$-algebras, the decomposition~\eqref{eq:vertex_algebra}.
As explained in \cref{sec:notation}, the SDPs associated with $\dsw$ involve operators in the
augmented space $\linop{\hA \ot \hB}$, with the vector $\ket{\Phi}$ giving an isomorphism
$\hilbdual{\hA} \to \hB$.
Under this isomorphism, we can decompose the $\hB$ space in a way matching~\eqref{eq:vertex_algebra}
for $\hA$,
\begin{align}
    \hB = \bigoplus_{i=1}^r \hBi \ot \hZi.
\end{align}
This decomposition will be assumed for the remainder of this section.

\begin{lemma}
    Let $S_0 \subseteq \linop{\hA}$ be a $C^*$-algebra.
    Let $Y \in \linop{\hA \ot \hB}$ be Hermitian and $Y' = T Y T'$ where
    \begin{align}
        T = \int (U \ot \conj{U}) dU
    \end{align}
    with integration being over unitaries in $S_0$ under the Haar measure.
    Then
    \begin{align}
        Y' &= \sum_{ij}
            \ket{\Phi_{\hAi \ot \hBi}}
            \bra{\Phi_{\hAj \ot \hBj}}
            \ot Q_{ij}
        \label{eq:lem_Y_twirled}
    \end{align}
    for some $Q_{ij} \in \linop{\hYj \ot \hZj \to \hYi \ot \hZi}$.
    \label{lem:twirl_Y}
\end{lemma}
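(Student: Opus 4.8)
The plan is to diagonalize the twirl operator $T$ using the structure theory of $S_0$ and then conjugate $Y$ by it. The first observation is that $T$ is a self-adjoint projector: the change-of-variables arguments already used in the proof of \cref{thm:twirl_proj} give $T = T^\dag = T^2$ (so the map $Y \mapsto Y'$ is just conjugation by the projector $T$). Moreover, since $\conj{U}\,\conj{V} = \conj{UV}$, the assignment $U \mapsto U \ot \conj{U}$ is a unitary representation of the compact unitary group $G$ of $S_0$, and $T$ is its group average, i.e.\ the orthogonal projector onto the subspace of $\hA \ot \hB$ fixed by all $U \ot \conj{U}$, $U \in G$.

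Next I would compute $T$ block by block. By the structure theorem~\eqref{eq:vertex_algebra} and the matching decomposition $\hB = \bigoplus_i \hBi \ot \hZi$ recorded above, $G = \{\, \bigoplus_i U_i \ot I_{\hYi} : U_i \in U(\hAi) \,\} \cong \prod_i U(\hAi)$, with Haar measure the product of the Haar measures; it acts on $\hA$ as $\bigoplus_i U_i \ot I_{\hYi}$ and on $\hB$ as $\bigoplus_i \conj{U_i} \ot I_{\hZi}$ (using that the isomorphism $\hilbdual{\hA} \cong \hB$ is compatible with the block structure and that $\conj{U_i \ot I_{\hYi}} = \conj{U_i} \ot I_{\hZi}$). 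Regrouping $\hA \ot \hB = \bigoplus_{i,j} (\hAi \ot \hBj) \ot (\hYi \ot \hZj)$, on the $(i,j)$ summand $U \ot \conj{U}$ acts as $(U_i \ot \conj{U_j}) \ot I_{\hYi \ot \hZj}$, so the $(i,j)$ block of $T$ is $\bigl(\int_G U_i \ot \conj{U_j}\, dU\bigr) \ot I$. For $i \ne j$ the factors $U_i$ and $U_j$ are independent and $\int_{U(d)} U\, dU = 0$, so this block vanishes; for $i = j$ the standard twirl identity $\int_{U(d)} U \ot \conj{U}\, dU = \dim(\hAi)^{-1}\proj{\Phi_{\hAi \ot \hBi}}$ gives the block. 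Hence
\begin{align}
    T &= \bigoplus_i \dim(\hAi)^{-1}\, \proj{\Phi_{\hAi \ot \hBi}} \ot I_{\hYi \ot \hZi}.
\end{align}

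Finally, since each summand of $T$ is diagonal in the $(i,j)$-label and is a rank-one projector on the $\hAi \ot \hBi$ leg tensored with the identity on $\hYi \ot \hZi$, the product $T Y T$ retains only the $(ii)$–$(jj)$ blocks of $Y$ and contracts their $\hAi \ot \hBi$ and $\hAj \ot \hBj$ legs against $\ket{\Phi_{\hAi \ot \hBi}}$ and $\bra{\Phi_{\hAj \ot \hBj}}$, leaving an operator $Q_{ij} \in \linop{\hYj \ot \hZj \to \hYi \ot \hZi}$ on the remaining legs. This is exactly~\eqref{eq:lem_Y_twirled}. The main obstacle is not conceptual but bookkeeping: one must (i) verify that $\conj{U}$ acts on the decomposed $\hB$ as $\bigoplus_i \conj{U_i} \ot I_{\hZi}$, which requires the identifications $\hBi \cong \hilbdual{\hAi}$ and $\hZi \cong \hilbdual{\hYi}$ to be chosen compatibly with $\ket{\Phi}$, and (ii) keep the tensor-factor permutations straight so that the rank-one projectors in $T$ land on the $\hAi \ot \hBi$ legs and the resulting $Q_{ij}$ indeed map $\hYj \ot \hZj$ to $\hYi \ot \hZi$. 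Once these identifications are pinned down, every step is a routine calculation.
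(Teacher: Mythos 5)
Your proof is correct, but it follows a genuinely different route than the paper's. The paper never computes $T$; instead it characterizes the vectors fixed by the group $\{U \ot \conj{U} : U \in S_0 \textnormal{ unitary}\}$ by passing through the commutant: a fixed vector $\ket{M} = (M \ot I)\ket{\Phi}$ forces $M \in S_0'$, whose block structure~\eqref{eq:commutator_structure} then gives $\ket{M} = \sum_i \ket{\Phi_{\hAi \ot \hBi}} \ot \ket{\psi_i}$, and the operator case is handled by invoking Hermiticity of $Y'$ to constrain the bra side once left-invariance constrains the ket side. You instead evaluate $T$ outright: using that the unitary group of $S_0$ factors as $\prod_i U(\hAi)$ with product Haar measure, and the off-diagonal vanishing $\int U\,dU = 0$ together with the Schur twirl identity $\int_{U(d)} U \ot \conj{U}\,dU = d^{-1}\proj{\Phi_d}$, you get the closed form $T = \bigoplus_i \dim(\hAi)^{-1} \proj{\Phi_{\hAi \ot \hBi}} \ot I_{\hYi \ot \hZi}$, from which $TYT$ visibly has the claimed shape with an explicit formula $Q_{ij} = \dim(\hAi)^{-1}\dim(\hAj)^{-1}\,\bra{\Phi_{\hAi \ot \hBi}}\, Y \,\ket{\Phi_{\hAj \ot \hBj}}$ (partial inner product on the $\hAi \ot \hBi$ and $\hAj \ot \hBj$ legs). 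What each approach buys: the paper's argument is coordinate-free and sidesteps the Haar-measure computation entirely, relying only on the structure of $S_0'$; yours is more constructive and yields an explicit expression for $T$ and $Q_{ij}$, at the cost of needing to pin down the compatibility of the isomorphism $\hilbdual\hA \cong \hB$ (and hence of conjugation) with the block decomposition --- a point you correctly flag as the nontrivial bookkeeping. Note also that your argument does not actually use Hermiticity of $Y$, so it establishes the conclusion a little more generally than stated; the paper's route does lean on it to pass from vectors to operators.
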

\begin{proof}
    Suppose $M \subseteq \linop{\hA}$ satisfies $U M U^\dag = M$ for all unitary $U \in S_0$.
    Then $M$ commutes with $U$.
    Also $M$ must commute with the algebra generated by these unitaries, which is all of $S_0$.
    Since the commutator of $S_0$ takes the form \eqref{eq:commutator_structure}, we have
    $M \in \bigoplus_i I_\hAi \ot \linop{\hYi}$.
    And $(M \ot I) \ket{\Phi} = \sum_i \ket{\Phi_{\hAi \ot \hBi}} \ot \ket{\psi_i}$
    with $\ket{\psi_i} \in \hYi \ot \hZi$.

    Any vector $\ket{M} \in \hA \ot \hB$ can be written as $(M \ot I) \ket{\Phi}$ for some
    $M \in \linop{\hA}$.
    If $(U \ot \conj{U}) \ket{M} = \ket{M}$ for all unitary $U \in S_0$ then $U M U^\dag = M$
    and by the above reasoning
    \begin{align}
        \ket{M} = \sum_i \ket{\Phi_{\hAi \ot \hBi}} \ot \ket{\psi_i}
        \label{eq:twirled_M_vec}
    \end{align}
    for some $\ket{\psi_i} \in \hYi \ot \hZi$.

    Because of the twirling, $Y'$ satisfies $(U \ot \conj{U}) Y' = Y'$.
    Extending~\eqref{eq:twirled_M_vec} by linearity gives
    $Y' = \sum_i \ket{\Phi_{\hAi \ot \hBi}} \ot R_i$
    with $R_i \in \hYi \ot \hZi \ot \hA^\dag \ot \hB^\dag$.
    Any Hermitian operator matching this form must take the form~\eqref{eq:lem_Y_twirled}.
\end{proof}

\begin{theorem}
    Let $S$ be an $S_0$-graph, with $S_0$ decomposed as in~\eqref{eq:vertex_algebra}.
    Let $P_i$ be the projector onto $\hAi \ot \hYi$.
    Suppose $X > 0$ commutes with all of $S_0$.
    Note that $X > 0$ requires $X$ to be invertible.
    Set $\lambda = \dsw(S, X)$.
    Then there are $Y'$ and $Q_{ij}$ such that
    \begin{align}
        Y' &\in S \ot \linop{\hB}
        \label{eq:mainlemma1_Y_in_SotB}
        \\
        Q_{ij} &\in \linop{\hYj \ot \hZj \to \hYi \ot \hZi}
        \\
        Y' &= \sum_{ij}
            \ket{\Phi_{\hAi \ot \hBi}}
            \bra{\Phi_{\hAj \ot \hBj}}
            \ot Q_{ij}
        \label{eq:mainlemma1_sumQ}
        \\
        Y' &\ge \ket{\Phi}\bra{\Phi}
        \label{eq:mainlemma1_gePhi}
        \\
        I_\hBi \ot \TrYi Q_{ii} &= \lambda (P_i X P_i)^{-T}
        \label{eq:mainlemma1_TrYiQ}
    \end{align}
    where $\ket{\Phi_{\hAi \ot \hBi}}$ is defined similar to $\ket{\Phi}$ but on the space
    $\hAi \ot \hBi$ rather than $\hA \ot \hB$.
    \label{thm:mainlemma1}
\end{theorem}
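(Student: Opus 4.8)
The plan is to take an optimal solution to the SDP for $\dsw(S, X)$, symmetrize it by twirling over the unitary group of $S_0$ so that it acquires the block form~\eqref{eq:mainlemma1_sumQ}, and then patch up the partial-trace constraint~\eqref{eq:mainlemma1_TrYiQ}.

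Since $X > 0$ is invertible, \eqref{eq:weighted_dsw_min_YWinvT} lets me pick $Y \in S \ot \linop{\hB}$ attaining the minimum, so $\TrA Y \le \lambda X^{-T}$ and $Y \ge \proj{\Phi}$ with $\lambda = \dsw(S, X)$. Let $T = \int (U \ot \conj{U})\,dU$, integrating over the (compact) unitary group of $S_0$ against Haar measure, and put $Y'' = T Y T^\dag$. Three facts: (a) $Y'' \in S \ot \linop{\hB}$, because writing $Y = \sum_k A_k \ot B_k$ with $A_k \in S$, every term of $TYT^\dag$ has its $\hA$-factor of the form $U A_k V^\dag$ with $U, V^\dag \in S_0$, whence $U A_k V^\dag \in S_0 S S_0 = S$, and $S \ot \linop{\hB}$ is a closed subspace; (b) $Y''$ has the form~\eqref{eq:mainlemma1_sumQ} with $Q_{ij} \in \linop{\hYj \ot \hZj \to \hYi \ot \hZi}$, which is precisely \cref{lem:twirl_Y}; (c) $Y'' \ge \proj{\Phi}$, since $(U \ot \conj{U})\ket{\Phi} = \ket{\Phi}$ (from $(I \ot \conj{U})\ket{\Phi} = (U^\dag \ot I)\ket{\Phi}$) forces $T\ket{\Phi} = \ket{\Phi}$, so $Y'' = TYT^\dag \ge T \proj{\Phi} T^\dag = \proj{\Phi}$.

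Now compare partial traces. By \cref{thm:twirl_partial_trace}, $\TrA Y'' \le \int \conj{U} (\TrA Y)\conj{U}^\dag\,dU \le \lambda X^{-T}$, the second step because $X^{-T}$ commutes with every $\conj{U}$ ($X^{-1}$ commutes with $S_0$, hence $X^{-T}$ commutes with $S_0^T \ni (U^\dag)^T = \conj{U}$). On the other hand, taking $\TrA$ of~\eqref{eq:mainlemma1_sumQ} kills the off-diagonal terms — the summands $\hAi$ and $\hAj$ lie in different blocks of $\hA$ when $i \ne j$ — and leaves $\TrA Y'' = \sum_i I_{\hBi} \ot \TrYi(Q_{ii})$, using $\Tr_{\hAi}\proj{\Phi_{\hAi \ot \hBi}} = I_{\hBi}$. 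Both this and $\lambda X^{-T} = \bigoplus_i \lambda(P_i X P_i)^{-T}$ are block-diagonal along $\hB = \bigoplus_i \hBi \ot \hZi$, so restricting to the $i$-th block gives $I_{\hBi} \ot \TrYi(Q_{ii}) \le \lambda (P_i X P_i)^{-T}$; write the nonnegative difference as $G_i = I_{\hBi} \ot g_i$ with $g_i \in \psd{\hZi}$.

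It remains to restore equality. For each $i$ set $C_i = \dimYi^{-1} g_i \ge 0$ and put $Y' = Y'' + \sum_i \proj{\Phi_{\hAi \ot \hBi}} \ot (I_{\hYi} \ot C_i)$. The correction keeps the form~\eqref{eq:mainlemma1_sumQ} (it just adds $I_{\hYi} \ot C_i$ to $Q_{ii}$), is positive semidefinite so $Y' \ge \proj{\Phi}$ survives, and — the one point that needs the hypothesis $S_0 \subseteq S$ — it stays in $S \ot \linop{\hB}$ because its $\hA$-side lies in $\linop{\hAi} \ot I_{\hYi} \subseteq S_0 \subseteq S$. Its partial-trace contribution is $I_{\hBi} \ot \TrYi(I_{\hYi} \ot C_i) = I_{\hBi} \ot (\dimYi\, C_i) = G_i$, so $I_{\hBi} \ot \TrYi(Q_{ii}) = \lambda(P_i X P_i)^{-T}$, giving~\eqref{eq:mainlemma1_TrYiQ}. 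I expect the main obstacle to be exactly this last step: twirling generally lowers the partial trace strictly (the $T$-invariant subspace need not contain the support of the twirled operator), so one must add mass back, and the construction is forced to do so only along $\hA$-directions that $S_0$ can see in order not to leave $S \ot \linop{\hB}$.
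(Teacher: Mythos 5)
Your proof is correct and follows essentially the same route as the paper's: twirl an optimal $Y$ from the $W^{-T}$ form over the unitary group of $S_0$, invoke \cref{lem:twirl_Y} for the block structure, invoke \cref{thm:twirl_partial_trace} for the partial-trace bound, and then restore equality by adding a positive correction of the form $\proj{\Phi_{\hAi\ot\hBi}}\ot I_{\hYi}\ot\sigma_i$, which lies in $S_0\ot\linop{\hB}\subseteq S\ot\linop{\hB}$. The one place you are slightly more explicit than the paper is in identifying the slack $G_i$ as having the tensor form $I_{\hBi}\ot g_i$ — a justified observation, since $X\in S_0'$ forces $\lambda(P_iXP_i)^{-T}$, and hence the whole $i$-th diagonal block, to have that factorization.
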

\begin{proof}
    Let $Y$ be optimal for~\eqref{eq:weighted_dsw_min_YWinvT} (of
    \cref{thm:weighted_dsw_equiv_forms})  for $\dsw(S, X)$,
    \begin{align}
        Y \in S \ot \linop{\hB},
        \\ \TrA Y \le \lambda X^{-T},
        \\ Y \ge \ket{\Phi}\bra{\Phi}.
    \end{align}
    Define the twirling operator
    \begin{align}
        T = \int (U \ot \conj{U}) dU
    \end{align}
    where we integrate over unitaries in $S_0$ under the Haar measure
    and set $Y' = T Y T^\dag$.

    Since $U S U^\dag \subseteq S_0 S S_0 \subseteq S$,
    we have
    \begin{align}
        Y' \in S \ot \linop{\hB}.
    \end{align}
    By \cref{lem:twirl_Y}, $Y'$ takes the form~\eqref{eq:mainlemma1_sumQ}.
    Since $T \ket{\Phi} = \ket{\Phi}$ we have
    \begin{align}
        Y' \ge \ket{\Phi}\bra{\Phi}.
    \end{align}
    By \cref{thm:twirl_partial_trace} we have
    \begin{align}
        \TrA Y'
        &\le \int \conj{U} (\TrA Y') \conj{U}^\dag dU
        \\ &\le \lambda \int \conj{U} X^{-T} \conj{U}^\dag dU
        \\ &\le \lambda \int X^{-T} \conj{U} \conj{U}^\dag dU
        \\ &\le \lambda X^{-T}
        \label{eq:Yprime_le_lambda_XT}
    \end{align}
    where we use the fact that $X$ (and thus $X^{-1}$) commutes with $U \in S_0$, therefore
    $X^{-T}$ commutes with $\conj{U}$.

    Considering~\eqref{eq:mainlemma1_sumQ} we have
    \begin{align}
        \TrA Y'
        &= \sum_i \TrA(P_i Y')
        \\ &= \sum_i \TrAi( \proj{\Phi_{\hAi \ot \hBi}} ) \ot \TrYi Q_{ii}
        \\ &= \sum_i I_\hBi \ot \TrYi Q_{ii}
    \end{align}
    Projecting onto $P_i^T \in \linop{\hB}$ (for a given $i$) and applying~\eqref{eq:Yprime_le_lambda_XT} gives
    \begin{align}
        I_\hBi \ot \TrYi Q_{ii} \le \lambda P_i X^{-T} P_i.
    \end{align}
    Since $P_i \in S_0$ and $X$ commutes with all of $S_0$ we have
    $P_i X^{-T} P_i = (P_i X P_i)^{-T}$ so
    \begin{align}
        I_\hBi \ot \TrYi Q_{ii} \le \lambda (P_i X P_i)^{-T}.
        \label{eq:Qii_le}
    \end{align}

    We are done aside from~\eqref{eq:Qii_le} being an inequality rather than an equality.
    This can be fixed by adding to each $Q_{ii}$ a term of the form $I_\hYi \ot \sigma_i$ with
    $\sigma_i \ge 0$.  Conditions \eqref{eq:mainlemma1_gePhi} and \eqref{eq:mainlemma1_Y_in_SotB}
    still hold, the former because we're adding a positive semidefinite term and the latter because
    $\ket{\Phi_{\hAi \ot \hBi}}
     \bra{\Phi_{\hAj \ot \hBj}}
     \ot I_\hYi \ot \sigma_i
     \in S_0 \ot \linop{\hB}
     \subseteq S \ot \linop{\hB}$.
\end{proof}

\begin{theorem}
    Let $S$ be an $S_0$-graph and take $\blockscale$ from \cref{def:blockutils}.
    Let $S' = \vecmod{S}{S_0} + \mathbb{C}I$.
    Let $W \in \psd{\hA}$ be non-singular.
    Then
    \begin{align}
        \dsw(S', W) \le \dsw(S, n \blockscale(W)).
    \end{align}
    \label{thm:mainlemma_le}
\end{theorem}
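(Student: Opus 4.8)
The plan is to convert a near-optimal solution for $\dsw(S, n\blockscale(W))$ into a feasible solution for $\dsw(S', W)$ of no larger value, where $S' = \vecmod{S}{S_0} + \mathbb{C}I$; the structured solution furnished by \cref{thm:mainlemma1} is exactly what makes this conversion possible. First I would check that $X := n\blockscale(W)$ satisfies the hypotheses of \cref{thm:mainlemma1}. By \cref{def:blockutils}, $\blockscale(W) = \sum_i \dim(\hYi)^{-1} I_{\hAi} \otimes \TrAi(P_i W P_i)$ lies in $\bigoplus_i I_{\hAi} \otimes \linop{\hYi}$, the commutant $S_0'$, so $X$ commutes with all of $S_0$; and since $W$ is non-singular, $P_i W P_i$ is positive definite on $\hAi \otimes \hYi$, hence its partial trace $\TrAi(P_i W P_i)$ is positive definite on $\hYi$, so $X > 0$ and in particular invertible. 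Applying \cref{thm:mainlemma1} to this $X$ produces $Y'$ and operators $Q_{ij}$ obeying \eqref{eq:mainlemma1_Y_in_SotB}--\eqref{eq:mainlemma1_TrYiQ}, with $\dsw(S, X) = \lambda = \dsw(S, n\blockscale(W))$; note \eqref{eq:mainlemma1_TrYiQ} forces $\TrA Y' = \sum_i \lambda (P_i X P_i)^{-T} = \lambda X^{-T}$ because $X$ inverts block-wise, so $Y'$ is in fact feasible for the $W^{-T}$-normalized form \eqref{eq:weighted_dsw_min_YWinvT} of $\dsw(S, X)$ with value exactly $\lambda$.

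The natural candidate for $\dsw(S', W)$ is obtained by stripping the non-central part of the $S_0$-content of $Y'$ and compensating with a multiple of $I_{\hA}$. Write $Y' = Y'_\perp + Y'_0$, the orthogonal decomposition of $Y'$ into its $(\vecmod{S}{S_0}) \otimes \linop{\hB}$ part and its $S_0 \otimes \linop{\hB}$ part (legitimate since $S = (\vecmod{S}{S_0}) \oplus S_0$ orthogonally). Using the block form \eqref{eq:mainlemma1_sumQ} together with \cref{lem:twirl_Y}, the $S_0$-piece $Y'_0$ is supported on the diagonal blocks only and its $\hYi$-content is fully mixed, so it has the explicit shape $\sum_i \ket{\Phi_{\hAi \otimes \hBi}}\bra{\Phi_{\hAi \otimes \hBi}} \otimes \dim(\hYi)^{-1} I_{\hYi} \otimes \TrYi(Q_{ii})$, with each $\TrYi(Q_{ii}) \ge 0$. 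I would set $\hat Y = Y'_\perp + I_{\hA} \otimes \sigma$, choosing $\sigma \ge 0$ so that $I_{\hA} \otimes \sigma$ dominates $Y'_0$ in the directions needed to preserve $\hat Y \ge \ket{\Phi}\bra{\Phi}$ while keeping $\TrA(I_{\hA} \otimes \sigma) = n\sigma$ as small as possible.

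The three feasibility conditions for \eqref{eq:weighted_dsw_min_YWinvT} are then checked as follows. (a) $\hat Y \in S' \otimes \linop{\hB}$ is immediate, since $Y'_\perp \in (\vecmod{S}{S_0}) \otimes \linop{\hB}$ and $I_{\hA} \otimes \sigma \in \mathbb{C}I \otimes \linop{\hB}$. (b) $\hat Y \ge \ket{\Phi}\bra{\Phi}$ follows from \eqref{eq:mainlemma1_gePhi} once one checks, using the diagonal-block form of $Y'_0$ and the factorization $\ket{\Phi} = \sum_i \ket{\Phi_{\hAi \otimes \hBi}} \otimes \ket{\Phi_{\hYi \otimes \hZi}}$, that replacing $Y'_0$ by $I_{\hA} \otimes \sigma$ does not destroy the inequality (this is the place the precise choice of $\sigma$ enters). (c) $\TrA \hat Y \le \lambda W^{-T}$ must be derived from $\TrA Y' = \lambda X^{-T} = \tfrac{\lambda}{n}\blockscale(W)^{-T}$, the contribution $n\sigma$, and the relation among $\blockscale(W)$, the scaling $\diagscale$ of \cref{def:blockutils}, and $W^{-1}$ — i.e. the point where the weight $n\blockscale(W)$ gets unfolded back to $W$ at the level of the semidefinite program.

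I expect (c), together with pinning down the correct $\sigma$ and verifying (b), to be the main obstacle: it is where ``one copy per block of $S_0$'', encoded by the weight $n\blockscale(W)$, must be shown to be SDP-equivalent to the original weight $W$ on the thin graph $S'$, and it is the technical heart that justifies deferring this argument to the appendix. The reverse inequality $\dsw(S', W) \ge \dsw(S, n\blockscale(W))$ is \cref{thm:mainlemma_ge}; combined with continuity of $\dsw$ in $W$ (\cref{thm:dsw_continuous}) to cover singular $W$, the two inequalities yield \cref{thm:thin_diag}.
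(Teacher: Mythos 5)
Your setup is sound and follows the paper: you correctly verify that $X = n\blockscale(W)$ is positive definite and lies in $S_0'$ so that \cref{thm:mainlemma1} applies, and you correctly compute $\TrA Y' = \lambda X^{-T}$ and the explicit form of the $S_0$-piece $Y'_0$. But the plan from that point — orthogonally decompose $Y' = Y'_\perp + Y'_0$, discard $Y'_0$, and compensate by $I_\hA \ot \sigma$ — has a gap that is not merely technical. Since every element of $\vecmod{S}{S_0}$ is trace-free (because $I \in S_0$), you get $\TrA \hat Y = n\sigma$, so condition (c) pins $\sigma \le \lambda n^{-1} W^{-T}$. On the other hand, the natural sufficient condition for (b) after subtracting $Y'_0$ is $I_\hA \ot \sigma \ge Y'_0$, and by the paper's \cref{lem:Winv_vs_TrWinv} (a key ingredient you do not invoke) the block structure of $Y'_0 \propto \sum_i \proj{\Phi_{\hAi\ot\hBi}} \ot I_\hYi \ot (\TrAi W_i)^{-T}$ forces $\sigma$ to dominate $\lambda n^{-1} W_i^{-T}$ on each block $\hBi \ot \hZi$. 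These two requirements are incompatible for generic non-singular $W$: the block-wise inverse $\bigoplus_i W_i^{-T}$ is \emph{not} $\le W^{-T}$ when $W$ has nonzero off-block-diagonal parts (already false for $2\times 2$ matrices with strong off-diagonal correlation).

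This is precisely the obstruction the paper's proof is built around, and the missing move is the conjugation by $I \ot \sqrt{W^T}$. The paper does not strip $Y'_0$; it keeps $Y'$, adds positive correction terms $Z_i$ (justified by \cref{lem:Winv_vs_TrWinv}) to land in $(\vecmod{S}{S_0} + \linspan\{P_i\}) \ot \linop{\hB}$ with $\TrA(P_i Y'') = \lambda n^{-1} n_i W_i^{-T}$, and only then conjugates by $\sqrt{W^T}$, which converts each $W_i^{-T}$ into the projector $\sqrt{W^T} W_i^{-T} \sqrt{W^T} \le I$. After that, filling slack with $I_\hAi \ot \sigma_i$ equalizes the proportionality constants across blocks, and the result lies in $S' \ot \linop{\hB}$. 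The conjugation step is what reconciles the block-inverse structure of $Y'_0$ with the full-matrix $W$, and its absence is what makes your route fail. You did correctly flag (c) as the crux and observe that something must ``unfold'' $n\blockscale(W)$ back to $W$; the answer is that this unfolding happens multiplicatively via $\sqrt{W^T}$, not additively via a choice of $\sigma$.
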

\begin{proof}
    With $S_0$ decomposed as in~\eqref{eq:vertex_algebra},
    let $P_i$ be the projector onto $\hAi \ot \hYi$.
    Set $n_i = \Tr P_i$, $n = \sum_i n_i = \dim(\hA)$,
    and $W_i = P_i W P_i$.

    Define $X = n \blockscale(W)$.
    The image of $\blockscale$ is $S_0'$ so $X$ commutes with all of $S_0$ and \cref{thm:mainlemma1} applies.
    Let $Y'$ and $Q_{ij}$ satisfy~\eqref{eq:mainlemma1_Y_in_SotB}-\eqref{eq:mainlemma1_TrYiQ}
    with $\lambda = \dsw(S, X)$.
    Substituting the definition of $X$ into~\eqref{eq:mainlemma1_TrYiQ} gives
    \begin{align}
        I_\hBi \ot \TrYi Q_{ii}
        &= \lambda (P_i X P_i)^{-T}
        \\ &= \lambda (n \dimYi^{-1} I_\hAi \ot \TrAi W_i)^{-T}
        \\ &= \lambda n^{-1} \dimYi I_\hBi \ot (\TrAi W_i)^{-T}
        \\
        \TrYi Q_{ii} &= \lambda n^{-1} \dimYi (\TrAi W_i)^{-T}.
        \label{eq:TrYiQii}
    \end{align}

    We will construct a feasible solution for $\dsw(S', W)$.
    Let $Y'_{ij} = P_i Y' P_j$.
    Because $Y'$ takes the form~\eqref{eq:mainlemma1_sumQ}, we have
    $Y'_{ij} = \ket{\Phi_{\hAi \ot \hBi}} \bra{\Phi_{\hAj \ot \hBj}} \ot Q_{ij}$.
    We will get $Y'$ into the space $S' \ot \linop{\hB}$ by adding correction terms that are
    positive semidefinite.
    We first adjust the diagonal blocks $Y'_{ii}$ to be perpendicular
    to trace-free elements of $P_i S_0 P_i$, i.e., trace free operators
    from $\linop{\hAi} \ot I_\hYi$.
    Consider the adjustment term
    \begin{align}
        Z_i &=
            \lambda n^{-1} I_\hYi \ot \left(
                I_\hAi \ot W_i^{-T}
                -
                \ket{\Phi_{\hAi \ot \hBi}}
                \bra{\Phi_{\hAj \ot \hBj}}
                \ot (\TrAi W_i)^{-T}
            \right)
    \end{align}
    which, by \cref{lem:Winv_vs_TrWinv}, is positive semidefinite (note
    $(\TrAi W_i)^{-T} = (\TrBi W_i^T)^{-1}$, and we are applying the lemma to $W_i^T$).
    Define
    \begin{align}
        Y'' = Y' + \sum_i Z_i \ge Y'.
    \end{align}
    Since $Z_i \in S_0 \ot \linop{\hB} \subseteq S \ot \linop{\hB}$, we have
    $Y'' \in S \ot \linop{\hB}$.
    For any $R \in P_i S_0 P_i$ we have, by the structure of $S_0$,
    $R = R' \ot I_\hYi$ for some $R' \in \linop{\hAi}$.
    Then
    \begin{align}
        \TrA(R Y'')
        &= \Tr_{\hAi \ot \hYi}( (R' \ot I_\hYi) (Y'_{ii} + Z_i) )
        \\ &= \Tr_{\hAi \ot \hYi}( (R' \ot I_\hYi) Y'_{ii} ) + \Tr_{\hAi \ot \hYi}( (R' \ot I_\hYi) Z_i )
        \\ &= \TrAi( R' \ket{\Phi_{\hAi \ot \hBi}} \bra{\Phi_{\hAi \ot \hBi}} ) \ot \TrYi Q_{ii}
            + \Tr_{\hAi \ot \hYi}( (R' \ot I_\hYi) Z_i )
        \\ &= R'^{T} \ot \TrYi Q_{ii}
            + \lambda n^{-1} \Tr(I_{\hYi}) \left( \Tr(R') W_i^{-T}
            - R'^T \ot (\TrAi W_i)^{-T} \right)
        \\ &= R'^{T} \ot \left( \TrYi Q_{ii} - \lambda n^{-1} \dimYi (\TrAi W_i)^{-T} \right)
            + \lambda n^{-1} \dimYi \Tr(R') W_i^{-T}.
    \end{align}
    Substituting in~\eqref{eq:TrYiQii}, the first term vanishes, leaving
    \begin{align}
        \TrA(R Y'')
        &= \lambda n^{-1} \dimYi \Tr(R') W_i^{-T}
        \\ &= \lambda n^{-1} \Tr(R) W_i^{-T}
    \end{align}
    In particular, $\TrA(P_i Y'') = \lambda n^{-1} n_i W_i^{-T}$ and for trace-free $R$,
    $\TrA(R Y'') = 0$.
    So
    \begin{align}
        Y'' \in (\vecmod{S}{S_0} + \linspan\{ P_i \}) \ot \hB.
        \label{eq:Y2_S_Pi}
    \end{align}

    Now define
    $Y''' = (I \ot \sqrt{W^T}) Y'' (I \ot \sqrt{W^T})$, giving
    \begin{align}
        \TrA Y'''_{ii} &= \lambda n^{-1} n_i \sqrt{W^T} W_i^{-T} \sqrt{W^T}.
    \end{align}
    Now, $\sqrt{W^T} W_i^{-T} \sqrt{W^T}$ is a projector because it is Hermitan and equal to its
    square,
    \begin{align}
        \left( \sqrt{W^T} W_i^{-T} \sqrt{W^T} \right)^2
        &= \sqrt{W^T} W_i^{-T} W^T W_i^{-T} \sqrt{W^T}
        \\ &= \sqrt{W^T} W_i^{-T} W_i^T W_i^{-T} \sqrt{W^T}
        \\ &= \sqrt{W^T} W_i^{-T} \sqrt{W^T}
    \end{align}
    where the second line uses that the support of $W_i^{-T}$ is $P_i$.
    Being a projector, $\sqrt{W^T} W_i^{-T} \sqrt{W^T} \le I$ and we have
    \begin{align}
        \TrA Y'''_{ii}
        &= \lambda n^{-1} n_i \sqrt{W^T} W_i^{-T} \sqrt{W^T}
        \\ &\le \lambda n^{-1} n_i I_\hB.
        \label{eq:TrA_Y3}
    \end{align}
    We can add to that to get equality.  Let $\sigma_i$ be the slack in the~\eqref{eq:TrA_Y3}
    inequality and set
    \begin{align}
        Y'''' = Y''' + \sum_i \dimAi I_\hAi \ot \sigma,
    \end{align}
    giving
    \begin{align}
        \TrA Y''''_{ii} &= \lambda n^{-1} n_i I_\hB.
    \end{align}
    We have $Y'''' \in S' \ot \linop{\hB}$.
    Indeed, since $Y'' \in (\vecmod{S}{S_0} + \linspan\{ P_i \}) \ot \hB$ by~\eqref{eq:Y2_S_Pi},
    and since we've only multiplied by $\sqrt{W^T}$ on the $\hB$ side
    and added terms in $(\vecmod{S}{S_0} + \linspan\{ P_i \}) \ot \hB$,
    we have $Y'''' \in (\vecmod{S}{S_0} + \linspan\{ P_i \}) \ot \hB$.
    But in fact $Y''''$ is perpendicular to anything in $\vecmod{\linspan\{ P_i \}}{\mathbb{C}I}$.
    For, suppose $R \in \vecmod{\linspan\{ P_i \}}{\mathbb{C}I}$.
    Then $R = \alpha_1 P_i + \dots + \alpha_n P_n$ and
    \begin{align}
        \TrA(R Y'''') &= \sum_i \alpha_i \lambda n^{-1} n_i I_\hB
        \\ &= \lambda n^{-1} I_\hB \sum_i \alpha_i n_i
        \\ &= \lambda n^{-1} I_\hB \Tr(R)
        = 0.
    \end{align}
    Therefore $Y'''' \in (\vecmod{S}{S_0} + \mathbb{C}I) \ot \linop{\hB} = S' \ot \linop{\hB}$.
    Substituting $R=I_\hA$ gives
    \begin{align}
        \TrA(Y'''') &= \lambda I_\hB.
    \end{align}
    And finally,
    \begin{align}
        Y''''
        &\ge (I \ot \sqrt{W^T}) Y' (I \ot \sqrt{W^T})
        \\ &\ge (I \ot \sqrt{W^T}) \proj{\Phi} (I \ot \sqrt{W^T})
        \\ &=\proj{\rw}.
    \end{align}
    Therefore, $Y''''$ is feasible for \cref{def:weighted_dsw_min_Y} for $\dsw(S', W)$
    with value $\lambda$.
\end{proof}

We now work toward the reverse inequality, $\dsw(S', W) \ge \dsw(S, n \blockscale(W))$.

\begin{lemma}
    Let $U_{\alpha \beta} \in \linop{\hA}$ be the generalized Pauli operators
    \begin{align}
        U_{\alpha \beta} = \sum_j \omega^{\beta j} \ket{j + \alpha} \bra{j}
    \end{align}
    where $\omega$ is a primitive root of unity of order $\dim(\hA)$
    and the indices $\alpha$ and $\beta$ range from $0$ to $\dim(\hA) - 1$.
    Then for any $M \in \linop{\hA}$ we have
    \begin{align}
        \sum_{\alpha \beta} U_{\alpha \beta} M U_{\alpha \beta}^\dag
        &= \dim(\hA) \Tr(M) I.
    \end{align}
    \label{lem:pauli_trace}
\end{lemma}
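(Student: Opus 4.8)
The plan is a direct computation in the computational basis, using only the orthogonality relation for roots of unity. Write $d = \dim(\hA)$ and expand $M = \sum_{j,k} M_{jk}\ket{j}\bra{k}$, with all indices understood modulo $d$. First I would compute, for fixed $\alpha,\beta$, that $U_{\alpha\beta}^\dag = \sum_j \omega^{-\beta j}\ket{j}\bra{j+\alpha}$, and therefore
\begin{align}
    U_{\alpha\beta} M U_{\alpha\beta}^\dag
    &= \sum_{j,k} \omega^{\beta(j-k)}\, M_{jk}\, \ket{j+\alpha}\bra{k+\alpha}.
\end{align}

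Next I would sum over $\beta$. Since $\omega$ is a primitive $d$-th root of unity, $\sum_{\beta=0}^{d-1}\omega^{\beta m} = d$ when $m \equiv 0 \pmod d$ and $0$ otherwise; applying this with $m = j-k$ annihilates every off-diagonal term of $M$, leaving
\begin{align}
    \sum_{\beta} U_{\alpha\beta} M U_{\alpha\beta}^\dag
    &= d \sum_j M_{jj}\, \ket{j+\alpha}\bra{j+\alpha}.
\end{align}
Finally I would sum over $\alpha$: for each fixed $j$, as $\alpha$ ranges over $\{0,\dots,d-1\}$ the shifted index $j+\alpha$ ranges over all of $\{0,\dots,d-1\}$, so $\sum_\alpha \ket{j+\alpha}\bra{j+\alpha} = I$. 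Hence
\begin{align}
    \sum_{\alpha,\beta} U_{\alpha\beta} M U_{\alpha\beta}^\dag
    &= d \sum_j M_{jj}\, I = d\, \Tr(M)\, I,
\end{align}
which is exactly the claimed identity.

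There is no genuine obstacle here — the only things to be careful about are the modular arithmetic in the indices and the normalization constant $d$ (rather than $d^2$), which comes from summing over the $d^2$ operators $U_{\alpha\beta}$ versus the $d^{-2}$ one would put in front to obtain the depolarizing channel. Conceptually the statement is just that $\{U_{\alpha\beta}\}$ is an orthogonal basis of $\linop{\hA}$ with $\Tr(U_{\alpha\beta}^\dag U_{\alpha'\beta'}) = d\,\delta_{\alpha\alpha'}\delta_{\beta\beta'}$, so that $M \mapsto d^{-2}\sum_{\alpha,\beta} U_{\alpha\beta} M U_{\alpha\beta}^\dag$ is the completely depolarizing channel; but the elementary computation above is shorter than invoking that machinery, and I would present it directly.
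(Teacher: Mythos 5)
Your proof is correct, but it takes a genuinely different route from the paper's. The paper expands $M$ in the generalized-Pauli basis itself, $M = \sum_{\alpha'\beta'} M_{\alpha'\beta'} U_{\alpha'\beta'}$, and applies the Weyl braiding relation $U_{\alpha\beta} U_{\alpha'\beta'} = \omega^{\alpha'\beta - \alpha\beta'} U_{\alpha'\beta'} U_{\alpha\beta}$ to pull $U_{\alpha\beta}$ through and cancel against $U_{\alpha\beta}^\dag$; the double sum over $(\alpha,\beta)$ then annihilates every coefficient except $M_{00}$, leaving $d^2 M_{00} I = d\,\Tr(M)\, I$. You instead expand $M$ in the computational basis and split the twirl into two stages: the $\beta$-sum enforces $j=k$ via $\sum_\beta \omega^{\beta(j-k)} = d\,\delta_{jk}$, and the $\alpha$-sum then averages $\ket{j+\alpha}\bra{j+\alpha}$ over all positions to give $I$. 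The two computations are of comparable length; yours is more elementary (it uses only the geometric-sum orthogonality of roots of unity, not the braiding relation), while the paper's makes visible the structural fact that the Pauli twirl is a projection onto $\mathbb{C}I$ in the operator basis it defines. Both correctly land on the constant $d$ rather than $d^2$, which is the only place to slip.
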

\begin{proof}
    The generalized Paulis form a basis of $\linop{\hA}$, so we can write $M$ in this basis,
    \begin{align}
        M = \sum_{\alpha' \beta'} M_{\alpha' \beta'} U_{\alpha' \beta'}.
    \end{align}
    The generalized Paulis satisfy the following braiding relation:
    \begin{align}
        U_{\alpha \beta} U_{\alpha' \beta'}
        &= \omega^{\alpha' \beta - \alpha \beta'} U_{\alpha' \beta'} U_{\alpha \beta}.
    \end{align}
    Using this braiding relation, we can evaluate the sum,
    \begin{align}
        \sum_{\alpha \beta}
        U_{\alpha \beta} M U_{\alpha \beta}^\dag
        &= \sum_{\alpha \beta \alpha' \beta'} M_{\alpha' \beta'}
            U_{\alpha \beta} U_{\alpha' \beta'} U_{\alpha \beta}^\dag
        \\ &= \sum_{\alpha \beta \alpha' \beta'} M_{\alpha' \beta'}
            \omega^{\alpha' \beta - \alpha \beta'}
            U_{\alpha' \beta'} U_{\alpha \beta} U_{\alpha \beta}^\dag
        \\ &= \sum_{\alpha \beta \alpha' \beta'} M_{\alpha' \beta'}
            \omega^{\alpha' \beta - \alpha \beta'}
            U_{\alpha' \beta'}
        \\ &= \sum_{\alpha \beta} M_{0 0} U_{0 0}
            \label{eq:pauli_twirl_00}
        \\ &= \dim(\hA)^2 M_{00} I
    \end{align}
    where~\eqref{eq:pauli_twirl_00} follows because $\sum_\alpha \omega^{\alpha \beta'} = 0$
    unless $\beta' = 0$, and similarly for the sum over $\beta$.
    Since $M_{00} = \dim(\hA)^{-1} \Tr M$, we have the desired result.
\end{proof}

The following corollary follows by linearity.

\begin{corollary}
    Let $U_{\alpha \beta} \in \linop{\hA}$ be as in \cref{lem:pauli_trace}.
    For any $M \in \linop{\hA} \ot \linop{\hB}$ we have
    \begin{align}
        \sum_{\alpha \beta} (U_{\alpha \beta} \ot I_\hB) M (U_{\alpha \beta}^\dag \ot I_\hB)
            &= \dim(\hA) I_\hA \ot \Tr_\hA M.
    \end{align}
    \label{lem:pauli_partial_trace}
\end{corollary}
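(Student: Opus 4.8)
The plan is to reduce everything to \cref{lem:pauli_trace} by exploiting that both sides of the claimed identity are linear in $M$ and that $\linop{\hA} \ot \linop{\hB}$ is spanned by simple tensors. Concretely, I would first write $M = \sum_k A_k \ot B_k$ with $A_k \in \linop{\hA}$ and $B_k \in \linop{\hB}$ (for instance by expanding $M$ in a basis of matrix units on the $\hB$ factor, so that the $B_k$ are matrix units and the $A_k$ are the corresponding blocks of $M$). Since conjugation by $U_{\alpha\beta} \ot I_\hB$ acts only on the $\hA$ tensor factor, we get
\begin{align}
    \sum_{\alpha\beta} (U_{\alpha\beta} \ot I_\hB) M (U_{\alpha\beta}^\dag \ot I_\hB)
    &= \sum_k \left( \sum_{\alpha\beta} U_{\alpha\beta} A_k U_{\alpha\beta}^\dag \right) \ot B_k.
\end{align}

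Next I would apply \cref{lem:pauli_trace} to each inner sum, obtaining $\sum_{\alpha\beta} U_{\alpha\beta} A_k U_{\alpha\beta}^\dag = \dim(\hA) \Tr(A_k) I_\hA$. Substituting this back gives
\begin{align}
    \sum_{\alpha\beta} (U_{\alpha\beta} \ot I_\hB) M (U_{\alpha\beta}^\dag \ot I_\hB)
    &= \dim(\hA) \, I_\hA \ot \sum_k \Tr(A_k) B_k.
\end{align}
Finally I would identify the remaining sum as the partial trace: by definition $\Tr_\hA(A_k \ot B_k) = \Tr(A_k) B_k$, so by linearity $\Tr_\hA M = \sum_k \Tr(A_k) B_k$, which yields the claimed equality $\dim(\hA) I_\hA \ot \Tr_\hA M$.

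There is essentially no hard step here; the content is entirely in \cref{lem:pauli_trace}, and the corollary is the routine ``tensor with an ancilla'' upgrade. The only points requiring a word of care are (i) noting that the decomposition $M = \sum_k A_k \ot B_k$ is finite and that the manipulation above does not depend on which decomposition is chosen, and (ii) matching the final expression to the definition of the partial trace $\Tr_\hA$. Both are immediate, so the proof can be stated in a couple of lines.
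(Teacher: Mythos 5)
Your argument is correct and is exactly the linearity argument the paper has in mind; the paper simply states ``follows by linearity'' without spelling out the decomposition into simple tensors, so your proposal is the same approach made explicit.
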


\begin{theorem}
    Let $S$ be an $S_0$-graph and take $\blockscale$ from \cref{def:blockutils}.
    Let $S' = \vecmod{S}{S_0} + \mathbb{C}I$.
    For $W \in \psd{\hA}$,
    \begin{align}
        \dsw(S', W) \ge \dsw(S, n \blockscale(W)).
    \end{align}
    \label{thm:mainlemma_ge}
\end{theorem}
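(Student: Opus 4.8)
The plan is to run the argument of \cref{thm:mainlemma_le} in reverse: from an arbitrary feasible solution of a program for $\dsw(S', W)$ I will build a feasible solution of a program for $\dsw(S, n\blockscale(W))$ of no larger value, so that optimizing over the former yields $\dsw(S, n\blockscale(W)) \le \dsw(S', W)$. I would work from a maximization formulation such as \eqref{eq:weighted_dsw_max_opnorm}, where the weight enters without an inverse; this keeps the statement in its full generality (all $W \in \psd{\hA}$), whereas working from the minimization forms would prove it only for nonsingular $W$, with the general case then following by continuity of $\dsw$ in the weight (\cref{thm:dsw_continuous}).

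Two structural facts make the construction possible. First, $S' = \vecmod{S}{S_0} + \mathbb{C}I \subseteq S$ (since $\vecmod{S}{S_0}\subseteq S$ and $\mathbb{C}I \subseteq S_0 \subseteq S$), so $S^\perp \subseteq (S')^\perp$; moreover $(S')^\perp = S^\perp + (\vecmod{S_0}{\mathbb{C}I})$, so we may add to a feasible $T$ any element of $(\vecmod{S_0}{\mathbb{C}I})\ot\linop{\hB}$ and stay feasible for $\dsw(S', W)$. Second, $S$ (hence $S^\perp$) is invariant under conjugation by unitaries of $S_0$, being an $S_0$-bimodule, while by inspection of \cref{def:blockutils} the weight $n\blockscale(W)$ lies in the commutant $S_0'$ and so commutes with all of $S_0$; thus we are free to twirl a feasible solution over a subgroup of $\mathcal{U}(S_0)$ without leaving the relevant operator spaces and without disturbing the placement of the weight.

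The crux is to pick the twirl so that the effective weight seen in the objective is converted between $W$ and $n\blockscale(W)$. I would twirl block by block over the generalized Pauli operators of each $\hAi$, embedded in $S_0$ as $U \mapsto U \ot I_{\hYi}$; this is a finite average (unlike the Haar twirl of \cref{thm:mainlemma1}), and \cref{lem:pauli_trace} together with \cref{lem:pauli_partial_trace} is exactly the identity evaluating it, replacing each diagonal block $P_i W P_i$ of a weight by $\dimAi^{-1} I_{\hAi} \ot \TrAi(P_i W P_i)$. The $\dimAi$ factors produced here, together with the $\dimYi$ factors built into $\blockscale$ and the identity $n = \sum_i \dimAi\,\dimYi$, are precisely what make the weight that emerges equal $n\blockscale(W)$; this is the accounting responsible for that factor in \cref{thm:thin_diag}. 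Matching the rank-one lower-bound terms of the two programs (which carry the within-block data not seen by $\TrA$) is handled by \cref{lem:Winv_vs_TrWinv}, the inequality $I \ot W^{-1} \ge \proj{\Phi} \ot (\TrB W)^{-1}$, used here in the opposite direction from \cref{thm:mainlemma_le}, and the residual slack is absorbed by correction terms of the form $I_{\hAi} \ot \sigma_i$ exactly as there.

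The step I expect to be the main obstacle is this weight reconciliation: the same twirl that averages the partial-trace data also redistributes the block structure, so one must check simultaneously that all the $\dimAi$, $\dimYi$, and $n$ factors cancel to leave exactly $n\blockscale(W)$, that positive-semidefiniteness and the exact-trace constraint survive the averaging, and that the rank-one lower bound is preserved under the accompanying conjugation by $\sqrt{n\blockscale(W)}\,W^{-1/2}$ (or its pseudoinverse). Once the block-by-block identities are in place, the remainder is the same routine correction-term bookkeeping as in \cref{thm:mainlemma_le}.
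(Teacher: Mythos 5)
Your central technical idea --- conjugating by a block-by-block sum of generalized Pauli operators embedded in $S_0$ and invoking \cref{lem:pauli_partial_trace} to convert the effective weight --- is exactly what the paper does, and is the right idea. But the surrounding logic is confused in ways that would block the argument. The sentence ``from an arbitrary feasible solution of a program for $\dsw(S', W)$ I will build a feasible solution of a program for $\dsw(S, n\blockscale(W))$ of no larger value'' is the calculus of \emph{minimization} programs; applied to the max form \eqref{eq:weighted_dsw_max_opnorm} it gives no bound at all. For max forms the map must run the other way, from an optimal $T$ for $\dsw(S, n\blockscale(W))$ to a feasible $T'$ for $\dsw(S', W)$ of no \emph{smaller} value, which you do not describe and which the Pauli averaging does not obviously provide (averaging tends to shrink an operator norm, useful for the $\le$ direction, not here). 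The stated reason for preferring the max form is also false: among the minimization forms only \eqref{eq:weighted_dsw_min_YWinvT} involves $W^{-T}$; the form \eqref{eq:weighted_dsw_min_YWT_eq}, which the paper works from here, has constraints $\TrA Y = \lambda W^T$ and $Y \ge \ket{\vw}\bra{\vw}$ and is well posed for singular $W$. That is precisely why \cref{thm:mainlemma_ge} carries no nonsingularity hypothesis while \cref{thm:mainlemma_le} (which routes through \cref{thm:mainlemma1} and hence \eqref{eq:weighted_dsw_min_YWinvT}) does.

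The extra machinery you propose also belongs to the other direction. \cref{lem:Winv_vs_TrWinv} and the $I_\hAi \ot \sigma_i$ correction terms are used in \cref{thm:mainlemma_le} to repair the solution after a Haar twirl. In the paper's proof of \cref{thm:mainlemma_ge}, once $Y$ is optimal for \eqref{eq:weighted_dsw_min_YWT_eq} with weight $W$ and one sets $Y' = K Y K^\dag$ with $K$ the scaled Pauli sum, all three constraints for weight $X = n\blockscale(W)$ fall out directly: $Y' \in S \ot \linop{\hB}$ because $K \in S_0 \ot \linop{\hB}$ and $S$ is an $S_0$-bimodule; $\TrA Y' = \lambda X$ follows from \cref{lem:pauli_partial_trace} after observing that the cross terms in $K(\cdot)K^\dag$ die because $Y \in S' \ot \linop{\hB}$ can only meet $S_0 \ot \linop{\hB}$ along $\mathbb{C}I \ot \linop{\hB}$; and the rank-one lower bound $Y' \ge \ket{\vx}\bra{\vx}$ holds because $K\ket{\vw} = \ket{\vx}$ exactly, with no slack to absorb. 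No correction terms, no \cref{lem:Winv_vs_TrWinv}, and no conjugation by $\sqrt{n\blockscale(W)}\,W^{-1/2}$ are needed --- the last of which would also reintroduce the very pseudoinverse you said you wanted to avoid.
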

\begin{proof}
    With $S_0$ decomposed as in~\eqref{eq:vertex_algebra},
    let $P_i$ be the projector onto $\hAi \ot \hYi$.
    Set $n_i = \Tr P_i$, $n = \sum_i n_i = \dim(\hA)$, and $W_i = P_i W P_i$.

    Define $X = n \blockscale(W)$.
    Let $Y$ be optimal for~\eqref{eq:weighted_dsw_min_YWT_eq} for $\dsw(S', W)$,
    \begin{align}
        Y &\in S' \ot \linop{\hB},
        \\ \TrA Y &= \lambda W^T,
        \\ Y &\ge \ket{\vw}\bra{\vw},
        \label{eq:mainlemma2_thindiag_ge_Y_WW_v2}
    \end{align}
    with $\lambda = \dsw(S', W)$.

    We will construct a feasible solution for $\dsw(S, n \blockscale(W))$.
    For each $\hAi$ let
    $U_{i \alpha \beta} \in \linop{\hAi}$ be the generalized Pauli operators
    \begin{align}
        U_{i \alpha \beta} = \sum_j \omega^{\beta j} \ket{j + \alpha} \bra{j},
    \end{align}
    where $\omega$ is a primitive root of unity of order $\dimAi$.
    The indices $\alpha$ and $\beta$ range from $0$ to $\dimAi - 1$ so there are
    $\dimAi^2$ such operators.

    Define the projection and twirling operation
    \begin{align}
        K = n \sum_{i \alpha \beta} n_i^{-1} V_{i \alpha \beta}
        \ot V_{i \alpha \beta}^T.
    \end{align}
    where
    \begin{align}
        V_{i \alpha \beta} = (U_{i \alpha \beta} \ot I_\hY) P_i.
    \end{align}
    Define
    \begin{align}
        Y' &= K Y K^\dag.
    \end{align}
    Since $Y \in S' \ot \linop{\hB} \subseteq S \ot \linop{\hB}$ and
    $K \in S_0 \ot \linop{\hB}$ we have $Y' \in S_0 S S_0 \ot \linop{\hB} = S \ot \linop{\hB}$.
    Consider now its partial trace,
    \begin{align}
        \TrA Y'
        &= n^2 \sum_{i i' \alpha \alpha' \beta \beta'}
            n_i^{-1} n_{i'}^{-1}
            \Tr\left(
            (V_{i \alpha \beta} \ot V_{i \alpha \beta}^T)
            Y
            (V_{i' \alpha' \beta'}^\dag \ot V_{i' \alpha' \beta'}^{\dag T})
            \right).
        \\ &= n^2 \sum_{i i' \alpha \alpha' \beta \beta'}
            n_i^{-1} n_{i'}^{-1}
            V_{i \alpha \beta}^T
            \TrA\left( Y (V_{i' \alpha' \beta'}^\dag V_{i \alpha \beta} \ot I_\hB) \right)
            V_{i' \alpha' \beta'}^{\dag T}.
    \end{align}
    Consider the partial trace $\TrA( Y (V_{i' \alpha' \beta'}^\dag V_{i \alpha \beta} \ot I_\hB))$.
    We have $Y \in S' \ot \linop{\hB} = (\vecmod{S}{S_0} + \mathbb{C}I) \ot \linop{\hB}$
    and $V_{i' \alpha' \beta'}^\dag V_{i \alpha \beta} \ot I_\hB \in S_0 \ot \linop{\hB}$.
    These spaces only overlap on $\mathbb{C}I \ot \linop{\hB}$ so we only need to consider the
    component of $V_{i' \alpha' \beta'}^\dag V_{i \alpha \beta}$ that lies in $\mathbb{C}I$.
    The projection of this factor
    onto $\mathbb{C}I$ is
    \begin{align}
        n^{-1} I_\hA \Tr(V_{i' \alpha' \beta'}^\dag V_{i \alpha \beta})
        &= n^{-1} I_\hA \Tr(P_i)
        \delta_{i i'} \delta_{\alpha \alpha'} \delta_{\beta \beta'}
    \end{align}
    where $\delta$ is the Kronecker delta.  Continuing,
    \begin{align}
        \TrA Y' &= n \sum_{i \alpha \beta} n_i^{-2} \Tr(P_i)
            V_{i \alpha \beta}^T
            \TrA(Y)
            V_{i \alpha \beta}^{\dag T}
        \\ &= \lambda n \sum_{i \alpha \beta} n_i^{-1}
            V_{i \alpha \beta}^T
            W^T
            V_{i \alpha \beta}^{\dag T}
        \\ &= \lambda n \sum_i n_i^{-1} \sum_{\alpha \beta}
            ((U_{i \alpha \beta} \ot I_\hY) W_i
            (U_{i \alpha \beta} \ot I_\hY)^\dag)^T.
    \end{align}
    Applying \cref{lem:pauli_partial_trace},
    \begin{align}
        \TrA Y'
        &= \lambda n \sum_i n^{-1} \dimAi I_\hAi \ot \TrAi W_i
        \\ &= \lambda n \sum_i \dimYi^{-1} I_\hAi \ot \TrAi W_i
        \\ &= \lambda X,
    \end{align}
    as desired.

    Conjugating both sides of~\eqref{eq:mainlemma2_thindiag_ge_Y_WW_v2} with $K$ yields
    \begin{align}
        Y' &\ge K \ket{\vw}\bra{\vw} K^\dag.
    \end{align}
    We have
    \begin{align}
        K \ket{\vw} &=
            n \sum_{i \alpha \beta} n_i^{-1}
            (V_{i \alpha \beta} \ot V_{i \alpha \beta}^T) \ket{\vw}.
        \\ &=
            n \sum_{i \alpha \beta} n_i^{-1}
            (V_{i \alpha \beta} W \ot V_{i \alpha \beta}^T) \ket{\Phi}.
        \\ &=
            n \sum_{i \alpha \beta} n_i^{-1}
            (V_{i \alpha \beta} W V_{i \alpha \beta}^\dag \ot I_\hB) \ket{\Phi}.
        \\ &=
            n \sum_i n_i^{-1} \dimAi
            (I_\hAi \ot \TrAi W_i \ot I_\hB) \ket{\Phi}.
        \\ &=
            n \sum_i \dimYi^{-1}
            (I_\hAi \ot \TrAi W_i \ot I_\hB) \ket{\Phi}.
        \\ &= (X \ot I_\hB) \ket{\Phi}.
        \\ &= \ket{\vx}
    \end{align}
    Therefore $Y' \ge \ket{\vx}\bra{\vx}$ and $Y'$
    satisfies~\eqref{eq:weighted_dsw_min_YWT_eq} for $\dsw(S, X)$ with value $\lambda = \dsw(S', W)$.
\end{proof}


\section{Proofs for convex corner geometry}
\label{sec:corner_geometry}

\begin{proof}[Proof of \cref{thm:facet_gives_full_dim}]
    Suppose $\dim(\hA) > 1$ and that $\cornC$ has a facet.
    By \cref{thm:corn_relint} there is some $X_0$ in the relative interior of $\cornC$,
    and $P \cornC P = \cornC$ where $P$ is the projector onto the support of $X_0$.
    It must be the case that $X_0 > 0$, i.e., $X_0$ is not just in the relative interior but in
    fact in the interior.
    Otherwise the projector $P$ is not full rank, and $\cornC$ lies in a subspace
    $P \linop{\hA} P$ of dimension $\rank(X_0)^2$, strictly smaller than $\dim(\linop{\hA})-1$,
    the required dimension of a facet.
\end{proof}

\begin{proof}[Proof of \cref{thm:facet_alpha_1}]
    Suppose the supporting hyperplane defining a facet passes through the origin.
    That is, suppose the facet is of the form
    \begin{align}
        \mathcal{F} = \{ X \in \cornC : \Tr(X Y) = 0 \}.
    \end{align}
    for some $Y$, with $\Tr(X Y) \le 0$ for all $X \in \cornC$.

    By \cref{thm:facet_gives_full_dim} there is some $X_0 \in \cornC$, $X_0 > 0$.
    Then for any $Z \ge 0$ there is some $\epsilon > 0$ such that
    $0 \le \epsilon Z \le X_0$, and hence by hereditarity $\epsilon Z \in \cornC$.
    It must be the case that $Y \le 0$.  Otherwise there is some $Z \ge 0$ such that
    $\Tr(Z Y) > 0$ (e.g.\ the projector onto the positive eigenvalues of $Y$).
    Then there is some $\epsilon > 0$ such that $\epsilon Z \in \cornC$,
    and $\Tr(\epsilon Z Y) > 0$,
    contradicting that $\Tr(X Y) \le 0$ for all $X \in \cornC$.

    Let $P$ be the projector onto the support of $Y$.  The condition $\Tr(X Y) = 0$ for all
    $X \in \mathcal{F}$, combined with $Y \le 0$, requires $P X P = 0$ for all $X \in \mathcal{F}$.
    But this is inconsistent with $\mathcal{F}$ having affine dimension $\dim(\linop{\hA})-1$.
    If $P \ne 0$, $\dim(\mathcal{F}) \le (\dim(\hA) - \rank(P))^2 < \dim(\linop{\hA})-1$
    (assuming $\dim(\hA) > 1$).
    If $P = 0$ then $Y = 0$ and
    $\dim(\mathcal{F}) = \dim(\cornC) = \dim(\linop{\hA}) > \dim(\linop{\hA})-1$.
    In either case $\dim(\mathcal{F}) = \dim(\linop{\hA}) - 1$ is not possible.
    So it is not possible for the hyperplane to pass through the origin.

    A hyperplane not passing through the origin is of the form
    $\{ X \in \herm{\hA} : \Tr(X Y) = \alpha \}$ for some $\alpha \ne 0$.
    Since supporting hyperplanes satisfy $\Tr(XY) \le \alpha$ for all $X \in \cornC$,
    and since $0 \in \cornC$, it is not possible that $\alpha < 0$.
    Therefore $\alpha > 0$.
    We can then rescale, defining $Y' = Y / \alpha$ to get
    $\mathcal{F} = \{ X \in \herm{\hA} : \Tr(X Y') = 1 \}$
\end{proof}

\begin{proof}[Proof of \cref{thm:facet_is_vertex}]
    Let $X_0 \in \relint(\mathcal{F})$.
    Note first that
    \begin{align}
        X_0 + \epsilon Y \not\in \cornC
        \label{eq:XZeY_notin_C}
    \end{align}
    for all $\epsilon > 0$ because $Y$ forms a supporting hyperplane, and adding $\epsilon Y$ puts
    the point on the wrong side of that hyperplane:
    $\Tr((X_0 + \epsilon Y) Y) = 1 + \epsilon \Tr(Y^2) > 1$
    unless $Y=0$, which in turn is forbidden by $\Tr(X Y) = 1$ for $X \in \mathcal{F}$.

    We first show $Y \ge 0$.  The hypothesis $\Tr(X_0 Y) = 1$ forbids
    $Y \le 0$, so $Y$ has at least one positive eigenvalue.
    If $Y$ also has at least one negative eigenvalue then there is some
    $Z > 0$ such that $\Tr(Z Y) = 0$.
    Since $X_0 \in \relint(\mathcal{F})$ and $\Tr(Z Y) = 0$ (so $Z$ runs parallel to the hyperplane),
    $Z$ can be chosen small enough that $X_0 \pm Z \in \mathcal{F}$.
    Since $Z > 0$ there is some $\epsilon > 0$ such that $-Z \le \epsilon Y \le Z$.
    Now,
    \begin{align}
        X_0 - Z \in \mathcal{F}
        &\implies X_0 - Z \ge 0
        \nonumber
        \\ &\implies X_0 + \epsilon Y \ge 0
        \label{eq:facet_is_vert_XeY_pos}
    \end{align}
    Also, since $\epsilon Y \le Z$,
    \begin{align}
        X_0 + Z \in \mathcal{F}
        &\implies X_0 + Z \in \cornC = \cornC^{\sharp\sharp}
        \nonumber
        \\ &\implies \Tr((X_0 + Z) X) \le 1 \textrm{ for all } X \in \cornC^\sharp
        \nonumber
        \\ &\implies \Tr((X_0 + \epsilon Y) X) \le 1 \textrm{ for all } X \in \cornC^\sharp
        \label{eq:facet_is_vert_XeY_bounded}
    \end{align}
    By the second anti-blocker theorem,
    $\cornC = \cornC^{\sharp\sharp} = \{ W \ge 0 : \Tr(W X) \le 1 \textrm{ for all } X \in \cornC^\sharp \}$
    so~\eqref{eq:facet_is_vert_XeY_pos}-\eqref{eq:facet_is_vert_XeY_bounded}
    imply $X_0 + \epsilon Y \in \cornC$.
    This violates~\eqref{eq:XZeY_notin_C}.
    Therefore $Y \ge 0$.

    Since $Y \ge 0$ and $\Tr(X Y) \le 1$ for all $X \in \cornC$ (because $Y$ is a
    supporting hyperplane of $\cornC$), we have $Y \in \cornC^\sharp$.

    Every element $X \in \mathcal{F}$ defines a supporting hyperplane of
    $\cornC^\sharp$ at $Y$, because we have $\Tr(X Y) = 1$ from~\eqref{eq:polar_hyperplane}
    and $\Tr(X Y') \le 1$ for all $Y' \in \cornC^\sharp$ due to
    $X \in \cornC = \cornC^{\sharp\sharp}$.

    Since $\mathcal{F}$ has affine dimension $\dim(\linop{\hA})-1$ and that affine space doesn't
    pass through the origin (by \cref{thm:facet_alpha_1}), we have linearly independent
    $X_1, \dots, X_{\dim(\linop{\hA})} \in \mathcal{F}$.
    These are all supporting hyperplanes of $\cornC^\sharp$ at $Y$, and their intersection is the
    single point $Y$, so $Y$ is a vertex.
\end{proof}

\begin{proof}[Proof of \cref{thm:vertex_is_maximal}]
    Let $X \ne 0$ be a vertex of a convex corner $\cornC$.
    Suppose $X$ is not maximal.  Then there is $Y \ge 0, Y \ne 0$ such that $X+Y \in \cornC$.
    We will show this leads to the local shape of $\cornC$ at $X$ being flat or curved, not a sharp
    point as would be required for $X$ to be a vertex.

    Let $R_X, R_Y$ be such that $R_X R_X^\dag = X$, $R_Y R_Y^\dag = Y$, and
    $R_X R_Y^\dag + R_Y R_X^\dag \ne 0$.
    It suffices to take $R_X = \sqrt{X}$ and $R_Y = \sqrt{Y}$ unless $\Tr(X Y) = 0$, in which case
    it suffices to take $R_Y = \sqrt{Y} U$ where $U$ is a unitary causing the support of $R_Y$ to
    overlap with the support of $R_X$.

    For $0 < \epsilon < 1$ let
    \begin{align}
        Z &= X + \epsilon^2 (Y - X)
        \\ W &= \epsilon \sqrt{1 - \epsilon^2} \left( R_X R_Y^\dag + R_Y R_X^\dag \right).
    \end{align}
    Then
    \begin{align}
        Z \pm W &= (1 - \epsilon^2) X + \epsilon^2 Y \pm \epsilon \sqrt{1 - \epsilon^2}
            \left( R_X R_Y^\dag + R_Y R_X^\dag \right)
        \\ &= \left( \sqrt{1 - \epsilon^2} R_X \pm \epsilon R_Y \right)
            \left( \sqrt{1 - \epsilon^2} R_X \pm \epsilon R_Y \right)^\dag
        \\ &\ge 0
        \\
        (X+Y) - (Z \pm W) &= Y + \epsilon^2 (X - Y) \mp W
        \\ &= (1 - \epsilon^2) Y + \epsilon^2 X \mp \epsilon \sqrt{1 - \epsilon^2}
            \left( R_X R_Y^\dag + R_Y R_X^\dag \right)
        \\ &= \left( \sqrt{1 - \epsilon^2} R_Y \mp \epsilon R_X \right)
            \left( \sqrt{1 - \epsilon^2} R_Y \mp \epsilon R_X \right)^\dag
        \\ &\ge 0
    \end{align}
    so $0 \le Z \pm W \le X+Y$.  Since $X+Y \in \cornC$, by hereditarity we have
    $Z \pm W \in \cornC$.

    Let $V, \alpha$ define a supporting hyperplane of $\cornC$ at $X$ that is not parallel to $W$
    (note that the direction of $W$ does not depend on $\epsilon$),
    \begin{align}
        \Tr(V X) &= \alpha
        \\ \Tr(V T) &\le \alpha \textnormal{ for all } T \in \cornC
        \\ \Tr(V W) &\ne 0.
        \label{eq:VWne0}
    \end{align}
    If $X$ is a vertex then such a hyperplane must exist: if all hyperplanes are
    parallel to $W$ then their intersection cannot be dimension 0.
    Now consider the points $Z \pm W \in \cornC$,
    \begin{align}
        Z \pm W \in \cornC
        &\implies \Tr(V (Z \pm W)) \le \alpha
        \\ &\implies \Tr(V (Z - X \pm W)) \le 0
        \label{eq:pmVW}
    \end{align}
    As $\epsilon \to 0$, $Z-X$ scales as $O(\epsilon^2)$ whereas $W$ scales as $O(\epsilon)$.
    So for sufficiently small $\epsilon$ the $\Tr(VW)$ term dominates, requiring
    $\pm \Tr(VW) \le 0$ in contradiction of~\eqref{eq:VWne0}.
    So all supporting hyperplanes of $\cornC$ at $X$ must be parallel to $W$ and $X$ cannot be a
    vertex.
\end{proof}

\begin{proof}[Proof of \cref{thm:finitely_generated_is_facet}]
    By the symmetry of the problem we need only prove that $X_0$ is a vertex and
    forms a facet of $\cornC^\sharp$.
    Consider the semidefinite program
    \begin{align}
        p^* = \min\{ \lambda :
            &\Tr(X_0 Y) = 1,
            \nonumber
            \\ &\Tr(X_i Y) \le \lambda
                \textrm{ for } i \in \{ 1, \ldots, m \},
            \nonumber
            \\ &Y \ge 0
        \}.
        \label{eq:facet_from_vert_primal}
    \end{align}
    After showing this is feasible with $\lambda < 1$ we will show that~\eqref{eq:facet_from_vert}
    is a facet and contains $Y$.
    Intuitively, the conditions of~\eqref{eq:facet_from_vert_primal} mandate that $Y$ is a supporting
    hyperplane of $\cornC^\sharp$ at $X_0$, and remains a supporting hyperplane if ``wiggled''
    in a direction perpendicular to $X_0$.
    We will return to this at the end.

    The Lagrangian of this SDP is
    \begin{align}
        L(\lambda, Y; \mu, b_i)
        &= \lambda + \mu (1 - \Tr(X_0 Y)) + \sum_{i=1}^m b_i (\Tr(X_i Y) - \lambda)
        \\ &= \mu + \Tr\left(Y \left(\sum_{i=1}^m b_i X_i - \mu X_0 \right)\right)
            + \lambda \left(1 - \sum_{i=1}^m b_i\right).
    \end{align}
    The dual program is
    \begin{align}
        d^* = \max\Big\{ \mu :
        &\sum_{i=1}^m b_i X_i \ge \mu X_0
        \\ &\sum_{i=1}^m b_i = 1, b_i \ge 0 \Big\}.
    \end{align}
    This cannot be feasible for $\mu \ge 1$, since that would imply $X_0 \le \sum b_i X_i$, meaning
    $X_0$ is dominated by a convex combination of the other $X_i$, in contradiction to the
    generators being a minimal set.
    Therefore $d^* < 1$.
    Slater's condition is satisfied, with $Y = I / \Tr(X_0)$ and $\lambda$ large being a feasible
    point in the relative interior of~\eqref{eq:facet_from_vert_primal}.
    Therefore $p^* = d^*$, giving $p^* < 1$.

    Take $Y$ to be feasible for~\eqref{eq:facet_from_vert_primal} with $\lambda < 1$.
    Let $K_0 = 0$ and let
    $\{ K_j : j \in \{1,\dots,\dim(\linop{\hA})-1\}\}$ be a Hermitian basis of
    $\vecmod{\linop{\hA}}{X_0}$ (the space perpendicular to $X_0$ under the Hilbert-Schmidt
    inner product), with $\opnorm{K_j} \le 1$.
    With $\epsilon > 0$ small (we shall later see how small), define
    $Y_j = (1 + \epsilon \Tr(X_0))^{-1} (Y + \epsilon (I + K_j))$
    for $j \in \{0,\dots,\dim(\linop{\hA})-1\}$.
    Since $\opnorm{K_j} \le 1$, we have
    \begin{align}
        Y_j \ge 0.
        \label{eq:facet_Yj_pos}
    \end{align}
    Since each $K_j$ is orthogonal to $X_0$ and $\Tr(X_0 Y) = 1$, we have
    \begin{align}
        \Tr(X_0 Y_j)
        &= (1 + \epsilon \Tr(X_0))^{-1} \Tr(X_0 (Y + \epsilon (I + K_j)))
        \\ &= 1.
        \label{eq:facet_Yj_touch}
    \end{align}
    As for the rest of the $X_i$,
    \begin{align}
        \Tr(X_i Y_j)
        &= (1 + \epsilon \Tr(X_0))^{-1} \Tr(X_i (Y + \epsilon (I + K_j)))
        \\ &\le (1 + \epsilon \Tr(X_0))^{-1} (\lambda + \epsilon \Tr(X_i (I + K_j)))
        \\ &\le \lambda + \epsilon \Tr(X_i (I + K_j))
        \\ &< 1
    \end{align}
    where the last line uses $\lambda < 1$ and $\epsilon$ sufficiently small.

    Since $\{ X_i : i \in \{ 0, \ldots, m \} \}$ generate $\cornC$, each element of
    $\cornC$ is dominated by a convex combination of these generators.
    We have then
    \begin{align}
        \Tr(X Y_j) &\le 1
            \textrm{ for all } X \in \cornC
        \label{eq:facet_Yj_bound}
    \end{align}
    By~\eqref{eq:facet_Yj_pos} and~\eqref{eq:facet_Yj_bound}, each $Y_j \in \cornC^\sharp$.
    By~\eqref{eq:facet_Yj_touch}, the $Y_j$ are on the surface of $\cornC^\sharp$.
    Since the $K_j$ (and hence the $Y_j$) span an affine space of dimension $\dim(\linop{\hA})-1$,
    they define a facet of $\cornC^\sharp$.
    And by~\eqref{eq:facet_Yj_touch},
    $\mathcal{F}_0 = \{ Z \in \herm{\hA} : \Tr(X_0 Z) = 1 \}$ is the supporting
    hyperplane defining this facet.
\end{proof}



\end{document}